\newlength\DX
\newlength\DY
\newcommand*\centermathcell[1]{\omit\hfil$\displaystyle#1$\hfil\ignorespaces}
\newcommand{\hgline}[2]{
\pgfmathsetmacro{\thetaone}{mod(#1,360)}
\pgfmathsetmacro{\thetatwo}{mod(#2,360)}
\pgfmathsetmacro{\theta}{(\thetaone+\thetatwo)/2}
\pgfmathsetmacro{\phi}{abs(\thetaone-\thetatwo)/2}
\pgfmathsetmacro{\close}{less(abs(\phi-90),0.0001)}
\ifdim \close pt = 1pt
    \draw[blue!50] (\thetaone:1) -- (\thetatwo:1);
\else
    \pgfmathsetmacro{\R}{tan(\phi)}
    \ifdim \R pt < 0pt
        \pgfmathsetmacro{\distance}{sqrt(1+\R*\R)}
        \draw[blue!50] (\theta:-\distance) circle (\R);
    \else \ifdim \R pt > 0pt
        \pgfmathsetmacro{\distance}{sqrt(1+\R^2)}
        \draw[blue!50] (\theta:\distance) circle (\R);
        \fi
    \fi
\fi
}
\newcommand{\hglinec}[3]{
\pgfmathsetmacro{\thetaone}{mod(#1,360)}
\pgfmathsetmacro{\thetatwo}{mod(#2,360)}
\pgfmathsetmacro{\theta}{(\thetaone+\thetatwo)/2}
\pgfmathsetmacro{\phi}{abs(\thetaone-\thetatwo)/2}
\pgfmathsetmacro{\close}{less(abs(\phi-90),0.0001)}
\ifdim \close pt = 1pt
    \draw[#3] (\thetaone:1) -- (\thetatwo:1);
\else
    \pgfmathsetmacro{\R}{tan(\phi)}
    \ifdim \R pt < 0pt
        \pgfmathsetmacro{\distance}{sqrt(1+\R*\R)}
        \draw[#3] (\theta:-\distance) circle (\R);
    \else \ifdim \R pt > 0pt
        \pgfmathsetmacro{\distance}{sqrt(1+\R^2)}
        \draw[#3] (\theta:\distance) circle (\R);
        \fi
    \fi
\fi
}
\newcommand*{\superellipse}[3][draw]{% #1 = styles
                                     % #2 = node
                                     % #3 = superness
    \coordinate (ne) at ($(#2.center)!#3!(#2.north east)$);
    \coordinate (nw) at ($(#2.center)!#3!(#2.north west)$);
    \coordinate (sw) at ($(#2.center)!#3!(#2.south west)$);
    \coordinate (se) at ($(#2.center)!#3!(#2.south east)$);
    \pgfmathparse{-9.6*#3*#3 + 14.4*#3 - 4.8}
    \xdef\tension{\pgfmathresult}
    \path[#1]
       plot[smooth cycle, tension=\tension] coordinates{ 
         (#2.east) (ne) (#2.north) (nw) (#2.west) (sw) (#2.south) (se)};
}
\newtheorem{thm}{Theorem}[section]
\newtheorem{lem}[thm]{Lemma}
\newtheorem{prop}[thm]{Proposition}
\newtheorem{cor}[thm]{Corollary}
\theoremstyle{definition}
\newtheorem{defn}[thm]{Definition}
\newtheorem{example}[thm]{Example}
\newtheorem*{qst}{Question}
\newtheorem{rem}[thm]{Remark}
\newtheorem*{rem*}{Remark}
\numberwithin{equation}{section} 
\numberwithin{figure}{section}
\numberwithin{table}{section}
\newcommand{\supp}{\mathop{\mathrm{supp}}}
\newcommand{\tr}{\mathop{\mathrm{Tr}}}
\newcommand{\ntr}{\mathop{\mathrm{tr}}}
\newcommand{\id}{\mathbf{1}}
\newcommand{\M}{\mathrm{M}}
\newcommand{\spc}{\mathrm{sp}}
\newcommand{\dH}{\mathrm{d_H}}
\begin{document}

\title{The strong convergence phenomenon}

\author{Ramon van Handel} 
\address{Department of Mathematics, Princeton University, Princeton, NJ 
08544, USA} 
\email{rvan@math.princeton.edu}

\begin{abstract}
In a seminal 2005 paper, Haagerup and Thorbj{\o}rnsen discovered that the 
norm of any noncommutative polynomial of independent complex Gaussian 
random matrices converges to that of a limiting family of operators that 
arises from Voiculescu's free probability theory. In recent years, new 
methods have made it possible to establish such strong convergence 
properties in much more general situations, and to obtain even more 
powerful quantitative forms of the strong convergence phenomenon. These, 
in turn, have led to a number of spectacular applications to long-standing 
open problems on random graphs, hyperbolic surfaces, and operator 
algebras, and have provided flexible new tools that enable the study of 
random matrices in unexpected generality. This survey aims to provide an 
introduction to this circle of ideas.
\end{abstract}

\subjclass[2010]{60B20; % RMT: probability
                 15B52; % RMT: algebraic
                 46L53; % noncommutative probability
                 46L54} % free probability

\maketitle

\thispagestyle{empty}
%\iffalse
%{\small
\setcounter{tocdepth}{2}
\tableofcontents
%}
%\fi

\section{Introduction}
\label{sec:intro}

The aim of this survey is to discuss recent developments surrounding
the following phenomenon, which has played a central role in a series of 
breakthroughs in the study of random graphs, hyperbolic surfaces, and 
operator algebras.

\begin{defn}
\label{defn:strong}
Let $\boldsymbol{X}^N=(X_1^N,\ldots,X_r^N)$ be a sequence of $r$-tuples of
random matrices of increasing dimension, and let 
$\boldsymbol{x}=(x_1,\ldots,x_r)$ be an $r$-tuple of bounded operators on 
a Hilbert space.
Then $\boldsymbol{X}^N$ is said to \emph{converge strongly} to 
$\boldsymbol{x}$ if
$$
	\lim_{N\to\infty}
	\| P(\boldsymbol{X}^N,\boldsymbol{X}^{N*}) \|
	= \| P(\boldsymbol{x},\boldsymbol{x}^*)\|
	\quad\text{in probability}
$$
for every $D\in\mathbb{N}$ and 
$P\in\M_D(\mathbb{C})\otimes\mathbb{C}\langle x_1,\ldots,x_{2r}\rangle$.
\end{defn}

Here we recall that a \emph{noncommutative polynomial with matrix 
coefficients}
$P\in\M_D(\mathbb{C})\otimes\mathbb{C}\langle x_1,\ldots,x_r\rangle$
of degree $q$ is a formal expression
$$
	P(x_1,\ldots,x_r) = A_0 \otimes\id +
	\sum_{k=1}^q\sum_{i_1,\ldots,i_k=1}^r
	A_{i_1,\ldots,i_k} \otimes x_{i_1}\cdots x_{i_k},
$$
where $A_{i_1,\ldots,i_k} \in \M_D(\mathbb{C})$ are $D\times D$ complex 
matrices. Such a polynomial defines a bounded operator whenever bounded 
operators are substituted for the free variables $x_1,\ldots,x_r$.
When $D=1$, this reduces to the classical notion of a noncommutative 
polynomial (we will then write $P\in \mathbb{C}\langle 
x_1,\ldots,x_r\rangle$).

The significance of the strong convergence phenomenon may not be 
immediately obvious when it is encountered for the first time.
Let us therefore begin with a very brief discussion of its origins.

The modern study of the spectral theory of random matrices arose from the 
work of physicists, especially that of Wigner and Dyson in the 1950s and 
60s \cite{Wig67}. Random matrices arise here as generic models for real 
physical systems that are too complicated to be understood in detail, such 
as the energy level structure of complex nuclei. It is believed that 
universal features of such systems are captured by random matrix models 
that are chosen essentially uniformly within the appropriate symmetry 
class. Such questions have led to numerous developments in probability and 
mathematical physics, and the spectra of such models are now understood in 
stunning detail down to microscopic scales (see, e.g., \cite{EY17}).

In contrast to these physically motivated models, random matrices that 
arise in other areas of mathematics often possess a much less regular 
structure. One way to build complex models is to consider arbitrary 
noncommutative polynomials of independent random matrices drawn from 
simple random matrix ensembles. It was shown in the seminal work of 
Voiculescu \cite{Voi91} that the limiting empirical eigenvalue 
distribution of such matrices can be described in terms of a family of 
limiting operators obtained by a free product construction. This is a 
fundamentally new perspective: while traditional random matrix methods are 
largely based on asymptotic explicit expressions or self-consistent 
equations satisfied by certain spectral statistics, Voiculescu's theory 
provides us with genuine \emph{limiting objects} whose spectral statistics 
are, in many cases, amenable to explicit computations. The interplay 
between random matrices and their limiting objects has proved to be of 
central importance, and will play a recurring role in the sequel.

While Voiculescu's theory is extremely useful, it yields rather weak 
information in that it can only describe the asympotics of the 
\emph{trace} of polynomials of random matrices. It was a major 
breakthrough when Haagerup and Thorbj{\o}rnsen~\cite{HT05} showed, for 
complex Gaussian (GUE) random matrices, that also the \emph{norm} of 
arbitrary polynomials converges to that of the corresponding limiting 
object. This much more powerful property, which was the first instance of 
strong convergence, opened the door to many subsequent developments.

The works of Voiculescu and Haagerup--Thorbj{\o}rnsen were directly 
motivated by applications to the theory of operator algebras. The fact 
that polynomials of a family of operators can be approximated by matrices 
places strong constraints on the operator (von~Neumann- or $C^*$-)algebras 
generated by this family: roughly speaking, it ensures that such algebras 
are ``approximately finite-dimensional'' in a certain sense. These 
properties have led to the resolution of important open problems in the 
theory of operator algebras which do not \emph{a priori} have anything to 
do with random matrices; see, e.g., \cite{VSW16,MS17,HT05}.

The interplay between operator algebras and random matrices continues to 
be a rich source of problems in both areas; an influential recent example 
is the work of Hayes \cite{Hay22} on the Peterson--Thom conjecture (cf.\ 
section \ref{sec:hayes}). In recent years, however, the notion of strong 
convergence has led to spectacular new applications in several other areas 
of mathematics. Broadly speaking, the importance of the strong convergence 
phenomenon is twofold.
\smallskip
\begin{enumerate}[$\bullet$]
\itemsep\medskipamount
\item Noncommutative polynomials are highly expressive: many
complex structures can be encoded in terms of spectral properties of
noncommutative polynomials.
\item Norm convergence is an extremely strong property, which provides 
access to challenging features of complex models.
\end{enumerate}
\smallskip
Furthermore, new mathematical methods have made it possible to establish 
novel quantitative forms of strong convergence, which enable the treatment 
of even more general random matrix models that were previously
out of reach.

We will presently highlight a number of themes that illustrate 
recent applications and developments surrounding strong convergence.
The remainder of this survey is devoted to a more detailed introduction
to this circle of ideas.

It should be emphasized at the outset that while I have aimed to give a 
general introduction to the strong convergence phenomenon and related 
topics, this survey is selectively focused on recent developments that are 
closest to my own interests, and is by no means comprehensive or complete. 
The interested reader may find complementary perspectives in surveys of 
Magee \cite{Mag24} and Collins \cite{Col23}, and is warmly encouraged to 
further explore the research literature on this subject.

\subsection{Optimal spectral gaps}
\label{sec:introgaps}

Let $G^N$ be a $d$-regular graph with $N$ vertices. By the 
Perron-Frobenius theorem, its adjacency matrix $A^N$ has largest 
eigenvalue $\lambda_1(A)=d$ with eigenvector $1$ (the vector
all of whose entries are one). The remaining eigenvalues are bounded by
$$
	\|A^N|_{1^\perp}\| = \max_{i=2,\ldots,N} |\lambda_i(A^N)| \le d.
$$
The smaller this quantity, the faster does a random walk on 
$G^N$ mix. The following classical lemma yields a lower bound that holds
for \emph{any} sequence of $d$-regular graphs. It provides a speed limit 
on how fast random walks can mix.

\begin{lem}[Alon--Boppana]
\label{lem:alonboppana}
For any $d$-regular graphs $G^N$ with $N$ vertices,
$$
	\|A^N|_{1^\perp}\| \ge 2\sqrt{d-1}- o(1)
	\quad\text{as}\quad N\to\infty.
$$
\end{lem}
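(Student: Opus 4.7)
The plan is to lower-bound a suitable averaged power of the eigenvalues $\lambda_i(A^N)$ for $i\ge 2$, and to compare this average to the spectral radius $2\sqrt{d-1}$ of the adjacency operator of the infinite $d$-regular tree $T_d$. The starting point is the elementary moment bound
$$
\|A^N|_{1^\perp}\|^{2k} \ge \frac{1}{N-1}\sum_{i=2}^N \lambda_i(A^N)^{2k} = \frac{\tr((A^N)^{2k}) - d^{2k}}{N-1},
$$
valid for any $k\in\mathbb{N}$. Since $\tr((A^N)^{2k})$ equals the total number of closed walks of length $2k$ in $G^N$, it suffices to lower-bound this count.

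The second step exploits the fact that every $d$-regular graph is locally treelike. Let $p_{2k}$ denote the number of closed walks of length $2k$ at the root of $T_d$. The universal covering map $\pi\colon T_d\to G^N$ projects any closed walk at a lift $\tilde v$ of a vertex $v\in G^N$ to a closed walk of length $2k$ at $v$ in $G^N$, and distinct walks in $T_d$ project to distinct walks in $G^N$ because paths have unique lifts starting from $\tilde v$. Hence $((A^N)^{2k})_{vv}\ge p_{2k}$ for every $v$, and summing gives $\tr((A^N)^{2k}) \ge Np_{2k}$. A classical computation (via the Kesten--McKay spectral measure) shows that the adjacency operator of $T_d$ on $\ell^2(T_d)$ has norm $2\sqrt{d-1}$, which yields $p_{2k}^{1/(2k)}\to 2\sqrt{d-1}$ as $k\to\infty$.

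To conclude, I choose $k=k(N)\to\infty$ slowly enough that $d^{2k} = o(Np_{2k})$; since $p_{2k}$ grows like $(2\sqrt{d-1})^{2k}$ up to polynomial factors, any $k=o(\log N)$ suffices. Taking $2k$-th roots and combining the bounds above,
$$
\|A^N|_{1^\perp}\|\ge \bigg(\frac{Np_{2k}-d^{2k}}{N-1}\bigg)^{1/(2k)} = p_{2k}^{1/(2k)}(1+o(1)) = 2\sqrt{d-1}-o(1).
$$

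The step I expect to be the main obstacle is the universal-cover argument in step two. While intuitively clear, it requires a careful verification that distinct closed walks at $\tilde v$ in $T_d$ really project to distinct closed walks at $v$ in $G^N$, which relies on the uniqueness of path lifting. An elementary alternative avoids covering spaces via an injectivity radius argument: if $B_k(v)\subset G^N$ is isomorphic to the radius-$k$ ball in $T_d$, then $((A^N)^{2k})_{vv}=p_{2k}$ by a direct bijection, and the volume bound on balls in $T_d$ shows that a positive proportion of vertices $v$ satisfy this whenever $k=o(\log N)$, which is already enough to conclude.
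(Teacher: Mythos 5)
Your proof is correct and follows exactly the approach the paper sketches for this lemma: lower bound the trace moment $\tr((A^N)^{2k})$ by counting closed walks at each vertex, use the universal covering map $T_d\to G^N$ (unique path lifting) to compare with closed walks in the $d$-regular tree, and then take $k\to\infty$ slowly with $N$ so that the trivial eigenvalue's contribution $d^{2k}$ is negligible. This is precisely the argument referenced in the paper via \cite[\S 5.2.2]{HLW06}, so there is nothing to reconcile; the covering-space step you flag is standard and sound, and your injectivity-radius alternative is a common variant of the same idea.
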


\smallskip

Given a universal lower bound on the nontrivial eigenvalues, the obvious 
question is whether there exist graphs that attain this bound. Such 
graphs have the largest possible spectral gap. One may expect that 
such heavenly graphs must be very special, and indeed the first examples 
of such graphs were carefully constructed using deep number-theoretic 
ideas by Lubotzky--Phillips--Sarnak~\cite{LPS88} and 
Margulis~\cite{Mar88}.
It may therefore seem surprising that this property does not turn 
out to be all that special at all: \emph{random} graphs have an optimal 
spectral gap \cite{Fri08}.\footnote{%
	Here we gloss over an important distinction between the explicit 
	and random constructions: the former yields the so-called 
	Ramanujan property $\|A^N|_{1^\perp}\|\le 2\sqrt{d-1}$, 
	while the latter yields only $\|A^N|_{1^\perp}\|\le 2\sqrt{d-1}+o(1)$ 
	which is the natural converse to Lemma \ref{lem:alonboppana}
	(cf.\ section \ref{sec:ramanujan}).}

\begin{thm}[Friedman]
\label{thm:friedman}
For a random $d$-regular graph $G^N$ on $N$ vertices,
$$
	\|A^N|_{1^\perp}\| \le 2\sqrt{d-1} + o(1)
	\quad\text{with probability}\quad 1-o(1)
        \quad\text{as}\quad N\to\infty.
$$
\end{thm}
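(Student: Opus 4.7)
My plan is to derive this spectral bound from a strong convergence statement for random permutation matrices, an approach that bypasses Friedman's original trace-method argument with very high moments and fits squarely into the framework of this survey.

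The first step is to replace the uniform model of random $d$-regular graphs by a cleaner permutation model. When $d=2r$ is even, Bollob\'as's configuration model is contiguous (up to discarding a negligible number of self-loops and multiple edges) to the random multigraph whose adjacency matrix is
$$
	\tilde A^N = \sum_{i=1}^r (U_i^N + U_i^{N*}),
$$
where $U_1^N,\ldots,U_r^N$ are independent uniformly random $N\times N$ permutation matrices. The odd case $d=2r+1$ is handled by adjoining one independent uniform random perfect matching. It therefore suffices to prove $\|\tilde A^N|_{1^\perp}\| \le 2\sqrt{d-1}+o(1)$ with high probability.

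Next I would identify the correct limiting object. Each $U_i^N$ fixes the all-ones vector, so restricting to $1^\perp$ corresponds to subtracting off the trivial representation. The natural limit is then the family $u_1,\ldots,u_r$ of generators of the free group $F_r$ acting on $\ell^2(F_r)$ by the left regular representation — equivalently, free Haar unitaries in a tracial $C^*$-probability space. Kesten's classical formula for the spectral radius of the simple random walk on $F_r$ gives
$$
	\Bigl\| \sum_{i=1}^r (u_i+u_i^*) \Bigr\| = 2\sqrt{2r-1} = 2\sqrt{d-1},
$$
which is exactly the constant we are aiming at. The theorem would therefore follow from the strong convergence of $(U_1^N,\ldots,U_r^N)|_{1^\perp}$ to $(u_1,\ldots,u_r)$ in the sense of Definition~\ref{defn:strong}, specialized to the scalar polynomial $P(x_1,\ldots,x_r)=\sum_i(x_i+x_i^*)$.

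The hard step — and the main obstacle — is precisely this strong convergence. Convergence of normalized traces $\frac{1}{N}\tr P(U^N,U^{N*})\to\tau(P(u,u^*))$ is a soft moment computation that yields the matching \emph{lower} bound on the norm for free. The \emph{upper} bound on the operator norm is where the sharp constant truly lives. I would follow the Bordenave strategy based on nonbacktracking operators: rather than estimating $\tr(\tilde A^N)^{2k}$ with $k$ a large polynomial power of $N$ as in Friedman's original combinatorial argument, one introduces an auxiliary nonbacktracking matrix $B^N$ whose spectral radius controls the eigenvalues of $\tilde A^N$ on $1^\perp$, and bounds $\EE\|B^N\|^{2k}$ by a careful expansion of path counts in powers of $1/N$, finished off by Markov's inequality and a concentration step. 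Threading matrix coefficients through this expansion upgrades the argument to the full strong convergence statement and thus closes the proof of Theorem~\ref{thm:friedman}.
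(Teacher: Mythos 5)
Your reduction is exactly the paper's framing of this theorem: in the permutation model, $A^N=\sum_i(U_i^N+U_i^{N*})$, the limit is the free-group generators in the left regular representation, Kesten's computation gives $\|\sum_i(u_i+u_i^*)\|=2\sqrt{d-1}$, and the bound then follows from strong convergence of $(U_1^N,\ldots,U_r^N)|_{1^\perp}$ to $(u_1,\ldots,u_r)$, i.e.\ from Theorem~\ref{thm:bc} applied to the single polynomial $P=\sum_i(x_i+x_i^*)$. Where you diverge from the paper is in how that strong convergence is established. You propose the Bordenave--Collins route via nonbacktracking operators and high moments; the paper instead proves Theorem~\ref{thm:bc} in section~\ref{sec:poly} by the polynomial method: expected traces of words are rational functions of $\frac1N$ (Lemma~\ref{lem:polyenc}), the Markov inequality upgrades this to the master inequality (Corollary~\ref{cor:master}), a Chebyshev/Fourier argument extends it to smooth test functions, and the support of the infinitesimal distribution is controlled for positive-coefficient polynomials (Lemma~\ref{lem:supp}), with Lemma~\ref{lem:pos} (positivization, using the Haagerup inequality and uniqueness of the trace on $C^*_{\rm red}(\mathbf{F}_r)$) removing the positivity restriction. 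The polynomial route only needs the first-order term of the $\frac1N$-expansion and avoids any conditioning; your route requires controlling the expansion to all orders along walks of length $\gg\log N$.

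On that hard step, your sketch as written has a real gap: a direct bound on $\mathbf{E}\|B^N\|^{2k}$ (or $\mathbf{E}\tr$ of high powers) by path counting plus Markov's inequality fails, because with probability polynomial in $\frac1N$ the graph contains tangles that create outlier eigenvalues; see Example~\ref{ex:tangles}, which shows the analogous high-moment bound for $A^N|_{1^\perp}$ is simply false for $k\gg\log N$. Friedman's and Bordenave's proofs must first condition on (or excise) tangle-free neighborhoods, and this tangle-removal is a substantial part of the argument that "a concentration step" does not replace. Also note that "threading matrix coefficients through the expansion" to get full strong convergence is the content of the separate, and nontrivial, Bordenave--Collins paper; for Theorem~\ref{thm:friedman} itself you only need the one scalar polynomial, so you could state the reduction more economically. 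Finally, the contiguity step between the uniform and permutation models is extra machinery the paper avoids by defining the random regular graph through the permutation model (and it only handles $d$ even directly).
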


\smallskip

We now explain that Theorem \ref{thm:friedman} may be viewed as a very 
special instance of strong convergence. This viewpoint will open the door 
to establishing optimal spectral gaps in much more general situations.

Let us begin by recalling that the proof of Lemma \ref{lem:alonboppana} is 
based on the simple observation that for any graph $G$, the number of 
closed walks with a given length and starting vertex is lower bounded by 
the number of such walks in its universal cover $\tilde G$. When $G$ is 
$d$-regular, its universal cover $\tilde G$ is the infinite $d$-regular 
tree. From this, it is not difficult to deduce that the maximum nontrivial 
eigenvalue of a $d$-regular graph is asymptotically lower bounded by the 
spectral radius of the infinite $d$-regular tree, which is 
$2\sqrt{d-1}$ \cite[\S 5.2.2]{HLW06}.

Theorem \ref{thm:friedman} therefore states, in essence, that the support 
of the 
nontrivial spectrum of a random $d$-regular graph behaves as that 
of the infinite $d$-regular tree. To make the connection more explicit, it 
is instructive to construct both the random graph and infinite tree in 
a parallel manner. For simplicity, we assume $d$ is even
(the construction can be modified to the odd case as well).

\begin{figure}
\centering
\begin{tikzpicture}

\path[use as bounding box] (-2.55, 2) rectangle (8, -2);

\begin{scope}

\draw[thick,red] (-2+.25,.25) to[out=45,in=135] (-.25,.25);
\draw[thick,red] (.25,.25) to[out=45,in=135] (2-.25,.25);
\draw[thick,red] (-2-.25,.25) to[out=90+45,in=45] (2+.25,.25);

\draw (0,1.5) node {\color{red} $(123)$};

\draw[thick,blue] (-2+.25,-.25) to[out=-45,in=0] (-2,-.55)
to[out=180,in=-135] (-2-.25,-.25);

\draw[thick,blue] (.25,-.25) to[out=-45,in=-135] (2-.25,-.25);
\draw[thick,blue] (-.25,-.25) to[out=-90-45,in=-45] (2+.25,-.25);

\draw (0,-1.1) node {\color{blue} $(1)(23)$};

\begin{scope}
\draw[fill=black] (0,0) circle (0.05) node[left] {$\scriptstyle 2$};

\draw[thick] (0,0) -- (.25,.25);
\draw[thick] (0,0) -- (.25,-.25);
\draw[thick] (0,0) -- (-.25,.25);
\draw[thick] (0,0) -- (-.25,-.25);

\draw[color=white,fill=white] (.25,.25) circle (0.01);
\draw[color=white,fill=white] (.25,-.25) circle (0.01);
\draw[color=white,fill=white] (-.25,.25) circle (0.01);
\draw[color=white,fill=white] (-.25,-.25) circle (0.01);
\end{scope}

\begin{scope}[xshift=2cm]
\draw[fill=black] (0,0) circle (0.05) node[left] {$\scriptstyle 3$};

\draw[thick] (0,0) -- (.25,.25);
\draw[thick] (0,0) -- (.25,-.25);
\draw[thick] (0,0) -- (-.25,.25);
\draw[thick] (0,0) -- (-.25,-.25);

\draw[color=white,fill=white] (.25,.25) circle (0.01);
\draw[color=white,fill=white] (.25,-.25) circle (0.01);
\draw[color=white,fill=white] (-.25,.25) circle (0.01);
\draw[color=white,fill=white] (-.25,-.25) circle (0.01);
\end{scope}

\begin{scope}[xshift=-2cm]
\draw[fill=black] (0,0) circle (0.05) node[left] {$\scriptstyle 1$};

\draw[thick] (0,0) -- (.25,.25);
\draw[thick] (0,0) -- (.25,-.25);
\draw[thick] (0,0) -- (-.25,.25);
\draw[thick] (0,0) -- (-.25,-.25);

\draw[color=white,fill=white] (.25,.25) circle (0.01);
\draw[color=white,fill=white] (.25,-.25) circle (0.01);
\draw[color=white,fill=white] (-.25,.25) circle (0.01);
\draw[color=white,fill=white] (-.25,-.25) circle (0.01);
\end{scope}

\end{scope}

%%%%%%%%%%%%%% TREE

\begin{scope}[xshift=6cm]

\draw[thick,red] (0,0) -- (1,1);
\draw[thick,red] (0,0) -- (-1,1);
\draw[thick,blue] (0,0) -- (1,-1);
\draw[thick,blue] (0,0) -- (-1,-1);

\draw[thick,red] (1,1) -- (1.5,0.5);
\draw[thick,blue] (1,1) -- (1.5,1.5);
\draw[thick,blue] (1,1) -- (0.5,1.5);

\draw[thick,red] (-1,1) -- (-1.5,0.5);
\draw[thick,blue] (-1,1) -- (-1.5,1.5);
\draw[thick,blue] (-1,1) -- (-0.5,1.5);

\draw[thick,blue] (-1,-1) -- (-1.5,-0.5);
\draw[thick,red] (-1,-1) -- (-1.5,-1.5);
\draw[thick,red] (-1,-1) -- (-0.5,-1.5);

\draw[thick,blue] (1,-1) -- (1.5,-0.5);
\draw[thick,red] (1,-1) -- (1.5,-1.5);
\draw[thick,red] (1,-1) -- (0.5,-1.5);

\begin{scope}[xshift=1.5cm,yshift=1.5cm]
\draw[thick,red,dotted] (0,0) -- (.25,.25);
\draw[thick,red,dotted] (0,0) -- (-.25,.25);
\draw[thick,blue,dotted] (0,0) -- (.25,-.25);
\end{scope}

\begin{scope}[xshift=.5cm,yshift=1.5cm]
\draw[thick,red,dotted] (0,0) -- (.25,.25);
\draw[thick,red,dotted] (0,0) -- (-.25,.25);
\draw[thick,blue,dotted] (0,0) -- (-.25,-.25);
\end{scope}

\begin{scope}[xshift=-.5cm,yshift=1.5cm]
\draw[thick,red,dotted] (0,0) -- (.25,.25);
\draw[thick,red,dotted] (0,0) -- (-.25,.25);
\draw[thick,blue,dotted] (0,0) -- (.25,-.25);
\end{scope}

\begin{scope}[xshift=-1.5cm,yshift=1.5cm]
\draw[thick,red,dotted] (0,0) -- (.25,.25);
\draw[thick,red,dotted] (0,0) -- (-.25,.25);
\draw[thick,blue,dotted] (0,0) -- (-.25,-.25);
\end{scope}

\begin{scope}[xshift=1.5cm,yshift=-1.5cm]
\draw[thick,red,dotted] (0,0) -- (.25,.25);
\draw[thick,blue,dotted] (0,0) -- (.25,-.25);
\draw[thick,blue,dotted] (0,0) -- (-.25,-.25);
\end{scope}

\begin{scope}[xshift=1.5cm,yshift=-.5cm]
\draw[thick,red,dotted] (0,0) -- (.25,.25);
\draw[thick,red,dotted] (0,0) -- (-.25,.25);
\draw[thick,blue,dotted] (0,0) -- (.25,-.25);
\end{scope}

\begin{scope}[xshift=-1.5cm,yshift=-.5cm]
\draw[thick,red,dotted] (0,0) -- (.25,.25);
\draw[thick,red,dotted] (0,0) -- (-.25,.25);
\draw[thick,blue,dotted] (0,0) -- (-.25,-.25);
\end{scope}

\begin{scope}[xshift=-1.5cm,yshift=.5cm]
\draw[thick,red,dotted] (0,0) -- (-.25,.25);
\draw[thick,blue,dotted] (0,0) -- (.25,-.25);
\draw[thick,blue,dotted] (0,0) -- (-.25,-.25);
\end{scope}

\begin{scope}[xshift=1.5cm,yshift=.5cm]
\draw[thick,red,dotted] (0,0) -- (.25,.25);
\draw[thick,blue,dotted] (0,0) -- (.25,-.25);
\draw[thick,blue,dotted] (0,0) -- (-.25,-.25);
\end{scope}

\begin{scope}[xshift=0.5cm,yshift=-1.5cm]
\draw[thick,red,dotted] (0,0) -- (-.25,.25);
\draw[thick,blue,dotted] (0,0) -- (.25,-.25);
\draw[thick,blue,dotted] (0,0) -- (-.25,-.25);
\end{scope}

\begin{scope}[xshift=-0.5cm,yshift=-1.5cm]
\draw[thick,red,dotted] (0,0) -- (.25,.25);
\draw[thick,blue,dotted] (0,0) -- (.25,-.25);
\draw[thick,blue,dotted] (0,0) -- (-.25,-.25);
\end{scope}

\begin{scope}[xshift=-1.5cm,yshift=-1.5cm]
\draw[thick,red,dotted] (0,0) -- (-.25,.25);
\draw[thick,blue,dotted] (0,0) -- (.25,-.25);
\draw[thick,blue,dotted] (0,0) -- (-.25,-.25);
\end{scope}

\draw[fill=black] (0,0) circle (0.05) node[right] {$\scriptstyle e$};
\draw[fill=black] (1,1) circle (0.05) node[right] {$\scriptstyle a$};
\draw[fill=black] (-1,1) circle (0.05);
\draw[fill=black] (-1,-1) circle (0.05);
\draw[fill=black] (-1,1.06) node[right] {$\scriptstyle a^{\text{-}1}$};
\draw[fill=black] (-1,-0.96) node[right] {$\scriptstyle b^{\text{-}1}$};
\draw[fill=black] (1,-1) circle (0.05) node[right] {$\scriptstyle b$};

\draw[fill=black] (1.5,0.5) circle (0.05);
\draw[fill=black] (1.5,0.55) node[right] {$\scriptstyle a^2$};
\draw[fill=black] (1.5,1.5) circle (0.05);
\draw[fill=black] (1.5,1.52) node[right] {$\scriptstyle ba$};
\draw[fill=black] (0.5,1.5) circle (0.05);
\draw[fill=black] (0.5,1.55) node[right] {$\scriptstyle b^{\text{-}1}a$};

\draw[fill=black] (-1.5,0.5) circle (0.05);
\draw[fill=black] (-1.5,0.56) node[right] {$\scriptstyle a^{\text{-}2}$};
\draw[fill=black] (-1.5,1.5) circle (0.05);
\draw[fill=black] (-1.5,1.55) node[right] {$\scriptstyle 
b^{\text{-}1}a^{\text{-}1}$};

\draw[fill=black] (-0.5,1.5) circle (0.05);
\draw[fill=black] (-0.5,1.55) node[right] {$\scriptstyle ba^{\text{-}1}$};

\draw[fill=black] (-1.5,-0.5) circle (0.05);
\draw[fill=black] (-1.5,-0.46) node[right] {$\scriptstyle b^{\text{-}2}$};

\draw[fill=black] (-1.5,-1.5) circle (0.05);
\draw[fill=black] (-1.5,-1.46) node[right] {$\scriptstyle 
a^{\text{-}1}b^{\text{-}1}$};

\draw[fill=black] (-0.5,-1.5) circle (0.05);
\draw[fill=black] (-0.5,-1.46) node[right] {$\scriptstyle ab^{\text{-}1}$};

\draw[fill=black] (1.5,-0.5) circle (0.05);
\draw[fill=black] (1.5,-0.46) node[right] {$\scriptstyle b^2$};

\draw[fill=black] (1.5,-1.5) circle (0.05);
\draw[fill=black] (1.5,-1.5) node[right] {$\scriptstyle ab$};

\draw[fill=black] (0.5,-1.5) circle (0.05);
\draw[fill=black] (0.5,-1.46) node[right] {$\scriptstyle a^{\text{-}1}b$};

\end{scope}
\end{tikzpicture}
\caption{Left figure: $4$-regular graph generated by two permutations.
Right figure: $4$-regular tree generated by two free generators $a,b$ of
the free group $\mathbf{F}_2$. The edges defined by the two generators
are colored red and blue, respectively.\label{fig:friedman}}
\end{figure}
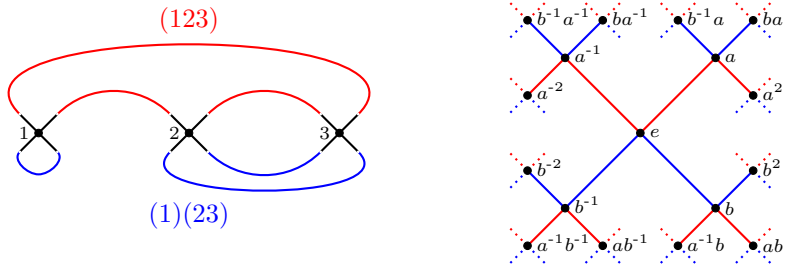

Given a permutation $\sigma\in\mathbf{S}_N$, we can define edges between
$N$ vertices by connecting each vertex $k\in[N]$ to its neighbors 
$\sigma(k)$ 
and $\sigma^{-1}(k)$. This defines a $2$-regular graph. To define a 
$d$-regular graph, we repeat this process with $r=\frac{d}{2}$ 
permutations. If the permutations are chosen independently and uniformly 
at random from $\mathbf{S}_N$, we obtain a random $d$-regular 
graph with adjacency matrix 
$$
	A^N = U_1^N + U_1^{N*} + \cdots + U_{r}^N +
	U_{r}^{N*},
$$
where $U_i^N$ are i.i.d.\ random permutation matrices of dimension 
$N$.\footnote{%
This is the \emph{permutation model} of random graphs;
see \cite[p.\,3]{Fri08} for its relation to other models.}

To construct the infinite $d$-regular tree in a parallel manner, we 
identify the vertices of the tree with the free group $\mathbf{F}_r$ with 
$r=\frac{d}{2}$ free generators $g_1,\ldots,g_r$. Each vertex 
$w\in\mathbf{F}_r$ is then connected to its neighbors $g_iw$ and 
$g_i^{-1}w$ for $i=1,\ldots,r$. This defines a $d$-regular tree with 
adjacency matrix
$$
	a = u_1 + u_1^* + \cdots + u_r + u_r^*,
$$
where $u_i=\lambda(g_i)$ is defined by the
left-regular representation 
$\lambda:\mathbf{F}_r \to B(l^2(\mathbf{F}_r))$,
i.e., $\lambda(g)\delta_w = \delta_{gw}$ where $\delta_w\in
l^2(\mathbf{F}_r)$ is
the coordinate vector of $w\in\mathbf{F}_r$.

These models are illustrated in Figure \ref{fig:friedman} for 
$r=2$, where the edges are colored according to their generator;
e.g., $U_1+U_1^*$ and $u_1+u_1^*$ are the 
adjacency matrices of the red edges in the left and right figures, 
respectively.

In these terms, Theorem \ref{thm:friedman} states that
$$
	\lim_{N\to\infty}
	\|(U_1^N + U_1^{N*} + \cdots + U_{r}^N +
        U_{r}^{N*})|_{1^\perp}\|
	=
	\|u_1 + u_1^* + \cdots + u_r + u_r^*\|
$$
in probability. This is precisely the kind of convergence described by
Definition \ref{defn:strong}, but only for one very special 
polynomial
$$
	P(\boldsymbol{x},\boldsymbol{x}^*) =
	x_1+x_1^*+\cdots + x_r+x_r^*.
$$
Making a leap of faith, we can now ask whether the same conclusion might 
even hold for \emph{every} noncommutative polynomial $P$. That this is 
indeed the case is a remarkable result of Bordenave and Collins \cite{BC19}.

\begin{thm}[Bordenave--Collins]
\label{thm:bc}
Let $\boldsymbol{U}^N=(U_1^N,\ldots,U_r^N)$ and
$\boldsymbol{u}=(u_1,\ldots,u_r)$ be defined as above.
Then $\boldsymbol{U}^N|_{1^\perp}$ converges strongly 
to $\boldsymbol{u}$, that is,
$$
	\lim_{N\to\infty}
	\|P(\boldsymbol{U}^N,\boldsymbol{U}^{N*})|_{1^\perp}\|
	=
	\|P(\boldsymbol{u},\boldsymbol{u}^*)\|
	\quad\text{in probability}
$$
for every $D\in\mathbb{N}$ and 
$P\in\M_D(\mathbb{C})\otimes\mathbb{C}\langle 
x_1,\ldots,x_{2r}\rangle$.\footnote{%
The reason that we must restrict to $1^\perp$ is elementary: the matrices 
$U_i^N$ have a Perron-Frobenius eigenvector $1$, but the operators $u_i$ 
do not as $1\not\in l^2(\mathbf{F}_r)$ (an infinite vector of ones is not 
in $l^2$). Thus we must remove the Perron-Frobenius 
eigenvector to achieve strong convergence.} 
\end{thm}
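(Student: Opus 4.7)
The plan is to establish the upper bound $\limsup_{N\to\infty}\|P(\boldsymbol{U}^N,\boldsymbol{U}^{N*})|_{1^\perp}\|\le\|P(\boldsymbol{u},\boldsymbol{u}^*)\|$ in probability; the matching lower bound follows from an Alon--Boppana-style argument using that the Cayley graph of $\mathbf{F}_r$ is the universal cover of every permutation-model graph, combined with Voiculescu's weak convergence (which is the non-quantitative shadow of strong convergence and already known). So the real content is the upper bound.

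The first reduction is Haagerup's \emph{linearization trick}: modulo a purely algebraic manipulation, it suffices to control the norm of an affine self-adjoint pencil
$$
  L(\boldsymbol{x}) = A_0\otimes\id+\sum_{i=1}^r\bigl(A_i\otimes x_i+A_i^*\otimes x_i^*\bigr),
  \qquad A_i\in\M_D(\mathbb{C}),
$$
because invertibility of $\lambda-P$ can be read off from invertibility of a suitable linearized matrix pencil of size $D'\times D'$, and norms are recovered by scanning $\lambda\in\mathbb{R}$. After linearization I need to prove that $\EE\,\|L(\boldsymbol{U}^N)|_{\M_D\otimes 1^\perp}\|\to\|L(\boldsymbol{u})\|$, together with a concentration argument (e.g.\ bounded-differences on the random permutations, replacing one $U_i^N$ at a time) to upgrade the expectation bound to convergence in probability.

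The core of the upper bound is a \emph{moment method} tuned carefully enough to give the sharp constant. For even $k$, use
$$
  \bigl\|L(\boldsymbol{U}^N)|_{\M_D\otimes 1^\perp}\bigr\|^{2k}
  \le \tr\!\left(L(\boldsymbol{U}^N)^{2k}(\id_D\otimes \pi_{1^\perp})\right),
$$
and expand the right-hand side as a sum over words of length $2k$ in the variables $U_i^{N},U_i^{N*}$. Taking expectations, the basic combinatorial input is that for a uniform random $\sigma\in\mathbf{S}_N$, the joint moment $\EE[\sigma(j_1)=k_1,\ldots,\sigma(j_m)=k_m]$ is explicitly $\frac{(N-c)!}{N!}$ where $c$ is the size of the pairing graph induced by the indices; an analogous formula holds word by word in several independent permutations. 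One then parametrizes each term by a closed walk in a labeled graph built out of the index identifications. The walks that traverse a \emph{tree}-like identification contribute exactly $\tau(L(\boldsymbol{u})^{2k})\cdot N(1+o(1))$, because the resulting weighted sum is literally the sum defining the trace in the left-regular representation of $\mathbf{F}_r$; every ``excess'' coincidence beyond the tree structure costs a factor $O(N^{-1})$. Restricting to $1^\perp$ removes the trivial eigenvector contribution, which otherwise would dominate.

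The \emph{main obstacle}, and the reason this result is nontrivial, is to push $k=k(N)\to\infty$ fast enough that $\tr(L(\boldsymbol{u})^{2k})^{1/2k}\to\|L(\boldsymbol{u})\|$, while still keeping the combinatorial error from non-tree walks under control. A naive count of walks of length $2k$ with a fixed excess $\ell$ loses a factor of $k^{O(\ell)}$, so one needs a compensating $N^{-\ell}$ that remains strong enough for $k$ as large as $c\log N$. This is achieved by the Bordenave-style \emph{non-backtracking} reformulation: replacing $L$ by the associated non-backtracking pencil, one obtains cleaner moment asymptotics in which the excess gain is quantitatively sharp, and a Ihara--Bass-type identity then converts spectral information about non-backtracking walks back into the desired bound on $\|L(\boldsymbol{U}^N)|_{1^\perp}\|$. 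Combined with the concentration step above, this yields the required upper bound for every affine pencil $L$, and via linearization for every matrix polynomial $P$, completing the proof.
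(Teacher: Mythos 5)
Your route is the original Bordenave--Collins strategy (linearization, then a sharpened moment method via non-backtracking operators), which is genuinely different from the proof in this survey: here Theorem \ref{thm:bc} is proved by the polynomial method, i.e.\ by showing that $\mathbf{E}[\ntr\,h(P(\boldsymbol{U}^N,\boldsymbol{U}^{N*}))|_{1^\perp}]$ is a rational function of $\frac{1}{N}$, using the Markov inequality to extend the first-order $\frac1N$-expansion from polynomial to smooth test functions, and bounding the support of the infinitesimal distribution $\nu_1$ (with the positivization trick reducing to polynomials with positive coefficients). That comparison aside, your sketch has a genuine gap at exactly the point you flag as the ``main obstacle.'' The moment method with $p\gg\log N$ does not merely lose combinatorial factors in this model --- it fails outright: with probability at least $N^{-r}$ all $r$ permutations share a fixed point, producing an outlier eigenvalue $2r$, so $\mathbf{E}[\tr(X^N)^{2p}]\ge N^{-r}(2r)^{2p}\gg N\,\|X_{\rm F}\|^{2p}$ once $p\gg\log N$ (this is Example \ref{ex:tangles} in the survey). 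Passing to the non-backtracking pencil and an Ihara--Bass identity does not remove this obstruction: the same polynomially rare ``tangle'' subgraphs create outliers for the non-backtracking operator as well, and its high moments blow up for the same reason. In Friedman's and Bordenave--Collins' actual proofs the essential extra step is to condition on (or surgically remove) tangles before running the trace method, and this conditioning is a substantial and delicate part of the argument that your sketch omits; asserting that the non-backtracking reformulation alone makes ``the excess gain quantitatively sharp'' is where the proposal would break down.

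A secondary issue: your concentration step via bounded differences is not justified as stated. Resampling an entire permutation $U_i^N$ changes $\|L(\boldsymbol{U}^N)|_{1^\perp}\|$ by an amount of order $\|A_i\|$, not $o(1)$, and even a single transposition gives a rank-two perturbation whose effect on the top eigenvalue is $O(1)$, so a naive Azuma/bounded-difference bound over the $rN$ transpositions needed to resample all permutations gives fluctuations of order $\sqrt{N}$, which is useless. Bordenave--Collins do not upgrade an expectation bound by concentration at all; convergence in probability comes directly from a Markov-inequality bound on $\mathbf{P}[\|\cdot\|\ge \|L(\boldsymbol{u})\|+\varepsilon]$ obtained from the (tangle-conditioned) trace estimates, which is also how the present survey concludes (via $\mathbf{E}[\tr h(\cdot)]=o(1)$ for a test function $h$ supported off the limiting spectrum). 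Your reduction of the lower bound to weak convergence is fine in spirit, but note it is cleanest via Lemma \ref{lem:upperlower} (unique trace of $C^*_{\rm red}(\mathbf{F}_r)$) rather than an Alon--Boppana argument, which only addresses the one special polynomial.
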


Theorem \ref{thm:bc} is a powerful tool because many problems can be 
encoded as special cases of this theorem by a suitable choice of $P$. To 
illustrate this, let us revisit the optimal spectral gap phenomenon in a 
broader context.

In general terms, the optimal spectral gap phenomenon is the following. 
The spectrum of various kinds of geometric objects admits a universal bound in 
terms of that of their universal covering space. The question is then 
whether there exist such objects which meet this bound. In particular, we 
may ask whether that is the case for random constructions. Lemma 
\ref{lem:alonboppana} and Theorem \ref{thm:friedman} establish this for 
regular graphs. An analogous picture in other situations is much more recent:
\smallskip
\begin{enumerate}[$\bullet$]
\itemsep\medskipamount
\item
It was shown by Greenberg \cite[Theorem 6.6]{HLW06} that for \emph{any} 
sequence of finite graphs $G^N$ with diverging number of vertices that 
have the same universal cover $\tilde G$, the maximal nontrivial 
eigenvalue of $G^N$ is asymptotically lower bounded by the spectral radius 
of $\tilde G$. On the other hand, given any (not necessarily regular) 
finite graph $G$, there is a natural construction of \emph{random lifts 
$G^N$} with the same universal cover $\tilde G$ \cite[\S 6.1]{HLW06}. It 
was shown by Bordenave and Collins \cite{BC19} that an optimal spectral 
gap phenomenon holds for random lifts of \emph{any} graph $G$.
\item Any hyperbolic surface $X$ has the hyperbolic plane $\mathbb{H}$ 
as its universal cover. Huber~\cite{Hub74} and
Cheng \cite{Che75} showed that for \emph{any} sequence $X^N$ of closed 
hyperbolic surfaces with diverging diameter, the first 
nontrivial eigenvalue of the Laplacian $\Delta_{X^N}$ is upper bounded by 
the bottom of the spectrum of $\Delta_{\mathbb{H}}$.
Whether this bound can be attained was an old question of 
Buser \cite{Bus84}. An affirmative answer was obtained by Hide and Magee 
\cite{HM23} by showing that an optimal spectral gap phenomenon holds for 
\emph{random} covering spaces of hyperbolic surfaces.
\end{enumerate}
\smallskip
The key ingredient in these breakthroughs is that the relevant 
spectral properties can be encoded as special instances of Theorem 
\ref{thm:bc}. How this is accomplished will be sketched in sections 
\ref{sec:lifts} and \ref{sec:buser}. 
In section \ref{sec:minsurf}, we will sketch another remarkable 
application due to Song \cite{Son24} to minimal surfaces.

Another series of developments surrrounding optimal spectral gaps
arises from a different perspective on Theorem \ref{thm:bc}.
The map $\mathrm{std}_N:\mathbf{S}_N\to 
\M_{N-1}(\mathbb{C})$ that assigns to each permutation 
$\sigma\in\mathbf{S}_N$ the restriction of the associated $N\times N$ 
permutation matrix to $1^\perp$ defines an irreducible representation of 
the symmetric group $\mathbf{S}_N$ of dimension $N-1$ called the 
\emph{standard representation}. Thus
$$
	U_i^N|_{1^\perp} = \mathrm{std}_N(\sigma_i^N),
$$
where $\sigma_1^N,\ldots,\sigma_r^N$ are independent uniformly distributed 
elements of $\mathbf{S}_N$. One may ask whether strong convergence remains 
valid if $\mathrm{std}_N$ is replaced by other representations $\pi_N$ of 
$\mathbf{S}_N$. This and related optimal spectral gap phenomena 
in representation-theoretic settings are the subject of long-standing 
questions and conjectures; see, e.g., \cite{RS19} and the references therein.

Recent advances in the study of strong convergence have led to major 
progress in the understanding of such questions 
\cite{BC20,CGTV25,CGV25,MdlS24,Cas24}. One of the most striking results in 
this direction to date is the recent work of Cassidy \cite{Cas24}, who 
shows that strong convergence for the symmetric group holds 
uniformly for all nontrivial irreducible representations of 
$\mathbf{S}_N$ of dimension up to 
$\exp(N^{\frac{1}{20}-\delta})$.\footnote{%
For comparison, all irreducible representations of $\mathbf{S}_N$ have dimension $\exp(O(N\log N))$.}
This makes it possible, for example, to study natural models of random 
regular graphs that achieve optimal spectral gaps using far less randomness 
than is required by Theorem~\ref{thm:bc}. We will discuss these results in
more detail in section \ref{sec:cassidy}.

\subsection{Intrinsic freeness}
\label{sec:intriintro}

We now turn to an entirely different development surrounding strong 
convergence that has enabled a sharp understanding of a very large class 
of random matrices in unexpected generality.

To set the stage for this development, let us begin by recalling 
the original strong convergence result of Haagerup and Thorbj{\o}rnsen
\cite{HT05}.
Let $G_1^N,\ldots,G_r^N$ be independent GUE matrices, that
is, $N\times N$ self-adjoint complex Gaussian random matrices whose
off-diagonal elements have variance $\frac{1}{N}$ and whose distribution
is invariant under unitary conjugation. The associated limiting object 
is a \emph{free semicircular family} $s_1,\ldots,s_r$
(cf.\ section \ref{sec:semicircle}). Define the random matrix
$$
	X^N = A_0\otimes\id + \sum_{i=1}^r A_i\otimes G_i^N
$$
and the limiting operator
$$
	X_{\rm free} = 
	A_0\otimes\id + \sum_{i=1}^r A_i\otimes s_i,
$$
where $A_0,\ldots,A_r\in\M_D(\mathbb{C})$ are self-adjoint matrix coefficients.

\begin{thm}[Haagerup--Thorbj{\o}rnsen]
\label{thm:ht}
For $X^N$ and $X_{\rm free}$ defined as above,\footnote{%
Here $\spc(X)$ denotes the spectrum of $X$, and
we recall that the Hausdorff distance
between sets $A,B\subseteq\mathbb{R}$ is defined as
$\dH(A,B) = \inf\{ \varepsilon>0:
A\subseteq B+[-\varepsilon,\varepsilon]\text{ and }
B\subseteq A+[-\varepsilon,\varepsilon]\}$.}
$$
	\lim_{N\to\infty}
	\dH\big(\spc(X^N),\spc(X_{\rm free})\big) = 0
	\quad\text{a.s.}
$$
\end{thm}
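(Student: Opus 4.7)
My plan is to prove the two inclusions that compose the Hausdorff distance separately. The inclusion $\spc(X_{\rm free})\subseteq \spc(X^N)+[-\varepsilon,\varepsilon]$ is the ``Voiculescu direction'': since independent GUE matrices are asymptotically free, $\EE[\ntr\, P(X^N)] \to (\ntr\otimes\tau)(P(X_{\rm free}))$ for every polynomial $P$, and Gaussian concentration of $\ntr\, P(X^N)$ around its mean promotes this to an almost sure statement. Hence the spectral measure of $X^N$ (with respect to $\ntr\otimes\EE$) converges weakly to that of $X_{\rm free}$, which forces every $\lambda\in\spc(X_{\rm free})$ to be an accumulation point of eigenvalues of $X^N$.

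The substantive direction is the absence of outliers. I would aim to show that $\PP(\spc(X^N)\not\subseteq \spc(X_{\rm free}) + [-\varepsilon,\varepsilon])$ is summable in $N$, so that Borel--Cantelli delivers the a.s.\ conclusion. The central object is the $D\times D$ matrix-valued Cauchy transform
$$
G_N(z) = (\mathrm{id}_D\otimes \EE\,\ntr)(z-X^N)^{-1},\qquad G_\infty(z) = (\mathrm{id}_D\otimes\tau)(z-X_{\rm free})^{-1}.
$$
Applying Gaussian integration by parts entry-wise to the resolvent, one derives an approximate matrix Dyson equation
$$
\big(z - A_0 - \eta(G_N(z))\big)G_N(z) = \id + \Delta_N(z),\qquad \eta(W) := \sum_{i=1}^r A_i W A_i,
$$
while the limit $G_\infty$ satisfies the exact relation $(z-A_0-\eta(G_\infty))G_\infty = \id$. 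The target is to show, uniformly on compact subsets of $\mathbb{C}\setminus\spc(X_{\rm free})$, the quantitative comparison
$$
\|G_N(z)-G_\infty(z)\| = O(N^{-2}).
$$
This rests on two ingredients: the vanishing of the $N^{-1}$ correction (a parity cancellation specific to centered Gaussian entries, hence peculiar to GUE), and the invertibility of the linearized Dyson operator, which is where the hypothesis that $z$ stays in the resolvent set of $X_{\rm free}$ enters.

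Given this estimate, I would pass from the resolvent to smooth functions through the Helffer--Sjöstrand (or Cauchy) calculus. For $f\in C_c^\infty(\mathbb{R})$ whose support is disjoint from $\spc(X_{\rm free})$, the contour of integration can be taken inside the region where the $O(N^{-2})$ bound holds, so that
$$
\EE\,\tr f(X^N) = DN\int f\,d\mu_\infty + O(N^{-2}) = O(N^{-2}),
$$
because $f$ vanishes on $\supp\mu_\infty$. A routine Gaussian concentration bound on operator norm gives $\|X^N\|\le C$ with overwhelming probability. Covering $[-C,C]\setminus(\spc(X_{\rm free})+[-\varepsilon,\varepsilon])$ by finitely many intervals and applying Markov's inequality to nonnegative bumps $f$ supported just above each interval, the probability of a single outlier is $O(N^{-2})$; summability and Borel--Cantelli then close the argument.

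The main obstacle will be the uniform $O(N^{-2})$ resolvent comparison. A naive Schwinger--Dyson iteration yields only $O(N^{-1})$, which is not summable; extracting the sharper exponent requires both the parity cancellation of the leading correction and a careful analysis of the stability of the matrix Dyson equation as $z$ approaches the spectral edge of $X_{\rm free}$, where the linearized operator becomes ill-conditioned. These two technical points, rather than the passage to test functions or the Borel--Cantelli step, form the technical heart of the Haagerup--Thorbj{\o}rnsen analysis.
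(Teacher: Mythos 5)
First, a point of reference: the survey states Theorem \ref{thm:ht} without proof, deferring to \cite{HT05}, so your proposal is to be measured against the original Haagerup--Thorbj{\o}rnsen argument. What you outline is essentially that argument: work directly with the degree-one model $X^N$, derive the matrix Dyson (``master'') equation by Gaussian integration by parts, exploit the GUE-specific vanishing of the $1/N$ correction to get an $O(N^{-2})$ comparison of $(\mathrm{id}\otimes\mathbf{E}\,\ntr)(z-X^N)^{-1}$ with $G_\infty(z)$ off the limiting spectrum, convert to smooth test functions, and handle the lower inclusion by weak convergence plus faithfulness of $\ntr\otimes\tau$ (which is what guarantees $\supp\mu_{X_{\rm free}}=\spc(X_{\rm free})$, cf.\ Lemma \ref{lem:faith}). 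Up to that point the plan is sound.

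There is, however, a concrete gap in the final step. Your resolvent estimate is at the level of the \emph{normalized} trace, so for $f$ supported off $\spc(X_{\rm free})$ it yields $\mathbf{E}[\ntr f(X^N)]=O(N^{-2})$ and hence, after multiplying by the dimension $DN$, only $\mathbf{E}[\tr f(X^N)]=O(N^{-1})$; your displayed claim ``$\mathbf{E}\,\tr f(X^N)=DN\int f\,d\mu_\infty+O(N^{-2})$'' is inconsistent with the bound you propose to prove. Markov's inequality then gives an outlier probability of order $N^{-1}$, which is not summable, so Borel--Cantelli as you invoke it does not deliver the a.s.\ conclusion (it would only give convergence in probability, whereas the theorem asserts almost sure Hausdorff convergence). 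The missing ingredient --- and the way \cite{HT05} actually closes the argument --- is a second-moment estimate: apply the Gaussian Poincar\'e inequality to $\tr f(X^N)$ and use that $f'$ is also supported off $\spc(X_{\rm free})$, so that $\mathbf{E}[\tr|f'(X^N)|^2]=O(N^{-1})$ and hence $\mathrm{Var}(\tr f(X^N))=O(N^{-2})$; then $\mathbf{P}[\tr f(X^N)\ge 1]\le \mathrm{Var}(\tr f(X^N))/(1-\mathbf{E}[\tr f(X^N)])^2=O(N^{-2})$, which is summable and Borel--Cantelli applies. With that variance step added (and the $N$-bookkeeping corrected), your outline matches the known proof; the two genuinely hard technical points you identify --- the vanishing of the $1/N$ term and the stability of the Dyson equation near the edge --- are indeed where the work lies.
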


\smallskip

It is a nontrivial fact, known as the \emph{linearization trick}, that 
Theorem \ref{thm:ht} implies that $\boldsymbol{G}^N=(G_1^N,\ldots,G_r^N)$ 
converges strongly to $\boldsymbol{s}=(s_1,\ldots,s_r)$; see section 
\ref{sec:lin}. This conclusion was used by Haagerup and Thorbj{\o}rnsen to 
prove an old conjecture that the $\mathrm{Ext}$ invariant of the reduced 
$C^*$-algebra of any countable free group with at least two generators is 
not a group. For our present purposes, however, the above formulation of 
Theorem \ref{thm:ht} will be the most natural.

The result of Haagerup--Thorbj{\o}rnsen may be viewed as a strong 
incarnation of Voiculescu's asymptotic freeness principle \cite{Voi91}. 
The limiting operators $s_1,\ldots,s_r$ arise from a free product 
construction and are thus algebraically free (in fact, they are freely 
independent in the sense of Voiculescu). This makes it possible to 
compute the spectral statistics of $X_{\rm free}$ by means of 
closed form equations, cf.\ section \ref{sec:semicircle}. The explanation 
for Theorem \ref{thm:ht} provided by the asymptotic freeness principle is 
that since the random matrices $G_i^N$ have independent uniformly random 
eigenbases (due to their unitary invariance), they become increasingly 
noncommutative as $N\to\infty$ which leads them to behave freely in the 
limit.

From the perspective of applications, however, the most interesting case 
of this model is the special case $N=1$, that is, the random matrix $X=X^1$
defined by 
$$
	X = A_0 + \sum_{i=1}^r A_i g_i
$$
where $g_i$ are independent standard Gaussians. Indeed, \emph{any} 
$D\times D$ self-adjoint random matrix with jointly Gaussian entries (with 
arbitrary mean and covariance) can be expressed in this form. This model 
therefore captures almost arbitrarily structured random matrices: if one 
could understand random matrices at this level of generality, one would 
capture in one fell swoop a huge class of models that arise in applications. 
However, since the $1\times 1$ matrices $g_i=G_i^1$ commute, the 
asymptotic freeness principle has no bearing on such matrices, and there 
is no reason to expect that $X_{\rm free}$ has any significance for the 
behavior of $X$.

It is therefore rather surprising that the spectral properties of 
arbitrarily structured Gaussian random matrices $X$ are nonetheless 
captured by those of $X_{\rm free}$ in great generality. This phenomenon, 
developed in joint works of the author with Bandeira, 
Boedihardjo, Cipolloni, and Schr\"oder \cite{BBV23,BCSV23}, is captured by 
the following counterpart of Theorem \ref{thm:ht} 
(stated here in simplified form).

\begin{thm}[Intrinsic freeness]
\label{thm:intrfree}
For $X$ and $X_{\rm free}$ be defined as above, we have
$$
	\mathbf{P}\big[
	\dH\big(\spc(X),\spc(X_{\rm free})\big) >
	C\tilde v(X)\big((\log D)^{\frac{3}{4}} + t\big)
	\big] \le e^{-t^2}
$$
for all $t\ge 0$. Here $C$ is a universal constant,
$$
	\tilde v(X) = \|\mathbf{E}[(X-\mathbf{E}X)^2]\|^{\frac{1}{4}}
	\|\mathrm{Cov}(X)\|^{\frac{1}{4}},
$$
and $\mathrm{Cov}(X)$ is the
$D^2\times D^2$ covariance matrix of the entries of $X$.
\end{thm}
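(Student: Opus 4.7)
The strategy is to compare the matrix-valued resolvent $R(z):=(z-X)^{-1}$ of the Gaussian model to the matrix Cauchy transform $M(z):=(\mathrm{id}\otimes\tau)((z-X_{\rm free})^{-1})$ of the free model. The free model is characterized by the \emph{matrix Dyson equation}
\begin{equation*}
  (z-A_0-\mathcal{E}(M(z)))\,M(z)=\id,
  \qquad
  \mathcal{E}(W):=\sum_{i=1}^r A_i W A_i,
\end{equation*}
the matricial Schwinger--Dyson equation for the semicircular family $(s_i)$. I would first derive an approximate Dyson equation for $\EE[R(z)]$ by Gaussian integration by parts: from $XR(z)=zR(z)-\id$, isolating $X-A_0=\sum_i A_ig_i$ and applying Stein's identity with $\partial_iR(z)=R(z)A_iR(z)$ gives
\begin{equation*}
  (z-A_0)\EE[R(z)]-\id
  = \mathcal{E}(\EE[R(z)])\,\EE[R(z)] + \text{(fluctuation)},
\end{equation*}
where the fluctuation is a second-cumulant correction in the entries of $R(z)$ whose size is controlled precisely by the covariance parameter $\|\Cov(X)\|$.

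Subtracting the two Dyson equations and inverting the stability operator of the Dyson equation at $M(z)$ would yield a resolvent comparison $\|\EE[R(z)]-M(z)\|\le C\,\tilde v(X)^2\,\Phi(z)$, with $\Phi(z)$ determined by the stability operator and diverging polynomially as $z$ approaches $\spc(X_{\rm free})$. This gives the ``no outliers'' half of the Hausdorff estimate: if $\lambda\in\spc(X)$ lies at distance $\delta$ from $\spc(X_{\rm free})$, then $\|R(\lambda+i\eta)\|\ge 1/\eta$ blows up, contradicting the resolvent bound once $\delta\gg\tilde v(X)(\log D)^{3/4}$. The logarithmic factor arises from a union bound over an $\varepsilon$-net of spectral parameters combined with Gaussian concentration: the map $g\mapsto\|R(z)\|$ is Lipschitz in $g$ with constant $\lesssim\|\EE[(X-\EE X)^2]\|^{1/2}/\eta^2$ (via $\partial_iR=RA_iR$ and a noncommutative Cauchy--Schwarz bound on the resulting gradient), which yields the $e^{-t^2}$ tail. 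The reverse inclusion (every point of $\spc(X_{\rm free})$ lies near an eigenvalue of $X$) is easier and follows from the scalar trace comparison $\EE[\tr R(z)]\approx\tr M(z)$ together with concentration of the Stieltjes transform.

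The main obstacle I anticipate is the stability of the matrix Dyson equation at the edges of $\spc(X_{\rm free})$: the equation is only marginally stable there, so the inverse of its stability operator blows up polynomially in the distance to the spectrum. Optimizing this blow-up against the Gaussian concentration scale in the union bound is what produces the precise exponent $3/4$ on $\log D$, and constitutes the quantitative heart of the argument. This amounts to a local law for structured Gaussian matrices with matrix-valued (rather than scalar) test objects, so that the $D$-dependence is uniform over all coefficient matrices $A_i$. The remaining ingredients---Stein's identity, Gaussian Lipschitz concentration, and the scalar trace comparison---are in comparison relatively standard.
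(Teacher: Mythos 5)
Your proposal follows a resolvent/matrix-Dyson-equation route, which is genuinely different from the paper's proof (the paper, following \cite{BBV23,BCSV23}, never touches resolvent stability: it embeds $X$ and $X_{\rm free}$ into an interpolating family $X_q^N$, shows via Gaussian integration by parts that $\frac{d}{dq}\mathbf{E}[\ntr h(X_q^N)]$ consists of terms containing a \emph{single} crossing (Lemma \ref{lem:interpolcross}), bounds each such term by $\tilde v(X)^4$ through the crossing inequality (Lemma \ref{lem:crossineq}, proved via the matrix alignment parameter, a Cauchy--Schwarz symmetrization and Riesz--Thorin), integrates the resulting differential inequality to get $|\mathbf{E}[\ntr X^{2p}]^{1/2p}-({\ntr}\otimes\tau)(X_{\rm free}^{2p})^{1/2p}|\lesssim p^{3/4}\tilde v(X)$, and then runs the moment method with $p\sim\log D$ and test functions $h(x)=|z-x|^{-2p}$; the exponent $\frac34$ on $\log D$ is exactly the $p^{3/4}$ from integrating that differential inequality, not an edge-stability phenomenon). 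However, your route has a genuine gap at its quantitative core. The step ``invert the stability operator of the Dyson equation at $M(z)$ to get $\|\mathbf{E}[R(z)]-M(z)\|\le C\tilde v(X)^2\Phi(z)$ with $\Phi$ diverging polynomially near $\spc(X_{\rm free})$'' is unsupported: for arbitrary coefficients $A_0,\ldots,A_r$ there is no universal polynomial bound on the inverse stability operator of the MDE near the spectral edges (the density of $X_{\rm free}$ can have cusps, internal zeros, or arbitrarily flat edges), and all known edge-stability/local-law results for the MDE impose structural hypotheses (e.g.\ flatness of the self-energy) that Theorem \ref{thm:intrfree} must dispense with. Moreover the fluctuation term is not ``controlled precisely by $\|\Cov(X)\|$'': it equals $\sum_i A_i\,\mathbf{E}[(R-\mathbf{E}R)A_i(R-\mathbf{E}R)]$, whose natural bound carries $\|\sum_i A_i^2\|\,\|\Cov(X)\|/\eta^4$, and at the spectral resolution you need ($\eta\sim\tilde v(X)(\log D)^{3/4}$, which can be polynomially small in $D$) this error is enormous; closing it would require a bootstrap/fluctuation-averaging argument of local-law type that is precisely what is unavailable for arbitrary structure. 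The paper's remark on Tropp's work \cite{Tro18} records that self-consistent-equation arguments have so far failed to capture intrinsic freeness at this generality for essentially this reason.

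A second concrete problem is the tail. To obtain the stated bound $C\tilde v(X)(\,(\log D)^{3/4}+t)$ with probability $1-e^{-t^2}$ you need a Lipschitz constant of order $\tilde v(X)$ in the Gaussian variables. Your concentration is applied to $g\mapsto\|R(z)\|$, whose Lipschitz constant is $\|\Cov(X)\|^{1/2}/\eta^2$ (note: $\sigma_*(X)=\|\Cov(X)\|^{1/2}$, not $\|\mathbf{E}[(X-\mathbf{E}X)^2]\|^{1/2}$, is the correct gradient bound via $\partial_i R=RA_iR$), and at the relevant $\eta$ this is vastly larger than $\tilde v(X)$; a union bound over a net then cannot recover the claimed Gaussian tail at scale $\tilde v(X)\,t$. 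In the actual argument the tail is immediate from the fact that the extreme eigenvalues of $X$ are $\sigma_*(X)$-Lipschitz functions of the Gaussians and $\sigma_*(X)\le\tilde v(X)$, so concentration is applied to the norm itself rather than to resolvents. Finally, the reverse inclusion of the Hausdorff bound at scale $\tilde v(X)(\log D)^{3/4}$ is not ``easier'' in your framework: it again requires trace comparison at that resolution, i.e.\ a local-law-type statement, whereas the interpolation method delivers it through the same moment-level estimates with smooth test functions.
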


\begin{rem}
While Theorem \ref{thm:intrfree} captures the edges of the spectrum of 
$X$, analogous results for other spectral parameters (such as the spectral 
distribution) may be found in~\cite{BBV23,BCSV23}. These results are 
further extended to a large class of non-Gaussian random matrices in joint 
work of the author with Brailovskaya \cite{BvH24}.
\end{rem}

Theorem \ref{thm:intrfree} states that the spectrum of $X$ behaves like 
that of $X_{\rm free}$ as soon at the parameter $\tilde v(X)$ is small. 
Unlike for the model $X^N$, where noncommutativity is overtly introduced 
by means of unitarily invariant matrices, the mechanism for $X$ to behave 
as $X_{\rm free}$ can only arise from the structure of the matrix 
coefficients $A_0,\ldots,A_r$. We call this phenomenon \emph{intrinsic 
freeness}. It should not be obvious at this point why the parameter 
$\tilde v(X)$ captures intrinsic freeness; the origin of this phenomenon 
(which was inspired in part by \cite{HT05,Tro18}) and the mechanism that 
gives rise to it will be discussed in section \ref{sec:mconc}.

In practice, Theorem \ref{thm:intrfree} proves to be a powerful result as 
$\tilde v(X)$ is indeed small in numerous applications, while the result 
applies without any structural assumptions on the random matrix $X$. This 
is especially useful in questions of applied mathematics, where messy 
random matrices are par for the course. Several such applications are 
illustrated, for example, in \cite[\S 3]{BCSV23}.

\begin{figure}
\includegraphics[width=\textwidth]{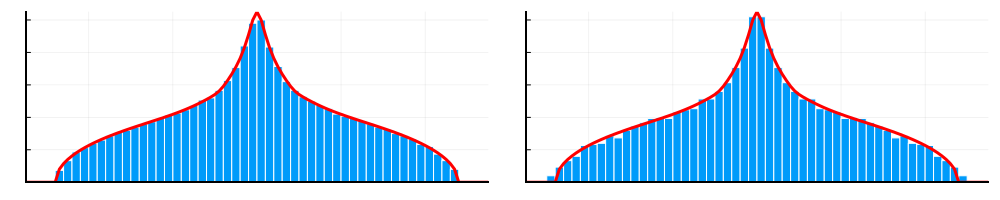}
\caption{Example of $P(H_1^N,H_2^N,H_3^N)$ where $H_i^N$ are independent 
$N\times N$ GUE (left plot) or $w$-sparse band matrices (right 
plot) with $N=1000$ and $w=27$. The histograms show the eigenvalues of a 
single realization of the random matrix, the solid line is the 
spectral density of $P(s_1,s_2,s_3)$ (computed using 
\texttt{NCDist.jl}).\label{fig:bandmat}} 
\end{figure}

Another consequence of Theorem \ref{thm:intrfree} is that the 
Haagerup--Thorbj{\o}rnsen strong convergence result extends to far more 
general situations. We only give one example for sake of illustration. A 
$w$-sparse Wigner matrix $H$ is a self-adjoint real random matrix so that 
each row has exactly $w$ nonzero entries, each of which is an independent 
(modulo symmetry $H_{ij}=H_{ji}$) centered Gaussian with variance 
$w^{-1}$. In this case $\tilde v(H)\sim w^{-\frac{1}{4}}$. Theorem 
\ref{thm:intrfree} shows that if $H_1^N,\ldots,H_r^N$ are independent 
$w$-sparse Wigner matrices of dimension $N$, then 
$\boldsymbol{H}^N=(H_1^N,\ldots,H_r^N)$ converges strongly to 
$\boldsymbol{s}=(s_1,\ldots,s_r)$ as soon as $w\gg (\log N)^3$ 
\emph{regardless} of the choice of sparsity pattern. Unlike GUE matrices, 
such models need not possess any invariance and can have 
localized eigenbases. Even though this appears to dramatically violate the 
classical intuition behind asymptotic freeness, this model exhibits 
precisely the same strong convergence property as GUE (see Figure 
\ref{fig:bandmat}).

\subsection{New methods in random matrix theory}
\label{sec:newmethods}

The development of strong convergence has gone hand in hand with the 
discovery of new methods in random matrix theory. For example, Haagerup 
and Thorbj{\o}rnsen \cite{HT05} pioneered the use of self-adjoint 
linearization (section \ref{sec:lin}), which enabled them to make 
effective use of Schwinger-Dyson equations to capture general polynomials
(their work was extended to various classical random matrix models in
\cite{Sch05,And13,CM14}); 
while Bordenave and Collins \cite{BC19,BC20,BC24} developed 
operator-valued nonbacktracking methods in order to efficiently apply the 
moment method to strong convergence.

More recently, however, two new methods for proving strong convergence 
have proved to be especially powerful, and have opened the door both to 
obtaining strong quantitative results and to achieving strong convergence 
in new situations that were previously out of reach. In contrast to 
previous approaches, these methods are rather different in spirit to those 
used in classical random matrix theory.

\subsubsection{The interpolation method}
\label{sec:introinterpol}

The general principle captured by strong convergence (and by the earlier 
work of Voiculescu) is that the spectral statistics of families of random 
matrices behave of those of a family of limiting operators. In classical 
approaches to random matrix theory, however, the limiting operators do not 
appear directly: rather, one shows that the spectral statistics of these 
operators admit explicit expressions or closed-form equations, and that 
the random matrices obey these same expressions or equations 
approximately.

In contrast, the existence of limiting operators suggests that these may 
be exploited explicitly as a method of proof in random matrix theory. This 
is the basic idea behind the \emph{interpolation method}, which was 
developed independently by Collins, Guionnet, and Parraud \cite{CGP22} to 
obtain a quantitative form of the Haagerup--Thorbj{\o}rnsen theorem, and 
by Bandeira, Boedihardjo, and the author~\cite{BBV23} to prove the intrinsic 
freeness principle (Theorem \ref{thm:intrfree}).

Roughly speaking, the method works as follows. We aim to 
show that the spectral statistics of a random matrix $X$ behave as those 
of a limiting operator $X_{\rm free}$. To this end, one introduces a certain 
continuous interpolation $(X(t))_{t\in[0,1]}$ between these objects, so 
that $X(1)=X$ and $X(0)=X_{\rm free}$. To bound the discrepancy between
the spectral statistics of $X$ and $X_{\rm free}$, one can then estimate
$$
	|\mathbf{E}[\ntr h(X)] - \tau(h(X_{\rm free}))|
	\le
	\int_0^1 \bigg| \frac{d}{dt} \mathbf{E}[\ntr h(X(t))]\bigg|\,dt,
$$
where $\tau$ denotes the limiting trace (see 
section \ref{sec:cstar}). If a good bound can be obtained
for sufficiently general $h$ (we will choose $h(x)=|z-x|^{-2p}$ for
$p\in\mathbb{N}$ and $z\in\mathbb{C}\backslash\mathbb{R}$), convergence
of the norm will follow as a consequence.

As stated above, this procedure does not make much sense. Indeed $X$ (a 
random matrix) and $X_{\rm free}$ (a deterministic operator) do not even 
live in the same space, so it is unclear what it means to interpolate 
between them. Moreover, the general approach outlined above does not in 
itself explain why the derivative along the interpolation should be small: 
the latter is the key part of the argument that requires one to understand 
the mechanism that gives rise to free behavior. Both these issues will be 
explained in more detail in section \ref{sec:mconc}, where we will sketch 
the main ideas behind the proof of Theorem \ref{thm:intrfree}.

\subsubsection{The polynomial method}
\label{sec:intropoly}

We now describe an entirely different method, developed in the recent work 
of Chen, Garza-Vargas, Tropp, and the author \cite{CGTV25}, which has led 
to a series of new developments.

Consider a sequence of self-adjoint random matrices $X^N$ with limiting operator 
$X_{\rm F}$; one may keep in mind the example 
$X^N = P(\boldsymbol{U}^N,\boldsymbol{U}^{N*})|_{1^\perp}$ and 
$X_{\rm F}=P(\boldsymbol{u},\boldsymbol{u}^*)$ in the context of Theorem 
\ref{thm:bc}. In many natural models, it turns out to be the case that
spectral statistics of \emph{polynomial} test functions $h$ can be expressed
as
$$
	\mathbf{E}[\ntr h(X^N)] = \Phi_h(\tfrac{1}{N}),
$$
where $\Phi_h$ is a rational function whose degree is controlled by the
degree $q$ of the polynomial $h$. Whenever this is the case, the fact that
$$
	\tau(h(X_{\rm F})) = \Phi_h(0)
$$
is generally an immediate consequence. However, such soft information 
does not in itself suffice to reason about the norm.

The key observation behind the \emph{polynomial method} is that 
classical results in the analytic theory of polynomials (due to Chebyshev, 
Markov, Bernstein, $\ldots$) can be exploited to ``upgrade'' the above 
soft information to strong quantitative bounds, merely by virtue of the 
fact that $\Phi_h$ is rational. The basic idea is to write
$$
	|\mathbf{E}[\ntr h(X^N)]-\tau(h(X_{\rm F}))|
	=
	|\Phi_h(\tfrac{1}{N})-\Phi_h(0)| \le
	\frac{1}{N} \|\Phi_h'\|_{L^\infty[0,\frac{1}{N}]}.
$$
This is reminiscent of the interpolation method, 
where now instead of an interpolation parameter 
we ``differentiate with 
respect to $\frac{1}{N}$''. In contrast to the interpolation method, 
however, the surprising feature of the present approach is that the
derivative of $\Phi_h$ can be controlled by means of completely general
tools that do not require any understanding of the random matrix model.
In particular, the analysis makes use of the following two
classical facts about polynomials \cite{Che98}.
\smallskip
\begin{enumerate}[$\bullet$]
\itemsep\medskipamount
\item An inequality of A.\ Markov states that
$\|f'\|_{L^\infty[-1,1]} \le q^2 \|f\|_{L^\infty[-1,1]}$
for every real polynomial $f$ of degree at most $q$.
\item A corollary of the Markov inequality states that
$\|f\|_{L^\infty[-1,1]} \le 2 \max_{x\in I}|f(x)|$
for any discretization $I$ of $[-1,1]$ with spacing at most $\frac{1}{q^2}$.
\end{enumerate}
\smallskip
Applying these results to the numerator and denominator of the rational 
function $\Phi_h$ yields with minimal effort a bound of the form 
$$
	\|\Phi_h'\|_{L^\infty[0,\frac{1}{N}]}\lesssim q^4\max_{N}
	|\Phi_h(\tfrac{1}{N})|=q^4 \max_{N} |\mathbf{E}[\ntr h(X^N)]|
$$
(the additional factor $q^2$ arises since we must restrict to the part of the
interval where the spacing between the points $\{\frac{1}{N}\}$ is
sufficiently small). Thus we obtain a strong quantitative bound in a 
completely soft manner.

In this form, the above method does not suffice to achieve strong convergence.
To this end, two additional ingredients must be added.
\smallskip
\begin{enumerate}[1.]
\itemsep\medskipamount
\item The above analysis requires the test function $h$
to be a polynomial. However, since the bound depends only
polynomially on the degree of $h$, one can use a Fourier-analytic argument
to extend the bound to arbitrary smooth $h$.
\item 
The $\frac{1}{N}$ rate obtained above does not suffice to
deduce convergence of the norm, since it can only ensure that $X^N$ has
a bounded (rather than vanishing) number of eigenvalues outside the 
support of $X_{\rm F}$. To achieve strong convergence, we must expand
$\Phi_h$ to second (or higher) order and control the additional term(s).
\end{enumerate}
\smallskip
Nonetheless, all these ingredients are essentially elementary and can be 
implemented with minimal problem-specific inputs. 

The polynomial method will be discussed in detail in section 
\ref{sec:poly}, where we will illustrate it by giving an essentially 
complete proof of Theorem \ref{thm:bc}. That an elementary proof is 
possible at all is surprising in itself, given that previous methods 
required delicate and lengthy computations.

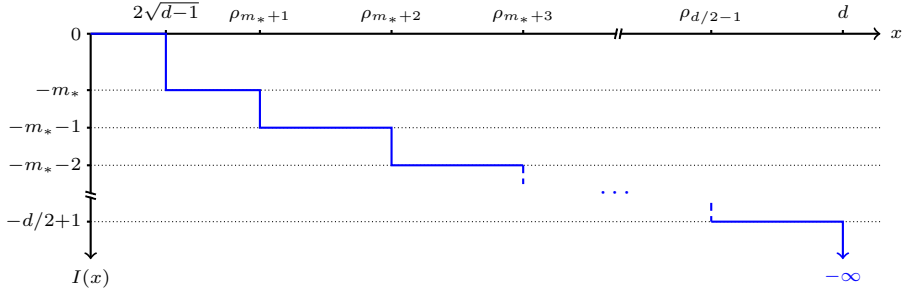
\begin{figure} 
\centering 
\begin{tikzpicture}

\draw[->,thick] (0,0) -- (10.5,0) node[right] {$\scriptstyle x$};
\draw[->,thick] (0,0) -- (0,-3) node[below] {$\scriptstyle I(x)$};

\draw[thick] (1,0) -- (1,.05);
\draw (1,.3) node {$\scriptstyle 2\sqrt{d-1}$};

\draw[thick] (2.25,0) -- (2.25,.05);
\draw (2.25,.23) node {$\scriptstyle \rho_{m_*+1}$};

\draw[thick] (4,0) -- (4,.05);
\draw (4,.23) node {$\scriptstyle \rho_{m_*+2}$};

\draw[thick] (5.75,0) -- (5.75,.05);
\draw (5.75,.23) node {$\scriptstyle \rho_{m_*+3}$};

\draw[thick] (8.25,0) -- (8.25,.05);
\draw (8.25,.23) node {$\scriptstyle \rho_{d/2-1}$};

\draw[thick] (10,0) -- (10,.05);
\draw (10,.3) node {$\scriptstyle d$};

\draw[thick] (0,0) -- (-.05,0);
\draw (0,0) node[left] {$\scriptstyle 0$};

\draw[thick] (0,-.75) -- (-.05,-.75);
\draw (0,-.75) node[left] {$\scriptstyle -m_*$};
\draw[color=black!100,densely dotted] (0,-.75) -- (10.5,-.75);

\draw[thick] (0,-1.25) -- (-.05,-1.25);
\draw (0,-1.25) node[left] {$\scriptstyle -m_*-1$};
\draw[color=black!100,densely dotted] (0,-1.25) -- (10.5,-1.25);

\draw[thick] (0,-1.75) -- (-.05,-1.75);
\draw (0,-1.75) node[left] {$\scriptstyle -m_*-2$};
\draw[color=black!100,densely dotted] (0,-1.75) -- (10.5,-1.75);

\draw[thick] (0,-2.5) -- (-.05,-2.5);
\draw (0,-2.5) node[left] {$\scriptstyle -d/2+1$};
\draw[color=black!100,densely dotted] (0,-2.5) -- (10.5,-2.5);

\draw[thick,blue] (0,0) -- (1,0) -- (1,-.75) -- (2.25,-.75) --
(2.25,-1.25) -- (4,-1.25) -- (4,-1.75) -- (5.75,-1.75);
\draw[thick,densely dashed,blue] (5.75,-1.75) -- (5.75,-2);

\draw[thick,densely dashed,blue] (8.25,-2.25) -- (8.25,-2.5);

\draw[->,thick,blue] (8.25,-2.5) -- (10,-2.5) -- (10,-3)
node[below] {$\scriptstyle -\infty$};

\draw[blue] (7,-2.125) node {$\cdots$};

\draw[thick] (-0.07,-2.1) to (0.07,-2.14);
\draw[thick] (-0.07,-2.16) to (0.07,-2.2);
\draw[thick,white] (-0.07,-2.13) to (0.07,-2.17);

\draw[thick] (6.97,-0.07) to (7,0.07);
\draw[thick] (7.03,-0.07) to (7.06,0.07);
\draw[thick,white] (7,-0.07) to (7.03,0.07);

\end{tikzpicture}
\caption{Staircase pattern of the large deviation probabilities
in Friedman's Theorem~\ref{thm:friedman} for the permutation model of random regular
graphs.
Here we define
$I(x)=\lim_{N\to\infty} \frac{\log\mathbf{P}[\|A^N\|\,\ge\, x]}{\log N}$,
$m_* = \lfloor \frac{1}{2}(\sqrt{d-1}+1)\rfloor$, and
$\rho_m = 2m-1 + \frac{d-1}{2m-1}$.
\label{fig:staircase}}
\end{figure}

When it is applicable, the polynomial method has typically provided the 
best known quantitative results and has made it possible to address 
previously inaccessible questions. To date, this includes works of Magee 
and de la Salle~\cite{MdlS24} and of Cassidy~\cite{Cas24} on strong 
convergence of high dimensional representations of the unitary and 
symmetric groups (see also \cite{CGV25}); strong convergence for 
polynomials with coefficients in subexponential operator spaces 
\cite{CGV25}; strong convergence of the tensor GUE model of graph products 
of semicircular variables (\emph{ibid}.); a characterization of the 
unusual large deviations in Friedman's theorem \cite{CGTV25} as 
illustrated in Figure~\ref{fig:staircase}; work of Magee, Puder and 
the author on strong convergence of uniformly random permutation 
representations of surface groups \cite{MPV25}; and work of 
Hide--Macera--Thomas on spectral gaps of Weil--Petersson random
surfaces \cite{HMT25b}.

\subsection{Organization of this survey}

The rest of this survey is organized as follows.

Section \ref{sec:strong} collects a number of basic but very useful 
properties of strong (and weak) convergence that are scattered throughout 
the literature. These properties also illustrate the fundamdental 
interplay between strong convergence and the operator algebraic properties 
of the limiting objects.

Section \ref{sec:poly} provides a detailed illustration of the polynomial 
method: we will give an essentially self-contained proof of Theorem 
\ref{thm:bc}.

Section \ref{sec:mconc} is devoted to further discussion of the intrinsic 
freeness phenomenon. In particular, we aim to explain the mechanism that 
gives rise to it.

Section \ref{sec:appl} discusses in more detail various applications of 
the strong convergence phenomenon that we introduced above. In particular, 
we aim to explain how the strong convergence property is used in these 
applications.

Finally, section \ref{sec:open} is devoted to a brief exposition of 
various open problems surrounding the strong convergence phenomenon.

\section{Strong convergence}
\label{sec:strong}

The aim of this section is to collect various general properties of strong 
convergence that are often useful. Many of these 
properties rely on operator algebraic properties of the limiting objects. 
We have aimed to make the presentation accessible to
readers without any prior background in operator algebras.

\subsection{$C^*$-probability spaces}
\label{sec:cstar}

Let $X$ be an $N\times N$ self-adjoint (usually random) matrix. We will 
be interested in understanding the
\emph{empirical spectral distribution}
$$
	\mu_X = \frac{1}{N}\sum_{k=1}^N \delta_{\lambda_k(X)}
$$
(that is, $\mu_X(I)$ is the fraction of the eigenvalues
of $X$ that lies in the set $I\subseteq\mathbb{R}$); and the 
\emph{spectral edges}, that is, the extreme eigenvalues 
$$
	\|X\| = \max_{1\le k\le N} |\lambda_k(X)|
$$
or, more generally, the full spectrum $\spc(X)$ as a set. In the models
that we will consider, both these spectral features are well described 
by the corresponding features of a limiting operator $X_{\rm F}$ as 
$N\to\infty$: convergence of the spectral distribution is \emph{weak 
convergence}, and convergence of the spectral edges is \emph{strong 
convergence}. These notions will be formally defined in the next 
section.\footnote{%
	These notions capture the macroscopic features of the 
	spectrum. A large part of modern random matrix theory is concerned 
	with understanding the spectrum at the \emph{microscopic} (or 
	local) scale, that is, understanding the limit of the eigenvalues 
	viewed as a point process. Such questions are rather different in 
	spirit, as the behavior of the local statistics is expected to be 
	universal and is not described by the spectral properties of 
	limiting operators.}

To do so, we must first give meaning to the spectral distribution and 
edges of the limiting operator $X_{\rm F}$. For the spectral edges, we may 
simply consider the norm or spectrum of $X_{\rm F}$ which are well defined 
for bounded operators on any Hilbert space $H$. However, the meaning of 
the spectral distribution of $X_{\rm F}$ is not clear a priori. Indeed, 
since the empirical spectral distribution
$$
	\int f \,d\mu_X =
	\frac{1}{N}\sum_{k=1}^N f(\lambda_k(X))
	= \ntr(f(X))
$$
is defined by the normalized trace,\footnote{%
	We denote by $\ntr X=\frac{1}{N}\tr X$ the normalized trace
	of an $N\times N$ matrix $X$, and define $f(X)$ 
	by functional calculus (i.e., apply $f$
	to the eigenvalues of $X$ while keeping the eigenvectors fixed).}
defining the spectral distribution of $X_{\rm F}$ requires us to make 
sense of the normalized trace of infinite-dimensional operators. This is 
impossible in general, as any linear functional $\tau:B(H)\to\mathbb{C}$ 
with the trace property $\tau(xy)=\tau(yx)$ for all $x,y\in B(H)$ must be 
trivial $\tau\equiv 0$ (this follows immediately by noting that when $H$ 
is infinite-dimensional, every element of $B(H)$ can be written as the sum 
of two commutators \cite{Hal54}).

This situation is somewhat reminiscent of a basic issue of
probability theory: one cannot define a probability measure on 
\emph{arbitrary} subsets of an uncountable set, but must rather work with 
a suitable $\sigma$-algebra of sets for which the notion of measure makes 
sense. In the present setting, we cannot define a normalized 
trace for \emph{all} bounded operators on an infinite-dimensional Hilbert 
space $H$, but must rather work with a suitable algebra 
$\mathcal{A}\subset B(H)$ of operators on which the trace 
$\tau:\mathcal{A}\to\mathbb{C}$ can be defined.
These objects must satisfy some basic axioms.\footnote{%
	We are a bit informal in our terminology:
	$C^*$-algebras are usually defined as
	Banach algebras rather than as subalgebras of $B(H)$.
	However, as any $C^*$-algebra may be represented in the
	latter form, our definition entails no loss of generality.
	What we call a trace should more precisely be called
	a tracial state. A crash course on the basic
	notions may be found in \cite{NS06}.}

\begin{defn}[$C^*$-algebra]
A \emph{unital $C^*$-algebra} is an algebra $\mathcal{A}$ of bounded 
operators on a complex Hilbert space that is self-adjoint 
($a\in\mathcal{A}$ implies $a^*\in\mathcal{A}$), contains the
identity $\id\in\mathcal{A}$, and is closed in the operator norm
topology.
\end{defn}

\begin{defn}[Trace]
\label{defn:tr}
A \emph{trace} on a unital $C^*$-algebra $\mathcal{A}$ is a linear 
functional $\tau:\mathcal{A}\to\mathbb{C}$ that is positive $\tau(a^*a)\ge 
0$, unital $\tau(\id)=1$, and tracial $\tau(ab)=\tau(ba)$.
A trace is called \emph{faithful} if $\tau(a^*a)=0$ implies $a=0$.
\end{defn}

\begin{defn}[$C^*$-probability space]
A \emph{$C^*$-probability space} is a pair $(\mathcal{A},\tau)$, where
$\mathcal{A}$ is a unital $C^*$-algebra and 
$\tau:\mathcal{A}\to\mathbb{C}$ is a faithful trace.
\end{defn} 

The simplest example of a $C^*$-probability space is the algebra of 
$N\times N$ matrices with its normalized trace $(\M_N(\mathbb{C}),\ntr)$. 
One may conceptually think of general $C^*$-probability spaces as 
generalizations of this example.

\begin{rem}
Most of the axioms in the above definitions are obvious analogues
of the properties of $(\M_N(\mathbb{C}),\ntr)$. What may not
be obvious at first sight is why we require $\mathcal{A}$ to be closed
in the norm topology. The reason is that it ensures that
$f(a)\in\mathcal{A}$ for any self-adjoint $a\in\mathcal{A}$
not only when $f$ is a polynomial (which follows merely from the fact 
that
$\mathcal{A}$ is an algebra), but also when $f$ is a continuous function, 
since the latter can be approximated in norm by polynomials. This 
property will presently be needed to define the spectral distribution.
\end{rem}

\begin{rem}
\label{rem:vna}
If we make the stronger assumption that $\mathcal{A}$ is closed in the 
strong operator topology, $\mathcal{A}$ is called a \emph{von~Neumann 
algebra}. This ensures that $f(a)\in\mathcal{A}$ even when $f$ is a 
bounded measurable function. Von~Neumann algebras form a major research
area in their own right, but
appear in this survey only in section~\ref{sec:hayes}.
\end{rem}

Given a $C^*$-probability space $(\mathcal{A},\tau)$, we can now 
associate to each self-adjoint element $a\in\mathcal{A}$, $a=a^*$ a
spectral distribution $\mu_a$ by defining
$$
	\int f\,d\mu_a = \tau(f(a))
$$
for every continuous function $f:\mathbb{R}\to\mathbb{C}$. Indeed,
that $\tau$ is positive and unital implies that $f\mapsto\tau(f(a))$ is
a positive and normalized linear functional on $C_0(\mathbb{R})$, so
$\mu_a$ exists by the Riesz representation theorem.

This survey is primarily concerned with strong convergence, that is, with 
norms and not with spectral distributions. Nonetheless, it is generally 
the case that the only spectral statistics of random matrices that are 
directly computable are trace statistics (such as the moments 
$\mathbf{E}[\ntr X^p]$), so that a good understanding of weak convergence 
is prerequisite for proving strong convergence. In particular, we must
understand how to recover the spectrum from the spectral distribution.
It is here that the faithfulness of the trace $\tau$ plays a key role.

\begin{lem}[Spectral distribution and spectrum]
\label{lem:faith}
Let $(\mathcal{A},\tau)$ be a $C^*$-probability space. Then for any 
self-adjoint $a\in\mathcal{A}$, $a=a^*$, we have
$\supp\mu_a = \spc(a)$ and thus
$$
	\|a\| = \lim_{p\to\infty} \tau(a^{2p})^{\frac{1}{2p}}.
$$
\end{lem}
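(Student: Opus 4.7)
The plan is to establish the support identity $\supp \mu_a = \spc(a)$ by showing each containment separately, and then to derive the norm formula as a standard $L^{2p}\to L^\infty$ consequence for the probability measure $\mu_a$ on the compact set $\spc(a)$. Throughout, I would rely on the continuous functional calculus for the self-adjoint element $a \in \mathcal{A}$, which is available because $\mathcal{A}$ is norm-closed (as noted in the remark following Definition \ref{defn:tr}).

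The inclusion $\supp \mu_a \subseteq \spc(a)$ is the soft direction. If $f : \mathbb{R} \to \mathbb{C}$ is continuous and vanishes on $\spc(a)$, then by the spectral mapping theorem and the $C^*$-identity $\|f(a)\| = \sup_{\lambda \in \spc(a)}|f(\lambda)| = 0$, so $f(a) = 0$ and hence $\int f \, d\mu_a = \tau(f(a)) = 0$. Taking $f \geq 0$ continuous and supported in the open set $\mathbb{R} \setminus \spc(a)$ yields $\mu_a(\mathbb{R}\setminus\spc(a)) = 0$.

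The reverse inclusion $\spc(a) \subseteq \supp \mu_a$ is where faithfulness of $\tau$ becomes essential, and is the only nontrivial step. I would argue by contradiction: suppose $\lambda \in \spc(a)$ but $\lambda \notin \supp \mu_a$. Choose a continuous $f : \mathbb{R} \to [0,\infty)$ with $f(\lambda) > 0$ and with $\supp f$ contained in an open neighborhood of $\lambda$ on which $\mu_a$ vanishes. Setting $g = \sqrt{f}$, which is continuous and real, we have $g(a)^* = g(a)$ because $a = a^*$ and $g$ is real-valued, so
$$
    \tau\bigl(g(a)^* g(a)\bigr) = \tau(f(a)) = \int f \, d\mu_a = 0.
$$
Faithfulness of $\tau$ forces $g(a) = 0$ and hence $f(a) = 0$. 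But the spectral mapping theorem gives $f(\lambda) \in \spc(f(a)) = \{0\}$, contradicting $f(\lambda) > 0$.

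With the support identity in hand, the norm formula is routine. Since $a$ is self-adjoint and $\mu_a$ is a probability measure on the compact set $\spc(a)$, we have $\|a\| = \sup\{|\lambda| : \lambda \in \spc(a)\} = \sup\{|x| : x \in \supp \mu_a\} = \|x\|_{L^\infty(\mu_a)}$, and the classical convergence of $L^{2p}$ norms to the $L^\infty$ norm on a probability space yields
$$
    \lim_{p\to\infty} \tau(a^{2p})^{1/2p}
    = \lim_{p\to\infty} \left(\int x^{2p}\, d\mu_a(x)\right)^{1/2p}
    = \|x\|_{L^\infty(\mu_a)} = \|a\|.
$$
The main obstacle is really the use of faithfulness in the reverse inclusion: without it, $\tau$ could be blind to a nontrivial spectral projection of $a$, and $\supp \mu_a$ could be strictly smaller than $\spc(a)$ (for instance, for non-faithful traces arising from cutting down by a central projection). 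Faithfulness is exactly the axiom that prevents this pathology.
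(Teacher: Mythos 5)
Your proposal is correct and follows essentially the same route as the paper: both characterize membership in $\supp\mu_a$ and in $\spc(a)$ via continuous nonnegative test functions, and both use faithfulness through the identity $\tau(f(a))=\tau\bigl(\sqrt{f}(a)^*\sqrt{f}(a)\bigr)$ to conclude that $\tau(f(a))=0$ forces $f(a)=0$. The paper phrases this as a single two-way equivalence while you split it into two containments and make the $L^{2p}\to L^\infty$ step explicit, but the mathematical content is the same.
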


\begin{proof}
By the definition of support, $x\not\in\supp\mu_a$ if and only if
there is a continuous nonnegative function $f$ so that
$f(x)>0$ and $\int f\,d\mu_a=0$. On the other hand, by the spectral
theorem, $x\not\in\spc(a)$ if and only if there is a continuous 
nonnegative function $f$ so that $f(x)>0$ and $f(a)=0$.
That $\supp\mu_a=\spc(a)$ therefore follows as
$\tau(f(a))=0$ if and only if $f(a)=0$, since $\tau$ is faithful and
$f\ge 0$.
\end{proof}

We now introduce one of the most important examples of a $C^*$-probability 
space. Another important example will appear in section 
\ref{sec:semicircle}.

\begin{example}[Reduced group $C^*$-algebras]
Let $\mathbf{G}$ be a finitely generated group with generators
$g_1,\ldots,g_r$, and $\lambda:\mathbf{G}\to 
B(l^2(\mathbf{G}))$ be its left-regular representation, i.e.,
$\lambda(g)\delta_w = \delta_{gw}$ where $\delta_w\in 
l^2(\mathbf{G})$ is the coordinate vector of $w\in\mathbf{G}$.
Then
$$
	C^*_{\rm red}(\mathbf{G}) = 
	\mathrm{cl}_{\|\cdot\|}\big(\mathop{\mathrm{span}}
	\{\lambda(g):g\in\mathbf{G}\}\big) =
	C^*(\lambda(g_1),\ldots,\lambda(g_r))
$$
is called the \emph{reduced $C^*$-algebra of $\mathbf{G}$}.
Here
and in the sequel, $C^*(\boldsymbol{u})$ denotes the $C^*$-algebra
generated by a family of operators $\boldsymbol{u}=(u_1,\ldots,u_r)$ (that 
is, the norm-closure of the set of all noncommutative polynomials in 
$\boldsymbol{u},\boldsymbol{u}^*$).

The reduced $C^*$-algebra of any group always comes equipped with a 
\emph{canonical trace} $\tau:C^*_{\rm red}(\mathbf{G})\to\mathbb{C}$ that 
is defined for all $a\in C^*_{\rm red}(\mathbf{G})$ by
$$
	\tau(a) = \langle \delta_e, a\,\delta_e\rangle,
$$
where $e\in\mathbf{G}$ is the identity element. Note that:
\smallskip
\begin{enumerate}[$\bullet$]
\itemsep\medskipamount
\item 
It is straightforward to check that $\tau$ is indeed tracial: by
linearity, it suffices to show that $\tau(\lambda(g)\lambda(h))=
1_{gh=e}=\tau(\lambda(h)\lambda(g))$ for all $g,h\in\mathbf{G}$.
\item
$\tau$ is also faithful: if 
$\tau(a^*a)=0$,
then $\|a\delta_g\|^2=\tau(\lambda(g)^*a^*a\lambda(g))=\tau(a^*a)=0$ for 
all $g\in\mathbf{G}$ by the trace property (since 
$\lambda(g)\lambda(g)^*=\id$), and thus $a=0$.
\end{enumerate}
\smallskip
Thus $(C^*_{\rm red}(\mathbf{G}),\tau)$ defines a
$C^*$-probability space.
\end{example}

\begin{example}[Free group]
In the case that $\mathbf{G}=\mathbf{F}_r$ is a free group, we 
implicitly encountered the above
construction in section \ref{sec:introgaps}. We argued there
that the adjacency matrix of a random $2r$-regular graph is 
modelled by the operator
$$
	a = \lambda(g_1)+\lambda(g_1)^* + \cdots +
	\lambda(g_r)+\lambda(g_r)^* \in C^*_{\rm red}(\mathbf{F}_r).
$$
It follows immediately from the
definition that the moments of the spectral distribution $\mu_a$ 
(defined by the canonical trace $\tau$) are given by
$$
	\int x^p \,d\mu_a =
	\tau(a^p) =
	\#\{ \text{words of length $p$ in }g_1,g_1^{-1},\ldots,g_r,g_r^{-1}
	\text{ that reduce to }e\}.
$$
As the moments grow at most exponentially in $p$,
this uniquely determines $\mu_a$. The density of $\mu_a$ was
computed in a classic paper of Kesten \cite[proof of Theorem 3]{Kes59},
and is known as the \emph{Kesten distribution}. Since the explicit formula 
for the 
density shows that $\supp \mu_a=[-2\sqrt{2r-1},2\sqrt{2r-1}]$, 
Lemma \ref{lem:faith} yields
$$
	\|a\| = 2\sqrt{2r-1}.
$$
This explains the value of the norm that appears in Theorem 
\ref{thm:friedman}.
\end{example}

\subsection{Strong and weak convergence}

We can now formally define the notions of weak and strong convergence of
families of random matrices.

\begin{defn}[Weak and strong convergence]
\label{defn:wkstrcv}
Let $\boldsymbol{X}^N=(X_1^N,\ldots,X_r^N)$ be a sequence of
$r$-tuples of random matrices, and let 
$\boldsymbol{x}=(x_1,\ldots,x_r)$ be an $r$-tuple of elements
of a $C^*$-probability space $(\mathcal{A},\tau)$.
\smallskip
\begin{enumerate}[$\bullet$]
\itemsep\medskipamount
\item 
$\boldsymbol{X}^N$ is said to \emph{converge weakly} to 
$\boldsymbol{x}$ if
for every $P\in\mathbb{C}\langle x_1,\ldots,x_{2r}\rangle$
\begin{equation}
\label{eq:wkcv}
	\lim_{N\to\infty} \ntr(P(\boldsymbol{X}^N,\boldsymbol{X}^{N*}))
	= \tau(P(\boldsymbol{x},\boldsymbol{x}^{*}))
	\quad\text{in probability}.
\end{equation}
\item 
$\boldsymbol{X}^N$ is said to \emph{converge strongly} to 
$\boldsymbol{x}$ if
for every $P\in\mathbb{C}\langle x_1,\ldots,x_{2r}\rangle$
\begin{equation}
\label{eq:strcv}
	\lim_{N\to\infty} \|P(\boldsymbol{X}^N,\boldsymbol{X}^{N*})\|
	= \|P(\boldsymbol{x},\boldsymbol{x}^{*})\|
	\quad\text{in probability}.
\end{equation}
\end{enumerate}
\end{defn}
\smallskip
This definition appears to be slightly weaker than our initial
definition of strong convergence in Definition \ref{defn:strong}, where
we allowed for polynomials $P$ with matrix rather than scalar 
coefficients. We will show in section \ref{sec:scalarmtx} that the 
apparently weaker definition in fact already implies the stronger one.

We begin by spelling out some basic properties, see for example
\cite[\S 2.1]{CM14}.

\begin{lem}[Equivalent formulations of weak convergence]
\label{lem:equivweak}
The following are equivalent.
\smallskip
\begin{enumerate}[a.]
\itemsep\medskipamount
\item $\boldsymbol{X}^N$ converges weakly to $\boldsymbol{x}$.
\item Eq.\ \eqref{eq:wkcv} holds for every
\emph{self-adjoint} $P\in\mathbb{C}\langle x_1,\ldots,x_{2r}\rangle$.
\item For every self-adjoint $P\in\mathbb{C}\langle 
x_1,\ldots,x_{2r}\rangle$, the empirical spectral distribution
$\mu_{P(\boldsymbol{X}^N,\boldsymbol{X}^{N*})}$ converges weakly to 
$\mu_{P(\boldsymbol{x},\boldsymbol{x}^*)}$ in probability.
\end{enumerate}
\end{lem}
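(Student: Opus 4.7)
The plan is to establish the cycle (a)$\Leftrightarrow$(b)$\Leftrightarrow$(c) by combining polarization (to pass between general and self-adjoint noncommutative polynomials) with the moment method (to pass between trace convergence and weak convergence of spectral measures). For (a)$\Leftrightarrow$(b) the implication (a)$\Rightarrow$(b) is immediate; for the converse, any $P\in\mathbb{C}\langle x_1,\ldots,x_{2r}\rangle$ decomposes as $P=P_1+iP_2$ with $P_1=\tfrac{1}{2}(P+P^*)$ and $P_2=\tfrac{1}{2i}(P-P^*)$ both self-adjoint, and applying (b) to $P_1,P_2$ together with linearity of $\ntr$ and $\tau$ recovers (a).

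For (b)$\Leftrightarrow$(c) the bridge is the identity
\[
\int x^p\,d\mu_{P(\boldsymbol{X}^N,\boldsymbol{X}^{N*})}=\ntr\bigl(P^p(\boldsymbol{X}^N,\boldsymbol{X}^{N*})\bigr),
\]
valid for self-adjoint $P$, together with the analogous identity for $\mu_{P(\boldsymbol{x},\boldsymbol{x}^*)}$ and $\tau$. Since $P^p$ is again self-adjoint, hypothesis (b) is exactly the statement that every polynomial moment of $\mu_{P^N}:=\mu_{P(\boldsymbol{X}^N,\boldsymbol{X}^{N*})}$ converges in probability to the corresponding moment of $\mu_{P^\infty}:=\mu_{P(\boldsymbol{x},\boldsymbol{x}^*)}$ for every self-adjoint $P$. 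By Lemma \ref{lem:faith} the measure $\mu_{P^\infty}$ has compact support $[-\|P^\infty\|,\|P^\infty\|]$ and is therefore uniquely determined by its moments, so the classical moment method delivers the equivalence with weak convergence, provided tightness holds. Tightness in probability is supplied by the Chebyshev-type bound $\mu_{P^N}(|x|>M)\le M^{-2p}\int x^{2p}\,d\mu_{P^N}$ for any $M>\|P^\infty\|$ and $p$ large, since the right-hand side is bounded in probability via (b) applied to the self-adjoint polynomial $P^{2p}$. To turn ``in probability'' hypotheses into ``in probability'' conclusions I would use the standard subsequence principle: along any subsequence, pass to a further subsequence on which moment convergence holds almost surely; on this event the deterministic moment method yields weak convergence almost surely, hence weak convergence in probability along the original sequence.

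The main obstacle lies in the (c)$\Rightarrow$(b) direction, where one must upgrade weak convergence of $\mu_{P^p(\boldsymbol{X}^N)}$ to convergence of the first moment $\int y\,d\mu_{P^p(\boldsymbol{X}^N)}=\ntr(P^p(\boldsymbol{X}^N,\boldsymbol{X}^{N*}))$. This is not automatic because the test function $y$ is unbounded, and weak convergence a priori allows a vanishing fraction of mass to escape to infinity while corrupting the trace. The remedy is to invoke (c) a second time for the higher even power $P^{2p}$: the portmanteau theorem applied to the closed set $[L,\infty)$ with $L>\|P^\infty\|^{2p}$, on which the limit measure $\mu_{P^{2p}(\boldsymbol{x})}$ is null, yields the uniform tail bound $\mu_{P^p(\boldsymbol{X}^N)}(|y|>L^{1/2})\to 0$ in probability. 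Combining this with a bounded continuous truncation of $y$ on an interval strictly containing $\spc(P^p(\boldsymbol{x}))$ upgrades weak convergence to convergence of the first moment in probability, and this uniform-integrability step is the technical heart of the argument.
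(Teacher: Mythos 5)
Your treatment of $a\Leftrightarrow b$ (polarization $P=P_1+iP_2$ with $P_1,P_2$ self-adjoint) and of $b\Rightarrow c$ (convergence of all moments via the self-adjoint powers $P^p$, compact support of $\mu_{P(\boldsymbol{x},\boldsymbol{x}^*)}$, moment method, with the subsequence trick to handle convergence in probability) is essentially identical to the paper's proof, just spelled out in more detail; that part is fine.

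The problem is the step you yourself call the technical heart, $c\Rightarrow b$. Your repair does not close it: applying portmanteau to $\mu_{P^{2p}(\boldsymbol{X}^N,\boldsymbol{X}^{N*})}$ on $[L,\infty)$ only shows that the \emph{mass} of $\mu_{P(\boldsymbol{X}^N,\boldsymbol{X}^{N*})}$ outside a fixed interval tends to $0$ in probability; it does not control the contribution of that tail to $\int y\,d\mu$, which is what you need, because the integrand is unbounded there. An empirical spectral distribution can place mass $\tfrac1N$ on an eigenvalue of size $N^2$: the tail mass vanishes while the first moment diverges. Trying to bound the tail term by Cauchy--Schwarz sends you to $\int y^2\,d\mu_{P(\boldsymbol{X}^N,\boldsymbol{X}^{N*})}$, i.e.\ back to a trace of a higher power, which is exactly what hypothesis $c$ does not give you. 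Concretely, with $X^N=\mathrm{diag}(N,0,\ldots,0)$ and $x=0$, every self-adjoint $P$ has $\mu_{P(X^N,X^{N*})}=\tfrac{N-1}{N}\delta_{P(0,0)}+\tfrac1N\delta_{P(N,N)}\to\delta_{P(0,0)}=\mu_{P(x,x^*)}$ weakly, so $c$ holds, yet $\ntr(X^N+X^{N*})=2\not\to 0=\tau(x+x^*)$; so no argument from $c$ alone can yield $b$, and what is missing is some uniform control such as $\|X_i^N\|\le C$ with probability $1-o(1)$ (true in all models treated in the survey), under which your truncation argument does go through. For comparison, the paper disposes of $c\Rightarrow b$ in one line, quoting $\tau(a)=\int x\,d\mu_a$ and compact support of the limiting measure, thereby implicitly relying on the same kind of norm control; your instinct that this direction needs an actual uniform-integrability argument is sound, but the specific portmanteau-plus-truncation step you propose does not supply it.
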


\begin{proof}
Since every polynomial $P\in\mathbb{C}\langle x_1,\ldots,x_{2r}\rangle$ 
can be written as $P=P_1+iP_2$ for self-adjoint polynomials $P_1,P_2$,
the equivalence $a\Leftrightarrow b$ is immediate by linearity of the 
trace. Moreover, the implication $c\Rightarrow b$ is trivial since
$\tau(a)=\int x\,\mu_a(dx)$ by the definition of the spectral distribution
(and as $\mu_a$ is compactly supported).

On the other hand, since $P^p\in \mathbb{C}\langle x_1,\ldots,x_{2r}\rangle$ 
for every $p\in\mathbb{N}$, \eqref{eq:wkcv} implies 
\begin{multline*}
	\int x^p\,\mu_{P(\boldsymbol{X}^N,\boldsymbol{X}^{N*})}(dx) = 
	\ntr(P(\boldsymbol{X}^N,\boldsymbol{X}^{N*})^p)
	\xrightarrow{N\to\infty} \\
	\tau(P(\boldsymbol{x},\boldsymbol{x}^*)^p)=\int 
	x^p\,\mu_{P(\boldsymbol{x},\boldsymbol{x}^*)}(dx)
\end{multline*}
in probability. As $\mu_{P(\boldsymbol{x},\boldsymbol{x}^*)}$ is compactly
supported, convergence of moments implies weak convergence, and
the implication $b\Rightarrow c$ follows.
\end{proof}

A parallel result holds for strong convergence.

\begin{lem}[Equivalent formulations of strong convergence]
\label{lem:equivstrong}
The following are equivalent.
\smallskip
\begin{enumerate}[a.]
\itemsep\medskipamount
\item $\boldsymbol{X}^N$ converges strongly to $\boldsymbol{x}$.
\item Eq.\ \eqref{eq:strcv} holds for every
\emph{self-adjoint} $P\in\mathbb{C}\langle x_1,\ldots,x_{2r}\rangle$.
\item For every self-adjoint $P\in\mathbb{C}\langle 
x_1,\ldots,x_{2r}\rangle$ and $f\in C(\mathbb{R})$, we have
$$
        \lim_{N\to\infty} \|f(P(\boldsymbol{X}^N,\boldsymbol{X}^{N*}))\|
        = \|f(P(\boldsymbol{x},\boldsymbol{x}^{*}))\|
	\quad\text{in probability}.
$$
\item For every self-adjoint $P\in\mathbb{C}\langle
x_1,\ldots,x_{2r}\rangle$, we have
$$
	\lim_{N\to\infty} 
	\dH\big(\spc(P(\boldsymbol{X}^N,\boldsymbol{X}^{N*})),
	\spc(P(\boldsymbol{x},\boldsymbol{x}^{*}))\big)
	= 0
	\quad\text{in probability}.		
$$
\end{enumerate}
\end{lem}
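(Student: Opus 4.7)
The plan is to establish the cycle $(a) \Leftrightarrow (b) \Rightarrow (c) \Leftrightarrow (d)$ together with $(d) \Rightarrow (b)$, which closes the equivalence of all four statements. Since (b) and (d) are manifestly weaker than (a) and (c) respectively, the only nontrivial implications are $(b) \Rightarrow (a)$, $(b) \Rightarrow (c)$, $(c) \Rightarrow (d)$, and $(d) \Rightarrow (b)$.

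The two short directions use only $C^*$-algebraic generalities. For $(b) \Rightarrow (a)$, given an arbitrary $P \in \mathbb{C}\langle x_1, \ldots, x_{2r}\rangle$, form $Q := P^* P$ using the natural involution on the free $*$-algebra that swaps $x_i \leftrightarrow x_{r+i}$. Then $Q$ is self-adjoint and satisfies $Q(\boldsymbol{X}^N, \boldsymbol{X}^{N*}) = P(\boldsymbol{X}^N, \boldsymbol{X}^{N*})^* P(\boldsymbol{X}^N, \boldsymbol{X}^{N*})$, and similarly for $\boldsymbol{x}$. The $C^*$-identity $\|Q(\cdot)\| = \|P(\cdot)\|^2$ then converts (b) applied to $Q$ into (a) applied to $P$. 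For $(d) \Rightarrow (b)$: since $\|Z\| = \max_{\lambda \in \spc(Z)} |\lambda|$ for self-adjoint $Z$ and the map $K \mapsto \max_{\lambda \in K} |\lambda|$ is Lipschitz in Hausdorff distance on uniformly bounded compact subsets of $\mathbb{R}$, Hausdorff convergence of spectra immediately yields convergence of norms.

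For $(b) \Rightarrow (c)$: fix self-adjoint $P$ and $f \in C(\mathbb{R})$, and set $Y^N := P(\boldsymbol{X}^N, \boldsymbol{X}^{N*})$, $y := P(\boldsymbol{x}, \boldsymbol{x}^*)$, and $R := \|y\| + 1$. By (b) the event $\{\|Y^N\| \le R\}$ has probability tending to $1$. Given $\varepsilon > 0$, the Weierstrass theorem produces a polynomial $Q$ with $\sup_{[-R, R]} |f - Q| < \varepsilon$, so functional calculus yields $\|f(Y^N) - Q(Y^N)\| \le \varepsilon$ on that event and $\|f(y) - Q(y)\| \le \varepsilon$ deterministically. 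Since $Q \circ P$ is itself a noncommutative polynomial, (b) gives $\|Q(Y^N)\| \to \|Q(y)\|$ in probability, and the triangle inequality yields $\limsup_N \bigl|\|f(Y^N)\| - \|f(y)\|\bigr| \le 2\varepsilon$ in probability; letting $\varepsilon \downarrow 0$ proves (c).

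The most delicate step is $(c) \Rightarrow (d)$, and I expect it to be the main obstacle: one must convert the norm data of (c) into the bilateral set-theoretic information of (d), and the reverse inclusion requires a discretization rather than a single test function. Fix $\varepsilon > 0$. For $\spc(Y^N) \subseteq \spc(y) + [-\varepsilon, \varepsilon]$, consider the continuous function $f_\varepsilon(\lambda) := \min(1, \varepsilon^{-1} \mathrm{dist}(\lambda, \spc(y)))$, which vanishes on $\spc(y)$; then $\|f_\varepsilon(y)\| = 0$, so (c) forces $\|f_\varepsilon(Y^N)\| < 1$ with probability $\to 1$, and the spectral theorem then gives $\mathrm{dist}(\lambda, \spc(y)) < \varepsilon$ for every $\lambda \in \spc(Y^N)$. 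For the reverse inclusion $\spc(y) \subseteq \spc(Y^N) + [-\varepsilon, \varepsilon]$, choose by compactness a finite $\tfrac{\varepsilon}{2}$-net $\{\mu_1, \ldots, \mu_m\} \subseteq \spc(y)$, and for each $i$ let $g_i$ be a continuous hat function with $g_i(\mu_i) = 1$ supported in $(\mu_i - \tfrac{\varepsilon}{2}, \mu_i + \tfrac{\varepsilon}{2})$. Since $g_i \ge 0$ and $\mu_i \in \spc(y)$, we have $\|g_i(y)\| = 1$, so (c) gives $\|g_i(Y^N)\| > \tfrac{1}{2}$ with probability $\to 1$ for each $i$; this places some eigenvalue of $Y^N$ within $\tfrac{\varepsilon}{2}$ of $\mu_i$, and a union bound over the finite net completes the argument.
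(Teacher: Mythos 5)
Your proof is correct and follows essentially the same route as the paper: the $P^*P$ trick for $a\Leftrightarrow b$, the Lipschitz bound $\bigl|\|X\|-\|Y\|\bigr|\le \dH(\spc(X),\spc(Y))$ for $d\Rightarrow b$, polynomial approximation on a norm ball for $b\Rightarrow c$, and test functions vanishing on (resp.\ peaking near points of) $\spc(P(\boldsymbol{x},\boldsymbol{x}^*))$ combined with a finite cover of the compact spectrum for $c\Rightarrow d$. No gaps.
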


\begin{proof}
Since $\|X\|^2=\|X^*X\|$ for any operator $X$ and as $P^*P\in 
\mathbb{C}\langle x_1,\ldots,x_{2r}\rangle$ is self-adjoint, it is
immediate that $a\Leftrightarrow b$. That $d\Rightarrow 
b$ is immediate as $|\|X\|-\|Y\|| \le \dH(\spc(X),\spc(Y))$ for
any bounded self-adjoint operators $X,Y$.

\smallskip

To show that $b\Rightarrow c$, we may choose for any $\varepsilon>0$
a univariate real polynomial $h$ so that $\|f-h\|_{L^\infty[-K,K]}\le
\varepsilon$, where $K=2\|P(\boldsymbol{x},\boldsymbol{x}^{*})\|$.
Since \eqref{eq:strcv} implies that
$\|P(\boldsymbol{X}^N,\boldsymbol{X}^{N*})\|\le K$ with probability 
$1-o(1)$ as $N\to\infty$, we obtain
$$
	\|f(P(\boldsymbol{x},\boldsymbol{x}^{*}))\| - 4\varepsilon	
	\le
	\|f(P(\boldsymbol{X}^N,\boldsymbol{X}^{N*}))\|
	\le
	\|f(P(\boldsymbol{x},\boldsymbol{x}^{*}))\| + 4\varepsilon
$$
with probability $1-o(1)$ as $N\to\infty$ by applying \eqref{eq:strcv}
to $h\circ P\in \mathbb{C}\langle 
x_1,\ldots,x_{2r}\rangle$. That
$b\Rightarrow c$ follows by taking $\varepsilon\downarrow 0$.

\smallskip

To show that $c\Rightarrow d$, choose $f\in C(\mathbb{R})$ so that
$f(x)=0$ for $x\in\spc(P(\boldsymbol{x},\boldsymbol{x}^{*}))$ and
$f(x)>0$ otherwise. Since $c$ implies that
$\|f(P(\boldsymbol{X}^N,\boldsymbol{X}^{N*}))\|\to 0$ in probability,
$$
	\spc(P(\boldsymbol{X}^N,\boldsymbol{X}^{N*}))\subseteq
	\spc(P(\boldsymbol{x},\boldsymbol{x}^{*}))+[-\varepsilon,
	\varepsilon]
$$
with probability $1-o(1)$ as $N\to\infty$ for every $\varepsilon>0$.
On the other hand, for any $y\in 
\spc(P(\boldsymbol{x},\boldsymbol{x}^{*}))$, we may choose $f\in 
C(\mathbb{R})$ so that $f(y)=1$ and $f(x)<1$ for $x\ne y$.
Since $c$ implies that
$\|f(P(\boldsymbol{X}^N,\boldsymbol{X}^{N*}))\|\to 1$ in probability,
$$
	y+[-\tfrac{\varepsilon}{2},\tfrac{\varepsilon}{2}]
	\subseteq
	\spc(P(\boldsymbol{X}^N,\boldsymbol{X}^{N*}))+[-\varepsilon,
	\varepsilon]
$$
with probability $1-o(1)$ as $N\to\infty$ for every $\varepsilon>0$.
As $\spc(P(\boldsymbol{x},\boldsymbol{x}^{*}))$ can be covered
by a finite number of such sets  
$y+[-\tfrac{\varepsilon}{2},\tfrac{\varepsilon}{2}]$, the implication
$c\Rightarrow d$ follows.
\end{proof}

The elementary equivalent formulations of weak and strong convergence 
discussed above are all concerned with the (real) eigenvalues of 
self-adjoint polynomials. In contrast, what implications weak or strong 
convergence may have for the empirical distributions of the complex 
eigenvalues of non-self-adjoint (or non-normal) polynomials is poorly 
understood; see section \ref{sec:complex}. We nonetheless record one 
easy observation in this direction \cite[Remark 3.6]{Miy23}.

\begin{lem}[Spectral radius]
\label{lem:specradius}
Suppose that $\boldsymbol{X}^N$ converges strongly to $\boldsymbol{x}$.
Then
$$
	\varrho\big(P(\boldsymbol{X}^N,\boldsymbol{X}^{N*})\big) 
	\le
	\varrho\big(P(\boldsymbol{x},\boldsymbol{x}^*)\big)+o(1)
	\quad
	\text{with probability}\quad 1-o(1)
$$
for every $P\in\mathbb{C}\langle x_1,\ldots,x_{2r}\rangle$,
where $\varrho(a)=\sup\{|\lambda|:\lambda\in\spc(a)\}$ denotes the
spectral radius of any (not necessarily normal) operator $a$.
\end{lem}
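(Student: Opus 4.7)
The plan is to combine the strong convergence hypothesis with Gelfand's spectral radius formula. Recall that for any bounded operator $a$ on a Hilbert space, the spectral radius satisfies the deterministic inequality $\varrho(a) \le \|a^p\|^{1/p}$ for every $p \ge 1$, together with the limit $\|a^p\|^{1/p} \to \varrho(a)$ as $p \to \infty$. This dual role of powers and norms is what will let us transfer information from the strong convergence property (which sees only norms) to statements about the spectral radius.

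Fix $\varepsilon > 0$. The first step is to choose $p \in \mathbb{N}$ large enough that
$$
    \|P(\boldsymbol{x},\boldsymbol{x}^*)^p\|^{1/p} \le
    \varrho\big(P(\boldsymbol{x},\boldsymbol{x}^*)\big) + \tfrac{\varepsilon}{2},
$$
which is possible by Gelfand's formula applied to the limiting operator. Note that $p$ depends only on $\varepsilon$ and $\boldsymbol{x}$, not on $N$.

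The second step is to apply the strong convergence hypothesis to the polynomial $P^p \in \mathbb{C}\langle x_1,\ldots,x_{2r}\rangle$. By Definition \ref{defn:wkstrcv} we have
$$
    \big\|P(\boldsymbol{X}^N,\boldsymbol{X}^{N*})^p\big\|
    \xrightarrow{N\to\infty}
    \big\|P(\boldsymbol{x},\boldsymbol{x}^*)^p\big\|
    \quad\text{in probability},
$$
so with probability $1 - o(1)$ we have $\|P(\boldsymbol{X}^N,\boldsymbol{X}^{N*})^p\|^{1/p} \le \varrho(P(\boldsymbol{x},\boldsymbol{x}^*)) + \varepsilon$. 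Combining this with the deterministic Gelfand inequality $\varrho(P(\boldsymbol{X}^N,\boldsymbol{X}^{N*})) \le \|P(\boldsymbol{X}^N,\boldsymbol{X}^{N*})^p\|^{1/p}$ yields the bound $\varrho(P(\boldsymbol{X}^N,\boldsymbol{X}^{N*})) \le \varrho(P(\boldsymbol{x},\boldsymbol{x}^*)) + \varepsilon$ with probability $1 - o(1)$. Since $\varepsilon$ was arbitrary, the claimed $o(1)$ conclusion follows.

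There is no real obstacle here; the statement is essentially an immediate corollary of strong convergence once one thinks of spectral radius through Gelfand's formula. The only point worth emphasizing is that strong convergence must be invoked with the polynomial $P^p$ (whose degree depends on $\varepsilon$), rather than with $P$ itself—this is precisely what converts norm convergence into a one-sided spectral radius bound. The opposite inequality need not hold, which is consistent with the fact that non-self-adjoint polynomial evaluations of $\boldsymbol{X}^N$ may have complex eigenvalues lying well inside the spectrum of $P(\boldsymbol{x},\boldsymbol{x}^*)$.
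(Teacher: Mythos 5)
Your proof is correct and is essentially the paper's argument: the paper disposes of the lemma by citing upper semicontinuity of the spectral radius with respect to the operator norm, and the standard proof of that fact is exactly the Gelfand-formula computation you carry out, namely $\varrho(a)\le\|a^p\|^{1/p}$ for every $p$ together with $\|a^p\|^{1/p}\to\varrho(a)$. Your writeup also makes explicit the one point the citation glosses over: since $P(\boldsymbol{X}^N,\boldsymbol{X}^{N*})$ and $P(\boldsymbol{x},\boldsymbol{x}^*)$ live in different spaces, the comparison of $\|\cdot^p\|$ must be routed through strong convergence applied to the polynomial $P^p$, which is precisely what you do.
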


\begin{proof}
This follows immediately from the fact that the spectral radius is
upper semicontinuous with respect to the operator norm
\cite[\S 104]{Hal82}.
\end{proof}

\subsection{Strong implies weak}

While we have formulated weak and strong convergence as distinct 
phenomena, it turns out that strong convergence---or even merely a 
one-sided form of it---often automatically implies weak convergence. Such 
a statement should be viewed with suspicion, since the definition of weak 
convergence requires us to specify a trace while the definition of strong 
convergence is independent of the trace. However, it turns out that many 
$C^*$-algebras have a \emph{unique} trace, and this is precisely the 
setting we will consider.

\begin{lem}[Strong implies weak]
\label{lem:upperlower}
Let $\boldsymbol{X}^N=(X_1^N,\ldots,X_r^N)$ be a sequence of
$r$-tuples of random matrices, and let 
$\boldsymbol{x}=(x_1,\ldots,x_r)$ be an $r$-tuple of 
elements
of a $C^*$-probability space $(\mathcal{A},\tau)$. Consider the following 
conditions.
\smallskip
\begin{enumerate}[a.]
\itemsep\medskipamount
\item 
For every $P\in\mathbb{C}\langle x_1,\ldots,x_{2r}\rangle$
$$
	\|P(\boldsymbol{X}^N,\boldsymbol{X}^{N*})\|
        \le \|P(\boldsymbol{x},\boldsymbol{x}^{*})\| + o(1)
	\quad\text{with probability}\quad 1-o(1).
$$
\item $\boldsymbol{X}^N$ converges weakly to $\boldsymbol{x}$.
\item
For every $P\in\mathbb{C}\langle x_1,\ldots,x_{2r}\rangle$
$$
	\|P(\boldsymbol{X}^N,\boldsymbol{X}^{N*})\|
        \ge \|P(\boldsymbol{x},\boldsymbol{x}^{*})\| - o(1)
	\quad\text{with probability}\quad 1-o(1).
$$
\end{enumerate}
Then $b\Rightarrow c$, and $a\Rightarrow b$ if in addition 
$C^*(\boldsymbol{x})$ has a unique trace.
\end{lem}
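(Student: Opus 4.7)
For the implication $b\Rightarrow c$, the plan is to reduce to a self-adjoint polynomial $P$ by replacing $P$ with $P^*P$ and using $\|Y\|^{2}=\|Y^*Y\|$ in both $\M_N(\mathbb{C})$ and $\mathcal{A}$. For self-adjoint $P$, Lemma~\ref{lem:faith} gives $\|P(\boldsymbol{x},\boldsymbol{x}^*)\|=\lim_{p\to\infty}\tau(P(\boldsymbol{x},\boldsymbol{x}^*)^{2p})^{1/(2p)}$, while for any self-adjoint matrix $Y$ we have the deterministic inequality $\ntr(Y^{2p})\le \|Y\|^{2p}$ (the maximum of the nonnegative numbers $\lambda_k(Y)^{2p}$ dominates their average). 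Given $\varepsilon>0$, I would fix $p$ so large that $\tau(P(\boldsymbol{x},\boldsymbol{x}^*)^{2p})^{1/(2p)}\ge \|P(\boldsymbol{x},\boldsymbol{x}^*)\|-\varepsilon/2$, and then invoke weak convergence applied to the self-adjoint polynomial $P^{2p}\in\mathbb{C}\langle x_1,\ldots,x_{2r}\rangle$ to conclude that
\[
\|P(\boldsymbol{X}^N,\boldsymbol{X}^{N*})\|\ge \ntr(P(\boldsymbol{X}^N,\boldsymbol{X}^{N*})^{2p})^{1/(2p)}\ge \|P(\boldsymbol{x},\boldsymbol{x}^*)\|-\varepsilon
\]
with probability $1-o(1)$. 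As $\varepsilon>0$ is arbitrary, condition $c$ follows.

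For the implication $a\Rightarrow b$ under the unique trace hypothesis, the plan is to exploit the subsequence characterization of convergence in probability: it suffices to show that, for every polynomial $P$, every subsequence of $Z_N(P):=\ntr(P(\boldsymbol{X}^N,\boldsymbol{X}^{N*}))$ admits a further subsequence converging in probability to the constant $\tau(P(\boldsymbol{x},\boldsymbol{x}^*))$. Passing to any subsequence, I would extract by a diagonal argument a further subsequence $(N_k)$ along which the joint law of $(Z_{N_k}(P))_P$, indexed by the countable family of polynomials with rational complex coefficients, converges weakly in the countable-product topology to the law of some random functional $Z$. Tightness of each marginal is immediate from the deterministic bound $|Z_N(P)|\le \|P(\boldsymbol{X}^N,\boldsymbol{X}^{N*})\|$ combined with condition $a$, which gives $|Z_N(P)|\le \|P(\boldsymbol{x},\boldsymbol{x}^*)\|+o(1)$ with probability $1-o(1)$.

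The final step is to identify $Z$. The defining relations of a positive, unital, tracial linear functional---namely $Z(\alpha P+\beta Q)=\alpha Z(P)+\beta Z(Q)$, $Z(\id)=1$, $Z(PQ)=Z(QP)$, $Z(P^*P)\ge 0$---together with the $C^*$-bound $|Z(P)|\le \|P(\boldsymbol{x},\boldsymbol{x}^*)\|$, all hold for each $Z_{N_k}$ (the last modulo the vanishing $o(1)$ term from condition $a$) and each is expressed by a closed condition on the product space. By the portmanteau theorem these properties pass to the limit, so $Z$ is almost surely a positive, unital, tracial linear functional on polynomials satisfying the $C^*$-norm bound. The norm bound, together with density of polynomials in $C^*(\boldsymbol{x})$, allows a continuous extension of $Z$ almost surely to a bounded tracial state on all of $C^*(\boldsymbol{x})$. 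The unique trace hypothesis then forces this extension to coincide with $\tau|_{C^*(\boldsymbol{x})}$, yielding $Z(P)=\tau(P(\boldsymbol{x},\boldsymbol{x}^*))$ almost surely for every $P$. Since convergence in distribution to a constant is convergence in probability, this completes the proof. The main technical obstacle I expect is handling the transition from the polynomial level to the $C^*$-algebra level: one must check that the $C^*$-norm bound extracted from condition $a$ passes to the distributional limit (using portmanteau on the open half-spaces $\{|Z(P)|>\|P(\boldsymbol{x},\boldsymbol{x}^*)\|+\varepsilon\}$ and letting $\varepsilon\downarrow 0$), and that the almost-sure extension of $Z$ to $C^*(\boldsymbol{x})$ still qualifies as a tracial state, so that the unique-trace hypothesis may be invoked to eliminate the randomness of $Z$.
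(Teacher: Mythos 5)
Your proof of $b\Rightarrow c$ is essentially the same as the paper's: both rely on $\ntr(Y^{2p})\le\|Y\|^{2p}$, weak convergence of the $2p$-th moments, and Lemma~\ref{lem:faith} to recover the norm by letting $p\to\infty$ (the paper works directly with $|P|^{2p}=(P^*P)^p$ rather than reducing to self-adjoint $P$ first, but this is cosmetic).

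For $a\Rightarrow b$, the underlying mechanism is the same in both arguments---compactness of the space of ``laws,'' the $C^*$-norm bound supplied by condition $a$, continuous extension of the limiting law to a tracial state on $C^*(\boldsymbol{x})$, and the unique-trace hypothesis to pin it down as $\tau$---but the bookkeeping for the randomness is genuinely different. The paper first treats the deterministic case (weak$^*$ precompactness of the nonrandom laws $\ell_N$, subsequential limit $\ell$, norm bound, unique trace), and then handles the random case by using condition $a$, Borel--Cantelli, and separability of $\mathbb{C}\langle x_1,\ldots,x_{2r}\rangle$ to extract a subsequence along which the norm bound holds for every $P$ \emph{almost surely}, so that the deterministic argument applies pathwise. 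You instead work with distributional limits of the random vector $(Z_N(P))_P$ in the countable product topology, use Prokhorov for subsequential convergence, and then invoke portmanteau (on closed sets) to transfer the linearity, positivity, traciality, and norm-bound constraints to the limiting random functional $Z$, concluding $Z=\tau$ almost surely from the unique-trace hypothesis. Both routes are correct. The paper's Borel--Cantelli reduction is the cleaner one, since after the reduction everything is deterministic and no care about closedness of constraint sets or passage of inequalities through a distributional limit is needed; your version works but carries more technical overhead (tightness, metrizability of the product topology, and the $\varepsilon\downarrow 0$ argument to pass the norm bound through portmanteau). One small point you correctly anticipate but should be explicit about: the countable family of rational-coefficient polynomials is dense in the full polynomial algebra, and the norm bound gives uniform continuity of each $Z_{N_k}$ on that dense family, so the almost-sure limit $Z$ extends uniquely to a (tracial, positive, unital) linear functional on all polynomials and then, by the $C^*$-bound, on $C^*(\boldsymbol{x})$.
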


\begin{proof}
To prove $b\Rightarrow c$, note that weak convergence implies
$$
	\|P(\boldsymbol{X}^N,\boldsymbol{X}^{N*})\|
	\ge
	\ntr\big(|P(\boldsymbol{X}^N,\boldsymbol{X}^{N*})|^{2p}
	\big)^{\frac{1}{2p}}
	=
	\tau\big(|P(\boldsymbol{x},\boldsymbol{x}^{*})|^{2p}\big)^{\frac{1}{2p}}
	-o(1)
$$
with probability $1-o(1)$ for every $p\in\mathbb{N}$, as
$|P|^{2p}= (P^*P)^p\in \mathbb{C}\langle x_1,\ldots,x_{2r}\rangle$.
The conclusion follows by letting $p\to\infty$ and applying
Lemma \ref{lem:faith}.

To prove $a\Rightarrow b$, let us first consider the special case that 
$\boldsymbol{X}^N$ are nonrandom.
Define a linear functional
$\ell_N : \mathbb{C}\langle x_1,\ldots,x_{2r}\rangle \to\mathbb{C}$ by
$$
	\ell_N(P) = \ntr P(\boldsymbol{X}^N,\boldsymbol{X}^{N*}).
$$
This is called the \emph{law} of the family $\boldsymbol{X}^N$; it has
the same properties as the trace in Definition \ref{defn:tr}, but 
restricted only to polynomials. Note that by linearity, $\ell_N$ is
fully determined by its value on all monomials.

Since $|\ell_N(Q)|\le \max_{N,i}\|X_i^N\|^{\deg(Q)}$ for every 
monomial $Q$, the sequence $\ell_N$ is precompact in the 
weak$^*$-topology.
Thus for every subsequence of the indices $N$, there is a further
subsequence so that $\ell_N\to\ell$ pointwise for some law $\ell$ that
satisfies the properties of a trace. On the other hand, condition $a$ 
ensures that
$$
	|\ell(P)| = \lim |\ell_N(P)| \le
	\limsup \|P(\boldsymbol{X}^N,\boldsymbol{X}^{N*})\|
	\le \|P(\boldsymbol{x},\boldsymbol{x}^{*})\|
$$
where the limits are taken along the subsequence. Thus $\ell$ extends
by continuity to a trace on $C^*(\boldsymbol{x})$. Since the latter
has the unique trace property, we must have
$\ell(P)=\tau(P(\boldsymbol{x},\boldsymbol{x}^{*}))$, and thus we have
proved weak convergence.

When $\boldsymbol{X}^N$ are random, we note that
condition $a$ implies (by Borel-Cantelli and as $\mathbb{C}\langle 
x_1,\ldots,x_{2r}\rangle$ is separable) that for every subequence of 
indices $N$, we can find a further subsequence along which
$\|P(\boldsymbol{X}^N,\boldsymbol{X}^{N*})\|
\le \|P(\boldsymbol{x},\boldsymbol{x}^{*})\| + o(1)$ for every $P
\in \mathbb{C}\langle x_1,\ldots,x_{2r}\rangle$ a.s.
The proof now proceeds as in the nonrandom case.
\end{proof}

The unique trace property turns out to arise frequently in practice. In 
particular, that $C^*_{\rm red}(\mathbf{F}_r)$ has a unique trace for 
$r\ge 2$ is a classical result of Powers~\cite{Pow75}, and a general 
characterization of countable groups $\mathbf{G}$ so that $C^*_{\rm 
red}(\mathbf{G})$ has a unique trace is given by 
Breuillard--Kalantar--Kennedy--Ozawa \cite{BKKO17}. In such 
situations, Lemma~\ref{lem:upperlower} shows that a strong convergence 
\emph{upper} bound (condition $a$) already suffices to establish both 
strong and weak convergence in full. Establishing such an upper bound
is the main difficulty in proofs of strong convergence.

\begin{rem}
The implication $a\Rightarrow c$ of Lemma~\ref{lem:upperlower} also holds 
under the alternative hypothesis that $C^*(\boldsymbol{x})$ is a simple 
$C^*$-algebra; see \cite[pp.\ 16–19]{LM25}.
\end{rem}

\subsection{Scalar, matrix, and operator coefficients}
\label{sec:scalarmtx}

In Definition \ref{defn:wkstrcv}, we have defined the weak and strong 
convergence properties for polynomials $P$ with scalar coefficients. 
However, applications often require polynomials with matrix or even 
operator coefficients to encode the models of interest.\footnote{%
Let $(\mathcal{A},\tau)$ and $(\mathcal{B},\sigma)$ be $C^*$-probability
spaces. If $\boldsymbol{x}=(x_1,\ldots,x_r)$ are  
elements of $\mathcal{A}$
and $P\in\mathcal{B}\otimes\mathbb{C}\langle x_1,\ldots,x_{2r}\rangle$
is a polynomial with coefficients in $\mathcal{B}$, then
$P(\boldsymbol{x},\boldsymbol{x}^*)$ lies in the algebraic tensor product 
$\mathcal{A}\otimes_{\rm alg}\mathcal{B}$. This viewpoint suffices
for weak convergence. To make sense of strong convergence, however,
we must define a norm on the tensor product.
We will do so in the obvious way:
Given $\mathcal{A}\subseteq B(H_1)$ and $\mathcal{B}\subseteq B(H_2)$,
we define the $C^*$-algebra $\mathcal{A}\otimes\mathcal{B}\subseteq
B(H_1\otimes H_2)$ by
$$
	\mathcal{A}\otimes\mathcal{B} = 
	\mathrm{cl}_{\|\cdot\|}\big(\mathop{\mathrm{span}}
	\{a\otimes b: a\in\mathcal{A},b\in\mathcal{B}\}\big),
$$
and extend the trace $\tau\otimes\sigma:\mathcal{A}\otimes\mathcal{B}
\to\mathbb{C}$ accordingly.
This construction is called the \emph{minimal} tensor product
of $C^*$-probability spaces, and is often denoted $\otimes_{\rm min}$.
For simplicity, we fix the following convention:
\emph{in this survey, the notation $\otimes$ will always denote
the minimal tensor product.}}
We now show that such properties are already implied by their 
counterparts for scalar polynomials.

For weak convergence, this situation is easy.

\begin{lem}[Operator-valued weak convergence]
The following are equivalent.
%\smallskip
\begin{enumerate}[a.]
\itemsep\medskipamount
\item
$\boldsymbol{X}^N$ converges weakly to $\boldsymbol{x}$, i.e.,
for all $P\in\mathbb{C}\langle x_1,\ldots,x_{2r}\rangle$
$$
	\lim_{N\to\infty} \ntr(P(\boldsymbol{X}^N,\boldsymbol{X}^{N*}))
	= \tau(P(\boldsymbol{x},\boldsymbol{x}^{*}))
	\quad\text{in probability}.
$$
\item
For any $C^*$-probability space $(\mathcal{B},\sigma)$ and
$P\in\mathcal{B}\otimes\mathbb{C}\langle x_1,\ldots,x_{2r}\rangle$
$$
	\lim_{N\to\infty} 
	(\sigma\otimes\ntr)(P(\boldsymbol{X}^N,\boldsymbol{X}^{N*}))
	= (\sigma\otimes\tau)(P(\boldsymbol{x},\boldsymbol{x}^{*}))
	\quad\text{in probability}.
$$
\end{enumerate}
\end{lem}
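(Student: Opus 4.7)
The implication $b\Rightarrow a$ is immediate: simply take $(\mathcal{B},\sigma)=(\mathbb{C},\mathrm{id})$, for which $\mathcal{B}\otimes\mathbb{C}\langle x_1,\ldots,x_{2r}\rangle = \mathbb{C}\langle x_1,\ldots,x_{2r}\rangle$ and $\sigma\otimes\ntr = \ntr$, $\sigma\otimes\tau=\tau$. So the entire content of the lemma is $a\Rightarrow b$.

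For $a\Rightarrow b$, the plan is to reduce to the scalar case by expanding $P$ in monomials. By definition, any element $P\in\mathcal{B}\otimes\mathbb{C}\langle x_1,\ldots,x_{2r}\rangle$ is a \emph{finite} sum
$$
        P(x_1,\ldots,x_{2r}) = \sum_{k=1}^K b_k \otimes Q_k(x_1,\ldots,x_{2r}),
$$
where $b_k\in\mathcal{B}$ and $Q_k$ are monomials in the free variables. Substituting $\boldsymbol{X}^N,\boldsymbol{X}^{N*}$ (or $\boldsymbol{x},\boldsymbol{x}^*$) and applying the product trace term by term, the key elementary identity $(\sigma\otimes\ntr)(b\otimes y)=\sigma(b)\,\ntr(y)$ for elementary tensors gives
$$
        (\sigma\otimes\ntr)\bigl(P(\boldsymbol{X}^N,\boldsymbol{X}^{N*})\bigr)
        = \sum_{k=1}^K \sigma(b_k)\,\ntr\bigl(Q_k(\boldsymbol{X}^N,\boldsymbol{X}^{N*})\bigr),
$$
and analogously for $\boldsymbol{x}$ with $\tau$ in place of $\ntr$.

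Now I would finish by invoking hypothesis $a$ for each of the finitely many scalar monomials $Q_1,\ldots,Q_K$: each $\ntr(Q_k(\boldsymbol{X}^N,\boldsymbol{X}^{N*}))$ converges in probability to $\tau(Q_k(\boldsymbol{x},\boldsymbol{x}^*))$. Since convergence in probability is preserved under finite linear combinations with deterministic coefficients $\sigma(b_k)\in\mathbb{C}$, the whole sum converges in probability to $\sum_k \sigma(b_k)\tau(Q_k(\boldsymbol{x},\boldsymbol{x}^*))=(\sigma\otimes\tau)(P(\boldsymbol{x},\boldsymbol{x}^*))$, which is precisely condition $b$.

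There is essentially no obstacle here: once one recognizes that polynomials are finite algebraic expressions (so no analytic issue about closures in $\mathcal{A}\otimes\mathcal{B}$ enters), the statement reduces by linearity to the scalar case. The only point worth flagging is that we use the explicit factorization of the product trace on elementary tensors, which is part of the definition of $\sigma\otimes\tau$ and $\sigma\otimes\ntr$; no minimality/maximality subtleties of the $C^*$-tensor norm intervene, since at the polynomial level we are working purely algebraically in $\mathcal{A}\otimes_{\rm alg}\mathcal{B}$.
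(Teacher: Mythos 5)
Your proposal is correct and follows essentially the same route as the paper: decompose $P$ into a finite sum of elementary tensors with monomial right factors, factor the product trace across the tensor, and invoke hypothesis $a$ monomial by monomial, using that finite linear combinations preserve convergence in probability. Your side remark — that no tensor-norm subtlety enters because a polynomial is a finite algebraic object in $\mathcal{A}\otimes_{\rm alg}\mathcal{B}$ — is a useful clarification that the paper leaves implicit.
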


\begin{proof}
That $b\Rightarrow a$ is obvious. To prove $a\Rightarrow b$,
let us express $P\in\mathcal{B}\otimes\mathbb{C}\langle 
x_1,\ldots,x_{2r}\rangle$ concretely as
$P(x_1,\ldots,x_{2r}) = b_0 \otimes\id +
\sum_{k=1}^q\sum_{i_1,\ldots,i_k=1}^{2r}
b_{i_1,\ldots,i_k} \otimes x_{i_1}\cdots x_{i_k}$
with operator coefficients $b_{i_1,\ldots,i_k}\in\mathcal{B}$. Then
clearly
$$
	(\sigma\otimes\ntr)(P(\boldsymbol{X}^N,\boldsymbol{X}^{N*})) =
	\sigma(b_0) +
	\sum_{k=1}^q\sum_{i_1,\ldots,i_k=1}^{2r}
	\sigma(b_{i_1,\ldots,i_k})\,\ntr(X^N_{i_1}\cdots X^N_{i_k})
$$
where we denote $X_{r+i}^N = X_i^{N*}$ for $i=1,\ldots,r$.
Since $a$ yields $\ntr(X^N_{i_1}\cdots X^N_{i_k})\to
\tau(x_{i_1}\cdots x_{i_k})$ for all $k,i_1,\ldots,i_k$, the
conclusion follows.
\end{proof}

Unfortunately, the analogous equivalence for strong convergence is simply 
false at this level of generality; a counterexample can be constructed 
as in \cite[Appendix~A]{CGV25}. Nonetheless, strong 
convergence extends in complete generality to polynomials with 
\emph{matrix} (as opposed to 
operator) coefficients. This justifies the apparently more general 
Definition \ref{defn:strong} given in the introduction.

\begin{lem}[Matrix-valued strong convergence]
\label{lem:mtxstr}
The following are equivalent.
%\smallskip
\begin{enumerate}[a.]
\itemsep\medskipamount
\item
$\boldsymbol{X}^N$ converges strongly to $\boldsymbol{x}$, i.e.,
for all $P\in\mathbb{C}\langle x_1,\ldots,x_{2r}\rangle$
$$
	\lim_{N\to\infty} \|P(\boldsymbol{X}^N,\boldsymbol{X}^{N*})\|
	= \|P(\boldsymbol{x},\boldsymbol{x}^{*})\|
	\quad\text{in probability}.
$$
\item
For every $D\in\mathbb{N}$ and
$P\in\M_D(\mathbb{C})\otimes\mathbb{C}\langle x_1,\ldots,x_{2r}\rangle$
$$
	\lim_{N\to\infty} 
	\|P(\boldsymbol{X}^N,\boldsymbol{X}^{N*})\|
	= \|P(\boldsymbol{x},\boldsymbol{x}^{*})\|
	\quad\text{in probability}.
$$
\end{enumerate}
\end{lem}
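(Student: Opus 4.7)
The direction $b \Rightarrow a$ is immediate by taking $D = 1$. For $a \Rightarrow b$, the essential input is a classical $C^*$-algebraic fact: if $\pi \colon A \hookrightarrow B$ is an injective unital $*$-homomorphism of $C^*$-algebras, then its ampliation $\mathrm{id}_{M_D} \otimes \pi \colon M_D \otimes A \to M_D \otimes B$ is again an injective $*$-homomorphism (its kernel is readily seen to be trivial when $M_D$ is finite-dimensional), and hence is automatically isometric. In other words, $\pi$ is \emph{completely isometric}. The plan is to realize scalar strong convergence as an isometric embedding of $C^*(\boldsymbol{x})$ into a $C^*$-algebraic ultraproduct, and then to read off matrix-valued strong convergence directly from this complete isometry.

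First I discharge the randomness. Since $\mathbb{C}\langle x_1,\ldots,x_{2r}\rangle$ is separable, hypothesis $a$ together with a diagonal subsequence argument allows me, starting from any subsequence of $N$, to extract a further subsequence $(N_k)$ along which $\|p(\boldsymbol{X}^{N_k},\boldsymbol{X}^{N_k*})\| \to \|p(\boldsymbol{x},\boldsymbol{x}^*)\|$ almost surely, simultaneously for every polynomial $p$ in a countable dense subset. Fix a realization in this full-probability event and a free ultrafilter $\omega$ on $\mathbb{N}$, and form the $C^*$-algebraic ultraproduct $\mathcal{M}_\omega := \prod_{k\to\omega} M_{N_k}$ (the $\ell^\infty$ direct product modulo the ideal of sequences whose norms vanish along $\omega$). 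The map $\pi_\omega(p) := [p(\boldsymbol{X}^{N_k},\boldsymbol{X}^{N_k*})]_k$ defines a unital $*$-homomorphism on the polynomial algebra, and the choice of realization gives $\|\pi_\omega(p)\| = \lim_{k\to\omega} \|p(\boldsymbol{X}^{N_k},\boldsymbol{X}^{N_k*})\| = \|p(\boldsymbol{x},\boldsymbol{x}^*)\|$, so $\pi_\omega$ extends by continuity to an injective $*$-homomorphism $\tilde\pi_\omega \colon C^*(\boldsymbol{x}) \hookrightarrow \mathcal{M}_\omega$.

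Applying the complete-isometry principle to $\tilde\pi_\omega$, the ampliation $\mathrm{id}_{M_D} \otimes \tilde\pi_\omega \colon M_D \otimes C^*(\boldsymbol{x}) \to M_D \otimes \mathcal{M}_\omega$ is isometric. Since $M_D$ is finite-dimensional, there is a canonical isometric identification $M_D \otimes \mathcal{M}_\omega \cong \prod_{k\to\omega} M_{DN_k}$, under which $(\mathrm{id}\otimes\tilde\pi_\omega)(P)$ corresponds to the sequence $[P(\boldsymbol{X}^{N_k},\boldsymbol{X}^{N_k*})]_k$. Chaining these identifications yields
$$
\|P(\boldsymbol{x},\boldsymbol{x}^*)\|_{M_D\otimes\mathcal{A}} = \|(\mathrm{id}\otimes\tilde\pi_\omega)(P)\| = \lim_{k\to\omega} \|P(\boldsymbol{X}^{N_k},\boldsymbol{X}^{N_k*})\|.
$$
Since $\omega$ was arbitrary, $\|P(\boldsymbol{X}^{N_k},\boldsymbol{X}^{N_k*})\|$ converges to $\|P(\boldsymbol{x},\boldsymbol{x}^*)\|$ almost surely along $(N_k)$. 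As every subsequence of the original sequence admits such a further almost surely convergent sub-subsequence, the original sequence converges in probability, establishing condition $b$.

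The main obstacle is verifying the tensor-product identification in the last step: one must check that $M_D \otimes \mathcal{M}_\omega$, with its (unique, since $M_D$ is nuclear) $C^*$-tensor norm, agrees isometrically with the ultraproduct $\prod_{k\to\omega} M_{DN_k}$. This requires a small dose of $C^*$-tensor product theory and ultimately rests on the finite-dimensionality of $M_D$, which ensures that tensor products and ultraproducts commute through the canonical isomorphism $M_D \otimes M_N \cong M_{DN}$.
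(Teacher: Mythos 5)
Your proof is correct, but it takes a genuinely different route from the paper. The paper's argument is entirely elementary: it writes $P=\sum_{i,j}e_ie_j^*\otimes P_{ij}$, sandwiches $\|P(\boldsymbol{X}^N,\boldsymbol{X}^{N*})\|$ between $\max_{i,j}\|P_{ij}(\boldsymbol{X}^N,\boldsymbol{X}^{N*})\|$ and $D^2\max_{i,j}\|P_{ij}(\boldsymbol{X}^N,\boldsymbol{X}^{N*})\|$ (and likewise for $\boldsymbol{x}$), and then removes the spurious factor $D^{\pm2}$ by applying the resulting two-sided bound to $(P^*P)^p$ and letting $p\to\infty$, so that the constant degrades to $D^{\pm1/p}\to1$. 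You instead package scalar strong convergence (after the same a.s.-subsequence reduction the paper itself uses in Lemma \ref{lem:upperlower}) as an isometric embedding $\tilde\pi_\omega\colon C^*(\boldsymbol{x})\hookrightarrow\prod_{k\to\omega}\mathrm{M}_{N_k}$, and then invoke the automatic complete isometry of injective $*$-homomorphisms together with the identification of $\mathrm{M}_D\otimes\prod_{k\to\omega}\mathrm{M}_{N_k}$ with $\prod_{k\to\omega}\mathrm{M}_{DN_k}$ (unproblematic since $\mathrm{M}_D$ is finite-dimensional, hence nuclear, and $D\times D$ matrices over an ultraproduct pass entrywise through the ultraproduct). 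All the ingredients you cite are standard and correctly deployed, including the final step that ultralimits along every free ultrafilter pin down the limit of the (a.s. bounded) norm sequence, followed by the subsequence criterion for convergence in probability. What each approach buys: the paper's proof is self-contained, uses nothing beyond the $C^*$-identity $\|Y\|^2=\|Y^*Y\|$, and makes the mechanism transparent at the level of matrix entries; your proof requires more machinery (ultraproducts, isometry of injective $*$-homomorphisms, uniqueness of the tensor norm against a nuclear factor) but is conceptually cleaner, explains \emph{why} the statement is true (scalar strong convergence realizes $C^*(\boldsymbol{x})$ inside an ultraproduct of matrix algebras, and embeddings of $C^*$-algebras are automatically completely isometric), and generalizes readily — e.g.\ it makes the extension to exact-coefficient polynomials in Lemma \ref{lem:exact} natural from the same viewpoint.
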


\begin{proof}
That $b\Rightarrow a$ is obvious. To prove $a\Rightarrow b$, 
express
$P\in\M_D(\mathbb{C})\otimes\mathbb{C}\langle x_1,\ldots,x_{2r}\rangle$
as $P = \sum_{i,j=1}^D e_i e_j^* \otimes P_{ij}$ with
$P_{ij}\in \mathbb{C}\langle x_1,\ldots,x_{2r}\rangle$, where 
$e_1,\ldots,e_D$ denotes the standard basis of $\mathbb{C}^D$.
We can therefore estimate
$$
	\max_{i,j} \|P_{ij}(\boldsymbol{x},\boldsymbol{x}^{*})\|
	\le
	\|P(\boldsymbol{x},\boldsymbol{x}^{*})\| \le
	D^2 \max_{i,j}\|P_{ij}(\boldsymbol{x},\boldsymbol{x}^{*})\|,
$$
and analogously for $P(\boldsymbol{X}^N,\boldsymbol{X}^{N*})$.
Here we used $\|P_{ij}\| = 
\|(e_ie_i^*\otimes\id)P(e_je_j^*\otimes\id)\|$
for the first inequality
and the triangle inequality for the second.
Thus $a$ yields
$$
	D^{-2}\|P(\boldsymbol{x},\boldsymbol{x}^{*})\| -o(1)
	\le
	\|P(\boldsymbol{X}^N,\boldsymbol{X}^{N*})\|
	\le D^2 \|P(\boldsymbol{x},\boldsymbol{x}^{*})\| + o(1)
$$
for probability $1-o(1)$ as $N\to\infty$ for every
$P\in\M_D(\mathbb{C})\otimes\mathbb{C}\langle x_1,\ldots,x_{2r}\rangle$.
Now note that since $\|P\|^{2p} = \|(P^*P)^p\|$ and 
$(P^*P)^p\in \M_D(\mathbb{C})\otimes\mathbb{C}\langle 
x_1,\ldots,x_{2r}\rangle$ for every $p\in\mathbb{N}$, applying the above 
inequality to $(P^*P)^p$ implies \emph{a fortiori} that
$$
	D^{-1/p}\|P(\boldsymbol{x},\boldsymbol{x}^{*})\| -o(1)
	\le
	\|P(\boldsymbol{X}^N,\boldsymbol{X}^{N*})\|
	\le D^{1/p} \|P(\boldsymbol{x},\boldsymbol{x}^{*})\| + o(1)
$$
for probability $1-o(1)$ as $N\to\infty$. Taking $p\to\infty$
completes the proof.
\end{proof}

Strong convergence of polynomials with operator coefficients requires 
additional assumptions. For example, if the coefficients are 
\emph{compact} operators, strong convergence follows easily from Lemma 
\ref{lem:mtxstr} since compact operators can be approximated in norm by 
finite rank operators (i.e., by matrices).

A much weaker requirement is provided by the following 
property of $C^*$-algebras. We give the definition in the form that is 
most relevant for our purposes; its equivalence to the original more 
algebraic definition (in terms of short exact sequences) is a nontrivial 
fact due to Kirchberg, see \cite[Chapter 17]{Pis03} or \cite{BO08}.

\begin{defn}[Exact $C^*$-algebra]
\label{defn:exact}
A $C^*$-algebra $\mathcal{B}$ is called \emph{exact} if for every
finite-dimensional subspace $\mathcal{S}\subseteq\mathcal{B}$ and
$\varepsilon>0$, there exists $D\in\mathbb{N}$ and a linear embedding
$u:\mathcal{S}\to \M_D(\mathbb{C})$ such that 
$$
	\|(u\otimes\mathrm{id})(x)\| \le \|x\| \le
	(1+\varepsilon)\|(u\otimes\mathrm{id})(x)\|
$$
for every $C^*$-algebra $\mathcal{A}$ and
$x\in\mathcal{S}\otimes\mathcal{A}$.
\end{defn}

We can now prove the following.

\begin{lem}[Operator-valued strong convergence]
\label{lem:exact}
Suppose that $\boldsymbol{X}^N$ converges strongly to $\boldsymbol{x}$.
Then we have
$$
	\lim_{N\to\infty} \|P(\boldsymbol{X}^N,\boldsymbol{X}^{N*})\|
	= \|P(\boldsymbol{x},\boldsymbol{x}^{*})\|
	\quad\text{in probability}
$$
for every 
$P\in\mathcal{B}\otimes\mathbb{C}\langle x_1,\ldots,x_{2r}\rangle$
with coefficients in an exact $C^*$-algebra $\mathcal{B}$.
\end{lem}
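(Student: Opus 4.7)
The strategy is to use exactness of $\mathcal{B}$ to approximate the operator coefficients of $P$ by matrix coefficients with arbitrarily small distortion in norm, thereby reducing to the matrix-valued case already established in Lemma~\ref{lem:mtxstr}.

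First I would gather the finitely many coefficients $b_{i_1,\ldots,i_k} \in \mathcal{B}$ that appear in $P$ and let $\mathcal{S} \subseteq \mathcal{B}$ denote their linear span, which is finite-dimensional. Fix $\varepsilon > 0$. By Definition~\ref{defn:exact} there exist $D \in \mathbb{N}$ and a linear embedding $u : \mathcal{S} \to \M_D(\mathbb{C})$ that is $(1+\varepsilon)$-bicontinuous after tensoring with any $C^*$-algebra $\mathcal{A}$. Applying $u$ coefficient-wise to $P$ produces a matrix-coefficient polynomial $\tilde{P} \in \M_D(\mathbb{C}) \otimes \mathbb{C}\langle x_1,\ldots,x_{2r}\rangle$. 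Taking $\mathcal{A} = C^*(\boldsymbol{X}^N,\boldsymbol{X}^{N*})$ and then $\mathcal{A} = C^*(\boldsymbol{x},\boldsymbol{x}^*)$ in the exactness inequality, I obtain the sandwich
\[
\|\tilde{P}(\boldsymbol{y},\boldsymbol{y}^*)\| \;\le\; \|P(\boldsymbol{y},\boldsymbol{y}^*)\| \;\le\; (1+\varepsilon)\,\|\tilde{P}(\boldsymbol{y},\boldsymbol{y}^*)\|
\]
for $\boldsymbol{y} \in \{\boldsymbol{X}^N, \boldsymbol{x}\}$.

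Next, since $\tilde{P}$ has matrix coefficients, Lemma~\ref{lem:mtxstr} applied to the strong convergence hypothesis yields $\|\tilde{P}(\boldsymbol{X}^N,\boldsymbol{X}^{N*})\| \to \|\tilde{P}(\boldsymbol{x},\boldsymbol{x}^*)\|$ in probability. Combining this with the two-sided bounds above, for any $\eta > 0$ I get, with probability $1-o(1)$,
\[
\tfrac{1}{1+\varepsilon}\|P(\boldsymbol{x},\boldsymbol{x}^*)\| - \eta \;\le\; \|P(\boldsymbol{X}^N,\boldsymbol{X}^{N*})\| \;\le\; (1+\varepsilon)\bigl(\|P(\boldsymbol{x},\boldsymbol{x}^*)\| + \eta\bigr).
\]
Given any target accuracy $\delta > 0$, choosing $\varepsilon$ and $\eta$ small enough (depending only on $\|P(\boldsymbol{x},\boldsymbol{x}^*)\|$) makes this sandwich tighter than $\delta$, which is precisely convergence in probability of $\|P(\boldsymbol{X}^N,\boldsymbol{X}^{N*})\|$ to $\|P(\boldsymbol{x},\boldsymbol{x}^*)\|$.

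There is no substantive analytic obstacle: Definition~\ref{defn:exact} is engineered precisely to license this reduction, and Lemma~\ref{lem:mtxstr} supplies the remaining content. The only mild bookkeeping point is the algebraic identification $(u \otimes \mathrm{id})(P(\boldsymbol{y},\boldsymbol{y}^*)) = \tilde{P}(\boldsymbol{y},\boldsymbol{y}^*)$ inside the minimal tensor product $\M_D(\mathbb{C}) \otimes \mathcal{A}$, which holds by linearity of $u$ together with the fact that the monomial word factors in $\boldsymbol{y},\boldsymbol{y}^*$ are unaffected by $u \otimes \mathrm{id}$.
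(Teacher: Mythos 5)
Your proof is correct and follows essentially the same route as the paper: span the coefficients, invoke the exactness embedding $u$ to sandwich the operator-coefficient norm between the norms of the matrix-coefficient polynomial $(u\otimes\mathrm{id})(P)$ (for both $\boldsymbol{X}^N$ and $\boldsymbol{x}$), apply Lemma~\ref{lem:mtxstr}, and let $\varepsilon\downarrow 0$. The only difference is cosmetic bookkeeping (your explicit $\eta$ and the remark on $(u\otimes\mathrm{id})$ commuting with the word structure), which the paper leaves implicit.
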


\begin{proof}
Fix $P\in\mathcal{B}\otimes\mathbb{C}\langle x_1,\ldots,x_{2r}\rangle$,
let $\mathcal{S}\subseteq\mathcal{B}$ be the linear span of the 
operator coefficients of $P$, and let $\varepsilon>0$. 
Let $u:\mathcal{S}\to\M_D(\mathbb{C})$ be the embedding
provided
by Definition \ref{defn:exact}. Since $Q=(u\otimes\mathrm{id})(P)\in
\M_D(\mathbb{C})\otimes \mathbb{C}\langle x_1,\ldots,x_{2r}\rangle$,
we obtain
$$
	\|Q(\boldsymbol{x},\boldsymbol{x}^{*})\| - o(1)
	\le
	\|P(\boldsymbol{X}^N,\boldsymbol{X}^{N*})\| \le
	(1+\varepsilon)\|Q(\boldsymbol{x},\boldsymbol{x}^{*})\| + o(1)
$$
with probability $1-o(1)$ as $N\to\infty$ by Lemma \ref{lem:mtxstr},
while
$$
	\|Q(\boldsymbol{x},\boldsymbol{x}^{*})\|\le 
	\|P(\boldsymbol{x},\boldsymbol{x}^{*})\| \le 
	(1+\varepsilon)\|Q(\boldsymbol{x},\boldsymbol{x}^{*})\|.
$$
The conclusion follows by letting $\varepsilon\downarrow 0$.
\end{proof}

The exactness property turns out to arise frequently in practice. In 
particular, $C^*_{\rm red}(\mathbf{F}_r)$ is exact \cite[Corollary 
17.10]{Pis03}, as is $C^*_{\rm red}(\mathbf{G})$ for many other groups 
$\mathbf{G}$. For an extensive discussion, see 
\cite[Chapter 5]{BO08} or \cite{Ana07}.

One reason that exactness is very useful in a strong convergence context 
is that it enables us construct complex strong convergence models by 
combining simpler building blocks, as will be explained briefly in section 
\ref{sec:hayes}. Another useful application of exactness is that it 
enables an improved form of Lemma \ref{lem:upperlower} with uniform bounds 
over polynomials with matrix coefficients of any dimension
\cite[\S 5.3]{MdlS24}.

\subsection{Linearization}
\label{sec:lin}

In the previous section, we showed that strong convergence of polynomials 
with scalar coefficients implies strong convergence of polynomials with 
matrix coefficients. If we allow for matrix coefficients, however, we can 
achieve a different kind of simplification: to establish strong 
convergence, it suffices to consider only polynomials with matrix 
coefficients \emph{of degree one}. This nontrivial fact is often referred 
to as the \emph{linearization trick}.

We first develop a version of the linearization trick for unitary 
families.

\begin{thm}[Unitary linearization]
\label{thm:ulin}
Let $\boldsymbol{U}^N=(U_1^N,\ldots,U_r^N)$ be a sequence of $r$-tuples
of unitary random matrices, and let $\boldsymbol{u}=(u_1,\ldots,u_r)$ be
an $r$-tuple of unitaries in a $C^*$-algebra $\mathcal{A}$. Then
the following are equivalent.
%\smallskip
\begin{enumerate}[a.]
\itemsep\medskipamount  
\item For every $D\in\mathbb{N}$ and self-adjoint
$P\in\M_D(\mathbb{C})\otimes\mathbb{C}\langle x_1,\dots,x_{2r}\rangle$
of degree one,
$$
	\lim_{n\to\infty} 
	\|P(\boldsymbol{U}^N,\boldsymbol{U}^{N*})\| =
	\|P(\boldsymbol{u},\boldsymbol{u}^*)\|
	\quad\text{in probability}.
$$
\item $\boldsymbol{U}^N$ converges strongly to $\boldsymbol{u}$.
\end{enumerate}
\end{thm}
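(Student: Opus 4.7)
The direction $b\Rightarrow a$ is essentially free: by Lemma~\ref{lem:mtxstr}, strong convergence of $\boldsymbol{U}^N$ to $\boldsymbol{u}$ implies norm convergence for \emph{every} matrix-valued polynomial in $(\boldsymbol{U}^N,\boldsymbol{U}^{N*})$, of which self-adjoint degree-one matrix polynomials form a special case.

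For the substantive direction $a\Rightarrow b$, my plan proceeds in two stages. First I would upgrade hypothesis (a) to cover \emph{arbitrary} (non-self-adjoint) degree-one matrix polynomials: given $T\in\M_n(\mathbb{C})\otimes\mathbb{C}\langle x_1,\dots,x_{2r}\rangle$ of degree one, the Hermitian dilation
$$
	L_T = \begin{pmatrix} 0 & T^* \\ T & 0 \end{pmatrix}
$$
is a self-adjoint degree-one matrix polynomial in $\M_{2n}(\mathbb{C})\otimes\mathbb{C}\langle x_1,\dots,x_{2r}\rangle$ with $\|L_T(\boldsymbol{y},\boldsymbol{y}^*)\|=\|T(\boldsymbol{y},\boldsymbol{y}^*)\|$, so hypothesis (a) applied to $L_T$ yields norm convergence for $T$. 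By Lemma~\ref{lem:equivstrong} ($a\Leftrightarrow b$), condition (b) in turn reduces to showing
$$
	\|P(\boldsymbol{U}^N,\boldsymbol{U}^{N*})\|\to\|P(\boldsymbol{u},\boldsymbol{u}^*)\|
	\quad\text{in probability}
$$
for every self-adjoint \emph{scalar} polynomial $P\in\mathbb{C}\langle x_1,\dots,x_{2r}\rangle$ of arbitrary degree.

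The second stage is the classical \emph{linearization trick} of Haagerup--Thorbj{\o}rnsen~\cite{HT05}, refined by Anderson~\cite{And13}: to any self-adjoint $P$ one associates a self-adjoint degree-one matrix polynomial $L_P\in\M_D(\mathbb{C})\otimes\mathbb{C}\langle x_1,\dots,x_{2r}\rangle$ and an affine self-adjoint map $\Lambda:\mathbb{R}\to\M_D(\mathbb{C})$ such that, for every tuple $\boldsymbol{y}$ in any unital $C^*$-algebra and every $\lambda\in\mathbb{R}$,
$$
	\lambda\in\spc\bigl(P(\boldsymbol{y},\boldsymbol{y}^*)\bigr)
	\quad\iff\quad
	L_P(\boldsymbol{y},\boldsymbol{y}^*)-\Lambda(\lambda)
	\text{ is not invertible}.
$$
The construction is purely algebraic, by iterated Schur complements that absorb high-degree monomials of $P$ into block matrices whose entries are degree-one polynomials in $(\boldsymbol{y},\boldsymbol{y}^*)$. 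Applying the first-stage convergence to the affine family of degree-one polynomials $\{L_P-\Lambda(\lambda)-t\id:\lambda,t\in\mathbb{R}\}$, I would extract from their norm convergence the invertibility information needed: for each $\lambda\notin\spc(P(\boldsymbol{u},\boldsymbol{u}^*))$, the operator $L_P(\boldsymbol{U}^N,\boldsymbol{U}^{N*})-\Lambda(\lambda)$ is invertible with probability $1-o(1)$, hence $\lambda\notin\spc(P(\boldsymbol{U}^N,\boldsymbol{U}^{N*}))$. Sweeping $\lambda$ over a countable dense subset of $\mathbb{R}\setminus\spc(P(\boldsymbol{u},\boldsymbol{u}^*))$ yields the one-sided containment $\spc(P(\boldsymbol{U}^N,\boldsymbol{U}^{N*}))\subseteq \spc(P(\boldsymbol{u},\boldsymbol{u}^*))+[-\varepsilon,\varepsilon]$ with probability $1-o(1)$ for every $\varepsilon>0$---exactly condition (a) of Lemma~\ref{lem:upperlower}---from which weak convergence and hence the matching lower bound follow, in the typical setting where $C^*(\boldsymbol{u})$ has a unique trace.

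\textbf{Main obstacle.} The delicate step is the extraction of invertibility of $L_P(\boldsymbol{U}^N,\boldsymbol{U}^{N*})-\Lambda(\lambda)$ from scalar norm-value convergence of self-adjoint degree-one polynomials: in general, knowing $\|A_N\|\to\|A\|$ for self-adjoint operators controls only the extremes of the spectrum, not the location of interior spectrum near zero. The key point is that hypothesis (a) supplies norm data for the \emph{entire} affine family of scalar shifts $L_P-\Lambda(\lambda)-t\id$, and the special Schur-complement structure of the Haagerup--Thorbj{\o}rnsen linearization---in which the auxiliary blocks of $L_P$ have spectrum bounded away from the region where the spectrum of $P$ is encoded---converts this rich norm data into the distance-from-zero information one needs. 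Implementing this transfer precisely is the technical heart of the proof.
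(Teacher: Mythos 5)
Your direction $b\Rightarrow a$ matches the paper. For $a\Rightarrow b$, you take a genuinely different route from the paper, and the route as proposed has a gap at exactly the step you flag as the ``technical heart.''

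The paper's proof (Pisier--Lehner) never passes through the self-adjoint Schur-complement linearization. Instead it uses the unitary structure directly: express $P(\boldsymbol{u},\boldsymbol{u}^*)$ as $\sum_{ij}A_{ij}\otimes U_i^*U_j$ where $U_i$ run over monomials of half the degree; use the dilation to symmetrize the spectrum; shift by a large multiple of $\id$ to make the coefficient block positive definite; factor it as $B^*B$; and conclude $\|P(\boldsymbol{u},\boldsymbol{u}^*)\|+rc=\|Q(\boldsymbol{u},\boldsymbol{u}^*)\|^2$ with $Q$ degree one. Crucially this is an \emph{algebraic identity valid for all unitary tuples in all $C^*$-algebras}, so iterating it produces a polynomial $h$ with $\|P(\boldsymbol{y},\boldsymbol{y}^*)\|=h(\|Q(\boldsymbol{y},\boldsymbol{y}^*)\|)$ for every unitary $\boldsymbol{y}$; applying this to both $\boldsymbol{U}^N$ and $\boldsymbol{u}$ makes the norm convergence transfer trivially. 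There is no invertibility-extraction problem at all.

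Your route, by contrast, must pass from norm convergence of self-adjoint degree-one matrix polynomials to a spectral-containment statement (so as to conclude invertibility of $L_P(\boldsymbol{U}^N,\boldsymbol{U}^{N*})-\Lambda(\lambda)$). But the paper's example immediately preceding Theorem~\ref{thm:salin} is precisely a warning that this passage cannot be automatic: two self-adjoint operators $x,y$ can have $\|P(x)\|=\|P(y)\|$ for \emph{every} degree-one matrix polynomial $P$ while having different spectra, because degree-one norms only see the convex hull of the spectrum. Norm convergence of the affine family $L_P-\Lambda(\lambda)-t\id$ (over all $t$) again only controls the two extreme eigenvalues of each member of the family; it does not control whether $0$ lies in a spectral gap of $L_P-\Lambda(\lambda)$, which is exactly what you need. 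That is why the paper's Theorem~\ref{thm:salin} takes \emph{spectral containment} of degree-one polynomials, not norm convergence, as its hypothesis. If your approach is to go through Theorem~\ref{thm:salin}, you must first prove that hypothesis $a$ of Theorem~\ref{thm:ulin} implies hypothesis $a$ of Theorem~\ref{thm:salin}; you assert that the ``special Schur-complement structure'' makes this work, but you give no argument, and given the explicit counterexample in the self-adjoint case this claim cannot be waved through. This is the substantive missing piece. (Secondarily, the step where you sweep $\lambda$ over a countable dense set and conclude a uniform spectral containment needs a quantitative argument; the ``probability $1-o(1)$'' in your claim is not automatically uniform over $\lambda$.)

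So: correct statement of the problem and a correct easy direction, but the plan for $a\Rightarrow b$ rests on an unproved transfer from norm data to spectral data that the paper's own discussion indicates is nontrivial, and the paper avoids by a different and self-contained argument specific to unitaries. If you want to salvage your route, the cleanest fix is simply to learn the Pisier--Lehner factorization trick; it is shorter than the Schur-complement machinery and is tailored to exactly the norm-only hypothesis you have.
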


Theorem \ref{thm:ulin} is due to Pisier \cite{Pis96,Pis18}, but the 
elementary proof we present here is due to Lehner \cite[\S 5.1]{Leh99}. 
We will need a classical lemma.

\begin{lem}
\label{lem:dilation}
For any operator $X$ in a $C^*$-algebra $\mathcal{A}$, define its
self-adjoint dilation
$\tilde X= e_1e_2^*\otimes X + e_2e_1^*\otimes X^*$ in
$\M_2(\mathbb{C})\otimes\mathcal{A}$.
Then $\|X\|=\|\tilde X\|$ and $\spc(\tilde X)=-\spc(\tilde 
X)$.
\end{lem}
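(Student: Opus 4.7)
The plan is to treat the two claims separately and reduce each to a one-line block-matrix computation. Write $\tilde X$ as the $2\times 2$ block matrix
\[
\tilde X = \begin{pmatrix} 0 & X \\ X^* & 0 \end{pmatrix}
\]
in $\M_2(\mathbb{C})\otimes\mathcal{A}$, which is manifestly self-adjoint.

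For the norm equality, I would use the $C^*$-identity. Since $\tilde X = \tilde X^*$, we have $\|\tilde X\|^2 = \|\tilde X^2\|$. A direct block computation gives
\[
\tilde X^2 = \begin{pmatrix} XX^* & 0 \\ 0 & X^*X \end{pmatrix},
\]
which is block diagonal, so its norm is $\max(\|XX^*\|,\|X^*X\|) = \|X\|^2$ by the $C^*$-identity applied to $X$. Hence $\|\tilde X\| = \|X\|$.

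For the symmetry of the spectrum, I would exhibit a self-adjoint unitary $W\in\M_2(\mathbb{C})\otimes\mathcal{A}$ conjugating $\tilde X$ to $-\tilde X$. The natural choice is
\[
W = (e_1e_1^* - e_2e_2^*)\otimes\id = \begin{pmatrix} \id & 0 \\ 0 & -\id \end{pmatrix},
\]
for which a direct block computation yields $W\tilde X W^{-1} = -\tilde X$. Since conjugation by a unitary preserves the spectrum, $\spc(\tilde X) = \spc(-\tilde X) = -\spc(\tilde X)$.

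There is no serious obstacle here; the only thing to be careful about is that both assertions are entirely algebraic once one has written $\tilde X$ in block form, so I would simply present the two computations above as the full argument.
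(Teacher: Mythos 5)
Your proposal is correct and follows the paper's proof essentially verbatim: both use the $C^*$-identity $\|\tilde X\|^2=\|\tilde X^2\|$ together with the block-diagonal form of $\tilde X^2$ for the norm equality, and both conjugate by $(e_1e_1^*-e_2e_2^*)\otimes\id$ for the spectral symmetry.
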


\begin{proof}
We first note that
$
	\|\tilde X\|^2 =
	\|\tilde X^2\| = \|e_1e_1^*\otimes XX^* + e_2e_2^*\otimes X^*X\|
	=\|X\|^2
$.
To show that the spectrum is symmetric, it suffices to note that
$\tilde X$ is unitarily conjugate to $-\tilde X$ since
$U\tilde X U^*=-\tilde X$ with
$U=(e_1e_1^*-e_2e_2^*)\otimes\id$.
\end{proof}

The main step in the proof of Theorem \ref{thm:ulin} is as follows.

\begin{lem}
\label{lem:linstep}
Fix $D,r\in\mathbb{N}$ and $A_{ij}\in\M_D(\mathbb{C})$ for 
$i,j\in[r]$. Then there exist $C\ge 0$, $D'\in\mathbb{N}$ and
$A_i'\in \M_{D'}(\mathbb{C})$ for $i\in[r]$ such that
$$
	\Bigg\|\sum_{i,j=1}^{r} A_{ij}\otimes U_i^*U_j\Bigg\| =
	\Bigg\|\sum_{i=1}^{r} A_i'\otimes U_i\Bigg\|^2 - C
$$
for any family of unitaries $U_1,\ldots,U_r$ 
in any $C^*$-algebra 
$\mathcal{A}$.
\end{lem}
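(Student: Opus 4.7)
The idea is to use the $C^*$-identity $\|V\|^{2}=\|V^{*}V\|$ to realize $\sum_{i,j}A_{ij}\otimes U_{i}^{*}U_{j}$ as a constant shift of $V^{*}V$ for a suitable linear pencil $V=\sum_{i}A_{i}'\otimes U_{i}$. The shift will come directly from the unitarity relation $\sum_{i}U_{i}^{*}U_{i}=r\cdot\id$, while a preliminary self-adjoint dilation (Lemma~\ref{lem:dilation}) is used to guarantee that both sides end up with the correct norm.

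Concretely, set $X=\sum_{i,j}A_{ij}\otimes U_{i}^{*}U_{j}$ and form its dilation $\tilde X=e_{1}e_{2}^{*}\otimes X+e_{2}e_{1}^{*}\otimes X^{*}\in \M_{2D}(\mathbb{C})\otimes\mathcal{A}$. A direct computation (using $X^{*}=\sum_{i,j}A_{ji}^{*}\otimes U_{i}^{*}U_{j}$ after relabeling) gives $\tilde X=\sum_{i,j}\tilde A_{ij}\otimes U_{i}^{*}U_{j}$ with
$$\tilde A_{ij}=e_{1}e_{2}^{*}\otimes A_{ij}+e_{2}e_{1}^{*}\otimes A_{ji}^{*}\in \M_{2D}(\mathbb{C}),$$
and one checks immediately that the block matrix $\tilde A=(\tilde A_{ij})_{i,j=1}^{r}\in \M_{2rD}(\mathbb{C})$ is self-adjoint. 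Choose now any $C\ge \|\tilde A\|$ so that $\tilde A+C\cdot\id\ge 0$, and factor $\tilde A+C\cdot\id = B^{*}B$ (e.g.\ via the square root). Partitioning $B$ into block columns $B=[B^{(1)},\ldots,B^{(r)}]$ with $B^{(i)}\in \M_{2rD\times 2D}(\mathbb{C})$ and setting $V=\sum_{i}B^{(i)}\otimes U_{i}$, the unitarity relation $\sum_{i}U_{i}^{*}U_{i}=r\cdot\id$ gives
$$
V^{*}V=\sum_{i,j}(B^{(i)})^{*}B^{(j)}\otimes U_{i}^{*}U_{j}=\sum_{i,j}\tilde A_{ij}\otimes U_{i}^{*}U_{j}+rC\cdot\id=\tilde X+rC\cdot\id.
$$

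Since $V^{*}V\ge 0$ is automatic and $\spc(\tilde X)$ is symmetric around $0$ by Lemma~\ref{lem:dilation}, the spectrum of $V^{*}V$ lies in $[\,rC-\|\tilde X\|,\,rC+\|\tilde X\|\,]\subseteq [0,\infty)$, so that $\|V\|^{2}=\|V^{*}V\|=rC+\|\tilde X\|=rC+\|X\|$. Padding each $B^{(i)}$ with zero columns to obtain square matrices $A_{i}'\in \M_{D'}(\mathbb{C})$ with $D'=2rD$ preserves $\|V\|$ and produces the claimed identity with the universal constant $rC\ge 0$, independent of the choice of unitaries.

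The only genuinely nontrivial ingredient is the combined use of the dilation and the unitarity relation: without the dilation, the positive shift $rC\cdot\id$ could push the spectrum of $V^{*}V$ asymmetrically across $0$, so that $\|V\|^{2}$ would record $\lambda_{\max}(X)+rC$ rather than $\|X\|+rC$. The symmetric-spectrum property furnished by Lemma~\ref{lem:dilation} is precisely what makes these two quantities coincide, and is the one step where it is not enough to invoke purely algebraic manipulations.
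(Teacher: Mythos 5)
Your proof is correct and follows essentially the same route as the paper: dilate $X$ via Lemma~\ref{lem:dilation} to make the block coefficient matrix self-adjoint, shift by a multiple of the identity (absorbed through the unitarity relation $\sum_i U_i^*U_i = r\,\id$) to make it positive semidefinite, factor as $B^*B$, read off the block columns, and pad with zero columns. The only cosmetic difference is that the paper keeps the scalar shift $c$ generic and verifies $\|\tilde X + rc\,\id\| = \|\tilde X\| + rc$ directly from the symmetric spectrum for any $c>0$, while you invoke the automatic positivity of $V^*V$ to locate the spectrum; both rely on the same symmetric-spectrum property of the dilation, which you correctly identify as the crucial step.
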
 

\begin{proof}
Let $X=\sum_{i,j=1}^{r} A_{ij}\otimes U_i^*U_j$. Lemma 
\ref{lem:dilation} yields $\tilde X=\sum_{i,j=1}^{r} \tilde A_{ij}\otimes 
U_i^*U_j$ with $\tilde A_{ij}=e_1e_2^*\otimes A_{ij} + 
e_2e_1^*\otimes A_{ji}^*$. We therefore obtain for any $c>0$
$$
	\|X\| + rc =
	\|\tilde X\| + rc =
	\|\tilde X + rc\id\| =
	\Bigg\|\sum_{i,j=1}^{r} (\tilde A_{ij}+c1_{i=j}\id)\otimes 
	U_i^*U_j\Bigg\|,
$$
where the second equality used
that $\tilde X$ has a symmetric spectrum. 

Now note that the $r\times r$ block matrix $\tilde A=(\tilde 
A_{ij}+c1_{i=j}\id)_{i,j\in[r]}\in \M_{2Dr}(\mathbb{C})$
is self-adjoint, and we can choose $c$ sufficiently large so that it is 
positive definite. Then we may write $\tilde A=B^*B$ for
$B\in\M_{2Dr}(\mathbb{C})$. Now view $B$ as an $1\times r$ block matrix
with $2Dr\times 2D$ blocks $B_1,\ldots,B_r$, so that
$\tilde A_{ij} + c1_{i=j}\id = B_i^*B_j$. Therefore
$
	\|X\|+rc =
	\|Y^*Y\| = \|Y\|^2
$
with $Y=\sum_{i=1}^r B_i\otimes U_i$. To conclude we let
$C=rc$, $D'=2Dr$, and define $A_i'$ by padding
$B_i$ with $2D(r-1)$ zero columns.
\end{proof}

We can now conclude the proof of Theorem \ref{thm:ulin}.

\begin{proof}[Proof of Theorem \ref{thm:ulin}]
Fix any $P\in\M_D(\mathbb{C})\otimes\mathbb{C}
\langle x_1,\ldots,x_r\rangle$ of degree at most $2^q$, and let
$\boldsymbol{u}=(u_1,\ldots,u_r)$ be unitaries in any $C^*$-algebra 
$\mathcal{A}$. Denote by $U_1,\ldots,U_R$ all monomials of 
degree at most $2^{q-1}$ in the variables 
$\boldsymbol{u},\boldsymbol{u}^*$. Then we may clearly express 
$P(\boldsymbol{u},\boldsymbol{u}^*)=
\sum_{i,j=1}^R A_{ij}\otimes U_i^*U_j$ for some matrix coefficients
$A_{ij}\in\M_D(\mathbb{C})$. Lemma \ref{lem:linstep} yields
$P'\in\M_{D'}(\mathbb{C})\otimes\mathbb{C}
\langle x_1,\ldots,x_r\rangle$ of degree at most $2^{q-1}$ so that
$$
	\|P(\boldsymbol{u},\boldsymbol{u}^*)\| =
	\|P'(\boldsymbol{u},\boldsymbol{u}^*)\|^2 - C.
$$
Iterating this procedure $q$ times and using Lemma \ref{lem:dilation}, we 
obtain a self-adjoint $Q\in\M_{D''}(\mathbb{C})\otimes\mathbb{C}
\langle x_1,\ldots,x_r\rangle$ of degree at most one and a real
polynomial $h$ so that
$$
	\|P(\boldsymbol{u},\boldsymbol{u}^*)\| = 
	h(\|Q(\boldsymbol{u},\boldsymbol{u}^*)\|)
$$
for any $r$-tuple of unitaries $\boldsymbol{u}=(u_1,\ldots,u_r)$ 
in any $C^*$-algebra $\mathcal{A}$.
As this identity
therefore applies also to $\boldsymbol{U}^N$, the implication 
$a\Rightarrow b$ follows immediately. The converse implication
$b\Rightarrow a$ follows from Lemma \ref{lem:mtxstr}.
\end{proof}

We have included a full proof of Theorem \ref{thm:ulin} to give a flavor 
of how the linearization trick comes about. In the rest of this section, 
we briefly discuss two additional linearization results without proof.

The proof of Theorem \ref{thm:ulin} relied crucially on the unitary 
assumption. It is tempting to conjecture that its conclusion extends to 
the non-unitary case. Unfortuntately, a simple example shows that this 
cannot be true.

\begin{example}
Consider any $D\in\mathbb{N}$ and
$P\in\M_D(\mathbb{C})\otimes\mathbb{C}\langle x_1\rangle$ of degree one, 
that is,
$P(x_1)=A_0\otimes\id + A_1\otimes x_1$. Then the spectral theorem yields
$$
	\|P(x)\| = \sup_{\lambda\in\spc(x)} \|A_0+\lambda A_1\|
$$
for every self-adjoint operator $x$.
Now let $x,y$ be self-adjoint operators with
$\spc(x)=[-1,1]$ and $\spc(y)=\{-1,1\}$. Since the right-hand side of the 
above identity is
the supremum of a convex function of $\lambda$, it is clear that
$\|P(x)\|=\|P(y)\|$ for every 
$P\in\M_D(\mathbb{C})\otimes\mathbb{C}\langle x_1\rangle$ of degree one. 
But clearly $\|1-x^2\|=1$ while $\|1-y^2\|=0$. 
\end{example}

This example shows that the norms of polynomials of degree one cannot 
detect gaps in the spectrum of a self-adjoint operator, while higher 
degree polynomials can. Thus the norm of degree one polynomials does not 
suffice for strong convergence in the self-adjoint setting. However, it 
was realized by Haagerup and Thorbj{\o}rnsen~\cite{HT05} that this issue 
can be surmounted by requiring convergence not just of the norm, but 
rather of the full spectrum, of degree one polynomials.

\begin{thm}[Self-adjoint linearization]
\label{thm:salin}
Let $\boldsymbol{X}^N=(X_1^N,\ldots,X_r^N)$ be a sequence of $r$-tuples
of self-adjoint random matrices, and let $\boldsymbol{x}=(x_1,\ldots,x_r)$ 
be an $r$-tuple of self-adjoint elements of a $C^*$-algebra $\mathcal{A}$. 
The following are equivalent.
\smallskip
\begin{enumerate}[a.]
\itemsep\medskipamount  
\item For every $D\in\mathbb{N}$ and self-adjoint
$P\in\M_D(\mathbb{C})\otimes\mathbb{C}\langle x_1,\dots,x_{r}\rangle$
of degree one,
$$
	\spc\big(P(\boldsymbol{X}^N)\big)
	\subseteq
	\spc\big(P(\boldsymbol{x})\big)
	+ o(1) [-1,1]
	\quad\text{with probability}\quad 1-o(1).
$$
\item For every $D\in\mathbb{N}$ and self-adjoint
$P\in\M_D(\mathbb{C})\otimes\mathbb{C}\langle x_1,\dots,x_{r}\rangle$
$$
	\|P(\boldsymbol{X}^N)\| \le 
	\|P(\boldsymbol{x})\|+o(1)
	\quad\text{with probability}\quad 1-o(1).
$$
\end{enumerate}
\end{thm}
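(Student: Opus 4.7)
The proof splits into the two implications; $b\Rightarrow a$ is a soft polynomial-approximation argument, while the real content is $a\Rightarrow b$, which rests on the self-adjoint linearization trick.

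For $b\Rightarrow a$, I would follow the pattern of Lemma \ref{lem:equivstrong}, $c\Rightarrow d$. Given a self-adjoint degree-one matrix polynomial $P$, hypothesis $b$ applied to $P$ itself gives the a priori bound $\|P(\boldsymbol{X}^N)\|\le K$ with probability $1-o(1)$ for any fixed $K>\|P(\boldsymbol{x})\|$. Fix $\varepsilon>0$ and choose a continuous $f\ge 0$ vanishing on $\spc(P(\boldsymbol{x}))$ and bounded below by $1$ outside its $\varepsilon$-neighborhood. Uniformly approximating $f$ by a real polynomial $h$ on $[-K,K]$, the composition $h\circ P$ becomes a self-adjoint matrix polynomial of higher degree, and $b$ applied to it forces $\|h(P(\boldsymbol{X}^N))\|=o(1)$ with probability $1-o(1)$; this in turn yields $\spc(P(\boldsymbol{X}^N))\subseteq\spc(P(\boldsymbol{x}))+[-\varepsilon,\varepsilon]$ with probability $1-o(1)$, which is $a$.

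For $a\Rightarrow b$, the key input is Anderson's \emph{self-adjoint linearization lemma}, which I take as given: for every self-adjoint $Q\in\M_D(\mathbb{C})\otimes\mathbb{C}\langle x_1,\ldots,x_r\rangle$ there exist $D'\in\mathbb{N}$, a self-adjoint matrix $\kappa\in\M_{D'}(\mathbb{C})$ (concretely, a rank-one projection arising from a block decomposition), and a self-adjoint degree-one matrix polynomial $L\in\M_{D'}(\mathbb{C})\otimes\mathbb{C}\langle x_1,\ldots,x_r\rangle$ such that for every tuple $\boldsymbol{y}$ of self-adjoint elements of any unital $C^*$-algebra and every $\mu\in\mathbb{R}$,
\[
\mu\in\spc(Q(\boldsymbol{y}))
\iff
0\in\spc\bigl(L(\boldsymbol{y})-\mu\,\kappa\otimes\id\bigr).
\]
Given this, I fix $\varepsilon>0$, apply $a$ to each scalar polynomial $P=x_i$ to obtain $\|X_i^N\|\le C_0$ with probability $1-o(1)$, and hence an a priori bound $\spc(Q(\boldsymbol{X}^N))\subseteq[-C,C]$. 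The set $S_\varepsilon:=[-C,C]\setminus\bigl(\spc(Q(\boldsymbol{x}))+(-\varepsilon,\varepsilon)\bigr)$ is compact, and the function $\delta(\mu):=\operatorname{dist}\bigl(0,\spc(L(\boldsymbol{x})-\mu\kappa\otimes\id)\bigr)$ is continuous on $S_\varepsilon$ and strictly positive by the linearization, so $\delta_0:=\inf_{S_\varepsilon}\delta>0$. I then choose a finite $\eta$-net $\{\mu_1,\ldots,\mu_k\}\subseteq S_\varepsilon$ with $\eta\|\kappa\|<\delta_0/4$ and apply hypothesis $a$ to each of the finitely many degree-one self-adjoint matrix polynomials $L-\mu_j\kappa\otimes\id$: with probability $1-o(1)$, the spectra of $L(\boldsymbol{X}^N)-\mu_j\kappa\otimes\id$ lie within $\delta_0/4$ of the corresponding deterministic spectra, hence at distance at least $3\delta_0/4$ from $0$. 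For arbitrary $\mu\in S_\varepsilon$, I pick $\mu_j$ within $\eta$ of $\mu$; since the additional perturbation is bounded in norm by $\eta\|\kappa\|<\delta_0/4$, Weyl's inequality gives $0\notin\spc(L(\boldsymbol{X}^N)-\mu\kappa\otimes\id)$, so by the linearization $\mu\notin\spc(Q(\boldsymbol{X}^N))$. Thus $\spc(Q(\boldsymbol{X}^N))\subseteq\spc(Q(\boldsymbol{x}))+[-\varepsilon,\varepsilon]$ with probability $1-o(1)$, which yields $\|Q(\boldsymbol{X}^N)\|\le\|Q(\boldsymbol{x})\|+\varepsilon+o(1)$; letting $\varepsilon\downarrow 0$ concludes $b$.

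The main obstacle is the linearization lemma itself; its standard proof is a recursive construction that uses Schur complements to absorb each higher-degree monomial into an enlarged self-adjoint pencil, with self-adjointness preserved by symmetrizing the auxiliary blocks. Once it is in hand, the remainder of the argument is a soft combination of compactness, continuity of the spectrum in the Hausdorff metric, and Weyl perturbation applied to the finitely many linear pencils $L-\mu_j\kappa\otimes\id$ to which hypothesis $a$ can be invoked directly.
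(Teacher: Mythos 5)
The paper omits the proof of this theorem, referring the reader to \cite{HT05}, \cite{HST06}, and \cite[\S 10.3]{MS17}; your argument is, up to packaging, the standard one in those references, and it is correct. Both directions check out: for $b\Rightarrow a$, applying $b$ to a polynomial $h\circ P$ (with $h$ a real polynomial approximating an appropriate bump function on $[-K,K]$) correctly rules out eigenvalues of $P(\boldsymbol{X}^N)$ in the gaps of $\spc(P(\boldsymbol{x}))$, which the norm bound alone would not. For $a\Rightarrow b$, your use of the Schur-complement linearization to replace $\mu\in\spc(Q(\boldsymbol{y}))$ by $0\in\spc(L(\boldsymbol{y})-\mu\kappa\otimes\id)$ is exactly the Haagerup--Thorbj\o{}rnsen/Anderson device, and the subsequent compactness argument---positive infimum $\delta_0$ of $\mathrm{dist}(0,\spc(L(\boldsymbol{x})-\mu\kappa\otimes\id))$ over $S_\varepsilon$, a finite $\eta$-net so that hypothesis $a$ need only be invoked for finitely many degree-one pencils, and Weyl's inequality to interpolate between net points---is a clean and correct way to implement what the references do via uniform resolvent bounds. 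Two small remarks: $\kappa$ is the orthogonal projection onto the $D$-dimensional top-left block, so it has rank $D$, not rank one (unless $Q$ has scalar coefficients), though this does not affect anything; and the a priori compactness bound $\spc(Q(\boldsymbol{X}^N))\subseteq[-C,C]$ indeed follows from applying $a$ to the scalar pencils $x_i$ as you say, since $a$ for $P=x_i$ forces $\|X_i^N\|\le\|x_i\|+o(1)$ with high probability.
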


We omit the proof, which may be found in \cite{HT05} or in \cite{HST06} 
(see also \cite[\S 10.3]{MS17}). Let us note that while this theorem only 
gives an upper bound, the corresponding lower bound will often follow from 
Lemma \ref{lem:upperlower}.

Finally, while we have focused on strong convergence, 
linearization tricks for weak convergence can be found in the
paper \cite{dlS10} of de la Salle. For example, we state the following 
result which follows readily from the proof of \cite[Lemma 1.1]{dlS10}.

\begin{lem}[Linearization and weak convergence]
Let $(\mathcal{A},\tau)$ be a $C^*$-probability space.
Then in the setting of Theorem \ref{thm:salin}, the following are 
equivalent.
\smallskip
\begin{enumerate}[a.]
\itemsep\medskipamount
\item For every $p,D\in\mathbb{N}$ and self-adjoint
$P\in\M_D(\mathbb{C})\otimes\mathbb{C}\langle x_1,\dots,x_{r}\rangle$
of degree one,
$$
	\lim_{N\to\infty} \ntr\big(P(\boldsymbol{X}^N)^{2p}\big)
	= ({\ntr}\otimes\tau)\big(P(\boldsymbol{x})^{2p}\big)
	\quad\text{in probability}.
$$
\item $\boldsymbol{X}^N$ converges weakly to $\boldsymbol{x}$.
\end{enumerate}
\end{lem}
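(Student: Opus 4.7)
The implication $b \Rightarrow a$ is a direct expansion. Writing any self-adjoint degree-one polynomial as $P = \sum_{i=0}^r A_i \otimes x_i$ with the convention $x_0 = \id$ and $A_i^* = A_i$, one computes
\[
(\ntr\otimes\ntr)\bigl(P(\boldsymbol{X}^N)^{2p}\bigr) = \sum_{i_1,\dots,i_{2p}=0}^r \ntr(A_{i_1}\cdots A_{i_{2p}})\,\ntr(X^N_{i_1}\cdots X^N_{i_{2p}}),
\]
and analogously for $(\ntr\otimes\tau)(P(\boldsymbol{x})^{2p})$. Since the right-hand side is a finite linear combination of monomial traces with fixed scalar coefficients, weak convergence of $\boldsymbol{X}^N$ to $\boldsymbol{x}$ immediately delivers $a$.

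For $a \Rightarrow b$, the task is to invert this relation and recover convergence of each individual monomial trace. Fix a target word $\boldsymbol{j} = (j_1,\dots,j_k)$ in $\{1,\dots,r\}$ and pick any even $2p \ge k$. Padding $\boldsymbol{j}$ with $2p - k$ copies of the index $0$ produces a word $\tilde{\boldsymbol{j}}$ of length $2p$ satisfying $X^N_{\tilde{\boldsymbol{j}}} = X^N_{\boldsymbol{j}}$ and $x_{\tilde{\boldsymbol{j}}} = x_{\boldsymbol{j}}$, so it suffices to handle words of any fixed even length $2p$.

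The key input is a separation of cyclic classes by self-adjoint matrix traces: for each cyclic equivalence class $[\tilde{\boldsymbol{j}}]$ of words of length $2p$ in $\{0,1,\dots,r\}$, there exist finitely many self-adjoint matrix tuples $A^{(\alpha)} = (A^{(\alpha)}_0,\dots,A^{(\alpha)}_r)$ and complex scalars $c_\alpha$ such that
\[
\sum_\alpha c_\alpha\,\ntr\bigl(A^{(\alpha)}_{i_1}\cdots A^{(\alpha)}_{i_{2p}}\bigr) = \mathbf{1}_{[\boldsymbol{i}] = [\tilde{\boldsymbol{j}}]}
\]
for every word $\boldsymbol{i}$ of length $2p$. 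Granting this, I apply hypothesis $a$ to each $P^{(\alpha)} = \sum_i A^{(\alpha)}_i \otimes x_i$, sum over $\alpha$, and use the cyclic invariance of both $\ntr(X^N_{\boldsymbol{i}})$ and $\tau(x_{\boldsymbol{i}})$ to obtain $|[\tilde{\boldsymbol{j}}]|\,\ntr(X^N_{\tilde{\boldsymbol{j}}}) \to |[\tilde{\boldsymbol{j}}]|\,\tau(x_{\tilde{\boldsymbol{j}}})$ in probability, which yields weak convergence after dividing by the positive-integer class size.

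The main substantive point is the separation claim itself. This is an instance of the classical Procesi--Razmyslov theorem to the effect that the universal trace identities of matrices are generated by cyclic symmetry. An explicit construction is to start from the non-self-adjoint shift tuple $A_i = \sum_{s:\,\tilde j_s = i} e_{s+1}e_s^*$ acting on $\mathbb{C}^{2p}$ (with indices mod $2p$), for which $\tr(A_{\boldsymbol{i}})$ is supported precisely on cyclic rotations of $\tilde{\boldsymbol{j}}$; passing to a Hermitian dilation produces a self-adjoint tuple, while taking real and imaginary parts of traces of complex Hermitian tuples disentangles the residual word-reversal symmetry by standard finite linear algebra. The trace property of $\tau$ is precisely what makes cyclic invariance sufficient for this scheme to recover every monomial trace.
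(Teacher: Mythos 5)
The direction $b\Rightarrow a$ by expanding $\ntr(P^{2p})$ into a finite linear combination of monomial traces is fine, and the overall strategy for $a\Rightarrow b$—pad a target monomial with $0$'s to a fixed even length, then use matrix coefficients to isolate the corresponding cyclic class and read off its trace—is sound and is, I believe, essentially what the de la Salle reference does.

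The gap is in the justification of the ``separation of cyclic classes by self-adjoint matrix traces,'' which is the entire crux of $a\Rightarrow b$. The shift tuple $A_i = \sum_{s:\tilde j_s=i} e_{s+1}e_s^*$ is not self-adjoint, and the Hermitian dilation does \emph{not} rescue it: if $\tilde A_j = e_1e_2^*\otimes A_j + e_2e_1^*\otimes A_j^*$, then an easy computation gives, for even length,
$$
\tr\big(\tilde A_{i_1}\cdots\tilde A_{i_{2p}}\big) \;=\; 2\,\mathrm{Re}\,\tr\big(A_{i_1}A_{i_2}^*A_{i_3}A_{i_4}^*\cdots A_{i_{2p}}^*\big),
$$
an \emph{alternating} product of the $A$'s and their adjoints, not $\tr(A_{i_1}\cdots A_{i_{2p}})$. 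With your shift matrices the ``up–down–up–down'' product essentially collapses (the shifts cancel pairwise), and the resulting functional does not isolate the cyclic class of $\tilde{\boldsymbol{j}}$. The appeal to ``real and imaginary parts'' and ``standard finite linear algebra'' does not repair this: the word-reversal constraint $\phi_A(\mathrm{rev}(\boldsymbol{i})) = \overline{\phi_A(\boldsymbol{i})}$ on self-adjoint tuples is anti-linear, so this is precisely the point that requires an actual argument. The claim you need is in fact true, but it is more naturally established by a polynomial-vanishing argument rather than by a construction: $\tr(A_{i_1}\cdots A_{i_{2p}})$ is a polynomial in the entries of the $A_j$, the set of self-adjoint tuples is a totally real form of the space of all tuples (i.e.\ $\M_n(\mathbb{C})^{r+1}$ is the complexification of $\mathrm{H}_n^{r+1}$), so any complex linear combination $\sum_{\boldsymbol{i}} c_{\boldsymbol{i}}\tr(A_{\boldsymbol{i}})$ that vanishes for all self-adjoint tuples vanishes for all tuples; and for arbitrary tuples the shift construction does give indicators of cyclic classes. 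Procesi--Razmyslov, which you invoke, only addresses the unconstrained case and does not by itself handle the self-adjointness constraint.
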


Why is linearization useful? It is often the case that one can perform 
computations more easily for polynomials of degree one than for general 
polynomials. For example, linearization played a key role in the 
Haagerup--Thorbj{\o}rnsen proof of strong convergence of GUE matrices 
\cite{HT05} because the matrix Cauchy transform of polynomials of degree 
one can be computed by means of quadratic equations. Similarly, 
polynomials of degree one make the moment computations in the works of 
Bordenave and Collins \cite{BC19,BC20,BC24} tractable. However, the 
interpolation and polynomial methods discussed in section 
\ref{sec:newmethods} do not rely on linearization.

\subsection{Positivization}

The linearization trick of the previous section states that if we work 
with general matrix coefficients, it suffices to consider only polynomials 
of degree one. We now introduce (in the setting of group $C^*$-algebras) a 
complementary principle: if we admit polynomials of any degree, it 
suffices to consider only polynomials with \emph{positive} scalar 
coefficients. This \emph{positivization trick} was introduced 
in the work of Magee and de la Salle \cite[\S 6.2]{MdlS24}.\footnote{% 
This idea appears in \cite{MdlS24} in a slightly different context, cf.\ 
Remark \ref{rem:posnonfree}. The form of the positivization trick that is 
presented here was explained to the author by Mikael de la 
Salle.}

The positivization trick will rely on another nontrivial operator 
algebraic property that we introduce presently. Let us
fix a finitely generated group $\mathbf{G}$ with generators
$g_1,\ldots,g_r$, let $\lambda:\mathbf{G}\to B(l^2(\mathbf{G}))$ be
its left-regular representation, and let $\tau$ be the canonical
trace on $C^*_{\rm red}(\mathbf{G})$.
For simplicity, we will denote $u_i = \lambda(g_i)$.
Then for any $P\in\mathbb{C}\langle x_1,\ldots,x_{2r}\rangle$, we can
uniquely express
\begin{equation}
\label{eq:polygroup}
	P(\boldsymbol{u},\boldsymbol{u}^*) =
	\sum_{g\in\mathbf{G}}
	a_g \,\lambda(g)
\end{equation}
for some coefficients $a_g\in\mathbb{C}$ that vanish for all but a finite
number of $g\in\mathbf{G}$. Moreover, it is readily verified
using the definition of the trace that
$$
	\|P(\boldsymbol{u},\boldsymbol{u}^*)\|_2 =
	\tau(|P(\boldsymbol{u},\boldsymbol{u}^*)|^2)^{\frac{1}{2}}
	=
	\Bigg(
	\sum_{g\in\mathbf{G}} |a_g|^2
	\Bigg)^{\frac{1}{2}}.
$$
We can now introduce the following property.

\begin{defn}[Rapid decay property]
The group $\mathbf{G}$ is said to have the \emph{rapid decay property}
if there exists constants $C,c>0$ so that
$$
	\|P(\boldsymbol{u},\boldsymbol{u}^*)\| \le Cq^c
	\|P(\boldsymbol{u},\boldsymbol{u}^*)\|_2
$$
for all $q\in\mathbb{N}$ and $P\in\mathbb{C}\langle 
x_1,\ldots,x_{2r}\rangle$ of degree $q$.
\end{defn}

The key feature of this property is the polynomial dependence on 
degree $q$. This is a major improvement over the trivial bound obtained
by applying the triangle inequality and Cauchy--Schwarz, which would yield
such an inequality with an exponential constant $|\{g\in\mathbf{G}:a_g\ne 
0\}|^{1/2}\le (2r+1)^{q/2}$.

While the rapid decay property appears to be very strong, it is 
widespread. It was first proved by Haagerup \cite{Haa78} for the
free group $\mathbf{G}=\mathbf{F}_r$, for which rapid decay property
is known as the \emph{Haagerup inequality}. The rapid decay property is
now known to hold for many other groups, cf.\ \cite{Cha17}.

We are now ready to introduce the positivization trick. For simplicity, we 
formulate the result for the case of the free group 
$\mathbf{G}=\mathbf{F}_r$ (see Remark \ref{rem:posnonfree}).

\begin{lem}[Positivization]
\label{lem:pos}
Let $\boldsymbol{U}^N=(U_1^N,\ldots,U_r^N)$ be a sequence of $r$-tuples
of unitary random matrices, and let $\boldsymbol{u}=(u_1,\ldots,u_r)$
be defined as above for $\mathbf{G}=\mathbf{F}_r$
(that is, $u_i=\lambda(g_i)\in C^*_{\rm red}(\mathbf{F}_r)$).
Then the following are equivalent.
\smallskip
\begin{enumerate}[a.]
\itemsep\medskipamount
\item For every self-adjoint
$P\in\mathbb{R}_+\langle x_1,\dots,x_{2r}\rangle$
$$
	\|P(\boldsymbol{U}^N,\boldsymbol{U}^{N*})\| \le
	\|P(\boldsymbol{u},\boldsymbol{u}^*)\| + o(1)
	\quad\text{with probability}\quad 1-o(1).
$$
\item $\boldsymbol{U}^N$ converges strongly to $\boldsymbol{u}$.
\end{enumerate}
\end{lem}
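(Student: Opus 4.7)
The direction $b \Rightarrow a$ is immediate, since strong convergence gives the one-sided upper bound for every polynomial, in particular those in $\mathbb{R}_+\langle x_1,\ldots,x_{2r}\rangle$. For $a \Rightarrow b$, the plan relies on two classical facts about $C^*_{\rm red}(\mathbf{F}_r)$: it has a unique trace (Powers' theorem, for $r \ge 2$; the case $r=1$ is classical) and it has the rapid decay (Haagerup) property. By Lemma~\ref{lem:upperlower}, unique trace reduces the task to proving
\[
\|P(\boldsymbol{U}^N,\boldsymbol{U}^{N*})\| \le \|P(\boldsymbol{u},\boldsymbol{u}^*)\| + o(1)
\]
with probability $1-o(1)$ for every $P\in\mathbb{C}\langle x_1,\ldots,x_{2r}\rangle$, not just for self-adjoint $P$ with non-negative coefficients.

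The first step extracts the bound for arbitrary positive-coefficient polynomials from hypothesis $a$. Given any $P_+\in\mathbb{R}_+\langle x_1,\ldots,x_{2r}\rangle$ with $P_+(\boldsymbol{u},\boldsymbol{u}^*)=\sum_g |a_g|\lambda(g)$, a short computation in $\mathbb{C}[\mathbf{F}_r]$ shows that $P_+^*P_+$ is self-adjoint with non-negative real coefficients---the coefficient of $\lambda(h)$ equals $\sum_g|a_g||a_{gh}|\ge 0$---so hypothesis $a$ applied to $P_+^*P_+$ yields $\|P_+(\boldsymbol{U}^N)\|^2\le \|P_+(\boldsymbol{u})\|^2+o(1)$ with high probability, extending the one-sided bound to all positive-coefficient polynomials.

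The decisive second step extends the bound from positive-coefficient to arbitrary polynomials, and it is here that rapid decay enters. For any $Q\in\mathbb{C}\langle x_1,\ldots,x_{2r}\rangle$ of degree $q$ one has the Haagerup inequality $\|Q(\boldsymbol{u})\|\le Cq^c\|Q(\boldsymbol{u})\|_2$, and since $\|Q(\boldsymbol{u})\|_2^2=\sum_g|a_g|^2$ depends only on the absolute values of the coefficients, this gives the pointwise comparison $\|P(\boldsymbol{u})\|\le Cq^c\|P_+(\boldsymbol{u})\|$ on the limit side. To transfer a comparable inequality to the matrix side, a natural route is an ultraproduct argument: along a probability-one subsequence, define $\pi_\omega\colon\mathbf{F}_r\to\mathcal{A}_\omega=\prod_\omega \M_{N_k}(\mathbb{C})$ by $g_i\mapsto[U_i^{N_k}]_\omega$. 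The first step provides $\|\pi_\omega(Q)\|\le\|Q(\boldsymbol{u})\|$ for every self-adjoint positive-coefficient $Q$, and the goal is to upgrade this to the same inequality for every $P\in\mathbb{C}[\mathbf{F}_r]$, i.e.\ to show that $\pi_\omega$ factors through the contractive $*$-homomorphism $\mathbb{C}[\mathbf{F}_r]\to C^*_{\rm red}(\mathbf{F}_r)$.

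The main obstacle is precisely this last upgrade: showing that contractive norm comparison on the subspace of self-adjoint positive-coefficient polynomials alone forces a $*$-homomorphic extension from all of $C^*_{\rm red}(\mathbf{F}_r)$. Rapid decay is decisive here because it allows arbitrary elements of $C^*_{\rm red}(\mathbf{F}_r)$ to be quantitatively approximated (with constants polynomial in the degree) by elements controlled by positive-coefficient polynomials. Implementing this approximation in a manner compatible with the ultraproduct---so that the degree-dependent loss can be absorbed by passing to a suitable limit in the degree---is the technical core of the argument, and it is also the step that would fail for groups lacking rapid decay, which explains why the lemma is formulated specifically for $\mathbf{F}_r$.
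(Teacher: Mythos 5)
You correctly identify the two key tools (unique trace of $C^*_{\rm red}(\mathbf{F}_r)$ to reduce to the upper bound, and the rapid decay/Haagerup inequality to bridge positive and general coefficients), and your first step — observing that $P_+^*P_+$ has non-negative coefficients so hypothesis $a$ applies — is sound. But the proposal stops exactly where the real work begins, and you say so yourself: you pose an ultraproduct formulation and then state that the upgrade from positive-coefficient control to general $P$ is "the technical core" without carrying it out. That is the entire content of the lemma, so the proof is incomplete.

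What is missing is a concrete decomposition and a way to remove the degree-dependent constant that rapid decay inevitably introduces. The paper's argument proceeds as follows: after reducing modulo cyclic reduction so that the monomial coefficients of $P$ match the group-algebra coefficients $a_g$, write $P = P_1 + iP_2$ with $P_1,P_2$ real-coefficient, and further split $P_j^*P_j = Q_j - R_j$ into its positive- and negative-coefficient parts, each a \emph{self-adjoint} polynomial in $\mathbb{R}_+\langle x_1,\ldots,x_{2r}\rangle$ to which hypothesis $a$ applies. The triangle inequality gives $\|P(\boldsymbol{U}^N,\boldsymbol{U}^{N*})\|^2 \le 2\sum_j(\|Q_j(\boldsymbol{U}^N)\|+\|R_j(\boldsymbol{U}^N)\|)$, and the $\ell^2$ identities $\|Q_j\|_2^2 + \|R_j\|_2^2 = \|P_j^*P_j\|_2^2$ and $\|P_1\|_2^2+\|P_2\|_2^2 = \|P\|_2^2$ (valid because these splits are disjointly supported on $\mathbf{F}_r$) let one apply rapid decay twice to land at $\|P(\boldsymbol{U}^N,\boldsymbol{U}^{N*})\| \le Cq^{c}\|P(\boldsymbol{u},\boldsymbol{u}^*)\| + o(1)$. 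This polynomial loss in the degree $q$ is precisely the obstruction you flag as unresolved in your ultraproduct setup — and the resolution is not an ultraproduct compactness argument at all, but the elementary power trick: apply the last inequality to $(P^*P)^p$ in place of $P$, so the constant becomes $C(2pq)^{c/2p}$, and let $p\to\infty$. Only then does Lemma~\ref{lem:upperlower} finish the job. Without the disjoint-support decomposition and the power trick, the rapid-decay loss cannot be eliminated, and the ultraproduct framing as you present it does not by itself supply a substitute mechanism.
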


\begin{proof}
The implication $b\Rightarrow a$ is trivial. To prove $a\Rightarrow b$,
fix any $P\in\mathbb{C}\langle x_1,\dots,x_{2r}\rangle$.
We may clearly assume without loss of generality that all the monomials
of $P$ are reduced (i.e., do not contain consecutive letters
$x_ix_i^*$ or $x_i^*x_i$), so that the coefficients of $P$ are precisely
those that appear in the representation \eqref{eq:polygroup}.

Let us write $P=P_1+iP_2$ for $P_1,P_2\in\mathbb{R}\langle 
x_1,\dots,x_{2r}\rangle$
defined by taking the real (imaginary) parts of the coefficients of 
$P$. Since the polynomials $P_j^*P_j$
are self-adjoint with real coefficients, we can write
$P_j^*P_j = Q_j-R_j$ for self-adjoint $Q_j,R_j\in \mathbb{R}_+\langle 
x_1,\dots,x_{2r}\rangle$ defined by keeping only the positive (negative)
coefficients of $P_j^*P_j$. Then we can estimate by the triangle 
inequality
\begin{align*}
	&\|P(\boldsymbol{U}^N,\boldsymbol{U}^{N*})\|^2 \le
	2(\|P_1(\boldsymbol{U}^N,\boldsymbol{U}^{N*})\|^2+
	\|P_2(\boldsymbol{U}^N,\boldsymbol{U}^{N*})\|^2) \\
	&\le
	2(\|Q_1(\boldsymbol{U}^N,\boldsymbol{U}^{N*})\|+\|R_1(\boldsymbol{U}^N,\boldsymbol{U}^{N*})\|+\|Q_2(\boldsymbol{U}^N,\boldsymbol{U}^{N*})\|+\|R_2(\boldsymbol{U}^N,\boldsymbol{U}^{N*})\|).
\end{align*}
On the other hand, note that
\begin{align*}
	\|Q_j(\boldsymbol{u},\boldsymbol{u}^*)\|_2^2 +
	\|R_j(\boldsymbol{u},\boldsymbol{u}^*)\|_2^2 &= 
	\|
	P_j(\boldsymbol{u},\boldsymbol{u}^*)^*
	P_j(\boldsymbol{u},\boldsymbol{u}^*)
	\|_2^2,
\\
	\|P_1(\boldsymbol{u},\boldsymbol{u}^*)\|_2^2 +
	\|P_2(\boldsymbol{u},\boldsymbol{u}^*)\|_2^2 &=
	\|P(\boldsymbol{u},\boldsymbol{u}^*)\|_2^2.
\end{align*}
We can therefore estimate
\begin{align*}
	&\|Q_1(\boldsymbol{u},\boldsymbol{u}^*)\|+
	\|R_1(\boldsymbol{u},\boldsymbol{u}^*)\|+
	\|Q_2(\boldsymbol{u},\boldsymbol{u}^*)\|+
	\|R_2(\boldsymbol{u},\boldsymbol{u}^*)\|
\\
	&\le Cq^c(
	\|P_1(\boldsymbol{u},\boldsymbol{u}^*)^*
        P_1(\boldsymbol{u},\boldsymbol{u}^*)\|_2 +
	\|P_2(\boldsymbol{u},\boldsymbol{u}^*)^*
        P_2(\boldsymbol{u},\boldsymbol{u}^*)\|_2)	
\\
	&\le Cq^c(
	\|P_1(\boldsymbol{u},\boldsymbol{u}^*)\|^2
	+
	\|P_2(\boldsymbol{u},\boldsymbol{u}^*)\|^2)	
\\
	&\le 
	C'q^{c'}\|P(\boldsymbol{u},\boldsymbol{u}^*)\|^2
\end{align*}
for some $C,C',c,c'>0$, where $q$ is the degree of $P$ and we have
applied the rapid decay property of $\mathbf{F}_r$ in the
first and last inequality. Thus $a$ implies that
$$
	\|P(\boldsymbol{U}^N,\boldsymbol{U}^{N*})\|
	\le
	Cq^{c}\|P(\boldsymbol{u},\boldsymbol{u}^*)\| + o(1)
	\quad\text{with probability}\quad 1-o(1)
$$
for every $P\in\mathbb{C}\langle x_1,\ldots,x_{2r}\rangle$ of degree
at most $q$ and some constants $C,c'>0$.

Now note that, for every $p\in\mathbb{N}$, applying the above
to $(P^*P)^p$ yields
$$
	\|P(\boldsymbol{U}^N,\boldsymbol{U}^{N*})\|
	\le
	C(2pq)^{\frac{c}{2p}}\|P(\boldsymbol{u},\boldsymbol{u}^*)\| + o(1)
	\quad\text{with probability}\quad 1-o(1).
$$
Taking $p\to\infty$ yields the strong convergence upper bound, and the 
lower bound now follows from Lemma \ref{lem:upperlower} since 
$C^*_{\rm red}(\mathbf{F}_r)$ has the unique trace property.
\end{proof}

The positivization trick is very useful in the context of the polynomial 
method, as we will see in section \ref{sec:poly}. Let us however give a 
hint as to its significance.

For a self-adjoint polynomial $P$ with positive coefficients, we may 
interpret \eqref{eq:polygroup} as defining the adjacency matrix of a 
weighted graph with vertex set $\mathbf{G}$, where we place an edge 
with weight $a_g$ between every pair of vertices $(w,gw)$ with 
$w\in\mathbf{G}$ and $a_g>0$. Thus, for example, computing the 
moments of $P(\boldsymbol{u},\boldsymbol{u}^*)$ is in essence a 
combinatorial problem of counting the number of closed walks in this 
graph. This greatly facilitates the analysis of such quantities; for 
example, we can obtain upper bounds by overcounting some of the walks.

For a general choice of $P$, we may still view 
$P(\boldsymbol{u},\boldsymbol{u}^*)$ as a kind of adjacency matrix of a 
graph with complex edge weights. This is a much more complicated object, 
however, since the moments of this operator may exhibit cancellations 
between different walks and can therefore no longer by treated as a 
counting problem. The surprising consequence of the positivization trick 
is that for the purposes of proving strong convergence, we can completely 
ignore these cancellations and restrict attention only to the 
combinatorial situation.

\begin{rem}
\label{rem:posnonfree}
The only part of the proof of Lemma \ref{lem:pos} where we used
$\mathbf{G}=\mathbf{F}_r$ is in the very first step,
where we argued that we may assume that the coefficients of $P$ agree
with those in the representation \eqref{eq:polygroup}. 
For other groups $\mathbf{G}$, it is not clear that this is the case 
unless
we assume that the matrices $\boldsymbol{U}^N$ also satisfy the group 
relations, i.e., that $U_i^N=\pi_N(g_i)$ where
$\pi_N:\mathbf{G}\to\M_N(\mathbb{C})$ is a (random) 
unitary representation of $\mathbf{G}$. Under the latter assumption,
Lemma \ref{lem:pos} extends directly to any 
$\mathbf{G}$ with the rapid decay and unique trace properties.

Alternatively, when the positivization trick is applied to the polynomial 
method, it is possible to apply a variant of the argument directly to the 
limiting object that appears in the proof, avoiding the need to invoke 
properties of the random matrices. This form of the positivization trick 
is developed in \cite[\S 6.2]{MdlS24} (cf.\ Remark \ref{rem:mdlspos}).
\end{rem}

\section{The polynomial method}
\label{sec:poly}

The polynomial method, which was introduced in the recent work of Chen, 
Garza-Vargas, Tropp, and the author \cite{CGTV25}, has enabled 
significantly simpler proofs of strong convergence and has opened the 
door to various new developments. The method was briefly introduced in 
section \ref{sec:intropoly} above. In this section, we aim to provide a 
detailed 
illustration of this method by using it to prove strong convergence of 
random permutation matrices (Theorem \ref{thm:bc}).

We will follow a simplified form of the treatment in 
\cite{CGTV25}. The simplifications arise for two reasons: we will make no 
attempt to get good quantitative bounds, enabling us to to use crude 
estimates in various places; and we will take advantage of the idea of 
\cite{MdlS24} to significantly simplify one part of the argument by 
exploiting positivization. Aside from the use of standard results on 
polynomials and Schwartz distributions, the proof given here
is essentially self-contained.

Despite its simplicity, what makes the polynomial method work appears 
rather mysterious at first sight. We will conclude this section with a 
discussion of the new phenomenon that is captured by this method 
(section \ref{sec:discpoly}).

Significant refinements of the polynomial method may be found in 
\cite{CGV25,MPV25}.

\subsection{Outline}

In the following, we fix independent random permutation matrices 
$\boldsymbol{U}^N=(U_1^N,\ldots,U_r^N)$ and the limiting model 
$\boldsymbol{u}=(u_1,\ldots,u_r)$ as in Theorem~\ref{thm:bc}. 
More precisely, recall that 
$u_i=\lambda(g_i)$, where $g_1,\ldots,g_r$ and $\lambda$ are the free 
generators and left-regular representation of $\mathbf{F}_r$. We will view 
$\boldsymbol{u}$ as living in the $C^*$-probability space $(C^*_{\rm 
red}(\mathbf{F}_r),\tau)$ where $\tau$ denotes the canonical trace.

For notational purposes, it will be convenient to define $g_0=e$ and 
$g_{r+i}=g_i^{-1}$ for $i=1,\ldots,r$. We analogously define $u_0=\id$ and 
$u_{r+i}=u_i^*$, and similarly $U_0^N=\id$ and $U_{r+i}^N = U_i^{N*}$, 
for $i=1,\ldots,r$. We will think of $r$ as fixed, and all constants that
appear in this section may depend on $r$.

We begin by outlining the key ingredients that are 
needed to conclude the proof. These ingredients will then be developed
in the remainder of this section.

\subsubsection{Polynomial encoding}
\label{sec:ipolyenc}

The first step of the analysis is to show that 
the expected traces of monomials of
$\boldsymbol{U}^N|_{1^\perp}$ are
rational expressions of $\frac{1}{N}$.

\begin{lem}
\label{lem:polyenc}
For every $q\in\mathbb{N}$ and $\boldsymbol{w}=(w_1,\ldots,w_q)\in
\{0,\ldots,2r\}^q$, there exist
real polynomials $f_{\boldsymbol{w}}$ and $g_q$ of degree at most $Cq$ so 
that for all $N\ge q$
$$
	\mathbf{E}\big[\ntr U_{w_1}^N\cdots U_{w_q}^N|_{1^\perp}\big]
	= \frac{f_{\boldsymbol{w}}(\frac{1}{N})}{g_q(\frac{1}{N})} =
	\Phi_{\boldsymbol{w}}(\tfrac{1}{N}).
$$
\end{lem}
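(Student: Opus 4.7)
The plan is to compute $\mathbf{E}[\tr(U_{w_1}^N\cdots U_{w_q}^N)]$ combinatorially as a finite sum of elementary rational functions of $N$, and then extract a universal common denominator depending only on $q$.

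Since each $U_i^N$ fixes the all-ones vector, so does $X:=U_{w_1}^N\cdots U_{w_q}^N$, and hence $\ntr(X|_{1^\perp})=(\tr X-1)/(N-1)$. Thus it suffices to prove the analogous rational encoding for $\mathbf{E}[\tr X]$, as multiplication by $1/(N-1)$ merely augments the denominator by a factor of $1-\tfrac{1}{N}$. To compute $\mathbf{E}[\tr X]$, I would expand
\[
\tr X=\sum_{(k_0,\ldots,k_{q-1})\in[N]^q}\prod_{j=1}^q\mathbf{1}\bigl[\sigma_{w_j}(k_j)=k_{j-1}\bigr],\qquad k_q:=k_0,
\]
where $\sigma_1,\ldots,\sigma_r$ are the independent uniform permutations realizing $U_1^N,\ldots,U_r^N$, with the convention $\sigma_0=\mathrm{id}$ and $\sigma_{r+i}=\sigma_i^{-1}$.

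Next, I would group tuples by their \emph{type}, i.e.\ the set partition $\pi$ of $\{0,\ldots,q-1\}$ recording which coordinates carry equal labels; the number of tuples of type $\pi$ equals the falling factorial $(N)_{|\pi|}$. For a given type $\pi$, the letters $w_j=0$ force identifications of certain indices of $\pi$ itself, while the remaining letters reduce to a system of required images $\sigma_i(B)=B'$ on blocks of $\pi$. These are satisfiable by some $\sigma_i\in\mathbf{S}_N$ iff no two constraints share a source or a target, in which case a uniform $\sigma_i$ satisfies all $m_i(\pi)$ distinct constraints with probability $(N-m_i(\pi))!/N!=1/(N)_{m_i(\pi)}$. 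Summing over the (finitely many, word-dependent) consistent partitions yields
\[
\mathbf{E}[\tr X]=\sum_{\pi\text{ consistent}}\frac{(N)_{|\pi|}}{\prod_{i=1}^r(N)_{m_i(\pi)}}.
\]

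The crucial step is the extraction of a universal denominator. Since $(N)_{m_i(\pi)}$ contributes each factor $(N-j)$ at most once, the product $\prod_{i=1}^r(N)_{m_i(\pi)}$ contains $(N-j)$ at most $r$ times; combined with $m_i(\pi)\le q$, this shows that every denominator divides $Q(N):=\prod_{j=0}^{q-1}(N-j)^r$, which depends only on $q$ and $r$. Clearing denominators gives $\mathbf{E}[\tr X]=P_{\boldsymbol{w}}(N)/Q(N)$ with $\deg P_{\boldsymbol{w}}\le(r+1)q$, whence
\[
\mathbf{E}[\ntr(X|_{1^\perp})]=\frac{P_{\boldsymbol{w}}(N)-Q(N)}{(N-1)\,Q(N)}.
\]
Substituting $z=1/N$ and multiplying numerator and denominator by $z^{1+rq}$ (i.e.\ passing to the reversed polynomials) converts this into a ratio of polynomials in $z$ with universal denominator
\[
g_q(z)=(1-z)\prod_{j=1}^{q-1}(1-jz)^r
\]
of degree $1+r(q-1)$, and a word-dependent numerator $f_{\boldsymbol{w}}(z)$ of degree at most $1+rq$; both are bounded by $Cq$ with $C$ depending only on $r$. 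Since the zeros of $g_q(1/N)$ occur only at $N\in\{1,\ldots,q-1\}$, the formula is valid for $N\ge q$.

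The substantive part of the argument is really the universality of $g_q$: the partition-sum encoding of $\mathbf{E}[\tr X]$ is essentially automatic, though one must take some care in treating the $w_j=0$ letters and the direction of $\sigma_i$ versus $\sigma_i^{-1}$ constraints. The universality itself reduces to the elementary observation that no linear factor $(N-j)$ can appear more than $r$ times in $\prod_i(N)_{m_i(\pi)}$---since at most one of the $r$ factors contributes a given copy of $(N-j)$---which is precisely what is needed to make the denominator depend only on the length $q$, not on the word $\boldsymbol{w}$.
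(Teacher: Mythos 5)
Your computation is essentially the paper's: you expand $\mathbf{E}[\tr U_{w_1}^N\cdots U_{w_q}^N]$ over index tuples, group them by their coincidence pattern (your set partition $\pi$ is the paper's quotient graph $\Gamma$, with $|\pi|=v_\Gamma$ and $m_i(\pi)=e_\Gamma^i$), and evaluate each class exactly as $(N)_{|\pi|}/\prod_i(N)_{m_i(\pi)}$; the universal-denominator observation is likewise the same as in the paper.

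There is, however, a gap at the last step. From $\deg_N P_{\boldsymbol{w}}\le (r+1)q$ and $\deg_N Q=rq$ it does \emph{not} follow that multiplying by $z^{1+rq}$ turns $\frac{P(N)-Q(N)}{(N-1)Q(N)}$ into a ratio of polynomials in $z=1/N$: if $\deg_N(P-Q)$ were really as large as $(r+1)q$, the reversed numerator would contain negative powers of $z$ (down to $z^{1-q}$), so the polynomial $f_{\boldsymbol{w}}$ of degree at most $1+rq$ you claim does not exist on the strength of what you wrote, and one would also lose the property $g_q(0)\neq 0$ that the subsequent $\tfrac1N$-expansion relies on. What is missing is exactly the inequality $|\pi|\le 1+\sum_{i=1}^r m_i(\pi)$ for every consistent partition, which bounds the degree of each term of $P$ by $rq+1$. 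In the paper this is the one substantive point of the argument: the quotient graph is connected because it is traced out by a closed walk, hence $e_\Gamma-v_\Gamma+1\ge 0$. In your language: consecutive positions are either merged by a letter $w_j=0$ or linked by one of the $\sum_i m_i(\pi)$ permutation constraints, so the blocks of $\pi$ form a connected constraint graph and $|\pi|\le 1+\sum_i m_i(\pi)$. Alternatively, a global argument works: $|\mathbf{E}[\tr X]|\le N$ for all $N\ge q$ forces $\deg_N P\le rq+1$. Either one-line addition closes the gap. A minor further point: you normalize the restricted trace by $N-1$, whereas the paper uses $\tfrac1N\tr$ throughout (cf.\ its identity $N\,\mathbf{E}[\ntr\,\cdot\,|_{1^\perp}]=\mathbf{E}[\tr\,\cdot\,]-1$ and the footnote $\ntr\id|_{1^\perp}=\tfrac{N-1}{N}$). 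This is harmless for the present lemma, since $\tfrac{N}{N-1}=\tfrac{1}{1-z}$ is rational in $z$, but it changes the value of $\mu_1(\boldsymbol{w})$ at words reducing to the identity, which is used later, so you should match the paper's convention.
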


Lemma \ref{lem:polyenc} immediately implies that
$$
	\mathbf{E}\big[\ntr U_{w_1}^N\cdots U_{w_q}^N|_{1^\perp}\big] =
	\mu_0(\boldsymbol{w}) +
	\frac{\mu_1(\boldsymbol{w})}{N} + O\bigg(\frac{1}{N^2}\bigg)
$$
as $N\to\infty$. The values of $\mu_0(\boldsymbol{w})$ and
$\mu_1(\boldsymbol{w})$ can be easily read off from the proof of Lemma 
\ref{lem:polyenc}. In particular, it will follow that
\begin{equation}
\label{eq:polyweak}
	\mu_0(\boldsymbol{w}) = 1_{g_{w_1}\cdots g_{w_q}=e} = 
	\tau(u_{w_1}\cdots u_{w_q}),
\end{equation}
which essentially establishes weak convergence of 
$\boldsymbol{U}^N|_{1^\perp}$ to $\boldsymbol{u}$ (albeit in expectation 
rather than in probability; this will not be important in what follows).

\subsubsection{Asymptotic expansion}

Now fix a self-adjoint noncommutative polynomial $P\in\mathbb{C}\langle 
x_1,\ldots,x_{2r}\rangle$. Then for every univariate real 
polynomial $h$, since $h\circ P$ is again a noncommutative polynomial, we 
immediately obtain
\begin{equation}
\label{eq:expansion}
	\mathbf{E}\big[ \ntr 
	h(P(\boldsymbol{U}^N,\boldsymbol{U}^{N*}))|_{1^\perp}\big]
	= \nu_0(h) + \frac{\nu_1(h)}{N} + O\bigg(\frac{1}{N^2}\bigg).
\end{equation}
Here $\nu_0$ and $\nu_1$ are defined, \emph{a priori}, as linear 
functionals on the space $\mathcal{P}$ of all univariate real polynomials
(of course, $\nu_0,\nu_1$ also depend on the choice of $P$, but we will 
view $P$ as fixed throughout the argument).

The core of the proof is now to show that the expansion 
\eqref{eq:expansion} is valid not only for polynomial test functions 
$h\in\mathcal{P}$, but even for arbitrary smooth test functions $h\in 
C^\infty(\mathbb{R})$. It is far from obvious why this should be the case; 
for example, it is conceivable that there could exist smooth test 
functions $h$ for which weak convergence takes place at a rate slower than 
the $\frac{1}{N}$ rate for polynomial $h$. If that were to be the case,
then $\nu_1(h)$ would not even make sense for smooth $h$. We will show, 
however, that this hypothetical scenario is not realized.

Recall that a linear functional $\nu$ on $C^\infty(\mathbb{R})$ is
called a \emph{compactly supported (Schwartz) distribution} (see 
\cite[Chapter II]{Hor03}) if
$$
	|\nu(h)| \le C\|h\|_{C^m[-K,K]}
	\quad\text{for all }h\in C^\infty(\mathbb{R})
$$
holds for some constants $C,K\in\mathbb{R}_+$ and $m\in\mathbb{Z}_+$.

\begin{prop}
\label{prop:smex}
For every self-adjoint $P\in\mathbb{C}\langle x_1,\ldots,x_{2r}\rangle$,
the corresponding linear functionals $\nu_0,\nu_1$ in \eqref{eq:expansion} 
extend to compactly supported Schwartz distributions, and the expansion
\eqref{eq:expansion} remains valid for any $h\in C^\infty(\mathbb{R})$.
\end{prop}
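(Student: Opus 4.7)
The plan is to establish Proposition \ref{prop:smex} by coupling the polynomial encoding of Lemma \ref{lem:polyenc} with the polynomial method of section \ref{sec:intropoly}, and then extending from polynomial to $C^\infty$ test functions via a Jackson-type approximation argument. Throughout, let $X^N = P(\boldsymbol{U}^N,\boldsymbol{U}^{N*})|_{1^\perp}$ and $X = P(\boldsymbol{u},\boldsymbol{u}^*)$. Since $\boldsymbol{U}^N$ and $\boldsymbol{u}$ are families of unitaries, there exists $K = K(P)$ (independent of $N$) with $\|X^N\|,\|X\|\le K$; in particular $|\mathbf{E}[\ntr h(X^N)|_{1^\perp}]|\le \|h\|_{L^\infty[-K,K]}$ for every continuous $h$.

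For any real polynomial $h$ of degree $q$, the composition $h\circ P$ is a noncommutative polynomial of degree at most $q\deg P$, so Lemma \ref{lem:polyenc} (summed over monomials, which share the common denominator $g_{Cq}$) shows that $\Phi_h(t) := \mathbf{E}[\ntr h(X^N)|_{1^\perp}]\big|_{t=1/N}$ is a rational function $f_h(t)/g_h(t)$ with $\deg f_h,\deg g_h\le Cq$, bounded by $\|h\|_{L^\infty[-K,K]}$ at every $t=1/N$ with $N$ large. Taylor-expanding at $t=0$ gives $\Phi_h(t) = \nu_0(h) + t\nu_1(h) + t^2 R_h(t)$. Applying the two classical polynomial facts from section \ref{sec:intropoly} (discretization, then A.\ Markov's inequality) separately to the numerator and denominator, one obtains estimates of the form
$$
\sup_{t\in [0, 1/N_\ast]}|\Phi_h^{(j)}(t)|\le C q^{k_j} \|h\|_{L^\infty[-K,K]}, \qquad j=0,1,2,
$$
for $N_\ast \gtrsim q^2$ and some fixed exponents $k_j$. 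Taylor's remainder formula then yields $|\nu_0(h)|,|\nu_1(h)|,\sup_{t\in[0,1/N_\ast]}|R_h(t)|\le C q^{k} \|h\|_{L^\infty[-K,K]}$ for $k = \max_j k_j$.

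To extend $\nu_0,\nu_1$ to Schwartz distributions, I would invoke Jackson's theorem: for every $h\in C^\infty(\mathbb{R})$ and every $m\in\mathbb{N}$ there exist polynomials $h_n$ of degree $n$ satisfying $\|h-h_n\|_{L^\infty[-K,K]}\le C_m n^{-m}\|h\|_{C^m[-K-1,K+1]}$. Applying the polynomial bound to the polynomial difference $h_n - h_{n-1}$ (of degree $\le n$) gives
$$
|\nu_j(h_n)-\nu_j(h_{n-1})|\le C n^{k-m}\|h\|_{C^m}, \qquad \sup_{t\in[0,1/N_\ast]}|R_{h_n}(t)-R_{h_{n-1}}(t)|\le C n^{k-m}\|h\|_{C^m},
$$
which is summable as soon as $m > k+1$. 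Hence $\nu_0(h_n)$, $\nu_1(h_n)$ and the functions $R_{h_n}$ are Cauchy, and I would define $\nu_0(h),\nu_1(h),R_h$ as their limits (a standard diagonal argument shows the limits do not depend on the choice of approximating sequence). Telescoping yields $|\nu_j(h)|, \sup_t |R_h(t)| \le C\|h\|_{C^m[-K-1,K+1]}$, establishing the compactly supported Schwartz distribution property.

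Finally, to verify the asymptotic expansion for $h\in C^\infty$: for fixed $N\ge N_\ast$, the identity $\mathbf{E}[\ntr h_n(X^N)|_{1^\perp}] = \nu_0(h_n) + \nu_1(h_n)/N + R_{h_n}(1/N)/N^2$ holds tautologically as Taylor's formula for $\Phi_{h_n}$ at $0$. Passing to the limit $n\to\infty$, the left-hand side converges to $\mathbf{E}[\ntr h(X^N)|_{1^\perp}]$ because $h_n\to h$ uniformly on $[-K,K]\supseteq\spc(X^N)$, while each term on the right converges by the previous step. This produces the expansion \eqref{eq:expansion} for all $h\in C^\infty(\mathbb{R})$ with the $O(1/N^2)$ remainder uniformly bounded by $C\|h\|_{C^m[-K-1,K+1]}$. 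The main obstacle is precisely the tension between the polynomial growth of the constants $q^{k_j}$ in the bounds on $\nu_1$ and $R_h$, and the desire to extend to arbitrary smooth $h$; this is resolved by observing that Jackson's theorem provides \emph{super-polynomial} approximation rates for $C^\infty$ functions, which effortlessly absorb any fixed polynomial degree blow-up.
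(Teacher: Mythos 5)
Your treatment of the first half of the proposition is fine: the bounds $|\nu_0(h)|,|\nu_1(h)|\le Cq^{k}\|h\|_{L^\infty[-K,K]}$ for polynomials of degree $q$ follow from your Markov-inequality estimates evaluated at $t=0$, and telescoping a Jackson approximation is a legitimate alternative to the paper's route (which expands $h$ in Chebyshev polynomials and uses Cauchy--Schwarz and Parseval); both rest on the same polynomial-in-degree input, and your version extends $\nu_0,\nu_1$ to compactly supported distributions just as well.

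The gap is in the final step, where you fix $N\ge N_\ast$ and let $n\to\infty$. Every quantitative bound you prove --- on $\Phi_h''$, hence on $R_h$, hence on $R_{h_n}-R_{h_{n-1}}$ --- holds only on $[0,1/N_\ast]$ with $N_\ast\gtrsim(\deg)^2$; in the telescoping step the relevant polynomial is $h_n-h_{n-1}$, of degree up to $n$, so the interval on which your Cauchy estimate is valid is $[0,c/n^2]$ and shrinks to $\{0\}$ as $n\to\infty$. For a fixed $N$, the estimate at the point $t=1/N$ is therefore available only for the finitely many $n$ with $n^2\lesssim N$: you obtain neither a uniform bound on $R_{h_n}(1/N)$ nor the claimed $\sup_t|R_h(t)|\le C\|h\|_{C^m}$, and defining $R_{h_n}(1/N)$ tautologically from the identity only reproduces $N^2$ times the quantity you are trying to bound. (In the same regime, the identity $\mathbf{E}[\ntr h_n(X^N)|_{1^\perp}]=\Phi_{h_n}(\tfrac1N)$ from Lemma \ref{lem:polyenc} itself requires $N\gtrsim n\deg P$.) The missing ingredient is exactly the second half of the paper's Corollary \ref{cor:master}: extend the two-term bound to \emph{all} $N\ge1$ by treating the regime $N\lesssim q^2$ trivially with the triangle inequality, using $|\nu_0(h)|\le\|h\|_{L^\infty[-K,K]}$, your bound on $|\nu_1(h)|$, and $1\le Cq^2/N$, at the price of the same polynomial factor in $q$. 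Once you have $|\mathbf{E}[\ntr g(X^N)|_{1^\perp}]-\nu_0(g)-\nu_1(g)/N|\le C(\deg g)^{k}\|g\|_{L^\infty[-K,K]}/N^2$ for every polynomial $g$ and every $N$, your Jackson telescoping applied to $h=h_{n_0}+\sum_{n>n_0}(h_n-h_{n-1})$ at each fixed $N$ does yield the expansion for smooth $h$ with remainder $\le C\|h\|_{C^m[-K,K]}/N^2$, since the super-polynomial Jackson rates absorb the $n^{k}$ factor; with that repair your argument becomes a valid variant of the paper's proof.
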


Note that it is immediate from \eqref{eq:polyweak} that
$$
	\nu_0(h) = \tau\big(h(P(\boldsymbol{u},\boldsymbol{u}^*))\big)
$$
for all $h\in C^\infty(\mathbb{R})$. In other words, 
$\nu_0=\mu_{P(\boldsymbol{u},\boldsymbol{u}^*)}$ is nothing other
than the spectral distribution of $P(\boldsymbol{u},\boldsymbol{u}^*)$.
The nontrivial aspect of Proposition \ref{prop:smex} is that $\nu_1$
and the expansion \eqref{eq:expansion} make sense for smooth $h$ as well.

The proof of Proposition \ref{prop:smex} is the key point of the 
polynomial method. We will exploit the Markov inequality to 
achieve a quantitative form of \eqref{eq:expansion} for $h\in\mathcal{P}$. 
The resulting bound is so strong that it can be extended to any
$h\in C^\infty(\mathbb{R})$ by means of a simple Fourier-analytic 
argument.

\subsubsection{The infinitesimal distribution}
\label{sec:infdist}

As $\nu_0=\mu_{P(\boldsymbol{u},\boldsymbol{u}^*)}$,
Lemma \ref{lem:faith} yields
$$
	\supp\nu_0 \subseteq [-\|P(\boldsymbol{u},\boldsymbol{u}^*)\|,
	\|P(\boldsymbol{u},\boldsymbol{u}^*)\|].
$$
The final ingredient of the proof is to show that $\nu_1$ satisfies
the same bound. By the positivization trick, it suffices 
to assume that $P$ has positive coefficients.

\begin{lem}
\label{lem:supp}
For every choice of self-adjoint $P\in\mathbb{R}_+\langle 
x_1,\ldots,x_{2r}\rangle$, we have
$$
	\supp\nu_1 \subseteq [-\|P(\boldsymbol{u},\boldsymbol{u}^*)\|,
	\|P(\boldsymbol{u},\boldsymbol{u}^*)\|].
$$
\end{lem}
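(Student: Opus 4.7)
The plan is to prove the support bound by establishing polynomial moment bounds $|\nu_1(x^n)| \leq C n^c \|P\|^n$ for all $n \in \mathbb{N}$, and then invoking the Paley--Wiener theorem. Since $\nu_1$ is a compactly supported Schwartz distribution by Proposition \ref{prop:smex}, the bound above forces $\hat\nu_1(\xi) = \nu_1(e^{-i\xi x})$ to be a polynomially bounded entire function of exponential type at most $\|P\|$, whence $\supp\nu_1 \subseteq [-\|P\|,\|P\|]$.

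The first half of the plan is to extract $\nu_1(x^{2p})$ explicitly. I would expand
$$
	\mathbf{E}[\tr X^{N,2p}] = \sum_{g_1,\dots,g_{2p}} a_{g_1}\cdots a_{g_{2p}}\,
	\mathbf{E}\big[\tr U^N_{g_1\cdots g_{2p}}\big],
$$
and use the classical fact (a refinement of Lemma \ref{lem:polyenc}) that for each reduced word $w \in \mathbf{F}_r$ one has $\mathbf{E}[\tr U^N_w] = N\mathbf{1}_{w=e} + \alpha_w + O(1/N)$ for a bounded combinatorial constant $\alpha_w$. Subtracting the deterministic Perron--Frobenius eigenvalue $c_0 = P(1,\dots,1)$ (well-defined since $X^N \mathbf{1} = c_0\mathbf{1}$ by positivity of the $a_g$) to pass to $1^\perp$ and reading off the coefficient of $1/N$ yields
$$
	\nu_1(x^{2p}) = \sum_{w\neq e} c_w^{(2p)}\alpha_w - c_0^{2p},
	\qquad c_w^{(2p)} := \sum_{g_1\cdots g_{2p}=w} a_{g_1}\cdots a_{g_{2p}};
$$
an analogous identity handles the odd moments.

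The main obstacle is to bound this expression by $Cp^c\|P\|^{2p}$. All terms $c_w^{(2p)}\alpha_w \geq 0$ by positivity, and the sum admits a combinatorial interpretation as counting weighted closed walks on finite covers of the Cayley graph of $\mathbf{F}_r$; the contribution from the trivial cover exactly cancels the Perron term $c_0^{2p}$, and the remaining contributions should be controlled by the spectral radius of $P(\boldsymbol{u},\boldsymbol{u}^*)$ on $l^2(\mathbf{F}_r)$, which equals $\|P\|$ by Lemma \ref{lem:faith}. A naive bound only gives $|\nu_1(x^{2p})|\lesssim c_0^{2p}$, insufficient because $c_0>\|P\|$ in general (e.g.\ $c_0=2r$ versus $\|P\|=2\sqrt{2r-1}$ for $P=\sum_i(u_i+u_i^*)$); exhibiting the cancellation sharply enough to recover the $\|P\|^{2p}$ scale is the technical heart of the argument, and is where nonbacktracking-walk techniques (as pioneered by Bordenave--Collins) would enter.
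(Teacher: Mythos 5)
Your reduction is sound as far as it goes: the support-from-moment-growth device (your Paley--Wiener route is the same mechanism as the fact, used in the paper, that a compactly supported distribution $\nu$ has $\supp\nu\subseteq[-\rho,\rho]$ with $\rho=\limsup_p|\nu(x^p)|^{1/p}$), and your identity $\nu_1(x^{2p})=\sum_{w\ne e}c_w^{(2p)}\alpha_w-c_0^{2p}$ with $\alpha_{v^k}=\omega(k)$ is correct and equivalent to the expression used in the paper. But the proposal stops exactly where the lemma lives: the assertion that the remaining contribution "should be controlled by the spectral radius" and that this "is where nonbacktracking-walk techniques would enter" is not an argument, and nothing in the proposal produces the required bound $|\nu_1(x^p)|\le \mathrm{poly}(p)\,\|P(\boldsymbol{u},\boldsymbol{u}^*)\|^{p+O(1)}$. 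Note also that the cancellation you flag is purely algebraic rather than analytic: since $\sum_w c_w^{(2p)}=c_0^{2p}$, one can regroup as $\nu_1(x^{2p})=\sum_{w\ne e}c_w^{(2p)}(\omega(k_w)-1)-c_e^{(2p)}$, and because $\omega(1)-1=0$ every non-power word drops out. After this regrouping no delicate cancellation remains; the whole problem is an upper bound, for each $k\ge 2$, on the total coefficient weight of letter tuples of length $2pd$ whose product reduces to a proper power $v^k$, summed over non-powers $v$, of size $\mathrm{poly}(p)\,\|P(\boldsymbol{u},\boldsymbol{u}^*)\|^{2p}$.

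The paper obtains exactly this bound, not via nonbacktracking operators, but by a Friedman-type spectral argument: any word reducing to $v^k$ (with $v=gwg^{-1}$, $w$ the cyclic reduction) is a concatenation of subwords reducing to $g,w,w,w^{k-2},g^{-1}$; each indicator is rewritten as a matrix element $\langle\delta_v,u_{w_1}\cdots u_{w_q}\,\delta_e\rangle$ in $\ell^2(\mathbf{F}_r)$; the independent blocks of letters lying entirely inside one matrix element are averaged out to produce factors $P(\boldsymbol{u},\boldsymbol{u}^*)^{m_j}$; and Cauchy--Schwarz together with $\sum_v|\langle\delta_v,X\delta_e\rangle|^2=\|X\delta_e\|^2\le\|P(\boldsymbol{u},\boldsymbol{u}^*)\|^{2m}$ controls the sums over $g,w$, at the price of a polynomial factor coming from the choice of cut points. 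Positivity of the coefficients is used precisely to permit the overcounting in the first step. Without carrying out an argument of this kind (or genuinely importing and adapting the Bordenave--Collins machinery, which is a far heavier route and is not set up here for a general positive-coefficient polynomial), your proposal does not prove the lemma.
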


To prove Lemma \ref{lem:supp} we face a conundrum: while we know
abstractly that $\nu_1$ is a compactly supported distribution, we 
are only able to compute its value for polynomial test functions 
(as we have an explicit formula for $\mu_1(\boldsymbol{w})$ in
section \ref{sec:ipolyenc}). To surmount this issue, we will use the
following general fact \cite[Lemma 4.9]{CGTV25}:
for any compactly supported distribution $\nu$, we have
$$
	\supp\nu\subseteq [-\rho,\rho] \qquad\text{with}\qquad
	\rho = \limsup_{p\to\infty} |\nu(x^p)|^{\frac{1}{p}}.
$$
Thus it suffices to show that
$$
	\limsup_{p\to\infty} |\nu_1(x^p)|^{\frac{1}{p}}
	\le
	\|P(\boldsymbol{u},\boldsymbol{u}^*)\|,
$$
which is tractable as we have access to the moments of $\nu_1$.
It is this moment estimate that is greatly
simplified by the assumption that $P$ has positive coefficients.

\subsubsection{Proof of Theorem \ref{thm:bc}}

We now use these ingredients to conclude the proof.

\begin{proof}[Proof of Theorem \ref{thm:bc}]
Fix $\varepsilon>0$ and a self-adjoint $P$ with positive coefficients.
Moreover, let $h$ be a nonnegative smooth function
that vanishes in a neighborhood of $[-\|P(\boldsymbol{u},\boldsymbol{u}^*)\|,
\|P(\boldsymbol{u},\boldsymbol{u}^*)\|]$ and such that
$h(x)=1$ for $|x|\ge \|P(\boldsymbol{u},\boldsymbol{u}^*)\|+\varepsilon$.

Note that $\nu_0(h)=\nu_1(h)=0$ by Lemma \ref{lem:supp}.
Thus Proposition \ref{prop:smex} yields
$$
	\mathbf{E}\big[ \tr
        h(P(\boldsymbol{U}^N,\boldsymbol{U}^{N*}))|_{1^\perp}\big]
	=
	N\,\mathbf{E}\big[ \ntr
        h(P(\boldsymbol{U}^N,\boldsymbol{U}^{N*}))|_{1^\perp}\big]
	=
	o(1)
$$
as $N\to\infty$. But since $\tr h(X)\ge 1$ whenever
$\|X\|\ge 
\|P(\boldsymbol{u},\boldsymbol{u}^*)\|+\varepsilon$, this
implies
$$	
	\mathbf{P}\big[
	\|P(\boldsymbol{U}^N,\boldsymbol{U}^{N*})|_{1^\perp}\|\ge 
	\|P(\boldsymbol{u},\boldsymbol{u}^*)\|+\varepsilon\big]=o(1).
$$
As $P,\varepsilon$ are arbitrary, we verified condition $a$
of Lemma \ref{lem:pos}.
\end{proof}

We now turn to the proofs of the various ingredients described above.

\subsection{Polynomial encoding}

The aim of this section is to prove Lemma \ref{lem:polyenc}. We follow
\cite{PL10}; see also \cite{Nic94,Col22}. We begin by noting that
$$
	N\,\mathbf{E}\big[\ntr U_{w_1}^N\cdots U_{w_q}^N|_{1^\perp}\big]
	=
	\mathbf{E}\big[\tr U_{w_1}^N\cdots U_{w_q}^N|_{1^\perp}\big]
	=
	\mathbf{E}\big[{\tr U_{w_1}^N\cdots U_{w_q}^N}\big] -1,
$$
so that it suffices to compute the rightmost expectation. Clearly
$$
	\mathbf{E}\big[{\tr U_{w_1}^N\cdots U_{w_q}^N}\big] =
	\sum_{i_1,\ldots,i_q\in[N]}
	\mathbf{E}\big[ (U_{w_1}^N)_{i_1i_2}
	(U_{w_2}^N)_{i_2i_3}\cdots (U_{w_q}^N)_{i_qi_1}\big].
$$
A tuple $\boldsymbol{i}=(i_1,\ldots,i_q)\in[N]^q$ is
\emph{realizable} if the corresponding summand is
nonzero. Denote by $\mathcal{I}_N(\boldsymbol{w})$ the set of all 
realizable tuples.

To bring out the dependence on dimension $N$, we note that by symmetry, 
the expectation inside the above sum only depends on how many distinct pairs 
of indices appear for each permutation matrix. To encode this information, 
we associate to each $\boldsymbol{i}\in \mathcal{I}_N(\boldsymbol{w})$ a 
directed edge-colored graph $\Gamma$ as follows. Number each distinct 
value among $(i_1,\ldots,i_q)$ by order of appearance, and assign to each 
a vertex. Now draw an edge colored $w\in[r]$ from one vertex to another if 
$(U^N_w)_{ii'}$ or $(U^N_{r+w})_{i'i}$ appears in the expectation, where 
$i,i'\in[N]$ are the values associated to the first and second vertex, 
respectively; see Figure \ref{fig:core}.

\begin{figure}
\centering
\begin{tikzpicture}

\fill (0,0) circle (0.075) node[below] {$\scriptstyle 1$};
\fill (3,0) circle (0.075) node[below] {$\scriptstyle 2$};
\fill (6,0) circle (0.075) node[below] {$\scriptstyle 3$};

\draw[->,thick,color=red] (0.075,0.075) to[out=30,in=150] (2.925,0.075);
\draw[color=red] (1.5,0.7) node {$\scriptstyle 1$};
\draw[->,thick,color=red] (3.075,0.075) to[out=30,in=150] (5.925,0.075);
\draw[color=red] (4.5,0.7) node {$\scriptstyle 1$};

\draw[->,thick,color=blue] (0.075,-0.075) to[out=-30,in=-150] (2.925,-0.075);
\draw[color=blue] (1.5,-0.7) node {$\scriptstyle 2$};
\draw[->,thick,color=blue] (3.075,-0.075) to[out=-30,in=-150] (5.925,-0.075);
\draw[color=blue] (4.5,-0.7) node {$\scriptstyle 2$};

\end{tikzpicture}
\caption{Graph $\Gamma$ associated to the term
$\mathbf{E}[
(U_1^N)_{58}
(U_2^N)_{86}
(U_1^{N*})_{68}
(U_2^{N*})_{85}]$. The vertices labelled $1,2,3$ correspond to the values 
$5,8,6$, respectively.
\label{fig:core}}
\end{figure}
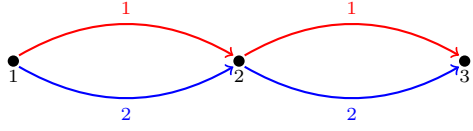

Denote by $\mathcal{G}(\boldsymbol{w})$ the set of graphs $\Gamma$ thus 
constructed, and note that this set is independent of $N$. For each such 
graph with $v_\Gamma$ vertices, we can recover all associated 
$\boldsymbol{i}\in\mathcal{I}_N(\boldsymbol{w})$ uniquely by assigning 
distinct values of $[N]$ to its vertices. There are 
$N(N-1)\cdots(N-v_\Gamma+1)$ ways to do this.
If the graph has $e_\Gamma^w$ edges with color 
$w$, then the corresponding expectation for each such $\boldsymbol{i}$ is
$$
	\mathbf{E}\big[ (U_{w_1}^N)_{i_1i_2}
	(U_{w_2}^N)_{i_2i_3}\cdots (U_{w_q}^N)_{i_qi_1}\big] =
	\prod_{w=1}^{r}
	\frac{1}{N(N-1)\cdots (N-e_\Gamma^w+1)},
$$
since the random variable in the expectation is the event that for
each $w$, the permutation matrix $U_w^N$ has $e_\Gamma^w$ of its rows
fixed as specified by the realizable tuple $\boldsymbol{i}$.
Here we presumed that $N\ge q$, which ensures that
$N\ge v_\Gamma$ and $N\ge e_\Gamma^w$.

In summary, we have proved the following.

\begin{lem}
For every $\boldsymbol{w}=(w_1,\ldots,w_q)$ and $N\ge q$, we have
$$
	\mathbf{E}\big[{\tr U_{w_1}^N\cdots U_{w_q}^N}\big] =
	\sum_{\Gamma\in \mathcal{G}(\boldsymbol{w})}
	\frac{N(N-1)\cdots(N-v_\Gamma+1)}{
	\prod_{w=1}^{r} N(N-1)\cdots (N-e_\Gamma^w+1)}.
$$
\end{lem}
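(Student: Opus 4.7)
The plan is to expand the trace as a sum over index tuples and reorganize the sum according to the combinatorial pattern of index coincidences, which is precisely the information captured by the graphs $\Gamma \in \mathcal{G}(\boldsymbol{w})$. Explicitly, I would start from the identity
$$
\mathbf{E}\big[\tr U_{w_1}^N\cdots U_{w_q}^N\big] =
\sum_{\boldsymbol{i}\in\mathcal{I}_N(\boldsymbol{w})}
\mathbf{E}\big[(U_{w_1}^N)_{i_1i_2}\cdots (U_{w_q}^N)_{i_qi_1}\big]
$$
and partition $\mathcal{I}_N(\boldsymbol{w})$ according to the equivalence class of the tuple $\boldsymbol{i}$ under relabelling by order of appearance. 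By construction, this equivalence class is exactly a graph $\Gamma\in\mathcal{G}(\boldsymbol{w})$, and the expectation depends only on $\Gamma$.

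Next I would count the number of $\boldsymbol{i}\in\mathcal{I}_N(\boldsymbol{w})$ that produce a given graph $\Gamma$. Since the $v_\Gamma$ vertices of $\Gamma$ correspond to distinct index values in $[N]$ (listed in order of first appearance), there are precisely $N(N-1)\cdots(N-v_\Gamma+1)$ such tuples, provided $N\ge v_\Gamma$, which is ensured by $N\ge q\ge v_\Gamma$. This accounts for the numerator in the claimed formula.

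For the denominator, I would compute the common value of the expectation for each $\boldsymbol{i}$ in the class of $\Gamma$. Group the $q$ factors by the generator index $w\in[r]$: an edge of color $w$ in $\Gamma$ corresponds to a condition $(U_w^N)_{ii'}=1$ (for some specific $i,i'$ determined by $\boldsymbol{i}$). Since $U_w^N$ is a uniform random permutation matrix and the $e_\Gamma^w$ constraints for color $w$ are pairwise consistent (by definition of realizability, so they involve distinct rows and distinct columns), the probability that $U_w^N$ satisfies these constraints equals $\frac{(N-e_\Gamma^w)!}{N!}=\frac{1}{N(N-1)\cdots(N-e_\Gamma^w+1)}$. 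Because $U_1^N,\ldots,U_r^N$ are independent, one multiplies over $w\in[r]$, yielding the asserted denominator. Assembling these two counts gives the lemma.

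The one step that requires genuine care, rather than bookkeeping, is the verification that the constraints imposed on $U_w^N$ by a realizable tuple $\boldsymbol{i}$ are consistent, i.e.\ that for each color $w$ the prescribed row$\to$column assignments form a partial permutation. This is automatic from the definition of realizability: if two appearances of the same generator $U_w^N$ (possibly via its inverse) demanded contradictory row/column pairings, the associated matrix entry would contain a factor like $(U_w^N)_{ij}(U_w^N)_{ij'}$ with $j\ne j'$, whose expectation vanishes, contradicting $\boldsymbol{i}\in\mathcal{I}_N(\boldsymbol{w})$. Once this observation is in place, the combinatorial identity follows by a straightforward reorganization of the sum, and no further analytic input is needed.
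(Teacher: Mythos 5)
Your proof is correct and follows essentially the same route as the paper's: expand the trace over index tuples, classify tuples by the edge-colored graph $\Gamma$ determined by coincidences, count labelings for the numerator, and use the uniform-permutation probability $\prod_w (N-e_\Gamma^w)!/N!$ for the denominator. Your additional remark about consistency of the row/column constraints is a sound justification for a point the paper treats as implicit in the definition of realizability.
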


The proof of Lemma \ref{lem:polyenc} is now straightforward.

\begin{proof}[Proof of Lemma \ref{lem:polyenc}]
We can rewrite the above lemma as
$$
	\mathbf{E}\big[\ntr U_{w_1}^N\cdots U_{w_q}^N|_{1^\perp}\big] =
	\sum_{\Gamma\in \mathcal{G}(\boldsymbol{w})}
	\bigg(\frac{1}{N}\bigg)^{e_\Gamma-v_\Gamma+1}
	\frac{\prod_{k=1}^{v_\Gamma-1} (1-\frac{k}{N})}{
	\prod_{w=1}^{r} \prod_{k=1}^{e_\Gamma^w-1} (1-\frac{k}{N})}
	-\frac{1}{N},
$$
where $e_\Gamma$ is the total number of edges in $\Gamma$.
As every $\Gamma$ is connected by construction, we have
$e_\Gamma-v_\Gamma+1\ge 0$ and thus the right-hand side is
a rational function of $\frac{1}{N}$.

Define a polynomial of degree $r(q-1)$ by
$$
	g_q(x) = (1-x)^r(1-2x)^r\cdots (1-(q-1)x)^r.
$$
Since $e_\Gamma^w\le e_\Gamma\le q$ for all $w$, it is clear that
$f_{\boldsymbol{w}}(\frac{1}{N})=
\mathbf{E}[\ntr U_{w_1}^N\cdots U_{w_q}^N|_{1^\perp}]\,g_q(\frac{1}{N})$
is a polynomial of degree at most $Cq$ for some constant $C$ (which 
depends on $r$).
\end{proof}

We can now read off the first terms in the $\frac{1}{N}$-expansion. 
Recall that $g\in\mathbf{F}_r\backslash\{e\}$ is called a \emph{proper 
power} if $g=v^k$ for some $v\in\mathbf{F}_r$, $k\ge 2$, and is called 
a \emph{non-power} otherwise. Every $g\ne e$ can be 
written uniquely as $g=v^k$ for a non-power $v$.

\begin{cor}
\label{cor:divisor}
We have
$$
	\mu_0(\boldsymbol{w}) =
	\lim_{N\to\infty} 
	\mathbf{E}\big[\ntr U_{w_1}^N\cdots U_{w_q}^N|_{1^\perp}\big]
	= 1_{g_{w_1}\cdots g_{w_q}=e}.
$$
Moreover, if $g_{w_1}\cdots g_{w_q}=v^k$ for a non-power $v$, then  
$$
	\mu_1(\boldsymbol{w}) =
	\lim_{N\to\infty} N\,
	\mathbf{E}\big[\ntr U_{w_1}^N\cdots U_{w_q}^N|_{1^\perp}\big]
	= \omega(k)-1,
$$
where $\omega(k)$ denotes the number of divisors of $k$.
\end{cor}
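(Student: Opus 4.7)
My approach is to extract $\mu_0(\boldsymbol{w})$ and $\mu_1(\boldsymbol{w})$ directly from the sum-over-graphs formula derived in the proof of Lemma~\ref{lem:polyenc}. Since each $\Gamma\in\mathcal{G}(\boldsymbol{w})$ is connected, the exponent $e_\Gamma-v_\Gamma+1$ is a nonnegative integer, vanishing iff $\Gamma$ is a tree and equal to $1$ iff $\Gamma$ is unicyclic. Expanding the rational factor in the formula to first order in $1/N$ and reading off the $N^0$ and $N^{-1}$ coefficients gives
\[
\mu_0(\boldsymbol{w}) = |\mathcal{T}(\boldsymbol{w})|, \qquad
\mu_1(\boldsymbol{w}) = |\mathcal{U}(\boldsymbol{w})| + \sum_{\Gamma\in\mathcal{T}(\boldsymbol{w})} c_\Gamma - 1,
\]
where $\mathcal{T}(\boldsymbol{w}),\mathcal{U}(\boldsymbol{w})\subseteq\mathcal{G}(\boldsymbol{w})$ denote the subsets of trees and unicyclic graphs and $c_\Gamma=\sum_w\binom{e_\Gamma^w}{2}-\binom{v_\Gamma}{2}$. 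Both claims of the corollary will follow once we enumerate $\mathcal{T}(\boldsymbol{w})$ and $\mathcal{U}(\boldsymbol{w})$.

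To do so I adopt the Stallings-graph viewpoint: each $\Gamma\in\mathcal{G}(\boldsymbol{w})$ is a finite folded edge-colored graph, and through the edge labels $\pi_1(\Gamma,v_1)$ embeds as a subgroup of $\mathbf{F}_r$. A closed walk based at $v_1$ with label word $\boldsymbol{w}$ exists in $\Gamma$ precisely when $g_{w_1}\cdots g_{w_q}\in\pi_1(\Gamma,v_1)$. If $\Gamma$ is a tree then $\pi_1(\Gamma,v_1)=\{e\}$, so a tree exists iff $g_{w_1}\cdots g_{w_q}=e$, and when it does it is unique (the minimal folded image of the walk lifted to the Cayley graph of $\mathbf{F}_r$). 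This gives $\mu_0(\boldsymbol{w})=1_{g_{w_1}\cdots g_{w_q}=e}$.

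Now suppose $g_{w_1}\cdots g_{w_q}=v^k$ with $v$ a non-power and $k\ge 1$; then $\mathcal{T}(\boldsymbol{w})=\emptyset$ and $\mu_1(\boldsymbol{w})=|\mathcal{U}(\boldsymbol{w})|-1$. For $\Gamma\in\mathcal{U}(\boldsymbol{w})$, $\pi_1(\Gamma,v_1)=\langle h\rangle$ is cyclic with $h^m=v^k$ for some $m\ne 0$. Both $h$ and $v$ therefore lie in the centralizer of the nontrivial element $v^k$, which in the free group is the unique maximal cyclic subgroup containing $v^k$; since $v$ is a non-power this subgroup is $\langle v\rangle$, so $h=v^d$ for some positive integer $d$, and $v^{dm}=v^k$ forces $d\mid k$. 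Conversely, for each divisor $d\mid k$ let $\Gamma_d$ be the image of the walk $\boldsymbol{w}$ (viewed as a based closed walk in the Cayley graph of $\mathbf{F}_r$) under the projection to the Schreier coset graph $\langle v^d\rangle\backslash\mathbf{F}_r$: this projection is a folded-graph morphism, so $\Gamma_d$ is folded and supports the walk, and since the walk traces the cycle $k/d\ge 1$ times we have $\pi_1(\Gamma_d,v_1)=\langle v^d\rangle$. Combined with the uniqueness of the folded Stallings quotient compatible with the given walk and $\pi_1$, this yields the bijection $\mathcal{U}(\boldsymbol{w})\leftrightarrow\{d>0:d\mid k\}$, whence $|\mathcal{U}(\boldsymbol{w})|=\omega(k)$ and $\mu_1(\boldsymbol{w})=\omega(k)-1$.

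The step I expect to require most care is the uniqueness half of the above bijection: one must verify that every $\Gamma\in\mathcal{U}(\boldsymbol{w})$ with $\pi_1(\Gamma,v_1)=\langle v^d\rangle$ is isomorphic (as a pointed labeled graph) to $\Gamma_d$, including any tree branches attached to the cycle that record cancellations in the word $w_1\cdots w_q$. This amounts to a standard Stallings folding argument together with the fact that every vertex and edge of $\Gamma$ is traversed by the walk by construction, so the walk pins down the folded structure completely.
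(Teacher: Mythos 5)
Your proposal is correct, but it takes a genuinely different route from the paper. The paper first notes that the left-hand side of Lemma~\ref{lem:polyenc} is invariant under cyclic reduction of the word, so it may assume $g_{w_1}\cdots g_{w_q}$ is cyclically reduced; then every contributing graph has minimum degree two, which kills trees (giving $\mu_0$) and forces every graph with $e_\Gamma=v_\Gamma$ to be a bare cycle, around which the closed nonbacktracking walk winds an integer number of times, so the cycle labels are exactly $v^d$ with $d\mid k$ --- a very short enumeration. You instead keep the word unreduced and classify general unicyclic graphs (cycle plus hanging trees recording cancellations) via Stallings theory: since realizability of an index tuple is exactly the partial-injectivity (foldedness) constraint for each color, each $\Gamma$ immerses into the rose, $\pi_1(\Gamma,v_1)$ embeds in $\mathbf{F}_r$ as $\langle v^d\rangle$ with $d\mid k$ by the centralizer argument, and conversely the image of the walk in the Schreier graph of $\langle v^d\rangle$ realizes each divisor (its $\pi_1$ is exactly $\langle v^d\rangle$ because the reduced walk already sweeps out the whole core cycle, and reduction only deletes edges of the walk). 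Your approach is heavier machinery but avoids the cyclic-reduction step and gives a structural description of the contributing graphs; the paper's reduction makes the count almost immediate. Two small remarks: the ``precisely when'' in your Stallings-graph paragraph is only true in the direction you actually use (existence of the unreduced walk implies membership of the reduced word in $\pi_1$; the converse can fail in a non-covering folded graph), and the uniqueness step you flag is indeed the only thin spot, but it closes along the lines you indicate --- in a folded graph, reading a word from a vertex is unique and lands where reading its reduction lands, so if $\pi_1(\Gamma,v_1)=\langle v^d\rangle$ then positions $s,t$ are identified in $\Gamma$ if and only if $p_sp_t^{-1}\in\langle v^d\rangle$, which pins the partition, and hence $\Gamma$, down to $\Gamma_d$.
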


\begin{proof}
If $g_{w_1}\cdots g_{w_q}=e$, it is obvious that 
$\mu_0(\boldsymbol{w})=1$. We therefore assume this is not the case. We 
may further assume that $g_{w_1}\cdots g_{w_q}$ is cyclically reduced, 
since the left-hand side of Lemma \ref{lem:polyenc} is unchanged under 
cyclic reduction. Then every vertex of any 
$\Gamma\in\mathcal{G}(\boldsymbol{w})$ must have degree at least two.

For the first identity, it now suffices to note that there cannot exist 
$\Gamma\in\mathcal{G}(\boldsymbol{w})$ with $e_\Gamma-v_\Gamma+1=0$: this 
would imply that $\Gamma$ is a tree, which must have a vertex of 
degree one. Thus the expression in the proof of Lemma \ref{lem:polyenc} 
yields $\mu_0(\boldsymbol{w})=0$.

We can similarly read off from the proof of Lemma \ref{lem:polyenc} that
$$
	\mu_1(\boldsymbol{w}) = \#\{\Gamma\in\mathcal{G}(\boldsymbol{w}):
	e_\Gamma-v_\Gamma=0\} - 1.
$$
That $e_\Gamma-v_\Gamma=0$ implies (as each vertex has degree at least 
two) that $\Gamma$ is a cycle. As $\boldsymbol{w}$ defines a closed 
nonbacktracking walk in $\Gamma$, it must go around the cycle an integer
number of times, so the possible lengths of cycles are
the divisors of $k$.
\end{proof}

\subsection{The master inequality}

We now proceed to the core of the polynomial method. Our main tool 
is the following inequality of A.\ Markov \cite[p.\ 91]{Che98}.

\begin{lem}[Markov inequality]
\label{lem:markov}
For any real polynomial $f$ of degree $q$ and $a>0$
$$
	\|f'\|_{L^\infty[0,a]} \le \frac{2q^2}{a}\|f\|_{L^\infty[0,a]}.
$$
\end{lem}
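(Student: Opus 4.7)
The statement is the classical A.\ Markov brothers inequality, rescaled from the standard interval $[-1,1]$ to $[0,a]$. The plan is to reduce to the standard form by an affine change of variables and then sketch the classical proof of the version on $[-1,1]$.

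For the rescaling, given $f$ of degree $\le q$ on $[0,a]$, set $g(y) := f\bigl(\tfrac{a}{2}(y+1)\bigr)$ for $y \in [-1,1]$. Then $g$ is a real polynomial of degree $\le q$ with $\|g\|_{L^\infty[-1,1]} = \|f\|_{L^\infty[0,a]}$, while the chain rule gives $g'(y) = \tfrac{a}{2}\,f'\bigl(\tfrac{a}{2}(y+1)\bigr)$, so $\|g'\|_{L^\infty[-1,1]} = \tfrac{a}{2}\|f'\|_{L^\infty[0,a]}$. Plugging these into the classical inequality $\|g'\|_{L^\infty[-1,1]} \le q^2\|g\|_{L^\infty[-1,1]}$ immediately yields the stated bound $\|f'\|_{L^\infty[0,a]} \le \tfrac{2q^2}{a}\|f\|_{L^\infty[0,a]}$, so everything reduces to the classical inequality.

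For the classical inequality on $[-1,1]$, the central objects are the Chebyshev polynomials of the first kind $T_q(x) = \cos(q\arccos x)$. Two facts drive the proof: (i) $|T_q(x)| \le 1$ on $[-1,1]$ with equioscillating extrema at $\xi_k = \cos(k\pi/q)$ for $k=0,\ldots,q$; and (ii) $|T_q'(\pm 1)| = q^2$, computed from $T_q'(\cos\theta) = q\sin(q\theta)/\sin\theta$ by letting $\theta \to 0$. So the inequality is already sharp at the endpoints, with $T_q$ as the extremizer. The cleanest route I would follow combines two ingredients: for the interior, apply Bernstein's inequality to the trigonometric polynomial $\theta \mapsto g(\cos\theta)$ of degree $\le q$ to obtain $|g'(x)| \le q\|g\|_\infty/\sqrt{1-x^2}$ on $(-1,1)$, which is $\le q^2\|g\|_\infty$ whenever $|x| \le \sqrt{1 - q^{-2}}$; for a neighborhood of $\pm 1$, use a Lagrange-type representation $g'(x) = \sum_{k=0}^q \alpha_k(x)\,g(\xi_k)$ at the Chebyshev extrema, and show that $\sum_k |\alpha_k(x)| \le q^2$ on the endpoint neighborhood. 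The latter follows from sign-alternation of $T_q(\xi_k) = (-1)^k$ combined with the endpoint identity $T_q'(\pm 1) = \pm q^2$, viewed as a duality/extremal relation.

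The main obstacle is the endpoint analysis, since Bernstein's bound degenerates as $x \to \pm 1$ and the sharp constant $q^2$ only materializes through the finer Chebyshev interpolation structure; this is precisely why the result is attributed to A.\ Markov rather than being an immediate corollary of Bernstein's inequality. The whole argument is classical, self-contained, and independent of the random matrix content of the paper; it is carried out in detail in standard references on approximation theory, in particular the book of Cheney \cite{Che98} already cited in the excerpt.
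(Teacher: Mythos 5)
Your proposal is correct: the paper offers no proof of this lemma at all, simply citing the classical Markov inequality from Cheney \cite[p.\ 91]{Che98}, and your affine rescaling from $[0,a]$ to $[-1,1]$ (which accounts exactly for the factor $\tfrac{2}{a}$) together with the standard Chebyshev/Bernstein proof of the inequality on $[-1,1]$ is precisely the content of that citation. Your sketch of the endpoint analysis via sign-alternation of the Lagrange differentiation coefficients at the Chebyshev extrema is the standard argument and is sound as outlined.
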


As well known consequence of the Markov inequality is that a bound
on a polynomial on a sufficiently fine grid extends to a uniform bound
\cite[p.\ 91]{Che98}. For completeness, we spell out the argument in the 
form we will need it.

\begin{cor}
\label{cor:cheney}
For any real polynomial $f$ of degree $q$ and $M\ge 2q^2$, we have
$$
	\|f\|_{L^\infty[0,\frac{1}{M}]} \le 
	2\sup_{N\ge M} |f(\tfrac{1}{N})|.
$$
\end{cor}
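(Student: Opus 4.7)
The plan is to combine Markov's inequality with a mean-value estimate, using the fact that the grid $\{1/N : N \ge M\}$ has its largest gap precisely at the right endpoint of $[0, 1/M]$.

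First I would apply Lemma~\ref{lem:markov} on the interval $[0, 1/M]$ to get
$$
	\|f'\|_{L^\infty[0,1/M]} \le 2q^2 M\,\|f\|_{L^\infty[0,1/M]}.
$$
Next, I would observe that the consecutive grid points $1/N$ and $1/(N+1)$ are at distance $\frac{1}{N(N+1)}$, which is a decreasing function of $N$. Consequently, for every $x \in [0, 1/M]$ there is some $N \ge M$ with
$$
	\bigl|x - \tfrac{1}{N}\bigr| \le \tfrac{1}{2M(M+1)} \le \tfrac{1}{2M^2},
$$
since the worst case is the midpoint of the outermost gap $[1/(M+1),1/M]$, and all gaps further in are smaller (and the grid accumulates at $0$, so points $x$ close to $0$ are easily approximated).

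Now let $x^\star \in [0, 1/M]$ be a point where $|f|$ attains its maximum and let $1/N^\star$ with $N^\star \ge M$ be a grid point within distance $1/(2M^2)$ of $x^\star$. Combining the two bounds above via the mean value theorem gives
$$
	\bigl|f(x^\star) - f(\tfrac{1}{N^\star})\bigr| \le \tfrac{1}{2M^2}\cdot 2q^2 M\,\|f\|_{L^\infty[0,1/M]} = \tfrac{q^2}{M}\,\|f\|_{L^\infty[0,1/M]}.
$$
Since $M \ge 2q^2$ by hypothesis, the coefficient $q^2/M$ is at most $1/2$, so
$$
	\|f\|_{L^\infty[0,1/M]} = |f(x^\star)| \le |f(\tfrac{1}{N^\star})| + \tfrac{1}{2}\|f\|_{L^\infty[0,1/M]},
$$
and rearranging yields $\|f\|_{L^\infty[0,1/M]} \le 2|f(1/N^\star)| \le 2\sup_{N\ge M}|f(1/N)|$, as required.

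There is no real obstacle here; the only point requiring a moment of care is verifying that the grid $\{1/N\}_{N\ge M}$ together with its limit point $0$ discretizes $[0,1/M]$ with spacing at most $1/(2M^2)$, which is automatic since the gaps between consecutive points shrink as $N$ grows. The factor $2q^2$ in the hypothesis $M \ge 2q^2$ is exactly what is needed to absorb the product of the Markov constant $2q^2$ and the half-spacing $1/(2M^2)$ into a $1/2$.
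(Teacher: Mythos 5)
Your proof is correct and follows essentially the same route as the paper's: bound the distance from any point of $[0,\frac{1}{M}]$ to the grid $\{\frac{1}{N}\}_{N\ge M}$ by $\frac{1}{2M^2}$, apply the Markov inequality to control $\|f'\|_{L^\infty[0,\frac{1}{M}]}$, and absorb the resulting factor $\frac{q^2}{M}\le\frac{1}{2}$. The only difference is cosmetic: you spell out the verification of the grid spacing and the mean-value step, which the paper leaves implicit.
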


\begin{proof}
For any $x\in[0,\frac{1}{M}]$, its distance to the set
$\{\frac{1}{N}\}_{N\ge M}$ is at most $\frac{1}{2M^2}$. Thus
$$
	\|f\|_{L^\infty[0,\frac{1}{M}]} \le
	\sup_{N\ge M} |f(\tfrac{1}{N})| + 
	\frac{1}{2M^2}\|f'\|_{L^\infty[0,\frac{1}{M}]}
	\le
	\sup_{N\ge M} |f(\tfrac{1}{N})| +
        \frac{q^2}{M} \|f\|_{L^\infty[0,\frac{1}{M}]}
$$
by the Markov inequality. The conclusion follows.
\end{proof}

In the following, we fix a self-adjoint
$P\in\mathbb{C}\langle x_1,\ldots,x_{2r}\rangle$ of degree $q_0$.
For every polynomial test function $h\in\mathcal{P}$ of degree $q$,
Lemma \ref{lem:polyenc} yields
$$
	\mathbf{E}\big[ \ntr h(P(\boldsymbol{U}^N,\boldsymbol{U}^{N*}))
	|_{1^\perp}\big] = \frac{f_h(\frac{1}{N})}{g_{qq_0}(\frac{1}{N})}
	=\Phi_h(\tfrac{1}{N})
$$
where $f_h,g_{qq_0}$ are real polynomials of degree at most $Cq$ and
$g_{qq_0}$ is defined in the proof of Lemma \ref{lem:polyenc}.
We define $\nu_0(h),\nu_1(h)$ for $h\in\mathcal{P}$ as in
\eqref{eq:expansion}, and denote by $K$ the sum of the moduli of
the coefficients of $P$. Note that all the above objects depend
on the choice of $P$, which we consider fixed.

The key idea is to use the Markov inequality to bound the
derivatives of $\Phi_h$.

\begin{lem}
\label{lem:polymethod}
For any $h\in\mathcal{P}$ of degree $q$, we have 
$$
	\|\Phi_h'\|_{L^\infty[0,\frac{1}{N}]} \le
	Cq^4\|h\|_{L^\infty[-K,K]},
	\qquad
	\|\Phi_h''\|_{L^\infty[0,\frac{1}{N}]} \le
	Cq^8\|h\|_{L^\infty[-K,K]}
$$
for all $N\ge Cq^2$, where $C$ is a constant (which depends on $P$).
\end{lem}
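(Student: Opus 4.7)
The plan is to separately control the numerator $f_h$ and denominator $g_{qq_0}$ of the rational function $\Phi_h = f_h/g_{qq_0}$, both of which are polynomials of degree at most $Cq$. The crux of the argument is to use Corollary~\ref{cor:cheney} and Lemma~\ref{lem:markov} to upgrade the pointwise bounds on $\Phi_h$ at the grid points $\{\tfrac{1}{M}\}_{M\ge qq_0}$, which follow from the trivial spectral estimate, into a uniform $L^\infty$-bound on $f_h$ over an interval of length $\sim 1/q^2$, and then into derivative bounds.

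For the pointwise input, since each $U_i^M$ is unitary and the sum of the moduli of the coefficients of $P$ equals $K$, the triangle inequality gives $\|P(\boldsymbol{U}^M, \boldsymbol{U}^{M*})|_{1^\perp}\| \le K$, hence its spectrum is contained in $[-K,K]$ and
\[
	\big|\Phi_h(\tfrac{1}{M})\big| =
	\big|\mathbf{E}[\ntr h(P(\boldsymbol{U}^M,\boldsymbol{U}^{M*}))|_{1^\perp}]\big|
	\le \|h\|_{L^\infty[-K,K]}
\]
for every $M \ge qq_0$. Since $|g_{qq_0}(\tfrac{1}{M})| = \prod_{k=1}^{qq_0-1}|1-\tfrac{k}{M}|^r \le 1$ for such $M$, this yields $|f_h(\tfrac{1}{M})| \le \|h\|_{L^\infty[-K,K]}$ on the grid. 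Choosing $M_0 = Cq^2$ large enough that both $M_0 \ge 2(\deg f_h)^2$ and $M_0 \ge qq_0$, Corollary~\ref{cor:cheney} applied to $f_h$ gives $\|f_h\|_{L^\infty[0, 1/M_0]} \le 2\|h\|_{L^\infty[-K,K]}$, and iterating Markov's inequality on $[0, 1/M_0]$ yields $\|f_h^{(j)}\|_{L^\infty[0, 1/M_0]} \le Cq^{4j}\|h\|_{L^\infty[-K,K]}$ for $j=1,2$. These bounds persist on the subinterval $[0, 1/N]$ for any $N \ge M_0$.

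The denominator is handled directly from its product form. For $N \ge Cq^2$ with $C$ depending on $q_0$, the elementary estimate $\log(1-y) \ge -2y$ (valid for $y \le 1/2$) applied to each factor shows that $|g_{qq_0}(x)|$ is bounded below by a positive constant (depending only on $q_0$) on $[0, 1/N]$, while trivially $|g_{qq_0}(x)| \le 1$. Computing with the logarithmic derivative $g_{qq_0}'/g_{qq_0} = -r\sum_k k/(1-kx)$ then yields $\|g_{qq_0}'\|_{L^\infty[0, 1/N]} \le Cq^2$ and, similarly, $\|g_{qq_0}''\|_{L^\infty[0, 1/N]} \le Cq^4$. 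Combining these with the bounds on $f_h^{(j)}$ via the quotient rule applied to $\Phi_h = f_h/g_{qq_0}$ produces the two claimed inequalities.

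The argument has no real obstacle, but its efficiency hinges on a simple balance: by choosing $M_0 \sim q^2$, the length $1/M_0$ of the interval on which $f_h$ is controlled exactly absorbs the $q^2$ factor produced by Markov's inequality. This is precisely what makes the polynomial method yield derivative bounds of the form $Cq^{4j}$ independent of $N$ — the decisive feature that will permit subsequent extension from polynomial to smooth test functions via Fourier analysis.
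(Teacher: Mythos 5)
Your proposal is correct and follows essentially the same route as the paper: a grid bound $|f_h(\tfrac1M)|\le\|h\|_{L^\infty[-K,K]}$ from the trivial spectral estimate, upgraded by Corollary~\ref{cor:cheney} and Markov's inequality on an interval of length $\sim q^{-2}$, combined with explicit lower/derivative bounds on $g_{qq_0}$ and the quotient rule. The only cosmetic difference is that you bound $g_{qq_0}'$ and $g_{qq_0}''$ directly rather than the ratios $g_{qq_0}'/g_{qq_0}$ and $g_{qq_0}''/g_{qq_0}$ as in the paper, which is equivalent given the uniform lower bound on $g_{qq_0}$.
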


\begin{proof}
It is easily verified using the explicit expression for $g_{qq_0}$ in the 
proof of Lemma \ref{lem:polyenc} that there are constants $C,c>0$ (which 
depend on $P$) so that
$$
	c\le g_q(x)\le 1, \qquad
	\bigg|\frac{g_{qq_0}'(x)}{g_{qq_0}(x)}\bigg| \le Cq^2,\qquad
	\bigg|\frac{g_{qq_0}''(x)}{g_{qq_0}(x)}\bigg| \le Cq^4
$$
for all $x\in[0,\frac{1}{q^2}]$.
We now simply apply the chain rule. For the first derivative,
$$
	\|\Phi_h'\|_{L^\infty[0,\frac{1}{Cq^2}]} = 
	\bigg\|\frac{f_h'}{g_{qq_0}} -
	\frac{f_h}{g_{qq_0}}
	\frac{g_{qq_0}'}{g_{qq_0}}\bigg\|_{L^\infty[0,\frac{1}{Cq^2}]}
	\le
	\frac{3C}{c} q^4 \|f_h\|_{L^\infty[0,\frac{1}{Cq^2}]}
$$
using Lemma \ref{lem:markov}. But Corollary \ref{cor:cheney} yields
$$
	\|f_h\|_{L^\infty[0,\frac{1}{Cq^2}]} \lesssim
	\sup_{N\ge q^2} |f_h(\tfrac{1}{N})| \le
	\sup_{N\ge q^2} |\Phi_h(\tfrac{1}{N})| \le
	\|h\|_{L^\infty[-K,K]},
$$
where we used $g_q\le 1$ in the second inequality and that
$\|P(\boldsymbol{U}^N,\boldsymbol{U}^{N*})\|\le K$ in the last inequality.
The bound on $\Phi_h''$ is obtained
in a completely analogous manner.
\end{proof}

We now easily obtain a quantitative form of \eqref{eq:expansion}.

\begin{cor}[Master inequality]
\label{cor:master}
For every $h\in\mathcal{P}$ of degree $q$ and $N\ge 1$,
$$
	\bigg| 
	\mathbf{E}\big[ \ntr 
	h(P(\boldsymbol{U}^N,\boldsymbol{U}^{N*}))|_{1^\perp}\big]
	- \nu_0(h) - \frac{\nu_1(h)}{N} \bigg|
	\le \frac{Cq^8}{N^2} \|h\|_{L^\infty[-K,K]},
$$
as well as $|\nu_1(h)| \le Cq^4\|h\|_{L^\infty[-K,K]}$.
\end{cor}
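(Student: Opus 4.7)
The plan is to recognize $\nu_0(h)$ and $\nu_1(h)$ as the zeroth and first Taylor coefficients at the origin of the rational function $\Phi_h$. From the defining expansion \eqref{eq:expansion} one reads off $\nu_0(h)=\Phi_h(0)$ and $\nu_1(h)=\Phi_h'(0)$, so the quantity to bound is precisely the second-order Taylor remainder
$$
\Phi_h(\tfrac{1}{N})-\Phi_h(0)-\tfrac{1}{N}\Phi_h'(0)
=\int_0^{1/N}\!\bigl(\tfrac{1}{N}-t\bigr)\Phi_h''(t)\,dt.
$$
I will treat the large-$N$ and small-$N$ regimes separately, since the hypothesis $N\ge Cq^2$ of Lemma \ref{lem:polymethod} need not be satisfied in general.

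In the large regime $N\ge Cq^2$, with $C$ the constant supplied by Lemma \ref{lem:polymethod}, the integral representation together with the estimate $\|\Phi_h''\|_{L^\infty[0,1/N]}\le Cq^8\|h\|_{L^\infty[-K,K]}$ immediately gives the remainder bound with the desired factor $\tfrac{1}{2N^2}$. The auxiliary estimate on $|\nu_1(h)|$ is $N$-independent: applying Lemma \ref{lem:polymethod} at any single admissible $N$ and noting that $0\in[0,1/N]$ yields $|\nu_1(h)|=|\Phi_h'(0)|\le\|\Phi_h'\|_{L^\infty[0,1/N]}\le Cq^4\|h\|_{L^\infty[-K,K]}$, which is the asserted second inequality.

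For the small regime $1\le N<Cq^2$, no differential information is needed; I bound each of the three terms on the left of the master inequality separately. Since $\|P(\boldsymbol{U}^N,\boldsymbol{U}^{N*})|_{1^\perp}\|\le K$ and $\|P(\boldsymbol{u},\boldsymbol{u}^*)\|\le K$ by the triangle inequality and unitarity, functional calculus yields $|\Phi_h(\tfrac{1}{N})|\le\|h\|_{L^\infty[-K,K]}$ and $|\nu_0(h)|=|\tau(h(P(\boldsymbol{u},\boldsymbol{u}^*)))|\le\|h\|_{L^\infty[-K,K]}$, while the preceding paragraph controls $|\nu_1(h)/N|$ by $Cq^4\|h\|_{L^\infty[-K,K]}$. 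The sum is of order $q^4\|h\|_{L^\infty[-K,K]}$, which is absorbed into $Cq^8/N^2\|h\|_{L^\infty[-K,K]}$ because $N<Cq^2$ forces $q^4\le C^2q^8/N^2$.

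The argument is thus a direct consequence of Lemma \ref{lem:polymethod}; the only care required lies in the case split to respect its hypothesis $N\ge Cq^2$, and this splitting is precisely what accounts for the asymmetric powers of $q$ (namely $q^4$ in the bound on $|\nu_1(h)|$ and $q^8$ in the remainder estimate) appearing in the two displayed inequalities.
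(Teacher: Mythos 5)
Your proof is correct and follows essentially the same route as the paper: identify $\nu_0(h)=\Phi_h(0)$, $\nu_1(h)=\Phi_h'(0)$, bound the second-order Taylor remainder by $\tfrac{1}{2N^2}\|\Phi_h''\|_{L^\infty[0,1/N]}$ via Lemma \ref{lem:polymethod} when $N\ge Cq^2$, and absorb the trivial triangle-inequality bound into $Cq^8/N^2$ when $N<Cq^2$. Writing the remainder as an integral rather than the standard Lagrange bound is a cosmetic difference only.
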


\begin{proof}
The bound on $|\nu_1(h)|$ follows immediately from Lemma 
\ref{lem:polymethod} as $\nu_1(h)=\Phi_h'(0)$. 
Now note that the left-hand side of the equation display in the statement 
equals
$$
	\big|\Phi_h(\tfrac{1}{N}) - \Phi_h(0) - \tfrac{1}{N}\Phi_h'(0)\big|
	\le \tfrac{1}{2N^2}\|\Phi_h''\|_{L^\infty[0,\frac{1}{N}]}.
$$
Thus the bound in the statement follows for $N\ge Cq^2$ from
Lemma
\ref{lem:polymethod}. On the other hand, when $N<Cq^2$, we can trivially
bound
$$
	\bigg| 
	\mathbf{E}\big[ \ntr 
	h(P(\boldsymbol{U}^N,\boldsymbol{U}^{N*}))|_{1^\perp}\big] 
	- \nu_0(h) - \frac{\nu_1(h)}{N} \bigg| \le
	\bigg(2+ \frac{Cq^4}{N}\bigg)\|h\|_{L^\infty[-K,K]}
$$
by the triangle inequality, as 
$\nu_0=\mu_{P(\boldsymbol{u},\boldsymbol{u}^*)}$ is supported
in $[-K,K]$, and using the bound on $\nu_1(h)$.
The conclusion follows using $1<\frac{Cq^2}{N}$.
\end{proof}

\subsection{Extension to smooth functions}

We are now ready to prove Proposition~\ref{prop:smex}. To this end, we 
will show that Corollary \ref{cor:master} can be extended to smooth test 
functions $h$ using a simple Fourier-analytic argument.

Recall that the \emph{Chebyshev polynomial (of the first kind)} $T_n$ is
the polynomial of degree $n$ defined by 
$T_n(\cos\theta)=\cos(n\theta)$. Any $h\in\mathcal{P}$ of degree $q$
can be written as
$$
	h(x) = \sum_{n=0}^q a_n\,T_n(K^{-1}x)
$$
for some real coefficients $a_0,\ldots,a_q$. Note that the latter are 
merely the Fourier coefficients of the function $\tilde 
h:S^1\to\mathbb{R}$ defined by $\tilde h(\theta)=h(K\cos\theta)$.

\begin{proof}[Proof of Proposition~\ref{prop:smex}]
Fix any $h\in\mathcal{P}$ and let $a_0,\ldots,a_q$ be its Chebyshev
coefficients as above. As $\|T_n\|_{L^\infty[-1,1]}=1$ for all $n$,
we can estimate
$$
	|\nu_1(h)| \le
	\sum_{n=0}^q |a_n|\, |\nu_1(T_n(K^{-1}\cdot))|
	\le
	C\sum_{n=0}^q n^4|a_n|,
$$
using the estimate on $\nu_1$ in Corollary \ref{cor:master}.
Now note that $n^ka_n$ is the $n$th Fourier coefficient of
the $k$th derivative $\tilde h^{(k)}$ of $\tilde h$.
We can therefore estimate
$$
	|\nu_1(h)| \le
	C\Bigg(\sum_{n=0}^q \frac{1}{n^2}\Bigg)^{\frac{1}{2}}
	\Bigg(
	\sum_{n=0}^q  n^{10}|a_n|^2\Bigg)^{\frac{1}{2}} 
	\le C' \|\tilde h^{(5)}\|_{L^2(S^1)}
	\le C'' \|h\|_{C^5[-K,K]}
$$
by Cauchy--Schwarz and Parseval, where the last inequality is obtained by 
applying the chain rule to
$\tilde h(\theta)=h(K\cos\theta)$. Since
this estimate holds for all $h\in\mathcal{P}$, the definition
of $\nu_1$ extends uniquely by continuity to any $h\in 
C^\infty(\mathbb{R})$. In particular, $\nu_1$ extends to a compactly 
supported distribution.

Applying the identical argument to the first inequality of
Corollary \ref{cor:master} yields
$$
	\bigg| 
	\mathbf{E}\big[ \ntr 
	h(P(\boldsymbol{U}^N,\boldsymbol{U}^{N*}))|_{1^\perp}\big]
	- \nu_0(h) - \frac{\nu_1(h)}{N} \bigg|
	\le \frac{C}{N^2} \|h\|_{C^9[-K,K]}
$$
for all $h\in C^\infty(\mathbb{R})$. In particular,
\eqref{eq:expansion} remains valid for any $h\in C^\infty(\mathbb{R})$.
\end{proof}

\subsection{The infinitesimal distribution}

It remains to prove Lemma \ref{lem:supp}. As was explained in section 
\ref{sec:infdist}, this result follows immediately from the following 
lemma, whose proof uses a spectral graph theory argument due to 
\cite[Lemma 2.4]{Fri03}.

\begin{lem}
\label{lem:frieduke}
Assume that $P$ has positive coefficients. Then
$$
	\limsup_{p\to\infty} |\nu_1(x^p)|^{\frac{1}{p}}
	\le
	\|P(\boldsymbol{u},\boldsymbol{u}^*)\|.
$$
\end{lem}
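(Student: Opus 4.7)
My strategy is to expand $\nu_1(x^p)$ explicitly via Corollary \ref{cor:divisor}, reorganize the sum using the divisor structure of proper powers in $\mathbf{F}_r$, and bound the remaining weighted walk counts spectrally.

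First I would expand $P(\boldsymbol{u},\boldsymbol{u}^*)^p = \sum_{\boldsymbol{w}} c_{\boldsymbol{w}}\, u_{w_1}\cdots u_{w_{pq_0}}$ with $c_{\boldsymbol{w}}\ge 0$ (since $P$ has positive coefficients). Setting $c^{(p)}_g = \sum_{\boldsymbol{w}:\,g_{\boldsymbol{w}} = g}c_{\boldsymbol{w}} = \langle\delta_g, P(\boldsymbol{u},\boldsymbol{u}^*)^p\delta_e\rangle$ and applying Corollary \ref{cor:divisor} termwise,
$$
\nu_1(x^p) = \sum_{g\ne e}(\omega(k_g)-1)\,c^{(p)}_g + R_p,
$$
where $g = v^{k_g}$ is the unique decomposition with $v$ non-power and $R_p$ collects contributions from monomials with $g_{\boldsymbol{w}}=e$. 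Revisiting the graph count in the proof of Corollary \ref{cor:divisor} shows that $|\mu_1(\boldsymbol{w})|$ is bounded polynomially in $p$ for words with $g_{\boldsymbol{w}}=e$, so $|R_p|\le p^{C}\,\tau(P^p)\le p^{C}\|P(\boldsymbol{u},\boldsymbol{u}^*)\|^p$ is harmless. I would then use $\omega(k) - 1 = |\{d\ge 2 : d\mid k\}|$ together with the bijection $\{g\ne e : d\mid k_g\}\leftrightarrow\{w^d : w\ne e\}$ (valid since $d$-th roots are unique in $\mathbf{F}_r$) to rewrite
$$
\sum_{g\ne e}(\omega(k_g)-1)\,c^{(p)}_g = \sum_{d=2}^{p}S_d(p),\qquad S_d(p):=\!\!\!\sum_{w\ne e,\,|w^d|\le p}\!\!\!\langle\delta_{w^d},P(\boldsymbol{u},\boldsymbol{u}^*)^p\delta_e\rangle.
$$

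The crux is then to establish $S_d(p)\le p^{C}\|P(\boldsymbol{u},\boldsymbol{u}^*)\|^p$ for every $d\ge 2$. I would exploit that $P(\boldsymbol{u},\boldsymbol{u}^*)$ commutes with right-multiplication on $\ell^2(\mathbf{F}_r)$, so that for each cyclically reduced $w_0$ the decomposition of $\ell^2(\mathbf{F}_r)$ into isotypical components for the right action of $\langle w_0\rangle\cong\mathbb{Z}$ gives a direct integral $P(\boldsymbol{u},\boldsymbol{u}^*) = \int_0^{2\pi} P_\theta\,d\theta/(2\pi)$ with $\|P_\theta\|\le\|P(\boldsymbol{u},\boldsymbol{u}^*)\|$; the winding number $d$ is then picked out by Fourier duality via $\langle\delta_{w_0^d},P^p\delta_e\rangle = \int_0^{2\pi} e^{id\theta}\langle\tilde\delta_e,P_\theta^p\tilde\delta_e\rangle\,d\theta/(2\pi)$. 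Equivalently, walks from $e$ to $w^d$ fold, under $\langle w\rangle_{\rm right}$, into closed walks on a quotient of the tree whose spectral radius equals $\|P(\boldsymbol{u},\boldsymbol{u}^*)\|$. Applying the spectral graph theoretic argument of \cite[Lemma 2.4]{Fri03} to this folded picture then yields $S_d(p)\le Cp^{C}\|P(\boldsymbol{u},\boldsymbol{u}^*)\|^p$ uniformly in $d$, and summing over $d\le p$ and taking $p$-th roots delivers the claim.

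The main obstacle is obtaining the spectral bound on $S_d(p)$ with only a polynomial prefactor in $p$. A naive Cauchy--Schwarz gives only
$$
S_d(p)\le|\{w^d:|w^d|\le p\}|^{1/2}\|P(\boldsymbol{u},\boldsymbol{u}^*)\|^p\lesssim (2r)^{p/(2d)}\,\|P(\boldsymbol{u},\boldsymbol{u}^*)\|^p,
$$
which for $d=2$ costs an exponential factor $(2r)^{p/4}$ and would yield only $\limsup|\nu_1(x^p)|^{1/p}\le(2r)^{1/4}\|P(\boldsymbol{u},\boldsymbol{u}^*)\|$. The gain needed to erase this exponential cost must come either from destructive interference across Fourier fibers of the right action of $\langle w_0\rangle$, or equivalently from the rigid axial sparsity of proper powers combined with the sharp decay $|R_g(z)|\le C\rho(z)^{|g|}$ of the resolvent on the tree; these are two faces of the same classical fact, namely that for free groups the combinatorial growth rate $(2r)^{1/2}$ of proper-power axes and the Green's function decay rate are exactly matched at $|z|=1/\|P(\boldsymbol{u},\boldsymbol{u}^*)\|$, an incarnation of Kesten's theorem. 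Executing this cancellation cleanly is the technical heart of the proof.
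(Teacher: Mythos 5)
Your reduction is fine and matches the paper's setup: expanding $P(\boldsymbol{u},\boldsymbol{u}^*)^p$ into monomials, applying Corollary \ref{cor:divisor} termwise (the $g_{\boldsymbol{w}}=e$ terms contribute exactly $-\tau(P(\boldsymbol{u},\boldsymbol{u}^*)^p)$, so your $R_p$ is even simpler than you claim), and reorganizing the divisor weights as a sum over $d\ge 2$ of $S_d(p)=\sum_{w\ne e}\langle\delta_{w^d},P(\boldsymbol{u},\boldsymbol{u}^*)^p\delta_e\rangle$ is equivalent to the paper's sum over $k$ and non-powers $v$. But the one genuinely nontrivial step --- bounding $S_d(p)$ by $p^{O(1)}\|P(\boldsymbol{u},\boldsymbol{u}^*)\|^p$ --- is exactly what you leave unexecuted, and the mechanisms you gesture at would not deliver it. The direct-integral/Fourier-fiber picture for the right action of $\langle w_0\rangle$ only gives $|\langle\delta_{w_0^d},P^p\delta_e\rangle|\le\|P(\boldsymbol{u},\boldsymbol{u}^*)\|^p$ for each \emph{fixed} root $w_0$, which is the trivial single-matrix-element bound; it says nothing about summing over the exponentially many roots $w_0$ of length up to $p/d$, which is the actual difficulty you identified. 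Moreover, ``destructive interference'' cannot be the source of the gain: since $P$ has positive coefficients, every term $\langle\delta_{w^d},P^p\delta_e\rangle$ is nonnegative, so there is no cancellation available --- positivity is used in the opposite way, to permit upper bounds by overcounting.

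The missing idea (following \cite[Lemma 2.4]{Fri03}, which the paper implements) is structural, not harmonic-analytic: write $v=gwg^{-1}$ with $w$ the cyclic reduction of $v$, and observe that every word reducing to $v^k$ is a concatenation of five consecutive subwords reducing to $g,w,w,w^{k-2},g^{-1}$. Summing over the $O(p^4)$ choices of break points and over $g,w\in\mathbf{F}_r$, writing each indicator as a matrix element $\langle\delta_v,u_{w_1}\cdots u_{w_q}\delta_e\rangle$, and taking expectations over the blocks of letters lying inside a single segment (which reconstitutes powers of $P(\boldsymbol{u},\boldsymbol{u}^*)$ there), one then applies Cauchy--Schwarz in the group variables: $g$ and $w$ each appear in exactly two matrix elements, so $\sum_g$ and $\sum_w$ are controlled by $\sum_v|\langle\delta_v,X\delta_e\rangle|^2=\|X\delta_e\|^2\le\|P(\boldsymbol{u},\boldsymbol{u}^*)\|^{2m}$, with the $w^{k-2}$ factor bounded by a supremum. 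This pairing is what eliminates the entropy factor $(2r)^{p/(2d)}$ that defeats your naive Cauchy--Schwarz, leaving only the polynomial prefactor from the break points. Without this decomposition your argument does not close.
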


To set up the proof, let us fix a noncommutative polynomial $P$ of degree 
$d$ with positive coefficients.
By homogeneity, we may assume without loss of generality that the
coefficients sum to one. It will be convenient to express
$$
	P(\boldsymbol{u},\boldsymbol{u}^*) =
	\sum_{i_1,\ldots,i_d=0}^{2r}
	a_{i_1,\ldots,i_d}\,u_{i_1}\cdots u_{i_d} =
	\mathbf{E}[u_{I_1}\cdots u_{I_d}],
$$
where $\boldsymbol{I}=(I_1,\ldots,I_d)$ are random variables such that
$\mathbf{P}[\boldsymbol{I}=(i_1,\ldots,i_d)]=a_{i_1,\ldots,i_d}$.
Now let
$(I_{sd+1},\ldots,I_{(s+1)d})$ be independent copies of 
$\boldsymbol{I}$
for $s\in\mathbb{N}$, so that we have 
$P(\boldsymbol{u},\boldsymbol{u}^*)^p
= \mathbf{E}[u_{I_1}\cdots u_{I_{pd}}]$. Then we can apply
Corollary \ref{cor:divisor} to compute
$$
	\nu_1(x^p) =
	-\tau\big(P(\boldsymbol{u},\boldsymbol{u}^*)^p\big) +
	\sum_{k=2}^{pd} (\omega(k)-1)
	\sum_{v\in\mathbf{F}_r^{\rm np}}
	\mathbf{E}\big[1_{g_{I_1}\cdots g_{I_{pd}}=v^k}\big],
$$
where $\mathbf{F}_r^{\rm np}$ denotes the set of non-powers in
$\mathbf{F}_r$.\footnote{Here we used that
$\mu_1(\boldsymbol{w})=-1$ if $g_{w_1}\cdots g_{w_q}=e$, since
$\ntr \id|_{1^\perp} = \frac{N-1}{N}$.\label{foot:muid}}

\begin{proof}[Proof of Lemma \ref{lem:frieduke}]
We would like to argue that if a word $g_{i_1}\cdots g_{i_{pd}}=v^k$,
it must be a 
concatenation of $k$ words that reduce to $v$. This is only true, however,
if $v$ is cyclically reduced: otherwise the last letters of $v$ may
cancel the first letters of the
next repetition of $v$, and the cancelled letters need not appear
in our word. The correct version of this statement is that there exist
$g,w\in\mathbf{F}_d$ 
with $v=gwg^{-1}$ (where $w$ is the cyclic reduction of $v$) so that
every word that reduces to $v^k$ is a concatenation of
words that reduce to $g,w,w,w^{k-2},g^{-1}$. Thus
\begin{multline*}
	\sum_{v\in\mathbf{F}_r^{\rm np}}
	1_{g_{i_1}\cdots g_{i_{pd}}=v^k} \le
	\sum_{g,w\in\mathbf{F}_r}
	\sum_{0\le t_1\le \cdots\le t_4\le pd}
	1_{g_{i_1}\cdots g_{i_{t_1}} = g}\,
	1_{g_{i_{t_1+1}}\cdots g_{i_{t_2}} = w} \times\mbox{} \\
	1_{g_{i_{t_2+1}}\cdots g_{i_{t_3}} = w}\,
	1_{g_{i_{t_3+1}}\cdots g_{i_{t_4}} = w^{k-2}}\,
	1_{g_{i_{t_4+1}}\cdots g_{i_{pd}} = g^{-1}}.
\end{multline*}
To relate this bound to the spectral properties of 
$P(\boldsymbol{u},\boldsymbol{u}^*)$, we make the simple observation
that the indicators above can be expressed as matrix elements
$$
	1_{g_{w_1}\cdots g_{w_q}=v} = \langle \delta_v,
	u_{w_1}\cdots u_{w_q}\,\delta_e\rangle.
$$
If we substitute the formula in the above inequality, and then
take the expectation with respect to each independent block of variables
$(I_{sd+1},\ldots,I_{(s+1)d})$ that lies entirely inside one of the
matrix elements, we obtain
\begin{multline*}
	\sum_{v\in\mathbf{F}_r^{\rm np}}
	\mathbf{E}\big[1_{g_{I_1}\cdots g_{I_{pd}}=v^k}\big]
	\le
	\sum_{g,w\in\mathbf{F}_r}
	\sum_{0\le t_1\le \cdots\le t_4\le pd}
	\mathbf{E}\big[
	\langle \delta_g,X_{1,\boldsymbol{t}}\,\delta_e\rangle\,
	\langle \delta_w,X_{2,\boldsymbol{t}}\,\delta_e\rangle 
	\times\mbox{} \\
	\langle\delta_w,X_{3,\boldsymbol{t}}\,\delta_e\rangle\,
	\langle\delta_{w^{k-2}},X_{4,\boldsymbol{t}}\,\delta_e\rangle\,
	\langle\delta_{g^{-1}},X_{5,\boldsymbol{t}}\,\delta_e\rangle
	\big]
\end{multline*}
with
$$
	X_{j,\boldsymbol{t}} =
	u_{I_{t_{j-1}+1}}\cdots u_{I_{a_j}}
	P(\boldsymbol{u},\boldsymbol{u}^*)^{m_j}
	\,u_{I_{b_j+1}}\cdots u_{I_{t_j}},
$$
where $a_j=\min\{sd:s\in\mathbb{Z}_+,~t_{j-1}\le sd\}\wedge t_j$,
$b_j=\max\{sd:s\in\mathbb{Z}_+,~sd\le t_j\}\vee a_j$, $m_jd=b_j-a_j$,
and we write $t_0=0$ and $t_5=pd$ for simplicity. 

The crux of the proof is now to note that as
$$
	\sum_{v\in\mathbf{F}_r} |\langle \delta_v,X_{j,\boldsymbol{t}}\,
	\delta_e\rangle|^2 =
	\|X_{j,\boldsymbol{t}}\,
	\delta_e\|^2 \le \|P(\boldsymbol{u},\boldsymbol{u}^*)\|^{2m_j},
$$
it follows readily using Cauchy--Schwarz that
\begin{align*}
	\sum_{v\in\mathbf{F}_r^{\rm np}}
	\mathbf{E}\big[1_{g_{I_1}\cdots g_{I_{pd}}=v^k}\big]
	&\le
	\sum_{0\le t_1\le \cdots\le t_4\le pd}
	\|P(\boldsymbol{u},\boldsymbol{u}^*)\|^{m_1+\cdots+m_5}
	\\ &\le
	(pd+1)^4 \|P(\boldsymbol{u},\boldsymbol{u}^*)\|^{p+O(1)},
\end{align*}
since each $X_{j,\boldsymbol{t}}$ contains at most $2d=O(1)$ variables
other than $P(\boldsymbol{u},\boldsymbol{u}^*)^{m_j}$.
As $\sum_{k=2}^{pd}(\omega(k)-1) \le (pd)^2$ and
$|\tau(P(\boldsymbol{u},\boldsymbol{u}^*)^p)|\le
\|P(\boldsymbol{u},\boldsymbol{u}^*)\|^p$, the conclusion follows
directly from the expression for $\nu_1(x^p)$ stated before the proof.
\end{proof}

\begin{rem}
\label{rem:mdlspos}
The proof of Lemma \ref{lem:frieduke} relies on positivization: since all 
the terms in the proof are positive, we are able to obtain upper bounds by 
overcounting as in the first equation display of the proof. While this 
argument only applies in first instance to polynomials with positive 
coefficients, strong convergence for arbitrary polynomials then follows 
\emph{a posteriori} by Lemma \ref{lem:pos}.

It is also possible, however, to apply a variant of the positivization 
trick directly to $\nu_1$. This argument \cite[\S 6.2]{MdlS24} shows that 
the validity of Lemma \ref{lem:frieduke} for polynomials with positive 
coefficients already implies its validity for all self-adjoint polynomials 
(even with matrix coefficients), so that the polynomial method can be 
applied directly to general polynomials. The advantage of this approach is 
that it yields much stronger quantitative bounds than can be achieved by 
applying Lemma~\ref{lem:pos}. Since we have not emphasized the 
quantitative features of the polynomial method in our presentation, we do 
not develop this approach further here.
\end{rem}

\subsection{Discussion: on the role of cancellations}
\label{sec:discpoly}

When encountered for the first time, the simplicity of proofs by the 
polynomial method may have the appearance of a magic trick. An explanation 
for the success of the method is that it uncovers a genuinely new 
phenomenon that is not captured by classical methods of random matrix 
theory. Now that we have provided a complete proof of Theorem \ref{thm:bc} 
by the polynomial method, we aim to revisit the proof to highlight 
where this phenomenon arises. For simplicity, we place the following 
discussion in the context of random matrices $X^N$ with 
limiting operator $X_{\rm F}$; the reader may keep in mind 
$$
	X^N = P(\boldsymbol{U}^N,\boldsymbol{U}^{N*})|_{1^\perp},
	\qquad
	X_{\rm F} = P(\boldsymbol{u},\boldsymbol{u}^*)
$$
in the context of Theorem \ref{thm:bc} and its proof.

\subsubsection{The moment method}
\label{sec:momentmethod}

It is instructive to first recall the classical moment method that is 
traditionally used in random matrix theory. Let us take for granted
that $X^N$ converges \emph{weakly} to $X_{\rm F}$, so that
\begin{equation}
\label{eq:momentmethod}
	\mathbf{E}[\ntr {(X^N)^{2p}}]^{\frac{1}{2p}}
	= (1+o(1)) \tau(X_{\rm F}^{2p})^{\frac{1}{2p}} \le
	(1+o(1)) \|X_{\rm F}\|
\end{equation}
as $N\to\infty$ with $p$ fixed. The premise of the moment method is that 
\emph{if} it could be shown that this convergence remains valid when $p$ 
is allowed to grow with $N$ at rate $p\gg \log N$, then a strong 
convergence upper bound would follow: indeed, since
$\|X^N\|^{2p} \le \tr[(X^N)^{2p}] = N\,\ntr[(X^N)^{2p}]$, we could then
estimate 
$$
	\mathbf{E}\|X^N\| \le
	N^{\frac{1}{2p}}
	\mathbf{E}[\ntr {(X^N)^{2p}}]^{\frac{1}{2p}} \le
	(1+o(1))\|X_{\rm F}\|,
$$
where we used that $N^{\frac{1}{2p}}=1+o(1)$ for $p\gg\log N$.

There are two difficulties in implementing the above method. First, 
establishing \eqref{eq:momentmethod} for 
$p\gg\log N$ can be technically challenging and often requires delicate
combinatorial estimates. When $p$ is fixed, we can write an expansion
$$
	\mathbf{E}[\ntr {(X^N)^{2p}}] =
	\alpha_0(p) + \frac{\alpha_1(p)}{N} +
	\frac{\alpha_2(p)}{N^2} + \cdots
$$
(this is immediate, for example, from \eqref{lem:polyenc}) and 
establishing \eqref{eq:momentmethod} requires us to understand only the 
lowest-order term $\alpha_0(p)$. In contrast, when $p\gg\log N$ the 
coefficients $\alpha_k(p)$ themselves grow faster than polynomially in 
$N$, so that it is necessary to understand the terms in the expansion to 
all orders.

In the setting of Theorem \ref{thm:bc}, however, there is a more serious 
problem: \eqref{eq:momentmethod} is not just difficult to prove, but 
actually fails altogether.

\begin{example}
\label{ex:tangles}
Consider the permutation model of $2r$-regular random graphs as in Theorem
\ref{thm:friedman}, so that $X^N=A^N|_{1^\perp}$ where $A^N$
is the adjacency matrix. We claim that $\|X^N\|=2r$ with probability
at least $N^{-r}$. As $\|X_{\rm F}\|=2\sqrt{2r-1}$, this implies
$$
	\mathbf{E}[\tr {(X^N)^{2p}}] \ge
	N^{-r}(2r)^{2p} \ge
	N^{-r}
	\bigg(\frac{4}{3}\bigg)^p \|X_{\rm F}\|^{2p},
$$
contradicting the validity of \eqref{eq:momentmethod} for $p\gg\log N$.

To prove the claim, note that any given point of $[N]$ is simultenously a 
fixed point of the random permutations $U_1^N,\ldots,U_r^N$ with 
probability $N^{-r}$. Thus with probability at least $N^{-r}$, random 
graph has a vertex with $2r$ self-loops which is disconnected from the 
rest of the graph, so that $A^N$ has eigenvalue $2r$ with multiplicity at 
least two. The latter cleatly implies that $\|X^N\|=2r$.
\end{example}

Example \ref{ex:tangles} shows that the appearance of outliers in the 
spectrum with polynomially small probability $\sim N^{-c}$ presents a 
fundamental obstruction to the moment method. In random graph models, this 
situation arises due to the appearance of ``bad'' subgraphs, called 
\emph{tangles}. Previous proofs \cite{Fri08,Bor20,BC19} of optimal 
spectral gaps in this setting must overcome these difficulties by 
conditioning on the absence of tangles, which significantly complicates 
the analysis and has made it difficult to adapt these methods to more 
challenging models.\footnote{A notable exception being the work of 
Anantharaman and Monk on random surfaces \cite{AM25i,AM25ii}.}

\subsubsection{A new phenomenon}

The polynomial method is essentially based on the same input as the moment 
method: we consider the spectral statistics
$$
	\mathbf{E}[\ntr h(X^N)] =
	\text{linear combination of }\mathbf{E}[\ntr {(X^N)^p}] 
	\text{ for }p\le q,
$$
where $h$ is any real polynomial of degree $q$, and aim to compare 
these with the spectral statistics of $X_{\rm F}$. Since we have shown in 
Example \ref{ex:tangles} that each moment can be larger than its limiting 
value by a factor exponential in the degree, that is, $\mathbf{E}[\ntr 
{(X^N)^{2p}}]\ge e^{Cp}\tau((X_{\rm F})^{2p})$ for $p\gg\log N$, it seems 
inevitable that $\mathbf{E}[\ntr h(X^N)]$ must be poorly approximated by 
$\tau(h(X_{\rm F}))$ for high degree polynomials $h$. The surprising 
feature of the polynomial method is that it defies this expectation: for 
example, a trivial modification of the proof of Corollary \ref{cor:master} 
yields the bound
\begin{equation}
\label{eq:polymeth}
	\big| \mathbf{E}[\ntr h(X^N)] -
	\tau(h(X_{\rm F})) \big|
	\le \frac{Cq^4}{N} \|h\|_{L^\infty[-K,K]}
\end{equation}
which depends only polynomially on the degree $q$.

There is of course no contradiction between these observations: if we 
choose $h(x)=x^p$ in \eqref{eq:polymeth}, then $\|h\|_{L^\infty[-K,K]}= 
K^p\ge e^{Cp}\|X_{\rm F}\|^p$ and we recover the exponential dependence on 
degree that was observed in Example \ref{ex:tangles}. On the other hand, 
\eqref{eq:polymeth} shows that the dependence on the degree becomes 
polynomial when $h$ is uniformly bounded on the interval 
$[-K,K]$. Thus the polynomial method reveals an unexpected cancellation 
phenomenon that happens when the moments are combined to form bounded test 
functions $h$.

The idea that classical tools from the analytic theory of polynomials, 
such as the Markov inequality, make it possible to capture such 
cancellations lies at the heart of the polynomial method. These 
cancellations would be very difficult to realize by a direct combinatorial 
analysis of the moments. The reason that this phenomenon greatly 
simplifies proofs of strong convergence is twofold. First, it only 
requires us to understand the $\frac{1}{N}$-expansion of the moments to 
first order, rather than to every order as would be required by the moment 
method. Second, this eliminates the need to deal with tangles, since 
tangles do not appear in the first-order term in the expansion. (The 
tangles are however visible in the higher order terms, which gives rise to 
the large deviations behavior in Figure \ref{fig:staircase}.)

\begin{rem} 
We have contrasted the polynomial method with the moment method since
both rely only on the ability to compute moments $\mathbf{E}[\ntr {(X^N)^p}]$.
Beside the moment method, another classical method of random 
matrix theory is based on resolvent statistics such as
$\ntr {(z-X^N)^{-1}}$.
This approach was used by Haagerup--Thorbj{\o}rnsen \cite{HT05}
and Schultz \cite{Sch05} to establish strong convergence for Gaussian 
ensembles, where strong analytic tools are available.
It is unclear, however, how such quantities can be 
computed or analyzed in the context of discrete models
as in Theorem \ref{thm:bc}. Nonetheless, let us note that the 
recent works \cite{HY24,HMY24,HMY25} 
have successfully used such an approach in the setting of random regular 
graphs.
\end{rem}

\section{Intrinsic freeness}
\label{sec:mconc}

The aim of this section is to explain the origin of the intrinsic freeness 
phenomenon that was introduced in section \ref{sec:intriintro}. Since 
Theorem \ref{thm:intrfree} requires a number of technical ingredients 
whose details do not in themselves shed significant light on the 
underlying phenomenon, we defer to \cite{BBV23,BCSV23} for a complete 
proof. Instead, we aim to give an informal discussion of the key ideas 
behind the proof: in particular, we aim to explain the underlying 
mechanism.

Before we can do so, however, we must first describe the limiting object 
$X_{\rm free}$ and explain why it is useful in practice, which we will do 
in section \ref{sec:semicircle}. We subsequently sketch some key ideas 
behind the proof of Theorem \ref{thm:intrfree} in section 
\ref{sec:pfintr}.

\subsection{The free model}
\label{sec:semicircle}

To work with Gaussian random matrices, we must 
recall how to compute moments of independent standard Gaussians 
$\boldsymbol{g}=(g_1,\ldots,g_r)$: given any $k_1,\ldots,k_n\in [r]$,
the Wick formula \cite[Theorem 22.3]{NS06} states that
$$
	\mathbf{E}[g_{k_1}\cdots g_{k_n}] =
	\sum_{\pi\in\mathrm{P}_2[n]} \prod_{\{i,j\}\in\pi}
	1_{k_i=k_j},
$$
where $\mathrm{P}_2[n]$ denotes the set of pairings of $[n]$
(that is, partitions into blocks of size two). This 
classical result is easily proved by induction on $n$ using integration by 
parts. A convenient way to rewrite the Wick formula is to introduce for 
every $\pi\in\mathrm{P}_2[n]$ and $j\in[n]$ random variables 
$\boldsymbol{g}^{j|\pi}=(g_1^{j|\pi},\ldots,g_r^{j|\pi})$ with the same 
law as $\boldsymbol{g}$ so 
that $\boldsymbol{g}^{j|\pi}=\boldsymbol{g}^{l|\pi}$ for $\{j,l\}\in\pi$, 
and $\boldsymbol{g}^{j|\pi},\boldsymbol{g}^{l|\pi}$ 
are independent otherwise. Then
$$
	\mathbf{E}[g_{k_1}\cdots g_{k_n}] = \sum_{\pi\in\mathrm{P}_2[n]}
	\mathbf{E}\big[g_{k_1}^{1|\pi}\cdots g_{k_n}^{n|\pi}\big],
$$
as the expectation in the sum factors as
$\prod_{\{i,j\}\in\pi}E[g_{k_i}g_{k_j}]$.

What happens if we replace the scalar Gaussians
$\boldsymbol{g}=(g_1,\ldots,g_r)$ by independent GUE matrices 
$\boldsymbol{G}^N=(G_1^N,\ldots,G_r^N)$?
To explain this, we need the following notion: a pairing 
$\pi$ has a \emph{crossing} if there exist 
pairs $\{i,j\},\{l,m\}\in\pi$ so that $i<l<j<m$. If we represent $\pi$ by 
drawing each element of $[n]$ as a vertex on a line, and drawing a 
semicircular arc between the vertices in each pair $\{i,j\}\in\pi$, the 
pairing has a crossing precisely when two of the arcs cross; see Figure 
\ref{fig:crossing}. 
\begin{figure}
\centering
\begin{tikzpicture}
\node[left] (0,0) {$\pi_1 = ~$};

\draw[thick] (0,0) -- (3.5,0);
\foreach \i in {1,...,8}
{
        \draw[fill=black] (0.5*\i-0.5,0) circle (0.05);
}
\begin{scope}
    \clip (-0.1,0) rectangle (3.6,1.4);
    \draw[thick] (1.75,-0.55) circle (1.85);
    \draw[thick] (0.75,0) circle (0.25);
    \draw[thick] (2.25,0) circle (0.75);
    \draw[thick] (2.25,0) circle (0.25);
\end{scope}

\begin{scope}[xshift=6cm]
\node[left] (0,0) {$\pi_2 = ~$};

\draw[thick] (0,0) -- (3.5,0);
\foreach \i in {1,...,8}
{
        \draw[fill=black] (0.5*\i-0.5,0) circle (0.05);
}
\begin{scope}
    \clip (-0.1,0) rectangle (3.6,1.1);
    \draw[thick] (1.5,-.68) circle (1.65);
    \draw[thick] (0.75,0) circle (0.25);
    \draw[thick] (2.5,0) circle (1);
    \draw[thick] (2.25,0) circle (0.25);
\end{scope}
\end{scope}
\end{tikzpicture}
\caption{Illustration of a noncrossing pairing 
$\pi_1$ and a crossing pairing $\pi_2$.\label{fig:crossing}}
\end{figure}
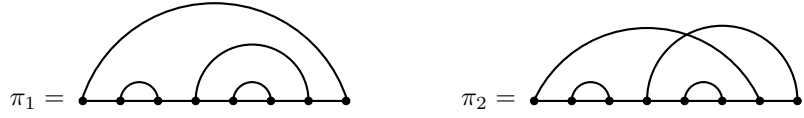

\begin{lem}
\label{lem:voic}
We have
$$
	\lim_{N\to\infty}
	\mathbf{E}\big[{\ntr G^N_{k_1}\cdots G^N_{k_n}}\big] =
	\sum_{\pi\in\mathrm{NC}_2[n]} \prod_{\{i,j\}\in\pi}
	1_{k_i=k_j},
$$
where $\mathrm{NC}_2[n]$ 
denotes the set of \emph{noncrossing} pairings.
\end{lem}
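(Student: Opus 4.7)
The plan is to compute the expectation entry-wise using the Wick formula and then carry out a standard orbit-counting argument.

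Expanding $\ntr G^N_{k_1}\cdots G^N_{k_n} = \tfrac{1}{N}\sum_{i_1,\ldots,i_n}(G^N_{k_1})_{i_1 i_2}\cdots (G^N_{k_n})_{i_n i_1}$ (indices read cyclically mod $n$), a direct computation from the definition of GUE yields the covariance $\mathbf{E}[(G^N_k)_{ij}(G^N_l)_{pq}] = \frac{1}{N}\,1_{k=l}\,1_{i=q}\,1_{j=p}$, so applying the Wick formula to each entry-wise product gives
\begin{equation*}
	\mathbf{E}\bigl[\ntr G^N_{k_1}\cdots G^N_{k_n}\bigr] =
	\frac{1}{N^{1+n/2}}\sum_{\pi\in\mathrm{P}_2[n]}
	\Biggl(\prod_{\{a,b\}\in\pi} 1_{k_a=k_b}\Biggr)
	\sum_{i_1,\ldots,i_n\in[N]}
	\prod_{\{a,b\}\in\pi} 1_{i_a=i_{b+1}}\,1_{i_{a+1}=i_b}.
\end{equation*}
For fixed $\pi$, the inner index sum equals $N^{c(\pi)}$, where $c(\pi)$ is the number of cycles of the permutation $\gamma\pi$ with $\gamma=(1\,2\,\cdots\,n)$ and $\pi$ viewed as a fixed-point-free involution: the constraints $\{a,b\}\in\pi$ impose exactly the identifications $i_a\sim i_{\gamma\pi(a)}$.

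The key combinatorial input is then the classical bound
\begin{equation*}
	c(\pi)\le \tfrac{n}{2}+1,\qquad\text{with equality iff $\pi$ is noncrossing.}
\end{equation*}
This is Euler's formula applied to the oriented ribbon graph obtained by gluing the sides of an $n$-gon in pairs according to $\pi$: the vertices of the resulting surface correspond to the orbits counted by $c(\pi)$, there are $n/2$ edges and a single face, so $c(\pi)-n/2+1 = 2-2g(\pi)$, and $g(\pi)=0$ holds precisely for planar, i.e.\ noncrossing, gluings. Substituting gives
\begin{equation*}
	\mathbf{E}\bigl[\ntr G^N_{k_1}\cdots G^N_{k_n}\bigr] =
	\sum_{\pi\in\mathrm{P}_2[n]}
	\Biggl(\prod_{\{a,b\}\in\pi} 1_{k_a=k_b}\Biggr) N^{-2g(\pi)},
\end{equation*}
so each noncrossing pairing contributes exactly $1$ while every crossing pairing contributes $O(N^{-2})$; letting $N\to\infty$ yields the claim.

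The main obstacle is the genus identity $c(\pi) = n/2+1-2g(\pi)$ together with the identification of noncrossing pairings as the genus-zero gluings. One can alternatively prove the bound inductively by an uncrossing move: replacing a crossing pair $\{a,b\},\{c,d\}$ (with $a<c<b<d$) by $\{a,c\},\{b,d\}$ strictly decreases the number of crossings and strictly increases $c(\pi)$ by $2$, and the noncrossing base case $c(\pi)=n/2+1$ is verified by peeling off an innermost pair. Once this combinatorial fact is in hand, the remainder of the argument is a mechanical unwinding of the Wick formula.
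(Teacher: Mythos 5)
Your proof is correct, and it takes the classical genus-expansion route rather than the paper's free-probability route. The paper also begins with the Wick formula, but keeps the sum over pairings at the level of matrix products: for a noncrossing $\pi$ it repeatedly removes an adjacent pair using $\mathbf{E}[G^N_{k_i}G^N_{k_j}]=1_{k_i=k_j}\id$, which peels the pairing down to the product of indicators; for a crossing $\pi$ it isolates a single crossing and uses the explicit identity $\mathbf{E}[G_{k_i}^N A \tilde G_{k_l}^N B G_{k_j}^N C \tilde G_{k_m}^N ] = \frac{1}{N^2} CBA\, 1_{k_i=k_j}1_{k_l=k_m}$ to get the $O(N^{-2})$ suppression, without ever identifying the exact power of $N$. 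Your approach instead expands entry-wise, reduces to counting cycles of $\gamma\pi$, and invokes Euler's formula for ribbon graphs to pin down $N^{c(\pi)-1-n/2}=N^{-2g(\pi)}$. What the paper's route buys is that it avoids the topological input entirely and makes the free-probability mechanism (adjacent-pair removal, which is exactly where freeness enters) visible; what your route buys is the exact genus expansion, which gives more information than needed here. Both are standard and equally valid. One small caveat: your backup inductive argument (the uncrossing move) needs a bit more care — the claim that a single uncrossing strictly decreases the total number of crossings is not as immediate as stated, though the weaker fact that $c(\pi)$ increases by $2$ per move, is bounded by $n/2+1$, and that a move is possible whenever a crossing exists suffices to make the induction run. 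Since you primarily rely on Euler's formula, this does not affect the correctness of the proof.
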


\begin{proof}
Define $\boldsymbol{G}^{N,j|\pi}=(G^{N,j|\pi}_1,\ldots,G^{N,j|\pi}_r)$ 
analogously to $\boldsymbol{g}^{j|\pi}$ above. Then 
$$
	\mathbf{E}\big[{\ntr G^N_{k_1}\cdots G^N_{k_n}}\big] = 
	\sum_{\pi\in\mathrm{P}_2[n]} \mathbf{E}\big[{\ntr 
	G^{N,1|\pi}_{k_1}\cdots G^{N,n|\pi}_{k_n}}\big]
$$
by the Wick formula.
Consider first a \emph{noncrossing} pairing $\pi$. Since pairs cannot
cross, there must be an adjacent pair 
$\{i,i+1\}\in\pi$, and if this pair is removed we obtain a
noncrossing pairing of $[n]\backslash\{i,i+1\}$. 
As
$\mathbf{E}[G^N_{k_i}G^N_{k_j}]=1_{k_i=k_j}\id$,\footnote{\label{foot:cross}%
This follows from a simple explicit computation using the following
characterization of GUE matrices: $G_i^N$ is a self-adjoint matrix whose
entries above the diagonal are i.i.d.\ complex Gaussians and
entries on the diagonal are i.i.d.\ real Gaussians with mean zero and
variance $\frac{1}{N}$.}
we obtain
$$
	\mathbf{E}\big[{\ntr
        G^{N,1|\pi}_{k_1}\cdots G^{N,n|\pi}_{k_n}}\big] =
	\prod_{\{i,j\}\in\pi} 1_{k_i=k_j}
$$
by repeatedly taking the expectation with respect to an adjacent pair.

On the other hand, if $\boldsymbol{\tilde G}^N$ is an independent copy of 
$\boldsymbol{G}^N$, we
can compute\cref{foot:cross}
\begin{equation}
\label{eq:guecross}
	\mathbf{E}\big[
	G_{k_i}^N \,A\, \tilde G_{k_l}^N \,B\, G_{k_j}^N \,C\, 
	\tilde G_{k_m}^N \big]
	=
	\frac{1}{N^2}\, CBA \, 1_{k_i=k_j}\, 1_{k_l=k_m}
\end{equation}
for any matrices $A,B,C$ that are independent of 
$\boldsymbol{G}^N,\boldsymbol{\tilde G}^N$.
Thus
$$
	\mathbf{E}\big[{\ntr
        G^{N,1|\pi}_{k_1}\cdots G^{N,n|\pi}_{k_n}}\big] =
	o(1)
$$
as $N\to\infty$ whenever $\pi$ is a \emph{crossing} pairing.
\end{proof}

In view of Lemma \ref{lem:voic}, the significance of the following 
definition of the limiting object associated to independent GUE matrices 
is self-evident.

\begin{defn}[Free semicircular family] 
\label{defn:semicircle}
A family 
$\boldsymbol{s}=(s_1,\ldots,s_r)\in\mathcal{A}$ of self-adjoint elements 
of a $C^*$-probability space $(\mathcal{A},\tau)$ such that
$$
	\tau(s_{k_1}\cdots s_{k_n}) =
	\sum_{\pi\in\mathrm{NC}_2[n]} \prod_{\{i,j\}\in\pi}
	1_{k_i=k_j}
$$
for all $n\in\mathbb{N}$ and $k_1,\ldots,k_n\in [r]$
is called a \emph{free semicircular family}.
\end{defn}

Free semicircular families can be constructed in various ways, 
guaranteeing their existence; see, e.g.\ \cite[pp.\ 102--108]{NS06}.
Lemma \ref{lem:voic} states that a family 
$\boldsymbol{G}^N$ of independent GUE matrices 
converges \emph{weakly} to a free semicircular family 
$\boldsymbol{s}$. 

\begin{rem}
The variables $s_i$ are called ``semicircular'' because their moments
$\tau(s_i^p) = |\mathrm{NC}_2[p]| = \int_{-2}^2 x^p\cdot \frac{1}{2\pi}
\sqrt{4-x^2}\,dx$ are the moments of the semicircle distribution.
Thus Lemma \ref{lem:voic} recovers the classical fact
that the empirical spectral distribution of a GUE matrix converges to the
semicircle distribution.
\iffalse
The definition of a free semicircular family is strongly reminiscent of
the Wick formula for standard Gaussians, and this is no coincidence: free
semicircular families play a parallel role in Voiculescu's free 
probability theory to Gaussians in classical probability theory.
We refer to \cite{NS06} for an excellent introduction to this theory.
\fi
\end{rem}

The \emph{intrinsic freeness principle} states that both the 
spectral distribution and spectral edges of a $D\times D$ self-adjoint 
Gaussian random matrix
$$
        X = A_0 + \sum_{i=1}^r A_i g_i
$$
are captured in a surprisingly general setting by those of the
operator
$$
        X_{\rm free} =
        A_0\otimes\id + \sum_{i=1}^r A_i\otimes s_i.
$$
This is unexpected, as this phenomenon does not arise as a limit of GUE 
type matrices which motivated the definition of $X_{\rm free}$ and thus 
it is not clear where the free behavior of $X$ comes 
from. The latter will be explained
in section \ref{sec:pfintr}.

Beside its fundamental interest, this principle is of considerable 
practical utility because the spectral statistics of the operator $X_{\rm 
free}$ can be explicitly computed by means of closed form equations, as we 
will presently explain. Let us first show how to compute the spectral 
distribution $\mu_{X_{\rm free}}$.

\begin{lem}[Matrix Dyson equation]
\label{lem:dyson}
For $z\in\mathbb{C}$ with $\mathrm{Im}\,z>0$, we denote by
$$
	G(z) = (\mathrm{id}\otimes\tau)\big[
	(z\id-X_{\rm free})^{-1}
	\big]
$$
the matrix Green's function of $X_{\rm free}$. Then $G(z)$ satisfies
the \emph{matrix Dyson equation}
$$
	G(z)^{-1} + A_0 + \sum_{i=1}^r A_i G(z) A_i = z\id,
$$
and
$
	\int f\,d\mu_{X_{\rm free}} = -\frac{1}{\pi}
	\lim_{\varepsilon\downarrow 0}
	\int f(x)\,
	\mathrm{Im}[\ntr G(x+i\varepsilon)] \, dx
$
for all $f\in C_b(\mathbb{R})$.
\end{lem}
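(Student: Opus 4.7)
My plan is to handle the two claims separately. For the second claim, combining the traces gives
$$
\ntr G(z) = (\ntr \otimes \tau)\big[(z\id - X_{\rm free})^{-1}\big] = \int \frac{d\mu_{X_{\rm free}}(x)}{z-x}
$$
for $\mathrm{Im}\,z > 0$, i.e., $\ntr G(z)$ is the Cauchy (Stieltjes) transform of the spectral distribution of the bounded self-adjoint operator $X_{\rm free}$. The asserted identity is then the classical Stieltjes inversion formula in its weak form against $f \in C_b(\mathbb{R})$; no properties of $X_{\rm free}$ are needed beyond boundedness of its spectrum.

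The substantive part is the matrix Dyson equation. My approach is to exploit the noncrossing-pair-partition formula for the moments of free semicircular variables. For $|z| > \|X_{\rm free}\|$, the Neumann series gives
$$
G(z) = \sum_{n \ge 0} z^{-n-1} M_n, \qquad M_n := (\id \otimes \tau)[X_{\rm free}^n].
$$
Assume momentarily $A_0 = 0$. Expanding $X_{\rm free}^n$ and using the definition of a free semicircular family yields
$$
M_n = \sum_{\pi \in \mathrm{NC}_2[n]}\, \sum_{i_1,\ldots,i_n=1}^r \prod_{\{j,k\} \in \pi} 1_{i_j=i_k}\; A_{i_1}\cdots A_{i_n}.
$$
The combinatorial heart of the argument is that any $\pi \in \mathrm{NC}_2[n]$ is determined by the pair $\{1, 2k\}$ containing position $1$, which (by the noncrossing property) splits the remaining points into independent noncrossing pairings of $\{2,\ldots,2k-1\}$ and $\{2k+1,\ldots,n\}$. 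This yields the recursion
$$
M_n = \sum_{k=1}^{n/2}\,\sum_{i=1}^r A_i\, M_{2k-2}\, A_i\, M_{n-2k}.
$$
Multiplying by $z^{-n-1}$, summing over $n$, and recognizing the right-hand side as a Cauchy product gives $zG(z) - \id = \bigl(\sum_i A_i G(z) A_i\bigr) G(z)$, which is the Dyson equation in the case $A_0 = 0$. For general $A_0$, I would decompose each term in the expansion of $X_{\rm free}^n$ over the subset of positions at which $A_0 \otimes \id$ appears; this produces an analogous recursion with an extra linear term that contributes precisely $A_0 G(z)$. Multiplying through by $G(z)^{-1}$ (invertible for $\mathrm{Im}\,z > 0$, since the matrix $\mathrm{Im}\,G(z)$ is negative definite as the Cauchy transform of a self-adjoint operator) gives the claimed equation for $|z|$ large, and it extends to the entire upper half-plane by analytic continuation.

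A conceptually cleaner but less elementary route would recognize $Y = \sum_i A_i \otimes s_i$ as an $\M_D(\mathbb{C})$-valued semicircular element with covariance $\eta \colon B \mapsto \sum_i A_i B A_i$, for which the fixed-point equation $G(z)^{-1} = z\id - A_0 - \eta(G(z))$ is the defining property of the operator-valued Cauchy transform. The main obstacle in the direct approach sketched above is the bookkeeping required to incorporate $A_0 \ne 0$ into the pair-partition recursion: one must allow ``singletons'' carrying $A_0$ factors interspersed among the pairs and verify that they contribute exactly the $A_0 G(z)$ term. This is routine once set up carefully, after which the remainder of the proof reduces to summation of a geometric-type series and standard analytic continuation.
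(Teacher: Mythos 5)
Your proposal is correct and takes essentially the same approach as the paper's proof: Neumann series for $G(z)$, the first-pair recursion for noncrossing pairings, a Cauchy product of the resulting series, analytic continuation, and Stieltjes inversion for the last claim. The only cosmetic difference is in the treatment of $A_0$: the paper defines $s_0 = \id$ and observes that the same first-pair recursion holds for indices $k_i \in \{0,\ldots,r\}$ with $k_1 > 0$, rather than stratifying by the set of positions carrying $A_0$ as you sketch, but both routes produce the identical recursion $M_n = A_0 M_{n-1} + \sum_{l=2}^n\sum_{k=1}^r A_k M_{l-2} A_k M_{n-l}$.
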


\begin{proof}
We can construct all
$\pi\in\mathrm{NC}_2[n]$ as follows: first choose the pair $\{1,l\}$ 
containing the first point; and then pair the remaining points by choosing 
any noncrossing pairings
of the sets $\{2,\ldots,l-1\}$ and $\{l+1,\ldots,n\}$. Thus
$$
	\tau(s_{k_1}\cdots s_{k_n}) =
	\sum_{l=2}^n 1_{k_1=k_l}\, \tau(s_{k_2}\cdots s_{k_{l-1}})\,
	\tau(s_{k_{l+1}}\cdots s_{k_n})
$$
for $k_1,\ldots,k_n\in[r]$
by the definition of a free semicircular
family. In the following, it will be convenient to allow also 
$k_i=0$, where we define $s_0=\id$. In this case, the identity clearly
remains valid provided that $k_1>0$.

Now define the matrix moments
$$
	M_n = (\mathrm{id}\otimes\tau)[ X_{\rm free}^n] =
	\sum_{\boldsymbol{k}\in\{0,\ldots,r\}^n} A_{k_1}\cdots A_{k_n}
	\,\tau(s_{k_1}\cdots s_{k_n}).
$$
Applying the above identity yields for $n\ge 2$ the recursion (with 
$M_0=\id$, $M_1=A_0$)
$$
	M_n = A_0 M_{n-1} +
	\sum_{l=2}^{n}
	\sum_{k=1}^r A_k M_{l-2} A_k M_{n-l}.
$$
When $|z|$ is sufficiently large, we can write 
$G(z) = \sum_{n=0}^\infty z^{-n-1} M_n$, and the matrix Dyson equation
follows readily from the recursion for $M_n$. The equation remains valid
for all $z\in\mathbb{C}$ with $\mathrm{Im}\,z>0$ by analytic continuation.

The final claim follows as 
$-\frac{1}{\pi}\mathrm{Im}\,(x+i\varepsilon)^{-1} =
\frac{1}{\pi}\frac{\varepsilon}{x^2+\varepsilon^2}=
\rho_\varepsilon(x)$ 
is the
density of the Cauchy 
distribution with scale $\varepsilon$, so that
$-\frac{1}{\pi}\mathrm{Im}[\ntr G(x+i\varepsilon)]$ is the 
density of the convolution $\mu_{X_{\rm free}}*\rho_\varepsilon$
which converges weakly to $\mu_{X_{\rm free}}$ as $\varepsilon\to 0$.
\end{proof}

Lemma \ref{lem:dyson} shows that the spectral distribution of $X_{\rm 
free}$ can be computed by solving a system of quadratic equations for the 
entries of $G(z)$. While these equations usually do not have a closed
form solution, they are well behaved and are amenable to analysis and 
numerical computation \cite{HRS07,AEK20}.

The spectral edges of $X_{\rm free}$ can in principle be obtained from its 
spectral distribution (cf.\ Lemma \ref{lem:faith}). However, the following 
formula of Lehner \cite{Leh99}, which we state without proof,\footnote{%
	The difficulty is to upper bound $\lambda_{\rm
        max}(X_{\rm free})$: as
	$M=G(z)>0$ for any
	$z>\lambda_{\rm max}(X_{\rm free})$,
	Lemma~\ref{lem:dyson}
	shows that $\lambda_{\rm 
        max}(X_{\rm free})$ is lower bounded by the right-hand
	side of Lehner's formula.
} 
provides an 
often more powerful tool: it expresses the outer edges of the spectrum
of $X_{\rm free}$ in 
terms of a variational principle.

\begin{thm}[Lehner] 
\label{thm:lehner}
We have
$$
	\lambda_{\rm max}(X_{\rm free}) =
	\inf_{M>0} \lambda_{\rm max}\Bigg(
	M^{-1} + A_0 + \sum_{i=1}^r A_iMA_i
	\Bigg),
$$
where we denote
$\lambda_{\rm max}(X) = \sup\spc(X)$ for any self-adjoint operator $X$.
\end{thm}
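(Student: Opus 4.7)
The plan is to establish the two inequalities in Lehner's identity separately. The ``$\ge$'' direction is essentially immediate from the matrix Dyson equation, as noted in the footnote: for any $z>\lambda_{\rm max}(X_{\rm free})$ the resolvent $(z\id-X_{\rm free})^{-1}$ is well defined and positive, so $M:=G(z)$ is positive definite, and Lemma~\ref{lem:dyson} gives $M^{-1}+A_0+\sum_i A_i M A_i=z\id$. Taking $\lambda_{\rm max}$ of both sides and then letting $z\downarrow\lambda_{\rm max}(X_{\rm free})$ shows that the infimum on the RHS is at most $\lambda_{\rm max}(X_{\rm free})$.

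The main obstacle, and the real content of the theorem, is the reverse inequality: for an arbitrary test matrix $M>0$ one must show
$$
\lambda_{\rm max}(X_{\rm free})\;\le\;\lambda_{\rm max}\Bigg(M^{-1}+A_0+\sum_{i=1}^r A_i M A_i\Bigg),
$$
without access to any Dyson-type equation. My plan is to work with a concrete realization of the free semicircular family on the full Fock space: write $s_i=\ell_i+\ell_i^*$ for left creation operators $\ell_i$ on $\mathcal{F}(\mathbb{C}^r)$, which satisfy the defining relation $\ell_i^*\ell_j=\delta_{ij}\id$ and for which $\tau$ is the vacuum state. With this representation in hand, the strategy is to encode the desired operator inequality as the positivity of a single square $ZZ^*\ge 0$ in $\M_D(\mathbb{C})\otimes B(\mathcal{F}(\mathbb{C}^r))$, for an astutely chosen element $Z$.

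Concretely, I would set
$$
Z\;:=\;M^{-1/2}\otimes\id\;-\;\sum_{i=1}^{r}A_iM^{1/2}\otimes\ell_i^*,
$$
expand $ZZ^*$, and invoke $\ell_i^*\ell_j=\delta_{ij}\id$ to collapse the mixed creation--annihilation terms into a matrix-valued scalar. The computation produces the identity
$$
ZZ^*=\Bigg(M^{-1}+\sum_{i=1}^{r}A_iMA_i\Bigg)\otimes\id\;-\;\sum_{i=1}^{r}A_i\otimes s_i,
$$
so the trivial inequality $ZZ^*\ge 0$ immediately yields $\sum_i A_i\otimes s_i\le(M^{-1}+\sum_i A_i M A_i)\otimes\id$. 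Adding $A_0\otimes\id$ to both sides produces the operator inequality $X_{\rm free}\le(M^{-1}+A_0+\sum_i A_iMA_i)\otimes\id$, and passing to $\lambda_{\rm max}$ (which is unaffected by the trailing factor $\otimes\id$) and infimizing over $M>0$ completes the argument. The true difficulty is thus concentrated entirely in guessing the right $Z$; once it is on the page, the rest is a short algebraic verification using the canonical relations of the free creation operators.
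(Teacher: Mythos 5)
Your argument is correct and matches the standard proof. The paper states this theorem without proof; the footnote's Dyson-equation computation for $\inf_{M>0}\lambda_{\rm max}(M^{-1}+A_0+\sum_i A_iMA_i)\le\lambda_{\rm max}(X_{\rm free})$ is exactly your easy direction, and your Fock-space completion of the square for the reverse inequality---verifying $ZZ^*=(M^{-1}+\sum_i A_iMA_i)\otimes\id-\sum_i A_i\otimes s_i\ge 0$ with $Z=M^{-1/2}\otimes\id-\sum_i A_iM^{1/2}\otimes\ell_i^*$---is precisely the mechanism underlying Lehner's original argument in \cite{Leh99}. One small point worth spelling out is that spectral permanence in $C^*$-algebras justifies passing to the concrete Fock-space realization of the free semicircular family in order to compute $\lambda_{\rm max}(X_{\rm free})$.
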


Various applications of this formula are 
illustrated in \cite{BCSV23}. On the other hand, in applications where the 
exact location of the edge is not important, the following simple bounds 
often suffice and are easy to use:
$$
	\|A_0\| \vee \Bigg\|\sum_{i=1}^r A_i^2\Bigg\|^{1/2} 
	\le
	\|X_{\rm free}\| 
	\le \|A_0\| + 2\Bigg\|\sum_{i=1}^r A_i^2\Bigg\|^{1/2}.
$$
These bounds admit a simple direct proof \cite[p.\ 208]{Pis03}.

By connecting the spectral statistics of a random matrix $X$ to those of 
$X_{\rm free}$, the intrinsic freeness principle makes it possible to 
understand the spectra of complicated random matrix models that would be 
difficult to analyze directly. One may view the operator $X_{\rm free}$ as 
a ``platonic ideal'': a perfect object which captures the essence of the 
random matrices $X$ that exist in the real world.

\subsection{Interpolation and crossings}
\label{sec:pfintr}

We now aim to explain how the intrinsic freeness principle actually 
arises. In this section, we will roughly sketch the most basic ideas 
behind the proof of Theorem \ref{thm:intrfree}.

The most natural way to interpolate between $X$ and $X_{\rm free}$ is to 
define
$$
        X^N = A_0\otimes\id + \sum_{i=1}^r A_i\otimes G_i^N 
$$
as in section \ref{sec:intriintro}, where $G_1^N,\ldots,G_r^N$ are 
independent GUE matrices. Then $X^N=X$ when $N=1$, while $X^N\to X_{\rm 
free}$ weakly as $N\to\infty$ by Lemma \ref{lem:voic}. One may thus be 
tempted to approach intrinsic freeness by applying the polynomial method 
to $X^N$. The problem with this approach, however, is that the small 
parameter that arises in the polynomial method is not $\tilde v(X)$ as in 
Theorem \ref{thm:intrfree}, but rather $\frac{1}{N}$. This is useless for 
understanding what happens when $N=1$.

The basic issue here is that unlike classical strong convergence, 
the intrinsic freeness phenomenon is truly nonasymptotic in nature: it 
aims to capture an intrinsic property of $X$ that causes it to behave as 
the corresponding free model. Thus we cannot hope to deduce such a property 
from the asymptotic behavior of the model $X^N$ alone; the proof must 
explicitly explain where intrinsic freeness comes from, and why it is 
quantified by a parameter such as $\tilde v(X)$.

\subsubsection{The interpolation method}

Rather than using $X^N$ as an interpolating family,
the proof of intrinsic freeness is based on a \emph{continuous} 
interpolating family parametrized by $q\in[0,1]$.
Roughly speaking, we would like to define
$$
	\text{`` }
	X_q = \sqrt{q}\, X + \sqrt{1-q}\, X_{\rm free} \text{ ''},
$$
and apply the fundamental theorem of calculus as 
explained in section~\ref{sec:introinterpol} to bound the discrepancy 
between the spectral statistics of $X=X_1$ and $X_{\rm free}=X_0$.
The obvious problem with the above definition is that it makes no sense:
$X$ is random matrix and $X_{\rm free}$ is a deterministic operator, which 
live in different spaces. To implement this program, we will construct
proxies for $X$ and $X_{\rm free}$ that are high-dimensional random 
matrices of the same dimension.

To this end, we proceed as follows. Let $G_1^N,\ldots,G_r^N$ be 
independent GUE matrices, and let $D_1^N,\ldots,D_r^N$ be independent
diagonal matrices with i.i.d.\ standard Gaussian entries on the diagonal.
Then we define the $DN\times DN$ random matrices
$$
	X_q^N =
	A_0\otimes\id + 
	\sum_{i=1}^r A_i\otimes\big(\sqrt{q}\,D_i^N+\sqrt{1-q}\,G_i^N\big).
$$
The significance of this definition is that
$$
	\mathbf{E}[\ntr {(X_1^N)^p}] = 
	\mathbf{E}[\ntr X^p],\qquad\quad
	\lim_{N\to\infty}\mathbf{E}[\ntr {(X_0^N)^p}] =
	({\ntr}\otimes\tau)\big(X_{\rm free}^p\big)
$$
for all $p\in\mathbb{N}$; the first identity follows as $X_1^N$ is a 
block-diagonal matrix with i.i.d.\ copies of $X$ on the diagonal, while 
the second follows by Lemma \ref{lem:voic} as $X_0^N=X^N$.
Thus $X_q^N$ does indeed interpolate between $X$ and $X_{\rm free}$ in the 
limit as $N\to\infty$. (We emphasize that we now view $q$ as the 
interpolation parameter, as opposed to the interpolation parameter 
$\frac{1}{N}$ in the polynomial method.)

Now that we have defined a suitable interpolation, we aim to compute the 
rate of change $\frac{d}{dq}\mathbf{E}[\ntr h(X_q^N)]$ of spectral 
statistics along the interpolation: if it is small, then the spectral 
statistics of $X$ and $X_{\rm free}$ must be nearly the same. For 
simplicity, we will illustrate the method using moments $h(x)=x^{2p}$, 
which suffices to capture the operator norm by the moment method.\footnote{% 
To achieve Theorem 
\ref{thm:intrfree} in its full strength, one uses instead spectral 
statistics of the form $h(x)=|z-x|^{-2p}$ for $z\in\mathbb{C}$, 
$\mathrm{Im}\,z>0$. The computations involved are however very similar.} 
We state the resulting expression informally; the computation 
is somewhat tedious (see the proof of \cite[Lemma 5.4]{BBV23}) but uses 
only standard tools of Gaussian analysis.

\begin{lem}[Informal statement]
\label{lem:interpolcross}
For any $p\in\mathbb{N}$, we have
\begin{multline*}
	\frac{d}{dq}\mathbf{E}[\ntr {(X_q^N)^{2p}}] =
	\text{sum of terms of the form}\\
	\mathbf{E}\big[{\ntr 
	H_a^N \, (X_q^N)^{m_1} \,
	\tilde H_b^N \, (X_q^N)^{m_2} \,
	H_a^N \, (\tilde X_q^N)^{m_3} \,
	\tilde H_b^N \, (\tilde X_q^N)^{m_4}
	}\big]
	\\
	\phantom{\sum}
	\text{with }m_1+m_2+m_3+m_4=2p-4\text{ and }a,b\in\{0,1\},
\end{multline*}
where
$\tilde X_q^N$ is a suitably constructed (dependent) copy of $X_q^N$ and
$H_q^N,\tilde H_q^N$ are independent copies of
$X_q^N-\mathbf{E}[X_q^N]$.
\end{lem}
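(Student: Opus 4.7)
The plan is to start from the chain rule
$$\frac{d}{dq}\mathbf{E}\big[\ntr(X_q^N)^{2p}\big] = 2p\,\mathbf{E}\bigg[\ntr\Big(\tfrac{dX_q^N}{dq}(X_q^N)^{2p-1}\Big)\bigg],$$
with $\tfrac{dX_q^N}{dq} = \sum_{i=1}^r A_i\otimes\big(\tfrac{1}{2\sqrt{q}}D_i^N-\tfrac{1}{2\sqrt{1-q}}G_i^N\big)$, and then apply Gaussian integration by parts separately to each entry $(D_i^N)_{jj}$ and $(G_i^N)_{jk}$. The relevant covariances are $\mathbf{E}[(D_i^N)_{jj}(D_i^N)_{kk}]=\delta_{jk}$ and $\mathbf{E}[(G_i^N)_{jk}(G_i^N)_{lm}]=N^{-1}\delta_{jm}\delta_{kl}$. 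Because $\partial X_q^N/\partial(D_i^N)_{jj}=\sqrt{q}\,A_i\otimes E_{jj}$ and $\partial X_q^N/\partial(G_i^N)_{jk}=\sqrt{1-q}\,A_i\otimes E_{jk}$ carry the compensating factors, the apparent $1/\sqrt{q}$ and $1/\sqrt{1-q}$ singularities cancel exactly, and the derivative is a bounded function of $q$ on $[0,1]$.

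After one round of IBP, each summand is a trace containing two matrix insertions built from a single $A_i$ and a matrix unit $E_{\cdot\cdot}$, separated by two blocks of powers of $X_q^N$. The two covariance structures produce qualitatively different pairings of these insertions: the GUE covariance factors the two $A_i$'s through the partial trace in the $N$-direction, producing what may be regarded as a free-type pairing, whereas the diagonal covariance of $D_i^N$ keeps the two $A_i$'s tied to the same $N$-fiber, producing a classical-type pairing. It is precisely the discrepancy between these two contractions that drives the intrinsic freeness phenomenon and that the lemma must expose.

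To express both contributions uniformly in the form stated in the lemma, the plan is to introduce an independent copy $\tilde X_q^N$ of $X_q^N$ (built from independent Gaussian families $\tilde D_i^N,\tilde G_i^N$) and to rewrite each pair of matrix-unit insertions produced by IBP as a pair of centered insertions $H_a^N,\tilde H_b^N$ of the types described in the statement. The standard mechanism is to replay a Gaussian contraction as an expectation over an independent replica, so that an inserted $E_{jk}$ contracted through the covariance is realized as the expectation of a centered random matrix living in either the original or the tilded sector; the binary indices $a,b\in\{0,1\}$ record which of the two Gaussian families ($D$ or $G$) produced the contraction. The exponent sum $m_1+m_2+m_3+m_4=2p-4$ then arises because $\tfrac{d}{dq}$ consumes one of the $2p$ original powers, the chain-rule derivative in the IBP consumes a second, and the repackaging into the doubled model absorbs two more powers into the centered $H$-factors, leaving $2p-4$ interior powers distributed across the four slots.

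The main obstacle is this last step: the Gaussian IBP is a routine computation, but repackaging the sum of the diagonal and GUE contractions into a uniform expression of the form $\mathbf{E}[\ntr H_a^N(X_q^N)^{m_1}\tilde H_b^N(X_q^N)^{m_2}H_a^N(\tilde X_q^N)^{m_3}\tilde H_b^N(\tilde X_q^N)^{m_4}]$ requires careful combinatorial bookkeeping of how each contraction propagates through the doubled model and materializes as a centered $H$-factor. Once this repackaging is in place, the specific form identified by the lemma is precisely what is needed in the proof of Theorem~\ref{thm:intrfree}: the norms of $H_a^N,\tilde H_b^N$ are controlled by the quantity $\tilde v(X)$, and the trace structure ensures that each insertion contributes a factor of this small parameter, yielding the quantitative intrinsic freeness estimate after integration against the moment method (or, in the full proof, against the resolvent test functions $h(x)=|z-x|^{-2p}$).
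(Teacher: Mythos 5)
Your setup coincides with the beginning of the computation that the paper actually relies on (the survey does not prove this lemma in-text but defers it to the Gaussian-analysis computation in \cite[Lemma 5.4]{BBV23}): differentiating in $q$, integrating by parts against the entries of $D_i^N$ and $G_i^N$, noting that the factors $\tfrac{1}{2\sqrt q}$ and $\tfrac{1}{2\sqrt{1-q}}$ are cancelled by the coefficients $\sqrt q,\sqrt{1-q}$ carried by $\partial X_q^N/\partial (D_i^N)_{jj}$ and $\partial X_q^N/\partial (G_i^N)_{jk}$, and reading the indices $a,b\in\{0,1\}$ as recording which Gaussian family produced a contraction are all correct. But the actual content of the lemma is never reached. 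A single integration by parts, followed by the replica rewriting you invoke, yields only \emph{two-insertion} terms, schematically $p\sum_{k+l=2p-2}\big(\mathbf{E}[\ntr H_1^N (X_q^N)^k H_1^N (X_q^N)^l]-\mathbf{E}[\ntr H_0^N (X_q^N)^k H_0^N (X_q^N)^l]\big)$; this is not of the stated form (two centered insertions, $2p-2$ interior powers), and each such term is only of size $\tilde v(X)^2$, which would be useless for Theorem \ref{thm:intrfree}. Producing the four-insertion alternating pattern $H_a^N\cdots\tilde H_b^N\cdots H_a^N\cdots\tilde H_b^N$ — a single crossing, exactly what Lemma \ref{lem:crossineq} is designed to bound — requires a further integration by parts against the Gaussian variables hidden inside the interior powers $(X_q^N)^m$, combined with the observation that the configurations in which the second contraction does not interlace the first cancel exactly between the diagonal-type and GUE-type covariance structures. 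That cancellation is the whole point of the lemma (it is what isolates the effect of one crossing), and it is neither performed nor identified in your sketch; you explicitly defer it as ``combinatorial bookkeeping.'' Your heuristic count for $m_1+\cdots+m_4=2p-4$ is consistent with this second contraction but is not a substitute for it.

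Two further inaccuracies. First, the replica mechanism as you state it (``replay a Gaussian contraction as an expectation over an independent replica'') is only valid when the factors surrounding the insertions are independent of the contracted variables; the interior powers $(X_q^N)^m$ are not, and handling this dependence is precisely what generates the second pair of insertions and the auxiliary copy of $X_q^N$. Second, you build $\tilde X_q^N$ from independent Gaussian families, whereas the statement requires a suitably constructed \emph{dependent} copy: the dependence (sharing part of the Gaussian data with $X_q^N$, reflecting how the GUE-type contraction splits the trace into two loops) is what makes the identity exact rather than an approximation. So your outline starts along the same route as the cited proof, but the decisive steps — the second contraction, the classical-versus-free cancellation of non-crossing configurations, and the correct construction of $\tilde X_q^N$ — are missing.
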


From a conceptual perspective, the expression of Lemma 
\ref{lem:interpolcross} should not be unexpected. Indeed, the explicit 
formulas for $\mathbf{E}[\ntr X^{2p}]$ and $({\ntr}\otimes\tau)(X_{\rm 
free}^{2p})$ that 
arise from the Wick formula and Definition \ref{defn:semicircle}, 
respectively, differ only in that the former has a sum over all pairings 
while the latter sums only over noncrossing pairings. Thus the difference 
between these two quantities is a sum over all pairings that contain at 
least one crossing. The point of the interpolation method, however, is 
that by changing $q$ infinitesimally we can isolate the effect of a 
\emph{single} crossing---this is precisely what Lemma 
\ref{lem:interpolcross} shows. This key feature of the interpolation 
method is crucial for accessing the edges of the spectrum (see section
\ref{sec:whyinterpol}).

\subsubsection{The crossing inequality}

By Lemma \ref{lem:interpolcross}, it remains to control the 
effect of a single crossing. We can now finally explain the significance 
of the mysterious parameter $\tilde v(X)$: this parameter controls the 
contribution of crossings. The following result is a 
combination of \cite[Lemma 4.5 and Proposition 4.6]{BBV23}.

\begin{lem}[Crossing inequality]
\label{lem:crossineq}
Let $H,\tilde H$ be any independent and centered self-adjoint random 
matrices, and $1\le p_1,\ldots,p_4\le\infty$ with 
$\frac{1}{p_1}+\cdots+\frac{1}{p_4}=1$. Then
$$
	\big|\mathbf{E}\big[{\ntr
	H\,M_1\,\tilde H\,M_2\, H\,M_3\,\tilde H\,M_4
	}\big]\big| \le
	\tilde v(H)^2\, \tilde v(\tilde H)^2
	\prod_{i=1}^4 \big({\ntr |M_i|^{p_i}}\big)^{\frac{1}{p_i}}
$$
for any matrices $M_1,\ldots,M_4$ that are independent of $H,\tilde H$.
\end{lem}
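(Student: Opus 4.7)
The plan is to integrate out $H$ and $\tilde{H}$ one at a time via Fubini, and for each of them to combine two complementary estimates of a bilinear form $\mathbf{E}[\tr(KAKB)]$ in a centered self-adjoint random matrix $K$: one controlling the form by $\|\mathbf{E}[K^2]\|$ (through the completely positive superoperator $\Psi_K(Z)=\mathbf{E}[KZK]$), and one controlling it by $\|\mathrm{Cov}(K)\|$ (through direct expansion in matrix entries). The geometric mean of these two bounds is precisely what produces the quartic exponents appearing in $\tilde{v}(K)=\|\mathbf{E}[K^2]\|^{1/4}\|\mathrm{Cov}(K)\|^{1/4}$, yielding the prefactor $\tilde{v}(K)^2$ for each of the two matrices.

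First I would rewrite the trace cyclically as $\tr(\tilde{H}X\tilde{H}Y)$ with $X=M_4 H M_1$ and $Y=M_2 H M_3$, and condition on $H$. The superoperator $\Psi_{\tilde{H}}(Z)=\mathbf{E}[\tilde{H}Z\tilde{H}]$ is self-adjoint and completely positive with $\Psi_{\tilde{H}}(I)=\mathbf{E}[\tilde{H}^2]$, so its endpoint norms $\|\Psi_{\tilde{H}}\|_{S_\infty\to S_\infty}$ and $\|\Psi_{\tilde{H}}\|_{S_1\to S_1}$ both equal $\|\mathbf{E}[\tilde{H}^2]\|$; Riesz--Thorin interpolation extends this to every Schatten class, and Schatten--H\"older then yields the CP bound
\[
|\mathbf{E}_{\tilde{H}}[\tr(\tilde{H}X\tilde{H}Y)]|\le\|\mathbf{E}[\tilde{H}^2]\|\,\|X\|_{S_a}\|Y\|_{S_{a'}}
\]
for any H\"older conjugate pair $(a,a')$. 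Expanding the same expectation in matrix entries exhibits it as $\tr(\mathrm{Cov}(\tilde{H})R)$, where $R\in\M_{D^2}(\mathbb{C})$ is a reshuffle of $Y\otimes X^T$ with $\|R\|_{S_1}=\|X\|_{S_1}\|Y\|_{S_1}$, so operator-norm/trace-norm duality gives the covariance bound
\[
|\mathbf{E}_{\tilde{H}}[\tr(\tilde{H}X\tilde{H}Y)]|\le\|\mathrm{Cov}(\tilde{H})\|\,\|X\|_{S_1}\|Y\|_{S_1}.
\]
Their geometric mean produces a factor $\tilde{v}(\tilde{H})^2$ multiplied by a mixed product of Schatten norms of $X$ and $Y$.

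Taking the $H$-expectation then reduces, via Cauchy--Schwarz together with identities like $\mathbf{E}_H[\|X\|_{S_2}^2]=\mathbf{E}_H[\tr(HM_1M_1^*HM_4^*M_4)]$ and its analog for $Y$, to bilinear forms of exactly the same type $\mathbf{E}_H[\tr(HA'HB')]$ with $A',B'$ deterministic products of the $M_i$'s. Applying the identical two-bound strategy to these forms produces the second factor $\tilde{v}(H)^2$, after which a final application of Schatten--H\"older collects the remaining matrix norms into $\prod_{i=1}^4(\ntr|M_i|^{p_i})^{1/p_i}$ for the prescribed H\"older conjugate tuple $(p_1,\dots,p_4)$. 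The main obstacle will be the exponent and normalization bookkeeping: one must calibrate the H\"older exponents in the CP bound and the subsequent Schatten choices when integrating out $H$ so that, after the two geometric means and the conversion between $\tr$ and $\ntr=\tr/D$, the Schatten norms of the products $M_iHM_j$ telescope cleanly into the target product for arbitrary $\sum_i p_i^{-1}=1$. One convenient route to manage this is to first establish the balanced case $p_i\equiv 4$ and then extend to general exponents by complex interpolation in the indices $p_i$.
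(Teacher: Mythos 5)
Your two ingredient bounds are individually correct (the Russo--Dye/Riesz--Thorin bound $|\mathbf{E}[\tr(KXKY)]|\le\|\mathbf{E}[K^2]\|\,\|X\|_{S_a}\|Y\|_{S_{a'}}$ for the CP map $Z\mapsto\mathbf{E}[KZK]$, and the duality bound $|\mathbf{E}[\tr(KXKY)]|=|\tr(\mathrm{Cov}(K)(Y\otimes X^T))|\le\|\mathrm{Cov}(K)\|\,\|X\|_{S_1}\|Y\|_{S_1}$), and the idea that $\tilde v$ arises from interpolating between $\|\mathbf{E}[K^2]\|$ and $\|\mathrm{Cov}(K)\|$ is indeed the right intuition. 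But the order of operations --- integrating out $\tilde H$ first with $X=M_2HM_3$, $Y=M_4HM_1$ still random, then integrating out $H$ --- loses a dimensional factor that cannot be recovered, and this is not a bookkeeping issue. The covariance bound at the first stage produces $\|X\|_{S_1}\|Y\|_{S_1}$, and $\mathbf{E}_H[\|M_2HM_3\|_{S_1}^2]$ is \emph{not} a bilinear form $\mathbf{E}[\tr(HA'HB')]$; only the $S_2$-norm squared is, and converting $S_1\le\sqrt{D}\,S_2$ costs $\sqrt{D}$ per factor, for a total loss of order $D$. Concretely, take $H,\tilde H$ independent GUE and $M_i=\id$: the target bound is $\tilde v(H)^2\tilde v(\tilde H)^2\,D=O(1)$ in unnormalized trace, and the true conditional value is $\mathbf{E}_{\tilde H}[\tr(\tilde HH\tilde HH)]=\tfrac1N(\tr H)^2$, which is $O(1)$ precisely because $H$ is centered. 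Your stage-one bound, however, is at least $\min\bigl(\|\mathbf{E}\tilde H^2\|\,\|H\|_{S_a}\|H\|_{S_{a'}},\ \|\mathrm{Cov}(\tilde H)\|\,\|H\|_{S_1}^2\bigr)\gtrsim\min(N,\tfrac1N\cdot N^2)=N$ for every choice of conjugate exponents (log-convexity forces $\|H\|_{S_a}\|H\|_{S_{a'}}\ge\|H\|_{S_2}^2\approx N$), so the bound already exceeds the target by a factor $N$ before the second matrix is ever touched. The defect is structural: by replacing the $H$-dependence with Schatten norms of $H$ you discard the cancellation coming from the centering of $H$, which must be exploited \emph{jointly} with that of $\tilde H$ against the single crossing.

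This is exactly the obstruction the paper flags as the non-monotonicity of the alignment parameter in the covariance, and its proof is organized differently to get around it: Riesz--Thorin is applied not to the CP map but to the exponents $p_1,\dots,p_4$ of the deterministic matrices, reducing to the endpoint $p_4=1$, i.e.\ to bounding $w(H,\tilde H)^4=\sup_{\|M_1\|,\|M_2\|,\|M_3\|\le1}\|\mathbf{E}[HM_1\tilde HM_2HM_3\tilde H]\|$; one then symmetrizes this quantity by a Cauchy--Schwarz step (Figure \ref{fig:cscross}) so that it splits into a single-pair term controlled by $\|\mathbf{E}[H^2]\|$ and a \emph{double-crossing} term that is a positive, hence covariance-monotone, functional of $\mathrm{Cov}(H)$; only then can one compare with the GUE case via $\mathrm{Cov}(H)\le N\|\mathrm{Cov}(H)\|\,\mathrm{Cov}(G)$ and extract $\|\mathrm{Cov}(H)\|$ without dimensional loss. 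If you want to salvage your plan, you would need some analogue of this symmetrization before invoking the covariance, rather than the direct $S_1$-duality bound on a form that still contains the second random matrix.
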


Rather than reproduce the details of the proof of this inequality here, we 
aim to explain the intuition behind the proof.

\begin{proof}[Idea behind the proof of Lemma \ref{lem:crossineq}]
We first observe that it suffices by the Riesz--Thorin 
interpolation theorem \cite[p.\ 202]{BS88} to prove the theorem for the 
case $p_4=1$. Thus the proof reduces
to bounding the \emph{matrix alignment parameter}
$$
	w(H,\tilde H)^4 =
	\sup_{\|M_1\|,\|M_2\|,\|M_3\|\le 1}
	\big\|
	\mathbf{E}\big[
	H\,M_1\,\tilde H\,M_2\, H\,M_3\,\tilde H
	\big]
	\big\|
$$
by 
$$
	\tilde v(H)^2\,\tilde v(\tilde H)^2=
	\|\mathbf{E}[H^2]\|^{\frac{1}{2}}\,
	\|\mathrm{Cov}(H)\|^{\frac{1}{2}}\,
	\|\mathbf{E}[\tilde H^2]\|^{\frac{1}{2}}\,
	\|\mathrm{Cov}(\tilde H)\|^{\frac{1}{2}}.
$$
How can we do this? The basic intuition behind the proof is as follows.
Note first that if $G$ is a GUE matrix, then 
$\mathrm{Cov}(G)=\frac{1}{N}\id$. Thus
$$
	\mathrm{Cov}(H) \le
	N\,\|\mathrm{Cov}(H)\|\,\mathrm{Cov}(G)
$$
for any random matrix $H$. If it were to be the case that $w(H,\tilde H)$ 
is monotone as a function of $\mathrm{Cov}(H)$ and $\mathrm{Cov}(\tilde H)$,
then one could bound
$$
	w(H,\tilde H)^4 \mathop{\stackrel{?}{\le}}
	N^2\,\|\mathrm{Cov}(H)\|\,\|\mathrm{Cov}(\tilde H)\|\,
	w(G,\tilde G)
	=
	\|\mathrm{Cov}(H)\|\,\|\mathrm{Cov}(\tilde H)\|
$$
using that $w(G,\tilde G)=\frac{1}{N^2}$ for independent GUE matrices
$G,\tilde G$ by \eqref{eq:guecross}.

Unfortunately, $w(H,\tilde H)$ is \emph{not} monotone
as a function of $\mathrm{Cov}(H)$ and $\mathrm{Cov}(\tilde H)$, so 
the above reasoning does not apply directly. However, we can use a trick 
to rescue the argument. The key observation is that the parameter 
$w(H,\tilde H)$ can be ``symmetrized'' by applying the Cauchy--Schwarz 
inequality as is illustrated informally in Figure \ref{fig:cscross}.
This results in two symmetric terms---a single pair which is readily 
bounded by $\|\mathbf{E}[H^2]\|$, and a double crossing that is a positive
functional of (and hence monotone in) $\mathrm{Cov}(H)$. We can thus apply 
the above logic to the double crossing to replace $H$ by a GUE matrix,
which yields a factor $\|\mathrm{Cov}(H)\|$. The term that remains
can now be bounded using a similar argument.
\begin{figure}
\begin{center}
\begin{tikzpicture}[scale=.8]
\usetikzlibrary{patterns}

\draw[thick] (0,0) -- (0.25,0);
\draw[thick] (0.75,0) -- (1.25,0);
\draw[thick] (1.75,0) -- (2.25,0);
\draw[thick] (2.75,0) -- (3,0);

\draw[fill=black] (0,0) circle (0.05);
\draw[fill=black] (1,0) circle (0.05);
\draw[fill=black] (2,0) circle (0.05);
\draw[fill=black] (3,0) circle (0.05);

\begin{scope}
    \clip (-0.05,0) rectangle (3.3,1.1);
    \draw[thick] (1,0) circle (1);
    \draw[thick] (2,0) circle (1);
\end{scope}

\draw (0.5,0) node {$\scriptstyle M_1$};
\draw (1.5,0) node {$\scriptstyle M_2$};
\draw (2.5,0) node {$\scriptstyle M_3$};

\draw (1,0.95) node[anchor=south] {$\scriptstyle H$};
\draw (2,0.95) node[anchor=south] {$\scriptstyle \tilde H$};

\draw (3.8,0.5) node {$\le$};

\begin{scope}[shift={(5,0)}]

\draw[thick] (-0.2,-0.2) -- (-0.3,-0.2) -- (-0.3,1.6) -- (-0.2,1.6);
\draw[thick] (2.2,-0.2) -- (2.3,-0.2) -- (2.3,1.6) -- (2.2,1.6);
\draw (2.25,1.6) node[right] {$\frac{1}{2}$};

\draw[thick] (0,0) -- (0.25,0);
\draw[thick] (0.75,0) -- (1.25,0);
\draw[thick] (1.75,0) -- (2,0);

\draw[fill=black] (0,0) circle (0.05);
\draw[fill=black] (2,0) circle (0.05);

\begin{scope}
    \clip (-0.05,0) rectangle (3.3,1.1);
    \draw[thick] (1,0) circle (1);
\end{scope}

\draw (0.5,0) node {$\scriptstyle M_1$};
\draw (1.5,0) node {$\scriptstyle M_1^*$};

\draw (1,0.95) node[anchor=south] {$\scriptstyle H$};

\end{scope}

\begin{scope}[shift={(8.5,0)}]

\draw[thick] (-0.2,-0.2) -- (-0.3,-0.2) -- (-0.3,1.6) -- (-0.2,1.6);
\draw[thick] (5.2,-0.2) -- (5.3,-0.2) -- (5.3,1.6) -- (5.2,1.6);
\draw (5.25,1.6) node[right] {$\frac{1}{2}$};

\draw[thick] (0,0) -- (0.25,0);
\draw[thick] (0.75,0) -- (1.25,0);
\draw[thick] (1.75,0) -- (3.25,0);
\draw[thick] (3.75,0) -- (4.25,0);
\draw[thick] (4.75,0) -- (5,0);

\draw[fill=black] (0,0) circle (0.05);
\draw[fill=black] (1,0) circle (0.05);
\draw[fill=black] (2,0) circle (0.05);
\draw[fill=black] (3,0) circle (0.05);
\draw[fill=black] (4,0) circle (0.05);
\draw[fill=black] (5,0) circle (0.05);

\begin{scope}
    \clip (-0.05,0) rectangle (5.05,1.6);
    \draw[thick] (1,0) circle (1);
    \draw[thick] (4,0) circle (1);
    \draw[thick] (2.5,0) circle (1.5);
\end{scope}

\draw (0.5,0) node {$\scriptstyle M_3^*$};
\draw (1.5,0) node {$\scriptstyle M_2^*$};
\draw (3.5,0) node {$\scriptstyle M_2$};
\draw (4.5,0) node {$\scriptstyle M_3$};

\draw (1,0.95) node[anchor=south] {$\scriptstyle \tilde H$};
\draw (4,0.95) node[anchor=south] {$\scriptstyle \tilde H$};
\draw (2.5,1.5) node[anchor=south] {$\scriptstyle H$};

\end{scope}
\end{tikzpicture}
\end{center}
\caption{Cauchy--Schwarz argument in the proof of Lemma
\ref{lem:crossineq}.\label{fig:cscross}}
\end{figure}
\end{proof}

We can now sketch how all the above ingredients fit together. Combining 
Lemmas~\ref{lem:interpolcross}~and~\ref{lem:crossineq} with 
$p_i=\frac{2p-4}{m_i}$ yields an inequality of the form
$$
	\bigg|
	\frac{d}{dq}\mathbf{E}[\ntr {(X_q^N)^{2p}}]
	\bigg|
	\lesssim
	p^4 \,\tilde v(X)^4 \,
	\mathbf{E}[\ntr {(X_q^N)^{2p-4}}].
$$
Using $\mathbf{E}[\ntr {(X_q^N)^{2p-4}}] \le
\mathbf{E}[\ntr {(X_q^N)^{2p}}]^{1-\frac{2}{p}}$ by Jensen's inequality,
we obtain a differential inequality that can be integrated by a 
straightforward change of variables. This yields (after taking 
$N\to\infty$) the final inequality
$$
	\big|\mathbf{E}[\ntr X^{2p}]^{\frac{1}{2p}} -
	({\ntr}\otimes\tau)\big(X_{\rm free}^{2p}\big)^{\frac{1}{2p}}
	\big| \lesssim
	p^{\frac{3}{4}} \tilde v(X).
$$
This inequality captures the intrinsic freeness phenomenon for the
moments of $X$. Since the right-hand side depends only polynomially on the
degree $p$, however, one can apply the moment method as explained in 
section \ref{sec:momentmethod} to deduce also a bound on the operator norm 
$\|X\|$ by $\|X_{\rm free}\|$. In this manner, we achieve both
weak convergence and norm convergence of $X$ to $X_{\rm free}$ as
$\tilde v(X)\to 0$.

\begin{rem}
The matrix alignment parameter $w(H,\tilde H)$ that appears in the proof 
of Lemma \ref{lem:crossineq} (as well as the use of the Riesz--Thorin 
theorem in this context) was first introduced in the work of Tropp 
\cite{Tro18}, which predates the discovery of the intrinsic freeness 
principle. Let us briefly explain how it appears there.

The idea of \cite{Tro18} is to mimic the classical proof of the 
Schwinger-Dyson equation for GUE matrices, see, e.g., \cite[Chapter 
2]{Gui19}, in the context of a general Gaussian random matrix. Tropp 
observed that the error term that arises from this argument can be 
naturally bounded by $w(H,\tilde H)$, and that this parameter is small in 
some examples (e.g., for matrices with independent entries).

The reason this argument cannot give rise to generally applicable bounds 
is that it fails to capture the intrinsic freeness phenomenon. Indeed, 
the validity of the Schwinger-Dyson equation for GUE matrices requires
that $H,\tilde H$ 
\emph{themselves} behave as free semicircular variables;
this is not at all the case in 
general, as the spectral distribution $X_{\rm free}$ need not look 
anything like a semicircle. To ensure this is the case, \cite{Tro18} has
to
impose strong symmetry assumptions on $H$ that are close in 
spirit to the classical setting of Voiculescu's asymptotic freeness.\footnote{%
The paper \cite{Tro18} also develops another set of inequalities
that are applicable to general Gaussian matrices, but are suboptimal by a 
dimension-dependent multiplicative factor. We do not discuss these
inequalities as they are less closely connected to the topic of this 
survey.}

In contrast, intrinsic freeness captures a more subtle property of random 
matrices: $\tilde v(H)$ does not quantify whether $H$ itself behaves 
freely, but rather how sensitive the model $H=\sum_{i=1}^n A_ig_i$ is to 
whether the scalar variables $g_i$ are taken to be commutative or free. 
Consequently, when $\tilde v(H)$ is small, the variables $g_i$ can be 
replaced by their free counterparts $s_i$ (i.e., ``liberated'') with a 
negligible effect on the spectral statistics. This viewpoint paves the way 
to the development of the interpolation method which is key to subsequent 
developments.

The works of Haagerup--Thorbj{\o}rnsen \cite{HT05} and Tropp \cite{Tro18} 
may nonetheless be viewed as precursors to the intrinsic freeness 
principle, and provided the motivation for the development of the theory
that is described in this section.
\end{rem}

\subsection{Discussion: on the role of interpolation}
\label{sec:whyinterpol}

To conclude this section, we aim to explain why the interpolation method 
plays an essential role in the development of intrinsic freeness. For 
simplicity we will assume in this section that $A_0=0$, so that 
$X=\sum_{i=1}^r A_ig_i$ is a centered Gaussian matrix.

Since the moments of $X$ and $X_{\rm free}$ can be easily computed
explicitly, it is tempting to reason directly using the resulting 
expressions. More precisely, note that
\begin{alignat*}{2}
	\mathbf{E}[\ntr X^{2p}] =&
	\centermathcell{\sum_{\pi\in\mathrm{P}_2[2p]}}
	&&\mathbf{E}[\ntr X^{1|\pi}\cdots X^{2p|\pi}]
\intertext{by the Wick formula, while}
	({\ntr}\otimes\tau)\big(
	X_{\rm free}^{2p}\big) =&
	\centermathcell{\sum_{\pi\in\mathrm{NC}_2[2p]}}
	&&\mathbf{E}[\ntr X^{1|\pi}\cdots X^{2p|\pi}]
\end{alignat*}
by Definition \ref{defn:semicircle}. Thus clearly the difference between 
these two expressions involves only a sum over crossing pairings, and
we can control each term in the sum directly using Lemma 
\ref{lem:crossineq}. This elementary approach yields the inequality
$$
	\big|
	\mathbf{E}[\ntr X^{2p}] -
	({\ntr}\otimes\tau)\big(
        X_{\rm free}^{2p}\big)
	\big|
	\le (Cp)^p\, \tilde v(X)^4\, \mathbf{E}[\ntr X^{2p-4}],
$$
where we used that the number of crossing pairings of $[2p]$ is of order
$(Cp)^p$ for a universal constant $C$. In particular, we obtain
$$
	\big|
	\mathbf{E}[\ntr X^{2p}]^{\frac{1}{2p}} -
	({\ntr}\otimes\tau)\big(
        X_{\rm free}^{2p}\big)^{\frac{1}{2p}}
	\big|
	\lesssim \sqrt{p}\, \tilde v(X)^{\frac{2}{p}}\, 
	\big(\mathbf{E}[\ntr X^{2p}]^{\frac{1}{2p}}\big)^{1-\frac{2}{p}}.
$$
This inequality suffices to prove weak convergence of $X$ to $X_{\rm 
free}$ as $\tilde v(X)\to 0$, but is far too weak to provide access to the 
edges of the spectrum. To see why, recall from section 
\ref{sec:momentmethod} that to bound the norm of the $D\times D$ matrix 
$X$ by the moment method, we must control $\mathbf{E}[\ntr 
X^{2p}]^{\frac{1}{2p}}$ for $p\gg\log D$. However, even when $X$ is a GUE 
matrix we only have $\tilde v(X) = D^{-\frac{1}{4}}$, so that the error 
term $\sqrt{p}\, \tilde v(X)^{\frac{2}{p}}$ in the above inequality 
diverges as $D\to\infty$ when $p\gg\log D$.

The reason for the inefficiency of this approach is readily understood. 
What we used is that the difference between the moments of $X$ and $X_{\rm 
free}$ is a sum of terms with \emph{at least} one crossing. However, most 
pairings of $[2p]$ contain not just one crossing, but many (typically of 
order $p$) crossings at the same time. Unfortunately, 
Lemma~\ref{lem:crossineq} can only capture the effect of a single 
crossing: it cannot be iterated to obtain an improved bound in the 
presence of multiple crossings, as the H\"older type bound destroys the 
structure of the pairing. Thus we are forced to ignore the effect of 
multiple crossings, which results in a loss of information.

The key feature of the interpolation method that is captured by Lemma 
\ref{lem:interpolcross} is that when we move \emph{infinitesimally} from 
$X$ to $X_{\rm free}$, the change of the moments is controlled by a 
\emph{single} crossing rather than by many crossings at the same time. 
This is the reason why we are able to obtain an efficient bound using the 
somewhat crude crossing inequality provided by Lemma \ref{lem:crossineq}.

\begin{rem}
In the special case that $X$ is a GUE matrix, we obtained a 
much better result than Lemma \ref{lem:crossineq}: the crossing 
\emph{identity} \eqref{eq:guecross} captures the effect of a crossing 
exactly. This identity can be iterated in the presence of multiple 
crossings, which results in the genus expansion for GUE matrices (see, 
e.g., \cite[\S 1.7]{MS17}). This is a rather special feature of 
classical random matrix models, however, and we do not know of any method 
that can meaningfully capture the effect of multiple crossings in the 
setting of arbitrarily structured random matrices.
\end{rem}

\section{Applications}
\label{sec:appl}

In recent years, strong convergence has led to several striking 
applications to problems in different areas of mathematics, which has in 
turn motivated new developments surrounding the strong convergence 
phenomenon. The aim of this section is to briefly describe some of these 
applications. The discussion is necessarily at a high level, since 
the detailed background needed to understand each application is beyond 
the scope of this survey. Our primary aim is to give a hint as to why and 
how strong convergence enters in these different settings.

We will focus on applications where strong convergence enters in a 
non-obvious manner. In particular, we omit applications of the 
intrinsic freeness principle in applied mathematics, since it is 
generally applied in a direct manner to analyze complicated 
random matrices that arise in such applications.

\subsection{Random lifts of graphs}
\label{sec:lifts}

We begin by recalling some basic notions that can be found, for example, 
in \cite[\S 6]{HLW06}.

Let $G=(V,E)$ be a connected graph. A connected graph $G'=(V',E')$ is said 
to \emph{cover} $G$ if there is a surjective map $f:V'\to V$ that maps the 
local neighborhood of each vertex $v'$ in $G'$ bijectively to the local 
neighborhood of $f(v')$ in $G$ (the local neighborhood 
consists of the given vertex and the edges incident to it).\footnote{% 
This definition is slightly ambiguous if $G$ has a self-loop, which we 
gloss over for simplicity.}

Every connected graph $G$ has a \emph{universal cover} $\tilde G$ which
covers all other covers of $G$. Given a base vertex $v_0$ in $G$, one can 
construct $\tilde G$ by choosing its vertex set to be the set of all 
finite non-backtracking paths in $G$ starting at $v_0$, with two vertices 
being joined by an edge if one of the paths extends the other by one step; 
thus $\tilde G$ is a tree (the construction does not 
depend on the choice of $v_0$).

It is clear that if $G$ is any $d$-regular graph, then $\tilde G$ is the 
infinite $d$-regular tree. In particular, all $d$-regular graphs have the 
same universal cover. In this setting, we have an \emph{optimal spectral 
gap phenomenon}: for any sequence of $d$-regular graphs with diverging 
number of vertices, the maximum nontrivial eigenvalue is asymptotically 
lower bounded by the spectral radius of the universal cover (Lemma 
\ref{lem:alonboppana}), and this bound is attained by random 
$d$-regular graphs (Theorem \ref{thm:friedman}).

It is expected that the optimal spectral gap phenomenon is a very general 
one that is not specific to the setting of $d$-regular graphs. Progress 
in this direction was achieved only recently, however, and makes crucial 
use of strong convergence. In this section, we will describe such a 
phenomenon in the setting of \emph{non-regular} graphs; the setting of 
hyperbolic surfaces will be discussed in section \ref{sec:buser} below.

\subsubsection{Random lifts}
\label{sec:liftcons}

From the perspective of the lower bound, there is nothing particularly 
special about $d$-regular graphs beside that they all have the same 
universal cover. Indeed, for any sequence of graphs with diverging number 
of vertices that have the \emph{same} universal cover, the maximum 
nontrivial eigenvalue is asymptotically lower bounded by the spectral 
radius of the universal cover. This follows by a straightforward 
adaptation of Lemma \ref{lem:alonboppana}, cf.\ \cite[Theorem 6.6]{HLW06}.

What may be less obvious, however, is how to construct a model of random 
graphs that share the same universal cover beyond the regular setting. The 
natural way to think about this problem, which dates back to Friedman 
\cite{Fri03} (see also \cite{AL02}), is as follows. Fix any finite 
connected base graph $G$; we will then construct random graphs with an 
increasing number of vertices by choosing a sequence of random 
\emph{finite} covers of $G$. By construction, the universal cover of all 
these graphs coincides with the universal cover $\tilde G$ of the base 
graph.

To this end, let us explain how to construct finite covers of a finite 
connected graph $G=(V,E)$. Fix an arbitrary orientation $(x,y)$ for 
every edge $\{x,y\}\in E$, and denote by $E_{\mathrm{or}}$ the set of 
oriented edges. Fix also $N\in\mathbb{N}$ and a permutation $\sigma_e\in 
\mathbf{S}_N$ for each $e\in E_{\rm or}$. Then we can construct a graph 
$G^N=(V^N,E^N)$ with
$$
	V^N = V\times [N]
$$
and
$$
	E^N = \big\{ \{(x,i),(y,\sigma_e(i))\} : 
	e=(x,y)\in E_{\rm or},~i\in[N]\big\}.
$$
In other words, $G^N$ is obtained by taking $N$ copies of 
$G$, and scrambling the endpoints of the $N$ copies of each edge $e$ 
according the permutation $\sigma_e$ (see Figure~\ref{fig:grcover}).
Then $G^N$ is a cover of $G$ with covering map $f:(x,i)\mapsto x$. 

Conversely, it is not difficult to see that \emph{any} finite cover 
of $G$ can be obtained in this manner by some choice of $N$ and 
$\sigma_e$ (as all 
fibers $f^{-1}(x)$
of a covering map $f$ must have the same cardinality $N$, called 
the 
\emph{degree} of the cover), and that the set of graphs thus constructed
is independent of the choice of orientation $E_{\rm or}$.
\begin{figure}
\centering
\begin{tikzpicture}

\begin{scope}[thick,decoration={
    markings,
    mark=at position 0.5 with {\arrow{>}}}
    ] 
    \draw[color=red,postaction={decorate}] (0,0) -- (1.5,0) node[midway,below]
	 {$\scriptstyle (12)(3)$};
 
   \draw[postaction={decorate}] (1.5,0) -- (1.5,1.5);
    \draw (1.8,0.75) node[rotate=-90] {$\scriptstyle (1)(2)(3)$};

    \draw[color=blue,postaction={decorate}] (1.5,0) -- (0,1.5);
    \draw[color=blue] (0.92,0.92) node[rotate=-45] {$\scriptstyle (1)(23)$};

    \draw[color=cyan,postaction={decorate}] (1.5,1.5) -- (0,1.5)
	 node[midway,above] {$\scriptstyle (123)$};

    \draw[postaction={decorate}] (0,1.5) -- (0,0);
    \draw (-.3,0.75) node[rotate=90] {$\scriptstyle (1)(2)(3)$};
\end{scope}

\draw[fill=black] (0,0) circle (0.05);
\draw[fill=black] (1.5,0) circle (0.05);
\draw[fill=black] (1.5,1.5) circle (0.05);
\draw[fill=black] (0,1.5) circle (0.05);

\begin{scope}[xshift=4cm]

\fill[color=black!10!white] (-.2,-.2) rectangle (1.7,1.7);
\fill[color=black!10!white] (2.8,-.2) rectangle (4.7,1.7);
\fill[color=black!10!white] (5.8,-.2) rectangle (7.7,1.7);

\end{scope}

\begin{scope}[xshift=4cm,thick,decoration={
    markings,
    mark=at position 0.5 with {\arrow{>}}}
    ] 

    \draw[postaction={decorate}] (1.5,0) -- (1.5,1.5);
    \draw[postaction={decorate}] (4.5,0) -- (4.5,1.5);
    \draw[postaction={decorate}] (7.5,0) -- (7.5,1.5);

    \draw[postaction={decorate}] (0,1.5) -- (0,0);
    \draw[postaction={decorate}] (3,1.5) -- (3,0);
    \draw[postaction={decorate}] (6,1.5) -- (6,0);

    \draw[color=blue,postaction={decorate}] (1.5,0) -- (0,1.5);
    \draw[color=blue,postaction={decorate}] (4.5,0) -- (6,1.5);
    \draw[color=blue,postaction={decorate}] (7.5,0) to[out=100,in=30] (3,1.5);

    \draw[color=red,postaction={decorate}] (3,0) -- (1.5,0);
    \draw[color=red,postaction={decorate}] (0,0) to[out=-20,in=200] (4.5,0);
    \draw[color=red,postaction={decorate}] (6,0) -- (7.5,0);

    \draw[color=cyan,postaction={decorate}] (1.5,1.5) -- (3,1.5);
    \draw[color=cyan,postaction={decorate}] (4.5,1.5) -- (6,1.5);
    \draw[color=cyan,postaction={decorate}] (7.5,1.5) 
to[out=160,in=20] (0,1.5);

\end{scope}

\begin{scope}[xshift=4cm]
\draw[fill=black] (0,0) circle (0.05);
\draw[fill=black] (1.5,0) circle (0.05);
\draw[fill=black] (1.5,1.5) circle (0.05);
\draw[fill=black] (0,1.5) circle (0.05);
\end{scope}

\begin{scope}[xshift=7cm]
\draw[fill=black] (0,0) circle (0.05);
\draw[fill=black] (1.5,0) circle (0.05);
\draw[fill=black] (1.5,1.5) circle (0.05);
\draw[fill=black] (0,1.5) circle (0.05);
\end{scope}

\begin{scope}[xshift=10cm]
\draw[fill=black] (0,0) circle (0.05);
\draw[fill=black] (1.5,0) circle (0.05);
\draw[fill=black] (1.5,1.5) circle (0.05);
\draw[fill=black] (0,1.5) circle (0.05);
\end{scope}

\end{tikzpicture}
\caption{A finite cover $G^N$ of degree $N=3$ (right) of 
a base graph $G$ (left). The three copies of the vertices of
$G$ in $G^N$ are highlighted by the shaded regions.\label{fig:grcover}}
\end{figure}
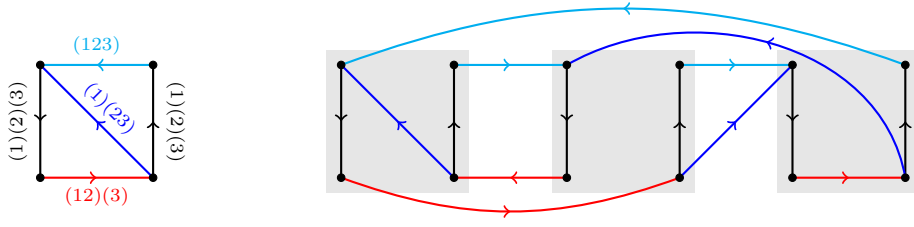

\begin{rem}
$G^N$ need not be connected for every choice 
of $\sigma_e$; for example, if each $\sigma_e$ is the identity 
permutation, then $G^N$ consists of $N$ disjoint copies of $G$.
It is always the case, however, that each connected component of $G^N$
is a cover of $G$.
\end{rem}

The above construction immediately gives rise to the natural model of 
random covers of graphs: given a finite connected base graph $G$, a random 
cover $G^N$ of degree $N$ is obtained by choosing the permutations 
$\sigma_e$ in the above construction independently and uniformly at random 
from $\mathbf{S}_N$. This model is commonly referred to as the 
\emph{random lift} model in graph theory (as a cover of degree $N$ of a 
finite graph is sometimes referred to in graph theory as an $N$-lift).

\subsubsection{Old and new eigenvalues}

From now on, we fix the base graph $G=(V,E)$ and its random lifts $G^N$ as
above. Then it is clear from the construction that the adjacency matrix 
$A^N$ of $G^N$ can be expressed as
$$
	A^N = \sum_{e=(x,y)\in E_{\rm or}}
	\big( e_ye_x^*\otimes U_e^N + e_xe_y^*\otimes U_e^{N*}
	\big),
$$
where $\{e_x\}_{x\in V}$ is the coordinate basis of $\mathbb{C}^{V}$ and 
$\{U_e^N\}_{e\in E_{\rm or}}$ are i.i.d.\ random permutation matrices of 
dimension $N$. The significance of strong convergence for this model is 
now obvious: we have encoded the adjacency matrix of the random lift model 
as a polynomial of degree one with matrix coefficients of i.i.d.\ 
permutation matrices, to which Theorem \ref{thm:bc} can be applied.

Before we can do so, however, we must clarify the nature of the optimal 
spectral gap phenomenon in the present setting. In first instance, one 
might hope to establish the obvious converse to the lower bound, 
that is, that $\|A^N|_{1^\perp}\|$ converges to 
the spectral radius $\varrho$ of the universal cover $\tilde G$.
Such a statement cannot be true in general, however, for the following 
reason. Note that for any $v\in\mathbb{C}^{V}$, we have
$$
	A^N(v\otimes 1) = Av\otimes 1,
$$
where $A$ denotes the adjacency matrix of $G$. Thus any 
eigenvalue $\lambda$ of $G$ is also an 
eigenvalue of $G^N$, since the corresponding eigenvector $v$ of $A$ lifts 
to an eigenvector $v\otimes 1$ of $A^N$: in other words, the eigenvalues 
of the base graph are always inherited by its covers. In particular, 
if the base graph $G$ happens to have an eigenvalue $\lambda$ that is 
strictly larger than $\varrho$, then $\|A^N|_{1^\perp}\|\ge\lambda>\varrho$
for all $N$.

For this reason, the best we can hope for is to show that the \emph{new} 
eigenvalues of $G^N$, that is, those eigenvalues that are not inherited 
from $G$, are asymptotically bounded by the spectral radius of $\tilde G$. 
More precisely, 
denote by 
$$
	A^N_{\rm new} =
	A^N|_{(\mathbb{C}^{V}\otimes 1)^\perp} =
	\sum_{e=(x,y)\in E_{\rm or}}
        \big( e_ye_x^*\otimes U_e^N|_{1^\perp} + e_xe_y^*\otimes 
	U_e^{N*}|_{1^\perp}
        \big)
$$
the restriction of $A^N$ to the space spanned by the new eigenvalues. Then 
we aim to show that $\|A^N_{\rm new}\|$ converges to the spectral radius 
of $\tilde G$. This is the correct formulation of the optimal spectral gap 
phenomenon for the random lift model: indeed, a variant of the 
lower bound shows that for \emph{any} sequence of covers of $G$ with 
diverging number of vertices, the maximum \emph{new} eigenvalue is 
asymptotically lower bounded by the spectral radius of $\tilde G$ 
\cite[\S 4]{Fri03}.

As was noted by Bordenave and Collins \cite{BC19}, the validity of the 
optimal spectral gap phenomenon for random lifts, conjectured by 
Friedman \cite{Fri03}, is now a simple corollary of strong convergence of 
random permutation matrices.

\begin{cor}[Optimal spectral gap of random lifts]
\label{cor:lift}
Fix any finite connected graph $G$, and denote by $\varrho$ the spectral
radius of its universal cover $\tilde G$. Then 
$$
	\lim_{N\to\infty}\|A^N_{\rm new}\| = \varrho
	\quad\text{in probability}.
$$
\end{cor}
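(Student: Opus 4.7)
The plan is a direct application of the Bordenave--Collins theorem, with an algebraic identification of the limiting object. Setting $r = |E_{\mathrm{or}}|$ and defining the self-adjoint polynomial
\[
	P(x_1,\ldots,x_{2r}) = \sum_{e=(x,y)\in E_{\mathrm{or}}}
	\big( e_y e_x^* \otimes x_e + e_x e_y^* \otimes x_e^* \big)
	\in \M_{|V|}(\mathbb{C}) \otimes \mathbb{C}\langle x_1,\ldots,x_{2r}\rangle,
\]
the expression for $A^N_{\rm new}$ given just before the corollary reads $A^N_{\rm new} = P(\boldsymbol{U}^N|_{1^\perp}, \boldsymbol{U}^{N*}|_{1^\perp})$. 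Applying Theorem~\ref{thm:bc} to this $P$ yields at once
\[
	\lim_{N\to\infty} \|A^N_{\rm new}\|
	= \|P(\boldsymbol{u},\boldsymbol{u}^*)\|
	\quad\text{in probability},
\]
where $\boldsymbol{u}=(u_e)_{e \in E_{\mathrm{or}}}$ with $u_e = \lambda(g_e)$ and $g_e$ the free generators of $\mathbf{F}_r$.

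Next I would identify $\|P(\boldsymbol{u},\boldsymbol{u}^*)\|$ with $\varrho$. The operator $P(\boldsymbol{u},\boldsymbol{u}^*)$ acts on $\mathbb{C}^V \otimes \ell^2(\mathbf{F}_r)$, and a direct computation in the standard basis $\{e_x \otimes \delta_w\}$ shows that it is the (self-adjoint) adjacency operator of the undirected graph $G^\infty$ with vertex set $V\times \mathbf{F}_r$ and edges $\{(x,w),(y,g_e w)\}$ for each $e=(x,y)\in E_{\mathrm{or}}$ and $w\in \mathbf{F}_r$. The map $(x,w)\mapsto x$ is a covering map $G^\infty \to G$. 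I claim that every connected component of $G^\infty$ is a tree, hence (being a connected, simply-connected cover of $G$) is isomorphic to $\tilde G$. Granting this, $P(\boldsymbol{u},\boldsymbol{u}^*)$ is a direct sum of copies of the adjacency operator of $\tilde G$, so $\|P(\boldsymbol{u},\boldsymbol{u}^*)\| = \varrho$.

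To verify the tree claim, observe that any non-backtracking closed walk in $G^\infty$ projects to a non-backtracking closed walk in $G$, and reading off the sequence of generators $g_e^{\pm 1}$ picked up in the second coordinate produces a freely reduced word in $\mathbf{F}_r$ that must equal $e$. Since $\mathbf{F}_r$ is free, the only such word is empty, and the walk is trivial. Thus $G^\infty$ has no cycles of positive length, and each of its connected components is a tree.

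As for obstacles: there is essentially no hard step here. The entire substance of the proof is packaged into Theorem~\ref{thm:bc}, which is the deep input; the rest is a routine algebraic encoding and a one-line group-theoretic identification. In particular, the matching Greenberg-type lower bound $\|A^N_{\rm new}\| \ge \varrho - o(1)$ alluded to before the corollary does not need to be separately invoked, since Theorem~\ref{thm:bc} already delivers two-sided convergence in probability.
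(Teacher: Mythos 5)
Your proposal is correct and follows essentially the same route as the paper: apply Theorem~\ref{thm:bc} to the degree-one matrix-coefficient polynomial encoding $A^N_{\rm new}$, then identify the limiting operator $P(\boldsymbol{u},\boldsymbol{u}^*)$ as the adjacency operator of an infinite graph on $V\times\mathbf{F}_r$ whose connected components are copies of $\tilde G$. The only cosmetic difference is in the last step: the paper directly parametrizes the vertices of each component by non-backtracking paths in $G$ (which is precisely the construction of $\tilde G$), while you show $G^\infty$ is a forest (using freeness of $\mathbf{F}_r$ to rule out closed non-backtracking walks) and then invoke that a connected, simply-connected cover of $G$ is the universal cover. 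Both come down to the same observation that reduced words in $\mathbf{F}_r$ correspond to non-backtracking paths in $G$, and your argument that non-backtracking walks spell out freely reduced words (since a cancellation $g_e^{\epsilon}g_e^{-\epsilon}$ would be exactly a backtrack along edge $e$) is the right one, valid also for multigraphs and self-loops.
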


\begin{proof}
It follows immediately from Theorem \ref{thm:bc} that $\|A^N_{\rm new}\|
\to\|a\|$ with
$$
	a = 
	\sum_{e=(x,y)\in E_{\rm or}}
	\big( e_ye_x^*\otimes \lambda(g_e) + e_xe_y^*\otimes
	\lambda(g_e^{-1})
        \big),
$$
where $g_e$ are the generators of a free group $\mathbf{F}$ and $\lambda$ 
is the left-regular representation of $\mathbf{F}$. It remains to show 
that in fact $\|a\|=\varrho$.

To see this, note that by construction, $a$ is an adjacency matrix of
an infinite graph with vertex set $V\times\mathbf{F}$. Moreover, all
vertices reachable from an initial vertex $(v_0,g)$ have the
form $(v_k,g_{(v_{k-1},v_k)}\cdots g_{(v_0,v_1)}g)$ where
$(v_0,\ldots,v_k)$ is a path in $G$ and we define 
$g_{(y,x)}=g_{(x,y)}^{-1}$ for $(x,y)\in E_{\rm or}$.
Note that this description is not unique: two paths 
define the same vertex if $g_{(v_{k-1},v_k)}\cdots g_{(v_0,v_1)}$
reduces to the same element of $\mathbf{F}$.
Thus the vertices reachable from $(v_0,g)$ are uniquely indexed by paths 
$(v_1,\ldots,v_k)$ so that $g_{(v_{k-1},v_k)}\cdots 
g_{(v_0,v_1)}$ is reduced, i.e., by nonbacktracking paths.
We have therefore shown that $a$ is the adjecency matrix of an infinite 
graph, each of whose connected components is isomorphic to $\tilde G$.
\end{proof}

Corollary \ref{cor:lift} may be viewed as a 
far-reaching generalization of Theorem \ref{thm:friedman}. Indeed, the 
permutation model of random $2r$-regular graphs is a special case of the 
random lift model, obtained by choosing the base graph $G$ to consist of 
a single vertex with $r$ self-loops (often called a ``bouquet'').

Even though Corollary \ref{cor:lift} is only concerned with the new 
eigenvalues of $G^N$, it implies the classical spectral gap property 
$\|A^N|_{1^\perp}\|\to\varrho$ whenever the base graph satisfies 
$\|A|_{1^\perp}\|\le\varrho$. Another simple consequence is that whenever 
the base graph satisfies $\|A\|>\varrho$, the random lift $G^N$ is 
connected with probability $1-o(1)$; this
holds if and only if $G$ has at least two cycles \cite[Theorem 2]{HR19}.

\iffalse
The operator $a$ in the proof yields a method for
computing the spectral radius of the universal cover of any finite
graph by means of a variational formula due to Lehner 
\cite{Leh99}, which is the analogue of Theorem \ref{thm:lehner} for
$C^*_{\rm red}(\mathbf{F})$; see also \cite{GK23}.
\fi

\subsection{Buser's conjecture}
\label{sec:buser}

Let $X$ be a hyperbolic surface, that is, a connected Riemannian surface 
of constant curvature $-1$. Then $X$ has the hyperbolic plane $\mathbb{H}$ 
as its universal cover, and we can in fact obtain 
$X=\Gamma\backslash\mathbb{H}$ as a quotient of the hyperbolic plane
by a Fuchsian group $\Gamma$ (i.e., a discrete subgroup of 
$\mathrm{PSL}_2(\mathbb{R})$) which is isomorphic to the fundamental
group $\Gamma \simeq \pi_1(X)$.

If $X$ is a closed hyperbolic surface, its Laplacian $\Delta_X$ has
discrete eigenvalues
$$
	0=\lambda_0(X)<\lambda_1(X)\le\lambda_2(X)\le\cdots
$$
The following is the direct analogue in this setting of Lemma 
\ref{lem:alonboppana}.

\begin{lem}[Huber \cite{Hub74}, Cheng \cite{Che75}]
For any sequence $X^N$ of closed hyperbolic surfaces with diverging 
diameter, we have
$$
	\lambda_1(X^N) \le \frac{1}{4} + o(1)
	\quad \text{as}\quad N\to\infty.
$$
\end{lem}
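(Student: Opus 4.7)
The plan is to upper bound $\lambda_1(X^N)$ by $\mu(R) := \lambda_1^D(B_R(\mathbb{H}))$, the first Dirichlet eigenvalue of a hyperbolic disk of radius $R$, for some large $R$ allowed by the diameter hypothesis. Since the $L^2$-spectrum of $\Delta_{\mathbb{H}}$ equals $[1/4, \infty)$, the function $R \mapsto \mu(R)$ is decreasing with limit $1/4$ at infinity. So, given $\varepsilon > 0$, I would fix $R = R(\varepsilon)$ with $\mu(R) \le 1/4 + \varepsilon$, and take the corresponding positive decreasing radial eigenfunction $u \in C^\infty([0,R])$ solving $u'' + \coth(s)\,u' = -\mu(R)\,u$ with $u'(0) = u(R) = 0$.

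The idea is then the classical two-ball argument. For $N$ large, the diameter hypothesis allows one to pick $p_1, p_2 \in X^N$ with $d_{X^N}(p_1, p_2) > 2R$, and to define radial test functions $v_i(q) := u(d_{X^N}(p_i, q))$ supported in the disjoint metric balls $B_R(p_i) \subset X^N$. By the min-max principle applied to the $2$-dimensional subspace $\mathrm{span}(v_1, v_2) \subset H^1(X^N)$, which necessarily contains a nonzero function orthogonal to the constants, the disjointness of supports reduces the problem to showing that each $v_i$ has Rayleigh quotient $\mathcal{R}(v_i) := \|\nabla v_i\|_2^2 / \|v_i\|_2^2$ at most $\mu(R)$.

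The Rayleigh quotient bound is the main work. Using $|\nabla d_{X^N}(p_i, \cdot)| = 1$ almost everywhere and the co-area formula, both $\|v_i\|_2^2$ and $\|\nabla v_i\|_2^2$ reduce to one-dimensional integrals against the sphere length $A_{p_i}(s)\,ds$ with $A_{p_i}(s) := \mathcal{H}^1(\partial B_s(p_i))$. Integration by parts in the numerator against the ODE for $u$ (using the boundary conditions $u'(0)=u(R)=0$) then yields
\[ \mathcal{R}(v_i) = \mu(R) + \frac{\int_0^R u(s)\,u'(s)\,\bigl[\coth(s)\,A_{p_i}(s) - A_{p_i}'(s)\bigr]\,ds}{\int_0^R u(s)^2\,A_{p_i}(s)\,ds}. \]
I expect the crux to be controlling the error term. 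The metric ball $B_R(p_i)$ is generally not isometric to a hyperbolic disk---the injectivity radius of $X^N$ at $p_i$ may be arbitrarily small, so $B_R(p_i)$ wraps around $X^N$ in an uncontrolled way and the sphere length $A_{p_i}$ can differ sharply from its hyperbolic counterpart $2\pi \sinh(s)$. The resolution is Bishop's area comparison for a surface of constant curvature $-1$: it asserts that $s \mapsto A_p(s)/\sinh(s)$ is non-increasing for every $p$, equivalently $\coth(s)\,A_p(s) - A_p'(s) \ge 0$ in the distributional sense. Combined with $u \ge 0$ and $u' \le 0$ on $[0, R]$, the error integrand is non-positive, so $\mathcal{R}(v_i) \le \mu(R) \le 1/4 + \varepsilon$; letting $\varepsilon \downarrow 0$ concludes the proof.
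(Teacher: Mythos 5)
The paper states this lemma as a cited classical fact from Huber and Cheng and gives no proof of its own, so there is no in-paper argument to compare against. Your proof is a correct reconstruction of Cheng's 1975 argument and I have no substantive objections. The two-disjoint-ball trick together with the mediant inequality for Rayleigh quotients of functions with disjoint supports is exactly the right way to pass from a Dirichlet-eigenvalue bound on balls to a bound on $\lambda_1$ of the closed surface; the diverging-diameter hypothesis is used precisely to make $R$ arbitrarily large for $N$ large, and $\mu(R)\downarrow \tfrac14$ because $\mu(R)\ge\inf\mathrm{spec}(\Delta_{\mathbb H})=\tfrac14$ always, $\mu$ is decreasing, and $C_c^\infty(\mathbb H)$ exhausts $H^1(\mathbb H)$. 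You also correctly identify the genuine crux: the metric ball $B_R(p_i)\subset X^N$ is typically not isometric to a hyperbolic disk, and the comparison needed is the Bishop--Gromov sphere-area monotonicity $A_p(s)/\sinh s$ non-increasing, which does hold \emph{globally} (not just within the injectivity radius) by integrating the Jacobian comparison over the star-shaped domain in $T_pX^N$ bounded by the cut locus. A few small points worth making explicit if you were to write this out in full: $u'<0$ on $(0,R)$ follows by integrating $(\sinh s\, u')'=-\mu \sinh s\, u<0$; the integration by parts needs $A$ to be (locally) BV and the sign statement $\coth(s)A(s)\,ds - dA\ge 0$ interpreted as an inequality of measures, which is exactly what the monotonicity of $A/\sinh$ delivers; and the mediant inequality $\frac{a^2\|\nabla v_1\|^2+b^2\|\nabla v_2\|^2}{a^2\|v_1\|^2+b^2\|v_2\|^2}\le\max_i\mathcal R(v_i)$ uses the disjointness of supports. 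None of these are gaps; they are routine once flagged.
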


\smallskip
The significance of the value $\lambda_1(\mathbb{H})=\frac{1}{4}$ is that 
it is the bottom of the spectrum  of the Laplacian
$\Delta_{\mathbb{H}}$ on the hyperbolic plane.

It is therefore natural to ask whether there exist closed hyperbolic 
surfaces with arbitrarily large diameter (or, equivalently in this 
setting, arbitrarily large genus) that attain this bound. The existence of 
such surfaces with optimal spectral gap, a long-standing 
conjecture\footnote{% 
Curiously, Buser has at different times conjectured both existence 
\cite{Bus84} and nonexistence \cite{Bus78} of such surfaces.
On the other hand, the (very much open) Selberg eigenvalue 
conjecture in number theory \cite{Sar95} predicts that a specific class of 
noncompact hyperbolic surfaces have this property.} of Buser 
\cite{Bus84}, was resolved by Hide and Magee \cite{HM23} by means of a 
striking application of strong convergence.

\subsubsection{Random covers}

The basic approach of the work of Hide and Magee is to prove an optimal 
spectral gap phenomenon for random covers $X^N$ of a given base surface 
$X$, in direct analogy with the random lift model for graphs. To explain 
how such covers are constructed, we must first sketch the analogue in the 
present setting of the covering construction described in section 
\ref{sec:liftcons}.

Let us begin with an informal discussion. The action of a Fuchsian group 
$\Gamma$ on $\mathbb{H}$ defines a Dirichlet fundamental domain $F$ whose 
translates $\{\gamma F:\gamma\in\Gamma\}$ tile $\mathbb{H}$; $F$ is a 
polygon whose sides are given by $F\cap \gamma F$ and $F\cap 
\gamma^{-1}F$ for some generating set 
$\gamma\in\{\gamma_1,\ldots,\gamma_s\}$ of $\Gamma$. Then 
$X=\Gamma\backslash\mathbb{H}$ is obtained from $F$ by gluing each pair of 
sides $F\cap \gamma_i F$ and $F\cap \gamma_i^{-1}F$.
See Figure \ref{fig:gluing} and
\cite[Chapter 9]{Bea95}.
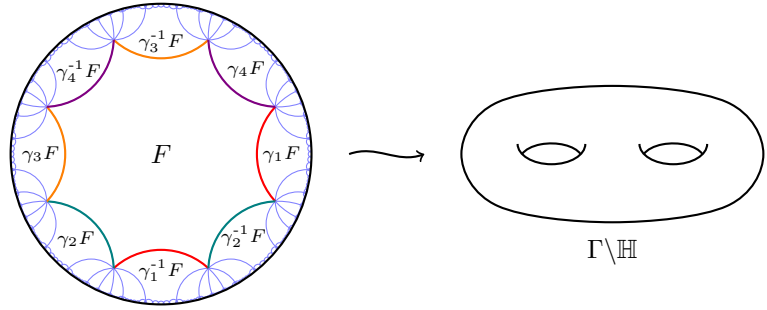
\begin{figure}
\centering
\begin{tikzpicture}[scale=2]

\begin{scope}

\clip (0,0) circle (1);

\foreach \i in {1,...,8}
{
        \hgline{45*(\i-1)-25}{45*(\i-1)+25};
        \hgline{45*(\i-1)-30}{45*(\i-1)-6};
        \hgline{45*(\i-1)-15}{45*(\i-1)-39};

        \hgline{45*(\i-1)-3}{45*(\i-1)-6.5};    
        \hgline{45*(\i-1)-.75}{45*(\i-1)-3.5};  
        \hgline{45*(\i-1)+1.25}{45*(\i-1)-1.25};        
        \hgline{45*(\i-1)+3}{45*(\i-1)+6.5};    
        \hgline{45*(\i-1)+.75}{45*(\i-1)+3.5};  

        \hgline{45*(\i-1)-8.5}{45*(\i-1)-5.5};  
        \hgline{45*(\i-1)-9.6}{45*(\i-1)-8.2};  
        \hgline{45*(\i-1)-11}{45*(\i-1)-9.6};   
        \hgline{45*(\i-1)-12.4}{45*(\i-1)-11};  
        \hgline{45*(\i-1)-13.8}{45*(\i-1)-12.4};        
        \hgline{45*(\i-1)-13.5}{45*(\i-1)-15.5};        

        \hgline{45*(\i-1)+8.5}{45*(\i-1)+5.5};  
        \hgline{45*(\i-1)+9.6}{45*(\i-1)+8.2};  
        \hgline{45*(\i-1)+11}{45*(\i-1)+9.6};   
        \hgline{45*(\i-1)+12.4}{45*(\i-1)+11};  
        \hgline{45*(\i-1)+13.8}{45*(\i-1)+12.4};        
        \hgline{45*(\i-1)+13.5}{45*(\i-1)+15.5};        

        \hgline{45*(\i-1)+16.5}{45*(\i-1)+14.5};        
        \hgline{45*(\i-1)-16.5}{45*(\i-1)-14.5};        

        \hgline{45*(\i-1)-26}{45*(\i-1)-24.5};  
        \hgline{45*(\i-1)-25.5}{45*(\i-1)-24};  
        \hgline{45*(\i-1)+26}{45*(\i-1)+24.5};  
        \hgline{45*(\i-1)+25.5}{45*(\i-1)+24};  

        \hgline{45*(\i-1)+23.3}{45*(\i-1)+24.1};        
        \hgline{45*(\i-1)+22.5}{45*(\i-1)+23.3};        
        \hgline{45*(\i-1)+21.7}{45*(\i-1)+22.5};
        \hgline{45*(\i-1)+20.9}{45*(\i-1)+21.7};

        \hgline{45*(\i-1)-17}{45*(\i-1)-16.25}; 
        \hgline{45*(\i-1)-17.75}{45*(\i-1)-17}; 
        \hgline{45*(\i-1)-18.5}{45*(\i-1)-17.75};       
        \hgline{45*(\i-1)-19.25}{45*(\i-1)-18.5};       
        \hgline{45*(\i-1)+17}{45*(\i-1)+16.25}; 
        \hgline{45*(\i-1)+17.75}{45*(\i-1)+17}; 
        \hgline{45*(\i-1)+18.5}{45*(\i-1)+17.75};       
        \hgline{45*(\i-1)+19.25}{45*(\i-1)+18.5};       

}
\end{scope}

\begin{scope}
\clip (0,0) circle (0.8222);
\hglinec{-25}{25}{thick,red};
\hglinec{-90-25}{-90+25}{thick,red};
\hglinec{-45-25}{-45+25}{thick,teal};
\hglinec{-135-25}{-135+25}{thick,teal};
\hglinec{-270-25}{-270+25}{thick,orange};
\hglinec{-180-25}{-180+25}{thick,orange};
\hglinec{-225-25}{-225+25}{thick,violet};
\hglinec{45-25}{45+25}{thick,violet};
\end{scope}

\draw (0,0) node {$F$};
\draw (0.8,0) node {$\scriptstyle\gamma_1 F$};
\begin{scope}[rotate=-45]
\draw (0.775,0) node {$\scriptstyle\gamma_2^{\text{-}1} F$};
\end{scope}
\begin{scope}[rotate=-90]
\draw (0.775,0) node {$\scriptstyle\gamma_1^{\text{-}1} F$};
\end{scope}
\begin{scope}[rotate=-135]
\draw (0.8,0) node {$\scriptstyle\gamma_2 F$};
\end{scope}
\begin{scope}[rotate=-180]
\draw (0.8,0) node {$\scriptstyle\gamma_3 F$};
\end{scope}
\begin{scope}[rotate=-225]
\draw (0.775,0) node {$\scriptstyle\gamma_4^{\text{-}1} F$};
\end{scope}
\begin{scope}[rotate=-270]
\draw (0.75,0) node {$\scriptstyle\gamma_3^{\text{-}1} F$};
\end{scope}
\begin{scope}[rotate=45]
\draw (0.8,0) node {$\scriptstyle\gamma_4 F$};
\end{scope}

\draw[thick] (0,0) circle (1);

\begin{scope}[xshift=3cm,scale=.9]

\node[rectangle, minimum height=1.8cm, minimum width=4cm] (sample) {};
\superellipse[draw, thick]{sample}{0.76};
\draw[thick] (-0.2,0.075) arc(0:-180:.25cm and .15cm);
\draw[thick] (0.7,0.075) arc(0:-180:.25cm and .15cm);

\draw[thick] (-0.235,0) arc(20:160:.23cm and .12cm);
\draw[thick] (0.9-0.235,0) arc(20:160:.23cm and .12cm);

\end{scope}

\draw[->,thick] (1.25,0) to[out=20,in=200] (1.75,0);
\draw (3,-0.65) node {$\Gamma\backslash\mathbb{H}$};

\end{tikzpicture}
\caption{Illustration of a tiling of $\mathbb{H}$ (in the Poincar\'e disc
model) by hyperbolic octagons; gluing the sides of the fundamental domain 
$F$ yields a genus $2$ surface.\label{fig:gluing}}
\end{figure}

To construct a candidate $N$-fold cover $X^N$ of $X$, we fix $N$ copies 
$F\times [N]$ of the fundamental domain and permutations 
$\sigma_1,\ldots,\sigma_s\in\mathbf{S}_N$. We then glue the side $(F\cap 
\gamma_i F) \times \{k\}$ to the corrsponding side 
$(F\cap\gamma_i^{-1}F)\times \{\sigma_i(k)\}$, that is, we scramble the 
gluing of the sides between the copies of $F$. Unlike in the case of 
graphs, however, it need not be the case that every choice of $\sigma_i$ 
yields a valid covering: if we glue the sides without regard for the 
corners of $F$, the resulting surface may develop singularities. The 
additional condition that is needed to obtain a valid covering is that 
$\sigma_1,\ldots,\sigma_s$ must satisfy the same relations as 
$\gamma_1,\ldots,\gamma_s$; that is, we must choose 
$\sigma_i=\pi_N(\gamma_i)$ for some 
$\pi_N\in\mathrm{Hom}(\Gamma,\mathbf{S}_N)$.

More formally, this construction can be implemented as follows. Fix a base 
surface $X=\Gamma\backslash\mathbb{H}$ and a homomorphism
$\pi_N\in\mathrm{Hom}(\Gamma,\mathbf{S}_N)$. Define
$$
	X^N = \Gamma \backslash (\mathbb{H}\times[N]),
$$
where we let $\gamma\in\Gamma$ act 
on $\mathbb{H}\times[N]$ as $\gamma (z,i) = (\gamma z,\pi_N(\gamma)i)$.
Then $X^N$ is an $N$-fold cover of $X$, and every $N$-fold cover of $X$ 
arises in this manner for some choice of $\pi_N$; cf.\ 
\cite[pp.\ 68--70]{Hat02} or
\cite[\S 14a and \S 16d]{Ful95}.

To define a \emph{random} cover of $X$ we may now simply choose a random 
homomorphism $\pi_N$, or equivalently, choose $\sigma_i=\pi_N(\gamma_i)$ 
to be random permutations. The major complication that 
arises here is that these permutations cannot in general be chosen 
independently, since they must satisfy the relations of 
$\Gamma$. For example, if $X$ is a closed orientable surface of genus 
$g$, then $\Gamma$ is the surface group
$$
	\Gamma \simeq \Gamma_g = \big\langle \gamma_1,\ldots,\gamma_{2g}
	~\big|~
	[\gamma_1,\gamma_2]\cdots[\gamma_{2g-1},\gamma_{2g}]=1\big\rangle
$$
where $[g,h]=ghg^{-1}h^{-1}$. In this case, the random permutations
$\sigma_i$ must be chosen to satisfy 
$[\sigma_1,\sigma_2]\cdots[\sigma_{2g-1},\sigma_{2g}]=1$, which precludes 
them from being independent. The reason this issue does not arise for 
graphs is that the fundamental group of every graph is 
free, and thus there are no relations to be satisfied.

The above obstacle has been addressed in three distinct ways.
\medskip
\begin{enumerate}[1.]
\itemsep\medskipamount
\item
While the fundamental group of a closed hyperbolic surface is never free, 
there are finite volume \emph{noncompact} hyperbolic surfaces with a free 
fundamental group; e.g., the thrice punctured sphere has 
$\pi_1(X)=\mathbf{F}_2$ and admits a finite volume hyperbolic metric 
with three cusps. Thus random covers of such surfaces can be defined using 
independent random permutation matrices. Hide and Magee \cite{HM23} proved 
an optimal spectal gap phenomenon for this model; this leads indirectly to 
a solution to Buser's conjecture by compactifying the resulting surfaces.

\item Louder and Magee \cite{LM25} showed that surface groups can be 
approximately embedded in free groups by mapping each generator of 
$\Gamma$ to a suitable word in the free group. This gives rise to a 
non-uniform random model of covers of \emph{closed} hyperbolic surfaces by 
choosing $\pi_N$ that maps each generator of $\Gamma$ to the coresponding 
word in independent random permutation matrices. 

\item
Finally, the most natural model of random covers of closed surfaces
is to choose $\pi_N\in\mathrm{Hom}(\Gamma,\mathbf{S}_N)$ uniformly at
random, that is, choose $\sigma_i=\pi_N(\gamma_i)$ uniformly at random 
among the set of tuples
$\sigma_1,\ldots,\sigma_s\in\mathbf{S}_N$ that satisfy the relation of 
$\Gamma$. This corresponds to choosing an $N$-fold cover of $X$ uniformly
at random \cite{MNP22,MP23}.
The challenge in analyzing this model is that $\sigma_i$ have a 
complicated
dependence structure that cannot be reduced to
independent random permutations.
\end{enumerate}

\medskip

These three approaches give rise to distinct models of random covers. The 
advantage of the first two approaches is that their analysis is based on 
strong convergence of independent random permutations (Theorem 
\ref{thm:bc}). This suffices for proving the \emph{existence} of covers 
with optimal spectral gaps, i.e., to resolve Buser's conjecture, but 
leaves unclear whether optimal spectral gaps are rare or 
common. That \emph{typical} covers of closed surfaces have an optimal 
spectral gap was recently proved by Magee, Puder, and the author 
\cite{MPV25} by resolving the strong convergence problem for uniformly 
random $\pi_N\in\mathrm{Hom}(\Gamma_g,\mathbf{S}_N)$ (cf.\ section 
\ref{sec:nonfree}).

The aim of the remainder of this section is to sketch how the optimal 
spectral gap problem for the Laplacian $\Delta_{X^N}$ of a random cover is 
encoded as a strong convergence problem. This reduction proceeds in 
an analogous manner for the three models 
described above. We therefore fix in the following a base surface 
$X=\Gamma\backslash\mathbb{H}$ and a sequence of random homomorphisms 
$\pi_N\in\mathrm{Hom}(\Gamma,\mathbf{S}_N)$ as in any of the above 
models. The key assumption that will be needed, which holds in all three 
models, is that the random
matrices $(U_1^N,\ldots,U_s^N)|_{1^\perp}$ defined by
\begin{align*}
	U_i^N &= \pi_N(\gamma_i) \\
\intertext{converge strongly to the operators $(u_1,\ldots,u_s)$ defined by}
	u_i &= \lambda_\Gamma(\gamma_i).
\end{align*}
Here we implicitly identify $\pi_N(\gamma_i)\in\mathbf{S}_N$ with
the corresponding $N\times N$ permutation matrix, and
$\lambda_\Gamma$ denotes the left-regular representation of $\Gamma$.

\begin{rem}
Beside models of random covers of hyperbolic surfaces, another important 
model of random surfaces is obtained by sampling from the Weil--Petersson 
measure on the moduli space of hyperbolic surfaces of genus $g$; this 
may be viewed as the natural notion of a typical surface of genus $g$.
In a tour-de-force, Anantharaman and Monk \cite{AM25i,AM25ii} 
proved that the Weil--Petersson model also exhibits an optimal spectral 
gap phenomenon by using methods inspired by Friedman's original proof of 
Theorem \ref{thm:friedman}. In contrast to random cover models, it does
not appear that this result can be reduced to a strong convergence 
problem. Nonetheless, it was recently shown by Hide--Macera--Thomas
\cite{HMT25b} that the the polynomial method, which plays a key role in 
\cite{MPV25}, can be applied directly to achieve a new proof of this 
result.
\end{rem}

\subsubsection{Exploiting strong convergence}

In contrast to the setting of random lifts of graphs, it is not 
immediately clear how the Laplacian spectrum of random surface covers 
relates to strong convergence. This connection is due to Hide and 
Magee~\cite{HM23}; for expository purposes, we sketch a variant of
their argument \cite{HMN25}.

We begin with some basic observations. Any $f\in L^2(X)$ lifts to a 
function in $L^2(X^N)$ by composing it with the covering 
map $\iota:X^N\to X$. As
$$
	\Delta_{X^N}(f\circ\iota) = \Delta_Xf \circ \iota,
$$
it follows precisely as for random lifts of graphs that the spectrum of 
the base surface $X$ is a subset of that of any of its covers 
$X^N$. What we aim to show is that
the smallest \emph{new} eigenvalue of $\Delta_{X^N}$, that is, 
the smallest eigenvalue of its restriction $\Delta^{\rm 
new}_{X^N}$ to the orthogonal complement of 
functions lifted from $X$, converges to the bottom of the spectrum of
$\Delta_{\mathbb{H}}$. In other words, we aim to prove that
$$
	\lim_{N\to\infty}
	\big\|e^{-\Delta_{X^N}^{\rm new}}\big\| 
	=	
	\big\|e^{-\Delta_{\mathbb{H}}}\big\| =
	e^{-\frac{1}{4}}.
$$
This leads us to consider the heat operators
$e^{-\Delta_{\mathbb{H}}}$ and $e^{-\Delta_{X^N}}$.

Recall that $e^{-\Delta_{\mathbb{H}}}$
is an integral operator on $L^2(\mathbb{H})$ with a smooth kernel
$p_\mathbb{H}(x,y)$. The Laplacian $\Delta_{X^N}$ on 
$X^N=\Gamma\backslash(\mathbb{H}\times[N])$ is obtained by restricting
the Laplacian on $\mathbb{H}\times[N]$ to 
functions that are invariant under $\Gamma$. In 
particular, this implies that $e^{-\Delta_{X^N}}$ may be 
viewed as an integral operator on $L^2(F\times[N])$ with kernel
$$
	p_{X^N}((x,i),(y,j)) =
	\sum_{\gamma\in\Gamma}
	p_{\mathbb{H}}(x,\gamma y)\, 1_{i=\pi_N(\gamma)j}
$$
by parameterizing $X^N$ as $F\times [N]$, where $F$ is the 
fundamental domain of the action of 
$\Gamma$ on $\mathbb{H}$. See, for example, \cite[\S 3.7]{Ber16} or
\cite[\S 2]{HMN25}.

In the following, we identify $L^2(F\times[N]) \simeq
L^2(F)\otimes\mathbb{C}^N$, and denote by $a_\gamma$ the integral operator 
on $L^2(F)$ with kernel $p_{\mathbb{H}}(x,\gamma y)$. In this notation, 
the above expression can be rewritten in the more suggestive form 
\begin{align*}
	e^{-\Delta_{X^N}} &=
	\sum_{\gamma\in\Gamma} 
	a_\gamma \otimes \pi_N(\gamma).\\
\intertext{In particular, we have}
	e^{-\Delta_{X^N}^{\rm new}} &=
	\sum_{\gamma\in\Gamma} 
	a_\gamma \otimes \pi_N(\gamma)|_{1^\perp}.
\end{align*}
Since $\pi_N$ is a homomorphism, each 
$\pi_N(\gamma)=\pi_N(\gamma_{i_1}\cdots\gamma_{i_k})=U_{i_1}^N\cdots 
U_{i_k}^N$ can be written 
as a word in the random permutation matrices $U_i^N=\pi_N(\gamma_i)$ 
associated to the generators $\gamma_i$ of $\Gamma$. Thus
$e^{-\Delta_{X^N}^{\rm new}}$ is nearly, but not exactly, a noncommutative 
polynomial
of $(U_1^N,\ldots,U_s^N)|_{1^\perp}$ with matrix coefficients:
\smallskip
\begin{enumerate}[$\bullet$]
\itemsep\medskipamount
\item The above sum is over all $\gamma\in\Gamma$ with no
bound on the word length $|\gamma|$. However, as $p_{\mathbb{H}}(x,y)$
decays rapidly as a function of $\mathrm{dist}_{\mathbb{H}}(x,y)$ (this 
can be read 
off from the explicit expression for $p_\mathbb{H}(x,y)$ \cite{HM23} or
from general heat kernel estimates \cite{HMN25}), the size of the 
coefficients 
$\|a_\gamma\|$ decays rapidly as function of $|\gamma|$. The infinite sum 
is therefore well approximated by a finite sum.
\item The coefficients $a_\gamma$ are operators rather than matrices.
However, when $X$ is a closed surface, $a_\gamma$ are compact operators
and are therefore well approximated by matrices. (The 
argument in the case that
$X$ is noncompact requires an additional truncation to remove the cusps;
see \cite{HM23,Moy25} for details.)
\end{enumerate}
\smallskip
We therefore conclude that $e^{-\Delta_{X^N}^{\rm new}}$ is well 
approximated in operator norm by a noncommutative polynomial in 
$(U_1^N,\ldots,U_s^N)|_{1^\perp}$ with matrix coefficients.
In particular, we can apply strong convergence to conclude that
$$
	\lim_{N\to\infty}
	\big\|e^{-\Delta_{X^N}^{\rm new}}\big\|
	=
	\|b\|
$$
with
$$
	b = 
	\sum_{\gamma\in\Gamma} 
	a_\gamma \otimes \lambda_\Gamma(\gamma).
$$
It remains to observe that the operator $b$ is $e^{-\Delta_{\mathbb{H}}}$ 
in disguise. To see this, note that the map $\eta:F\times\Gamma\to\mathbb{H}$
defined by $\eta(x,g)=g^{-1}x$ is a.e.\ invertible, as the
translates of the fundamental domain tile $\mathbb{H}$. Thus
$f\mapsto \tilde f=f\circ\eta$ defines an isomorphism 
$L^2(\mathbb{H})\simeq L^2(F\times\Gamma)$.
We can now readily compute for any $f\in L^2(\mathbb{H})$
$$
	b\tilde f(x,g) =
	\sum_{\gamma\in\Gamma} \int_F
	p_{\mathbb{H}}(x,\gamma y)\,f(g^{-1}\gamma y)\,
	dy =
	\int_{\mathbb{H}}
	p_{\mathbb{H}}(g^{-1}x,y)\,f(y)\,dy =
	\widetilde{e^{-\Delta_{\mathbb{H}}}f}(x,g),
$$
where we used that $p_{\mathbb{H}}(g^{-1}x,y)=p_{\mathbb{H}}(x,gy)$.

\begin{rem}
There are several variants of the above argument. The original work of 
Hide and Magee \cite{HM23} used the resolvent $(z-\Delta_{X^N})^{-1}$ 
instead of $e^{-\Delta_{X^N}}$. The heat operator approach of 
Hide--Moy--Naud \cite{HMN25,Moy25} has the advantage that it extends to 
surfaces with variable negative curvature by using heat 
kernel estimates. For hyperbolic surfaces, another 
variant due to Hide--Macera--Thomas \cite{HMT25} uses a 
specially designed function $h$ with the property that $h(\Delta_{X^N})$ 
is already a noncommutative polynomial of $U_1^N,\ldots,U_s^N$ with 
operator coefficients, avoiding the need to truncate the sum over 
$\gamma$. The advantage of this approach is that it leads to much better 
quantitative estimates, since the truncation of the sum is the main source 
of loss in the previous arguments. Finally, Magee \cite{Mag24} presents a 
more general perspective that uses the continuity of induced 
representations under strong convergence.
\end{rem}

\subsection{Random Schreier graphs}
\label{sec:cassidy}

In this section, we take a different perspective on random regular graphs 
that will lead us in a new direction.

\begin{defn}
Given $\sigma_1,\ldots,\sigma_r\in\mathbf{S}_N$ and an action 
$\mathbf{S}_N\curvearrowright V$ of the symmetric group on a finite set 
$V$, the \emph{Schreier graph} $\mathrm{Sch}(\mathbf{S}_N\curvearrowright 
V; \sigma_1,\ldots,\sigma_r)$ is the $2r$-regular graph with vertex set 
$V$, where each vertex $v\in V$ has neighbors 
$\sigma_i(v),\sigma_i^{-1}(v)$ for $i=1,\ldots,r$ (allowing for
multiple edges and self-loops).
\end{defn}

The permutation model of random $2r$-regular graphs that was introduced
in section \ref{sec:introgaps} is merely the special case
$\mathrm{Sch}(\mathbf{S}_N\curvearrowright [N]; \sigma_1,\ldots,\sigma_r)$
where $\mathbf{S}_N\curvearrowright [N]$ is the natural action of 
permutations of $[N]$ on the points of $[N]$, and 
$\sigma_1,\ldots,\sigma_r$ are independent and uniformly distributed
random elements of $\mathbf{S}_N$.

We may however ask what happens if we consider other actions of the
symmetric group. Following \cite{FJRST96,Cas24}, denote by $[N]_k$ the
set of all $k$-tuples of distinct elements of $[N]$. Then we obtain
the natural action $\mathbf{S}_N\curvearrowright [N]_k$ by letting 
$\sigma$ act on each element of the tuple, that is,
$\sigma(i_1,\ldots,i_k) = (\sigma(i_1),\ldots,\sigma(i_k))$. If we again 
choose $\sigma_1,\ldots,\sigma_r$ to be i.i.d.\ uniform
random elements of $\mathbf{S}_N$, then
$$
	\mathrm{Sch}(\mathbf{S}_N\curvearrowright [N]_k; 
	\sigma_1,\ldots,\sigma_r)
$$
yields a new model of random $2r$-regular graphs that generalizes the 
permutation model. The interesting aspect of these graphs is that even
though the number of vertices $\sim N^k$ grows rapidly as we increase $k$,
the number of random bits $\sim rN\log N$ that generate the graph is fixed 
independently of $k$. We may therefore think of the model as becoming 
increasingly less random as $k$ is increased.\footnote{%
A different, much less explicit approach to derandomization of random 
graphs from a theoretical computer science perspective may be found in 
\cite{MOP19,OW20}.}

What is far from obvious is whether the optimal spectral gap of the random 
graph persists as we increase $k$. Let us consider the two extremes.
\smallskip
\begin{enumerate}[$\bullet$]
\itemsep\medskipamount
\item The case $k=1$ is the permutation model of random regular graphs,
which have an optimal spectral gap by Theorem \ref{thm:friedman}.
\item The case $k=N$ corresponds to the Cayley graph of $\mathbf{S}_N$ 
with the random generators $\sigma_1,\ldots,\sigma_r$, since 
$[N]_N\simeq\mathbf{S}_N$. Whether random Cayley graphs of 
$\mathbf{S}_N$ have an optimal 
spectral gap is a long-standing question (see section \ref{sec:cayley}) 
that remains wide open: it has not even been shown that the maximum 
nontrivial eigenvalue is bounded away from the trivial eigenvalue in this 
setting.
\end{enumerate}
\smallskip
The intermediate values of $k$ interpolate between these two extremes.
In a major improvement over previously known results,
Cassidy \cite{Cas24} recently proved that the optimal spectral gap 
persists in the range $k\le N^\alpha$ for some $\alpha<1$.

\begin{thm}
\label{thm:cassidy}
Denote by $A^{N,k}$ the adjacency matrix of 
$\mathrm{Sch}(\mathbf{S}_N\curvearrowright 
[N]_k;\sigma_1,\ldots,\sigma_r)$ where $\sigma_1,\ldots,\sigma_r$
are i.i.d.\ uniform random elements of $\mathbf{S}_N$.
Then
$$
	\|A^{N,k_N}|_{1^\perp}\| = 2\sqrt{2r-1} + o(1)
	\quad\text{with probability}\quad 1-o(1)
$$
as $N\to\infty$ whenever $k_N\le N^{\frac{1}{20}-\delta}$, 
for any $\delta>0$.
\end{thm}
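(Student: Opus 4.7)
The plan is to reduce the spectral gap question to Cassidy's strong convergence theorem for high-dimensional irreducible representations of $\mathbf{S}_N$, as discussed after Theorem~\ref{thm:bc}. The starting observation is that the adjacency matrix can be written as
\[
	A^{N,k} = \sum_{i=1}^r \big(\pi_{N,k}(\sigma_i) + \pi_{N,k}(\sigma_i)^*\big),
\]
where $\pi_{N,k}$ is the permutation representation of $\mathbf{S}_N$ on $\mathbb{C}^{[N]_k}$. Since $[N]_k \simeq \mathbf{S}_N/\mathbf{S}_{N-k}$ as an $\mathbf{S}_N$-set, Frobenius reciprocity yields the decomposition
\[
	\pi_{N,k} \simeq \bigoplus_{\lambda \vdash N,\,\lambda_1 \geq N-k} m_\lambda\, \pi_N^\lambda,
\]
where $\pi_N^\lambda$ denotes the irreducible representation of $\mathbf{S}_N$ attached to the partition $\lambda$. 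The trivial representation $\pi_N^{(N)}$ occurs with multiplicity one (as $[N]_k$ is a transitive $\mathbf{S}_N$-set) and corresponds precisely to the Perron line $\mathbb{C}\mathbf{1}$. Consequently, $A^{N,k}|_{1^\perp}$ is unitarily equivalent to a block-diagonal operator whose blocks are copies of $A^{N,\lambda} := \sum_{i=1}^r \big(\pi_N^\lambda(\sigma_i) + \pi_N^\lambda(\sigma_i)^*\big)$, and so
\[
	\|A^{N,k}|_{1^\perp}\| = \max_{\lambda \neq (N),\,\lambda_1 \geq N-k}\|A^{N,\lambda}\|.
\]

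The next step is a crude dimension bound on the relevant irreducible representations. Since every $\pi_N^\lambda$ appearing above is a subrepresentation of $\pi_{N,k}$,
\[
	\dim \pi_N^\lambda \leq \dim \pi_{N,k} = \frac{N!}{(N-k)!} \leq N^k.
\]
For $k_N \leq N^{1/12-\delta}$, this gives $\dim \pi_N^\lambda \leq \exp(N^{1/12-\delta'})$ for any $0 < \delta' < \delta$ and $N$ sufficiently large, absorbing the $\log N$ factor into the exponent. Cassidy's theorem \cite{Cas24} then applies uniformly over all nontrivial $\lambda$ in this range: writing $u_i = \lambda(g_i) \in C^*_{\mathrm{red}}(\mathbf{F}_r)$ for the images of the free generators under the left-regular representation, the tuples $(\pi_N^\lambda(\sigma_1),\ldots,\pi_N^\lambda(\sigma_r))$ converge strongly to $(u_1,\ldots,u_r)$, uniformly in $\lambda$. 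Applied to the degree-one polynomial $P(x_1,\ldots,x_{2r}) = \sum_{i=1}^r(x_i + x_{r+i})$, and using the Kesten computation $\|u_1+u_1^*+\cdots+u_r+u_r^*\| = 2\sqrt{2r-1}$ recorded after Lemma~\ref{lem:faith}, this yields
\[
	\max_{\lambda \neq (N),\,\lambda_1 \geq N-k_N}\|A^{N,\lambda}\|
	\leq 2\sqrt{2r-1} + o(1)
\]
with probability $1-o(1)$.

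The matching lower bound $\|A^{N,k_N}|_{1^\perp}\| \geq 2\sqrt{2r-1} - o(1)$ is immediate from the Alon--Boppana Lemma~\ref{lem:alonboppana}, since $A^{N,k_N}$ is the adjacency matrix of a $2r$-regular graph on $N!/(N-k_N)! \to \infty$ vertices (multi-edges and self-loops being harmless). Combining the two bounds finishes the proof. The entire difficulty of the argument is packaged into Cassidy's theorem, whose proof requires a sharpened form of the polynomial method capable of handling the irreducible representations $\pi_N^\lambda$ even when their dimensions grow nearly exponentially in $N$; the reduction above rests only on elementary representation theory of $\mathbf{S}_N$ and the trivial bound $\dim \pi_N^\lambda \leq N!/(N-k)!$, which is precisely what turns the dimension threshold $\exp(N^{1/12-\delta})$ in Cassidy's theorem into the threshold $k \leq N^{1/12-\delta}$ in Theorem~\ref{thm:cassidy}.
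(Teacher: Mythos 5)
Your proposal is correct and follows the same route as the paper: write $A^{N,k}$ through the permutation representation $\pi_{N,k}$, decompose it into irreducibles $\pi_N^\lambda$ (all of dimension at most $N!/(N-k)!\le N^k\le\exp(N^{\frac{1}{12}-\delta'})$, with the trivial representation accounting exactly for the Perron line), and invoke the uniform strong convergence of Theorem~\ref{thm:cassidystrong} together with the Kesten value $2\sqrt{2r-1}$. The only cosmetic difference is that you obtain the matching lower bound from Lemma~\ref{lem:alonboppana}, whereas it also follows directly from the two-sided norm convergence in Theorem~\ref{thm:cassidystrong} applied to any block (e.g.\ the standard representation); both are fine.
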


This yields a natural model of random $2r$-regular 
graphs with $|V|$ vertices that has an optimal spectral gap using only 
$\sim (\log|V|)^{20+\delta}$ bits of randomness, as compared to
$\sim|V|\log|V|$ bits for ordinary random regular graphs.

Theorem \ref{thm:cassidy} arises from a much more general result about 
strong convergence of representations of $\mathbf{S}_N$. To motivate
this result, note that we can write
$$
	A^{N,k} = \pi_{N,k}(\sigma_1) + \pi_{N,k}(\sigma_1)^* +
	\cdots + \pi_{N,k}(\sigma_r) + \pi_{N,k}(\sigma_r)^*,
$$
where $\pi_{N,k}:\mathbf{S}_N\to \mathrm{M}_{[N]_k}(\mathbb{C})$  
maps $\sigma\in\mathbf{S}_N$ to the permutation 
matrix defined by its action on $[N]_k$. Then $\pi_{N,k}$ is clearly 
a group representation of $\mathbf{S}_N$, so it
decomposes as a direct sum of irreducible representations
$\pi_N^\lambda$. Theorem~\ref{thm:cassidy} now follows 
from the following result about strong convergence of
irreducible representations of $\mathbf{S}_N$ that vastly generalizes
Theorem \ref{thm:bc} (which is the special case where $\pi_N^\lambda =
\mathrm{std}_N$ is the standard representation, so that
$\dim(\mathrm{std}_N)=N-1$). For 
expository purposes, we 
state the result in a slightly more general form than is given in 
\cite{Cas24}.

\begin{thm}
\label{thm:cassidystrong}
Let $\boldsymbol{\sigma}=(\sigma_1,\ldots,\sigma_r)$ be i.i.d.\ uniform 
random elements of $\mathbf{S}_N$, and let
$\boldsymbol{u}=(u_1,\ldots,u_r)$ be defined as in Theorem \ref{thm:bc}. 
Then
$$
	\lim_{N\to\infty}
	\max_{1<\dim(\pi_N^\lambda)\le \exp(N^{\frac{1}{20}-\delta})}
	\big|
	\|P(\pi_N^\lambda(\boldsymbol{\sigma}),
	\pi_N^\lambda(\boldsymbol{\sigma}^*))\| -
	\|P(\boldsymbol{u},\boldsymbol{u}^*)\|
	\big|=0
$$
in probability
for every $\delta>0$, $D\in\mathbb{N}$, and 
$P\in\mathrm{M}_D(\mathbb{C})\otimes
\mathbb{C}\langle x_1,\ldots,x_{2r}\rangle$, where the maximum is taken
over irreducible representations $\pi_N^\lambda$ of $\mathbf{S}_N$.
\end{thm}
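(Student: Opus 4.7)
The plan is to carry out the polynomial method of section \ref{sec:poly} simultaneously for every nontrivial irreducible representation $\pi_N^\lambda$ of $\mathbf{S}_N$, with quantitative bounds strong enough to survive both a union bound over $\lambda$ and the $\dim(\pi_N^\lambda)^{1/p}$ loss incurred when passing from trace statistics to the operator norm. Throughout, write $\mathrm{tr}_\lambda := (\dim\pi_N^\lambda)^{-1}\mathrm{tr}$ for the normalized trace of $\pi_N^\lambda$.

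The first step is a polynomial encoding generalizing Lemma \ref{lem:polyenc}: for every irrep $\pi_N^\lambda$ and every word $\boldsymbol{w}=(w_1,\ldots,w_q)$, the quantity $\mathbf{E}[\mathrm{tr}_\lambda\,\pi_N^\lambda(\sigma_{w_1}\cdots\sigma_{w_q})]$ should be a rational function of $\tfrac{1}{N}$ of degree $O(q)$. Since $\mathrm{tr}_\lambda\,\pi_N^\lambda(g)=\chi^\lambda(g)/\dim\pi_N^\lambda$, this factors through the identity
$$
\mathbf{E}[\chi^\lambda(\sigma_{w_1}\cdots\sigma_{w_q})] = \sum_{\mu\vdash N} p_{\boldsymbol{w}}(\mu)\,\chi^\lambda(\mu),
$$
where $p_{\boldsymbol{w}}(\mu)$ is the probability that a product of independent uniform random permutations has cycle type $\mu$. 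The cycle-type probabilities $p_{\boldsymbol{w}}(\mu)$ admit explicit rational $\tfrac{1}{N}$-expansions whose proof runs in direct parallel to Lemma \ref{lem:polyenc}, while the normalized character ratios $\chi^\lambda(\mu)/\dim\pi_N^\lambda$ are polynomials in $N$ and in the contents of $\lambda$ by the Stanley--F\'eray character formula. Together these give the required rational encoding, whose leading term as $N\to\infty$ equals $\tau(u_{w_1}\cdots u_{w_q})$ independently of $\lambda$.

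With the encoding in place, the Markov-type bounds of Lemma \ref{lem:polymethod} and Corollary \ref{cor:master} carry over uniformly in $\lambda$ to yield a quantitative expansion
$$
\mathbf{E}[\mathrm{tr}_\lambda\, h(P(\pi_N^\lambda(\boldsymbol{\sigma}),\pi_N^\lambda(\boldsymbol{\sigma})^*))] = \nu_0^\lambda(h) + \frac{\nu_1^\lambda(h)}{N} + O\!\left(\frac{q^{O(1)}\|h\|_{L^\infty[-K,K]}}{N^2}\right),
$$
where $\nu_0^\lambda=\mu_{P(\boldsymbol{u},\boldsymbol{u}^*)}$ and $\nu_1^\lambda$ extends to a compactly supported Schwartz distribution by the Fourier argument of Proposition \ref{prop:smex}. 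After reducing to self-adjoint $P\in\mathbb{R}_+\langle x_1,\dots,x_{2r}\rangle$ via the positivization trick (Lemma \ref{lem:pos}), the support of $\nu_1^\lambda$ is controlled by $\|P(\boldsymbol{u},\boldsymbol{u}^*)\|$ by an analogue of Lemma \ref{lem:frieduke}. The endgame of section \ref{sec:poly} then transfers verbatim to give, for each fixed $\lambda$, the norm upper bound; the matching lower bound follows from Lemma \ref{lem:upperlower} and the unique trace property of $C^*_{\rm red}(\mathbf{F}_r)$.

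The hard part will be making every step quantitative enough that the resulting convergence rate survives (i)~the passage from $\mathrm{tr}_\lambda$ to $\|\cdot\|$, which via the moment method forces the test polynomial degree $p$ to satisfy $p\gg \log\dim(\pi_N^\lambda)$ in order that $\dim(\pi_N^\lambda)^{1/(2p)}\to 1$, and (ii)~the union bound over $\lambda$. The exponent $\tfrac{1}{12}-\delta$ will emerge from optimizing the polynomial-method error (extended to a higher-order expansion $\sum_{j=0}^k \nu_j^\lambda(h)/N^j$ so that the residual is $O(q^{O(k)}/N^{k+1})$) against the allowed degree $p\sim\log\dim(\pi_N^\lambda)$, while absorbing the $q^c$-type losses introduced by positivization through the rapid decay property of $\mathbf{F}_r$ and by the normalization $1/\dim(\pi_N^\lambda)$. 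The union bound itself is cheap, since there are only $p(N)\le e^{C\sqrt N}$ irreducible representations of $\mathbf{S}_N$; and standard concentration for traces of words in independent uniform random permutations (via, e.g., the Efron--Stein inequality) upgrades the expectation estimates to high-probability convergence.
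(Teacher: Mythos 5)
Your proposal takes a fundamentally different route from the paper. The paper does \emph{not} re-run the polynomial method for general irreducible representations; it proves Theorem~\ref{thm:cassidystrong} as a short \emph{reduction} to Cassidy's main result \cite[Theorem 1.9]{Cas24}. The reduction has two ingredients: (i) an argument of Etingof showing that any irrep of $\mathbf{S}_N$ with $\dim(\pi_N^\lambda)\le\exp(N^{1/12-\delta})$ has a Young diagram $\lambda$ (or conjugate $\lambda'$) whose first row has length at least $N-N^{1/12-\delta'}$, placing it in the ``stable'' regime covered by Cassidy; and (ii) in the conjugate case, the relation $\pi_N^{\lambda'}(\sigma)=\mathrm{sgn}(\sigma)\pi_N^\lambda(\sigma)$ together with uniformity over finitely many sign-flipped polynomials and the Fell absorption principle. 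You are instead sketching a from-scratch proof of Cassidy's theorem itself.

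As a sketch of that self-contained route, your proposal has several genuine gaps. First, the assertion that $\mathbf{E}[\mathrm{tr}_\lambda\,\pi_N^\lambda(\sigma_{w_1}\cdots\sigma_{w_q})]$ is a rational function of $1/N$ of degree $O(q)$ for \emph{every} irrep is false as stated; this structure (from \cite{HP23}) holds for \emph{stable} sequences $\pi_N^{\lambda(N)}$ where the shape below the first row is fixed of size $k$, and the degree control deteriorates as $k$ grows. Saying that normalized character ratios are ``polynomials in $N$ and in the contents of $\lambda$'' is not the same as having a controlled $1/N$-expansion, since the contents themselves vary with $N$. Second, and more seriously, your sketch omits the key input that actually produces the exponent $\tfrac{1}{12}$: Cassidy's improved estimate that for non-power words the expected character is $O(1/\dim\pi_N^\lambda)$ rather than merely $O(1/N)$. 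Without this, a higher-order Taylor expansion to order $m$ in $1/N$ can only reach $\dim\le N^{O(m)}$, never quasi-exponential dimension; the ``optimization of polynomial-method error against $p\sim\log\dim$'' you describe does not close on its own. Third, you never address the representations with a long first \emph{column} rather than a long first row, which are included in the dimension constraint (it is invariant under $\lambda\mapsto\lambda'$) but are not stable sequences in the required sense; the paper handles them by the sign twist and Fell absorption, and some substitute for this step is unavoidable.

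Your observation that the union bound over partitions is cheap ($p(N)\le e^{C\sqrt N}$) and that concentration upgrades expectation bounds to high-probability statements is sound, but it is peripheral to the main difficulties above.
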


\begin{proof}
The irreducible representations $\pi_N^\lambda$ are
indexed by Young diagrams $\lambda \vdash N$. The argument in
\cite[\S 6]{Eti14} shows that for any $0<\delta'<\delta$ and sufficiently
large $N$, every irreducible representation with
$\dim(\pi_N^\lambda)\le \exp(N^{\frac{1}{20}-\delta})$ has the property
that the first row of either $\lambda$ or of the conjugate diagram
$\lambda'$ has length
at least $N-N^{\frac{1}{20}-\delta'}$. In the first case, the conclusion 
\begin{equation}
\label{eq:cassidy}
	\lim_{N\to\infty}
	\max_{\lambda_1 \ge N-N^{\frac{1}{20}-\delta'}}
	\big|
	\|P(\pi_N^\lambda(\boldsymbol{\sigma}),
	\pi_N^\lambda(\boldsymbol{\sigma}^*))\| -
	\|P(\boldsymbol{u},\boldsymbol{u}^*)\|
	\big|=0
\end{equation}
follows from the proof of \cite[Theorem 1.9]{Cas24}.

On the other hand, as
$\pi_N^{\lambda'}(\sigma)=\mathrm{sgn}(\sigma)\pi_N^\lambda(\sigma)$
\cite[Theorem 6.7]{Jam78}, we obtain
$$
	\lim_{N\to\infty}
	\max_{\lambda_1 \ge N-N^{\frac{1}{20}-\delta'}}
	\big|
	\|P(\pi_N^{\lambda'}(\boldsymbol{\sigma}),
	\pi_N^{\lambda'}(\boldsymbol{\sigma}^*))\| -
	\|P(\mathrm{sgn}(\boldsymbol{\sigma})\boldsymbol{u},\mathrm{sgn}(\boldsymbol{\sigma})\boldsymbol{u}^*)\|
	\big|=0
$$
using that \eqref{eq:cassidy} holds uniformly over any finite set of 
polynomials 
$P$, and thus in particular over the polynomials 
$(\boldsymbol{u},\boldsymbol{u}^*)\mapsto P(\boldsymbol{\varepsilon 
u},\boldsymbol{\varepsilon u}^*)$ for all 
choices of signs $\boldsymbol{\varepsilon}\in 
\{-1,1\}^r$. It remains to note that 
$\|P(\mathrm{sgn}(\boldsymbol{\sigma})\boldsymbol{u},\mathrm{sgn}(\boldsymbol{\sigma})\boldsymbol{u}^*)\| 
= \|P(\boldsymbol{u},\boldsymbol{u}^*)\|$ by the Fell absorption principle 
\cite[Proposition 8.1]{Pis03}.
\end{proof}

The above results are made possible by a marriage of two complementary 
developments: new representation-theoretic ideas due Cassidy, and the 
polynomial method for proving strong convergence. Here, we merely give a 
hint of the underlying phenomenon, and refer to \cite{Cas24} for the 
details.

Fix $\lambda\vdash k$, and consider the sequence of Young diagrams 
$\lambda(N)\vdash N$ (for $N\ge 2k$) so that removing the first row of 
$\lambda(N)$ yields $\lambda$; in particular, the first row has length 
$N-k$. Then the sequence of representations $\pi_N^{\lambda(N)}$ is called 
\emph{stable}~\cite{Far14}. As was the case in section \ref{sec:poly}, any 
stable representation has the property that 
$$
	\mathbf{E}\big[\tr \pi_N^{\lambda(N)}(\sigma_{w_1}\cdots\sigma_{w_k})
	\,\big] = \Psi_{\boldsymbol{w}}^\lambda(\tfrac{1}{N})
$$
is a rational 
function of $\frac{1}{N}$. Moreover, as in
Corollary \ref{cor:divisor},
\begin{equation}
\label{eq:precassidy}
	\mathbf{E}\big[\tr \pi_N^{\lambda(N)}(\sigma_{w_1}\cdots\sigma_{w_k})
	\,\big] =
	O\bigg(\frac{1}{N}\bigg)
\end{equation}
if $g_{w_1}\cdots g_{w_r}$ is a non-power, 
where $g_1,\ldots,g_r$ are free generators of $\mathbf{F}_r$; see
\cite{HP23}. These facts already suffice, by the polynomial method, for 
proving a form of Theorem \ref{thm:cassidystrong} that applies to 
representations of polynomial dimension $\dim(\pi_N^\lambda)\le N^k$ for 
any fixed $k$ \cite{CGTV25}. This falls far short, however, of
Theorem \ref{thm:cassidystrong}.

The key new ingredient that is developed in \cite{Cas24} is a major
improvement of \eqref{eq:precassidy}: when $g_{w_1}\cdots g_{w_r}$ is a 
non-power, it turns out that in fact
$$
	\mathbf{E}\big[\tr \pi_N^{\lambda(N)}(\sigma_{w_1}\cdots\sigma_{w_k})
	\,\big] =
	O\bigg(\frac{1}{\dim\big(\pi_N^{\lambda(N)}\big)}\bigg).
$$
The surprising aspect of this bound is that it exhibits \emph{more} 
cancellation as the dimension of the representation increases---contrary 
to what one may expect, since the model becomes ``less random''. This 
phenomenon therefore captures a kind of pseudorandomness in 
high-dimensional representations. This is achieved in \cite{Cas24} by 
combining a new representation of the stable characters of $\mathbf{S}_N$ 
with ideas from low-dimensional topology. The improved estimate makes it 
possible to Taylor expand the rational function 
$\Psi_{\boldsymbol{w}}^\lambda$ to much higher order in the polynomial 
method, enabling it to reach representations of quasi-exponential 
dimension.

Taken more broadly, high dimensional representations of finite and matrix 
groups form a natural setting for the study of strong convergence and give 
rise to many interesting questions. For the unitary group $U(N)$, strong 
convergence was established earlier by Bordenave and Collins \cite{BC20} 
for representations of polynomial dimension $N^k$, and by Magee and de la 
Salle \cite{MdlS24} for representations of quasi-exponential dimension 
$\exp(N^\alpha)$ (further improved in \cite{CGV25} using complementary 
ideas). On the other hand it is a folklore conjecture (see, e.g., 
\cite[Conjecture~1.6]{RS19}) that \emph{any} sequence of representations 
of $\mathbf{S}_N$ of diverging dimension should give rise to optimal 
spectral gaps; Theorem \ref{thm:cassidystrong} is at present the best 
known result in this direction. Analogous questions for finite simple 
groups of Lie type remain entirely open.

\subsection{The Peterson--Thom conjecture}
\label{sec:hayes}

In this section, we discuss a very different application of strong 
convergence to the theory of von~Neumann algebras, which has motivated 
many recent works in this area.

Recall that a von~Neumann algebra is defined as a unital $C^*$-algebra,
but is closed in the strong operator topology rather than the operator 
norm topology; see Remark \ref{rem:vna}. An important example is the 
\emph{free group factor}
$$
	L(\mathbf{F}_r) = 
	\mathrm{cl}_{\rm SOT}\big(\mathop{\mathrm{span}}
	\{\lambda(g):g\in\mathbf{F}_r\}\big),
$$
i.e., the closure of $C^*_{\rm red}(\mathbf{F}_r)$ in the strong operator 
topology. Von~Neumann algebras are much ``bigger'' than $C^*$-algebras and 
thus much less well understood; for example, it is not even known whether
or not $L(\mathbf{F}_r)$ and $L(\mathbf{F}_s)$
are isomorphic for $r\ne s$, which is one of the major open problems in 
this area.

However, the subclass of \emph{amenable} von~Neumann algebras---the 
counterpart in this context of the notion of an amenable group---is very 
well understood due to the work of Connes \cite{Con76}. For 
example, amenable von~Neumann algebras can be characterized as those that 
are approximately finite dimensional, i.e., the closure in the strong 
operator topology of an increasing net of matrix algebras. It is therefore 
natural to try to gain a better understanding of non-amenable von~Neumann 
algebras such as $L(\mathbf{F}_r)$ by studying the collection of its 
amenable subalgebras. The following conjecture of Peterson and Thom 
\cite{PT11}---now a theorem due to the works to be discussed below---is in 
this spirit: it states that two distinct \emph{maximal} amenable 
subalgebras of $L(\mathbf{F}_r)$ cannot have a too large overlap.

\begin{thm}[Peterson--Thom conjecture]
\label{thm:pt}
Let $r\ge 2$.
If $M_1$ and $M_2$ are distinct maximal amenable von~Neumann subalgebras 
of $L(\mathbf{F}_r)$, then $M_1\cap M_2$ is not diffuse.
\end{thm}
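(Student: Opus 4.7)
The plan is to proceed in two stages: first reduce the Peterson--Thom conjecture to a strong convergence statement via the work of Hayes \cite{Hay22}, and then establish the required strong convergence using the machinery developed in this survey.

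First, I would invoke Hayes' reduction, which uses Jung's theory of $1$-bounded entropy (a refinement of Voiculescu's free entropy) to recast the question about maximal amenable subalgebras as an asymptotic spectral property of matrix models drawn from the microstates spaces of $L(\mathbf{F}_r)$. The key idea is that if two distinct maximal amenable subalgebras had a diffuse intersection, then an ``exotic'' amenable bimodule must be present in the basic construction of $L(\mathbf{F}_r)$ over this intersection, and Hayes shows this is incompatible with strong convergence of a certain tensor GUE random matrix model to its free probabilistic limit. The argument combines Popa's intertwining-by-bimodules techniques with Connes' characterization of amenability as approximate finite-dimensionality, together with a careful analysis of how free entropy dimension interacts with amenable extensions.

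Second, I would establish the strong convergence asserted by Hayes' reduction. The relevant model involves independent GUE matrices $G_1^N,\ldots,G_r^N$ together with ``tensor copies'' such as $G_i^N \otimes I_N$ and $I_N \otimes (G_i^N)^{\mathsf{T}}$, which should converge strongly to the appropriate free semicircular family living in a tensor product involving $L(\mathbf{F}_r)$. This can be attacked via the intrinsic freeness principle (Theorem~\ref{thm:intrfree}), by verifying that the matrix coefficients obtained from the tensor structure yield a sufficiently small noncommutativity parameter $\tilde v(X)$; alternatively, one could use the polynomial method after a suitable linearization (Theorem~\ref{thm:salin}), and then invoke exactness (Lemma~\ref{lem:exact}) to pass from matrix to operator coefficients in the limiting object.

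The main obstacle is Hayes' reduction itself: translating an operator-algebraic statement about intersections of maximal amenable subalgebras into a concrete random matrix norm estimate requires substantial free probability machinery, including microstates spaces, $1$-bounded entropy, and the careful identification of the precise tensor model whose strong convergence is sufficient. By comparison, once this identification is in hand, the strong convergence step is comparatively robust: the methods in this survey apply essentially out-of-the-box to the tensor GUE models that arise, and parallel results on strong convergence for graph products of semicircular variables (cf.\ section~\ref{sec:newmethods}) already provide closely related tools that can be adapted to the model at hand.
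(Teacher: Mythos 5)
Your two-stage outline---Hayes' reduction followed by strong convergence of a tensor random matrix model---is the same architecture the paper follows, and you correctly identify the tensor GUE model (equivalently, independent Haar unitaries acting on opposite tensor legs) as the relevant strong convergence target, as well as the role of exactness in passing from matrix to operator coefficients. However, two parts of the argument are either missing or misdirected.

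First, the deduction of Theorem~\ref{thm:pt} from Hayes' entropic characterization (Theorem~\ref{thm:hayes}) does not go through ``exotic amenable bimodules in the basic construction'' or Popa's intertwining-by-bimodules. The actual step is short and explicit: the $1$-bounded entropy is \emph{subadditive} over algebras with diffuse intersection, $h(M_1\vee M_2:L(\mathbf{F}_r)) \le h(M_1:L(\mathbf{F}_r)) + h(M_2:L(\mathbf{F}_r))$; combined with $h(M_i)=0$ for amenable $M_i$ and the converse direction of Theorem~\ref{thm:hayes}, this forces $M_1\vee M_2$ to be amenable, contradicting maximality of $M_1\ne M_2$. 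Your description conflates the reduction of Theorem~\ref{thm:pt} to Theorem~\ref{thm:hayes} with the (much more substantial) reduction of Theorem~\ref{thm:hayes} to strong convergence, and for the latter the paper's sketch hinges on the Haagerup--Connes spectral characterization of nonamenability, ``microstate collapse'' driven by $h(M)=0$, and a concentration-of-measure upgrade---not on intertwining techniques.

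Second, the suggestion to verify the strong convergence step via the intrinsic freeness principle (Theorem~\ref{thm:intrfree}) by checking that $\tilde v(X)$ is small does not address the problem that actually arises here. The obstacle in the tensor model is strong convergence for polynomials with \emph{matrix coefficients of dimension $D_N$ growing with $N$} (one needs at least $D_N=N$): after conditioning on one tensor factor, one is left with a polynomial $P_N$ whose coefficients have dimension $N$, and Lemma~\ref{lem:mtxstr} only handles fixed $D$. The intrinsic freeness theorem compares $X$ to the ``fully liberated'' operator in which \emph{every} scalar Gaussian is replaced by a free semicircular; for the tensor GUE model this does not coincide with the desired limiting object $(\boldsymbol{u}\otimes\id,\id\otimes\boldsymbol{u})$, and the resulting discrepancy is itself a nontrivial error term. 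The route the paper actually follows is the polynomial method of \cite{CGV25}, with the new ingredient of a higher-order $\frac{1}{N}$-expansion whose distributional coefficients $\nu_k$ are all shown to be supported in the limiting spectrum via a concentration argument; this reaches $D_N=e^{o(N)}$. So calling the strong convergence step ``out-of-the-box'' somewhat understates the additional ideas required to handle coefficient dimensions that grow with $N$.
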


A von~Neumann algebra is called diffuse if it has no minimal projection. 
Being non-diffuse is a strong constraint: if $M$ is not diffuse, then the 
spectral distribution $\mu_a$ of every self-adjoint $a\in M$ must have an 
atom. (Here and below, we always compute laws with respect to the 
canonical trace $\tau$ on $L(\mathbf{F}_r)$.) 

\begin{example}
Let $M_i$ be the von~Neumann subalgebra of $L(\mathbf{F}_2)$ generated 
by $\lambda(g_i)$, where $g_1,g_2$ are free generators of $\mathbf{F}_2$.
Then $M_1,M_2\simeq L(\mathbb{Z})$ are maximal amenable, but $M_1\cap M_2$ 
is trivial and thus certainly not diffuse.
\end{example}

The affirmative solution of the Peterson-Thom conjecture was made 
possible by the 
work of Hayes \cite{Hay22}, who in fact provides a much stronger result. 
For every von~Neumann subalgebra $M\le L(\mathbf{F}_r)$, Hayes 
defines a quantity $h(M:L(\mathbf{F}_r))$ called the \emph{$1$-bounded 
entropy in the presence of $L(\mathbf{F}_r)$}, see \cite[\S 
2.2~and~Appendix]{HJK25}, that satisfies
$h(M:L(\mathbf{F}_r))\ge 0$ for every $M$ and
$h(M:L(\mathbf{F}_r))=0$ if $M$ is amenable. Hayes' main result is that 
the converse of this property also holds---thus providing an 
entropic 
characterization of amenable subalgebras of 
$L(\mathbf{F}_r)$.

\begin{thm}[Hayes]
\label{thm:hayes}
$M\le L(\mathbf{F}_r)$ is amenable if and only 
if $h(M:L(\mathbf{F}_r))=0$.
\end{thm}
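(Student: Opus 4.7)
The forward direction---$M$ amenable implies $h(M:L(\mathbf{F}_r))=0$---is the standard property of Hayes' $1$-bounded entropy alluded to in the setup above the theorem, so I will focus on the substantive converse.

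My plan is to argue by contrapositive: assuming $M$ is not amenable, I must show $h(\boldsymbol{x}:L(\mathbf{F}_r))>0$ for a generating tuple $\boldsymbol{x}\subset M$. Recall that $h(\boldsymbol{x}:L(\mathbf{F}_r))$ measures the asymptotic exponential covering number of matrix microstates for $\boldsymbol{x}$ that extend to microstates for the ambient $L(\mathbf{F}_r)$, taken modulo unitary conjugations that preserve the ambient microstates. The strategy, following the reduction developed in \cite{Hay22}, is to exhibit an exponentially large family of inequivalent such microstates using random matrices. Fix free Haar generators $\boldsymbol{u}=(u_1,\ldots,u_r)$ of $L(\mathbf{F}_r)$ together with a polynomial encoding $\boldsymbol{x}\approx P(\boldsymbol{u},\boldsymbol{u}^*)$, and let $\boldsymbol{U}^N$ be independent Haar-random unitary matrices. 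Strong convergence of Haar unitaries to $\boldsymbol{u}$ gives canonical microstates $\boldsymbol{X}^N = P(\boldsymbol{U}^N,\boldsymbol{U}^{N*})$. To produce genuinely inequivalent microstates, I conjugate by an auxiliary random unitary $V^N$ drawn from a carefully designed ensemble, under the constraint that $V^N\boldsymbol{X}^N V^{N*}$ continues to approximate $\boldsymbol{x}$ in all joint moments with $\boldsymbol{U}^N$ while $V^N$ is asymptotically far from the commutant of $\boldsymbol{U}^N$.

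The critical input is a joint strong convergence statement for tuples of the form $(\boldsymbol{U}^N, V^N \boldsymbol{U}^N V^{N*})$, asserting convergence to $(\boldsymbol{u},\boldsymbol{u}')$ with $\boldsymbol{u}'$ a freely independent copy of $\boldsymbol{u}$. This is precisely the strong convergence conjecture that Hayes reduced the theorem (and the Peterson--Thom conjecture) to. Once it is available, non-amenability of $M$---phrased in terms of the non-existence of certain hyperfinite approximations---translates, via a combinatorial counting argument on the resulting perturbation space, into a positive lower bound on the number of inequivalent microstates, yielding $h(\boldsymbol{x}:L(\mathbf{F}_r))>0$.

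The main obstacle is establishing the joint strong convergence statement above. It falls outside the scope of Haagerup--Thorbj{\o}rnsen-type results because $V^N$ is not drawn from a natural unitarily invariant ensemble but must be engineered to produce inequivalent microstates, so the random matrix model mixes independent Haar factors with ingredients whose distribution is far less canonical. It is precisely the modern strong convergence machinery described earlier in this survey---in particular the polynomial method and the intrinsic freeness principle, which are tailored to random matrix models built from disparate building blocks with complicated dependence structures---that supplies this missing ingredient and closes the argument.
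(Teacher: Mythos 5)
There is a genuine gap: you have misidentified the key random matrix input to Hayes' reduction. The required strong convergence statement is \emph{not} that $(\boldsymbol{U}^N, V^N\boldsymbol{U}^N V^{N*})$ converges strongly to a pair of \emph{freely} independent copies $(\boldsymbol{u},\boldsymbol{u}')$—that is already a consequence of Collins--Male \cite{CM14}, since conjugation by an independent Haar unitary produces free independence, and so it could not be the missing ingredient Hayes had to wait for. The actual conjecture that Hayes reduced Theorem~\ref{thm:hayes} to is strong convergence of the \emph{tensor} model: the family $(\boldsymbol{U}^N\otimes\id,\ \id\otimes\boldsymbol{\tilde U}^N)$ of $N^2\times N^2$ matrices should converge strongly to $(\boldsymbol{u}\otimes\id,\ \id\otimes\boldsymbol{u})$. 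These limiting operators \emph{commute}—they are tensorially, not freely, independent—which changes the nature of the problem entirely. In particular, after conditioning on one factor, the question becomes strong convergence of $\boldsymbol{U}^N$ against noncommutative polynomials $P_N$ whose matrix coefficients have dimension $D_N=N$ growing with $N$, and this is precisely what classical strong convergence results do not provide; closing this gap (cf.\ section~\ref{sec:subexpon}) is what the modern machinery supplies. By replacing tensor independence with free independence, your proposal eliminates exactly the source of difficulty, so the ``critical input'' you invoke is vacuous.

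There is also a second, independent gap in how nonamenability and the hypothesis $h(M:L(\mathbf{F}_r))=0$ are actually used. The reduction does not produce ``an exponentially large family of inequivalent microstates'' via a counting argument. Rather, it runs by contradiction: nonamenability of $M$ is converted, via the Haagerup--Connes spectral characterization (Theorem~\ref{thm:hc}), into a norm estimate $\|\tfrac1r\sum_i h_i\otimes\overline{h_i}\|<1$ for suitable $h_i\in M$; strong convergence of the tensor model transports this to the matrix level as a bound $\|\tfrac1r\sum_i H_i^N\otimes\overline{\tilde H_i^N}\|\le 1-\delta$ with independent microstates $H_i^N,\tilde H_i^N$. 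Meanwhile, the vanishing of the $1$-bounded entropy forces \emph{microstate collapse}: by a covering-plus-concentration-of-measure argument, the independent microstates $\boldsymbol{H}^N$ and $\boldsymbol{\tilde H}^N$ must, with high probability, lie $o(1)$-close in orbit distance, so that $H_i^N\approx V\tilde H_i^N V^*$ for a single unitary $V$. This collides with the matrix identity \eqref{eq:qexpander}, yielding the contradiction. Your proposal does not engage with either the Haagerup--Connes mechanism for injecting nonamenability or the concentration argument for exploiting $h=0$, both of which are indispensable.
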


Theorem \ref{thm:pt} follows immediately from Theorem \ref{thm:hayes} 
using the following subadditivity property of the $1$-bounded entropy 
\cite[\S 2.2]{HJK25}: 
$$
	h(M_1\vee M_2:L(\mathbf{F}_r)) \le 
	h(M_1:L(\mathbf{F}_r))+h(M_2:L(\mathbf{F}_r))
$$
whenever $M_1\cap M_2$ is diffuse, where
$M_1\vee M_2$ is the
von~Neumann algebra generated by $M_1,M_2$. Indeed, 
it follows that if $M_1\ne M_2$ are amenable and $M_1\cap M_2$ is 
diffuse then $M_1\vee M_2$ is amenable, so $M_1,M_2$ cannot
be maximal amenable.

Theorem~\ref{thm:hayes} is not stated as such in \cite{Hay22}. The key 
insight of Hayes was that the validity of Theorem \ref{thm:hayes} can be 
reduced (in a highly nontrivial fashion) to proving strong convergence of 
a certain random matrix model. This problem was outside the reach of the 
methods that were available when \cite{Hay22} was written, and thus 
Theorem~\ref{thm:hayes} was given there as a conditional statement. Hayes' 
work strongly influenced new developments on the random matrix side, and 
the requisite strong convergence has now been proved by
several approaches \cite{BC22,BC24,MdlS24,Par24,CGV25}. This has 
in turn not only completed the proofs of Theorems 
\ref{thm:pt}~and~\ref{thm:hayes}, but also led to new developments on the 
operator algebras side \cite{HJK25}.

In the remainder of this section, we aim to discuss the relevant strong 
convergence problem, and to give a hint as to how it gives rise to 
Theorem \ref{thm:hayes}.

\subsubsection{Tensor models}
\label{sec:subexpon}

Let $\boldsymbol{U}^N=(U_1^N,\ldots,U_r^N)$ be independent
Haar-distributed random unitary matrices of dimension $N$, and let
$\boldsymbol{u}=(u_1,\ldots,u_r)$ be the standard generators
of $L(\mathbf{F}_r)$ as defined in section \ref{sec:introgaps}.
That $\boldsymbol{U}^N$ strongly converges to $\boldsymbol{u}$ is
a consequence of the Haagerup-Thorbj{\o}rnsen theorem for GUE matrices, as 
was shown by Collins and Male \cite{CM14}. 
The basic question posed by Hayes is
whether strong convergence continues to hold if we consider the tensor 
product of two 
independent copies of this model. More precisely: 

\begin{qst}
Let $\boldsymbol{\tilde U}^N$ be an independent copy of 
$\boldsymbol{U}^N$.
Is it true that the
family
\begin{align*}
	(\boldsymbol{U}^N\otimes\id,~
	\id\otimes\boldsymbol{\tilde U}^N) &=
	(U_1^N\otimes\id,~\ldots,~U_r^N\otimes\id,~
	\id\otimes\tilde U_1^N,~\ldots,~\id\otimes\tilde U_r^N) \\
\intertext{of random unitaries of dimension $N^2$ converges strongly to}
	(\boldsymbol{u}\otimes\id,~\id\otimes\boldsymbol{u}) &=
	(u_1\otimes\id,~\ldots,~u_r\otimes\id,~
	\id\otimes u_1,~\ldots,~\id\otimes u_r)
\end{align*}
as $N\to\infty$?\footnote{%
Recall that we always denote by $\otimes=\otimes_{\rm min}$ 
the minimal tensor product, see section \ref{sec:scalarmtx}.}
(Alternatively,
one may replace $\boldsymbol{U}^N,\boldsymbol{\tilde U}^N$ by
independent GUE matrices and $\boldsymbol{u}$ by a free semicircular
family.)
\end{qst}
The main result of Hayes \cite[Theorem 1.1]{Hay22} states that an 
affirmative answer to this
question implies the validity of Theorem \ref{thm:hayes}. 

Because $\boldsymbol{U}^N$ and $\boldsymbol{\tilde U}^N$ are independent, 
it is natural to attempt to apply strong convergence of
each copy separately. To this end, note that for any noncommutative 
polynomial $P\in\mathbb{C}\langle x_1,\ldots,x_{4r}\rangle$, we can write
$$
	P\big(
	\boldsymbol{U}^N\otimes\id,~
	\boldsymbol{U}^{N*}\otimes\id,~
	\id\otimes\boldsymbol{\tilde U}^N,~
	\id\otimes\boldsymbol{\tilde U}^{N*}\big) =
	P_N\big(\boldsymbol{U}^N,
	\boldsymbol{U}^{N*}\big)
$$
where $P_N\in\mathrm{M}_N(\mathbb{C})\otimes\mathbb{C}\langle 
x_1,\ldots,x_{2r}\rangle$ is a noncommutative polynomial with 
matrix coefficients of dimension $N$ that depend only on 
$\boldsymbol{\tilde U}^N$.
We can now condition on $\boldsymbol{\tilde U}^N$ and think of $P_N$ as a 
determinstic polynomial with matrix coefficients. In particular,
one may hope to use strong convergence of 
$\boldsymbol{U}^N$ to $\boldsymbol{u}$ to show that
\begin{equation}
\label{eq:mtxcoeff}
	\big\|P_N\big(\boldsymbol{U}^N,
        \boldsymbol{U}^{N*}\big)\big\|
	\stackrel{?}{=}
	(1+o(1))
	\|P_N(\boldsymbol{u},\boldsymbol{u}^*)\|
\end{equation}
as $N\to\infty$. If \eqref{eq:mtxcoeff} holds, 
then the proof of strong convergence of the tensor model is readily 
completed. Indeed, we may now write
$P_N(\boldsymbol{u},\boldsymbol{u}^*) = 
Q(\boldsymbol{\tilde U}^N,\boldsymbol{\tilde U}^{N*})$
where $Q\in C^*_{\rm red}(\mathbf{F}_r)\otimes \mathbb{C}\langle
x_1,\ldots,x_{2r}\rangle$ is a polynomial with operator coefficients that
depend only on $\boldsymbol{u}$. Since $C^*_{\rm red}(\mathbf{F}_r)$ is 
exact, Lemma \ref{lem:exact} yields
$$
	\big\|Q\big(\boldsymbol{\tilde U}^N,\boldsymbol{\tilde 
	U}^{N*}\big)\big\| =
	(1+o(1))\|Q(\boldsymbol{u},\boldsymbol{u}^*)\|
$$
as $N\to\infty$. Finally, as
$$
	Q(\boldsymbol{u},\boldsymbol{u}^*) =
	P(\boldsymbol{u}\otimes\id,~
	\boldsymbol{u}^*\otimes\id,~
	\id\otimes\boldsymbol{u},~
	\id\otimes\boldsymbol{u}^*),
$$
the desired strong convergence property is established.

This argument reduces the question of strong convergence of the 
tensor product of two independent families of random unitaries to a 
question about strong convergence \eqref{eq:mtxcoeff} of a single family 
of random unitaries for polynomials with matrix coefficients. The latter 
is far from obvious, however. While norm convergence of any \emph{fixed} 
polynomial $P$ with matrix coefficients is an automatic consequence of 
strong convergence of $\boldsymbol{U}^N$ (Lemma \ref{lem:mtxstr}), here 
the polynomial $P_N\in 
\mathrm{M}_{D_N}(\mathbb{C})\otimes\mathbb{C}\langle 
x_1,\ldots,x_{2r}\rangle$ and the dimension $D_N$ of the matrix 
coefficients changes with $N$. This cannot follow from 
strong convergence alone, but may be obtained if the
proof of strong convergence provides sufficiently strong quantitative 
estimates.

The question of strong convergence of polynomials $P_N$ with matrix 
coefficients of increasing dimension $D_N$ was first raised by Pisier 
\cite{Pis14} in his study of subexponential operator spaces. Pisier noted 
that \eqref{eq:mtxcoeff} can \emph{fail} for matrix coefficients of 
dimension $D_N \ge e^{CN^2}$ (see \cite[Appendix A]{CGV25}); while a 
careful inspection of the quantitative estimates in the strong convergence 
proof of Haagerup--Thorbj{\o}rnsen yields that \eqref{eq:mtxcoeff} holds 
for matrix coefficients of dimension $D_N = o(N^{1/4})$. This leaves a 
huge gap between the upper and lower bound, and in particular excludes the 
case $D_N=N$ that is required to prove Theorem \ref{thm:hayes}.

Recent advances in strong convergence have led to a greatly improved 
understanding of this problem by means of several independent methods 
\cite{BC24,MdlS24,Par24,CGV25}, all of which suffice to complete the proof 
of Theorem \ref{thm:hayes}. The best result to date, obtained by the 
polynomial method \cite{CGV25}, is that strong convergence in the GUE and 
Haar unitary models remains valid for matrix coefficients of dimension 
$D_N=e^{o(N)}$. Let us briefly sketch how this is achieved.

The arguments that we developed in section \ref{sec:poly} for random 
permutation matrices can be applied in a very similar manner to random 
unitary matrices. In particular, one obtains
as in the proof of Proposition \ref{prop:smex}
an estimate of the form 
$$
	\bigg| 
	\mathbf{E}\big[ \ntr 
	h(P(\boldsymbol{U}^N,\boldsymbol{U}^{N*}))\,\big]
	- \nu_0(h) - \frac{\nu_1(h)}{N} \bigg|
	\le \frac{C}{N^2} \|h\|_{C^\ell[-K,K]}.
$$
Here $P$ is any noncommutative polynomial with matrix 
coefficients of
dimension $D$, $\ell$ is an absolute constant, $C$ is a constant that 
depends only on the degree of $P$, and $\nu_0,\nu_1$ are Schwartz 
distributions that are supported in 
$[-\|P(\boldsymbol{u},\boldsymbol{u}^*)\|,
\|P(\boldsymbol{u},\boldsymbol{u}^*)\|]$.
If we choose a test function $h$ that vanishes in the latter 
interval, we obtain 
$$
	\bigg| 
	\mathbf{E}\big[ \tr 
	h(P(\boldsymbol{U}^N,\boldsymbol{U}^{N*}))\,\big]
	\bigg|
	\le \frac{CD}{N} \|h\|_{C^\ell[-K,K]}
$$
as $P(\boldsymbol{U}^N,\boldsymbol{U}^{N*})$
has dimension $DN$. Repeating the proof of Theorem \ref{thm:bc} now 
yields strong convergence whenever the
right-hand side is $o(1)$, that is, for $D=o(N)$. This does not suffice 
to prove the Peterson-Thom conjecture.

The above estimate was obtained by Taylor expanding the rational 
function in the polynomial method to first order. Nothing is preventing 
us, however, from expanding to higher order $m$; then 
a very similar argument yields 
$$
	\bigg| 
	\mathbf{E}\big[ \ntr 
	h(P(\boldsymbol{U}^N,\boldsymbol{U}^{N*}))\,\big]
	- \sum_{k=0}^{m}
	\frac{\nu_k(h)}{N^k} \bigg|
	\le \frac{C(m)}{N^{m+1}} \|h\|_{C^{\ell(m)}[-K,K]}
$$
where all $\nu_i$ are Schwartz distributions. The new ingredient that
now arises is that we must show that
the support of \emph{each} $\nu_i$ is included in 
$[-\|P(\boldsymbol{u},\boldsymbol{u}^*)\|,
\|P(\boldsymbol{u},\boldsymbol{u}^*)\|]$. Surprisingly, a very 
simple technique that is developed in \cite{CGV25} (see also
\cite{Par23,Par23b}) shows that
this property follows automatically in the present setting from 
concentration of measure. This yields strong convergence 
for $D=o(N^m)$ for any $m\in\mathbb{N}$. Reaching $D=e^{o(N)}$ is
harder and requires several additional ideas.

\subsubsection{Some ideas behind the reduction}

In the remainder of this section, we aim to give a hint as to how the 
purely operator-algebraic statement of Theorem \ref{thm:hayes} is reduced 
to a strong convergence problem. Since we cannot do justice to the details 
of the argument within the scope of this survey, we must content ourselves 
with an impressionistic sketch. From now on, we fix a nonamenable $M\le 
L(\mathbf{F}_r)$ with $h(M:L(\mathbf{F}_r))=0$, and aim to prove a 
contradiction.

The starting point for the proof is the following theorem of Haagerup 
and Connes \cite[Lemma 2.2]{Haa85} that provides a spectral 
characterization of amenability.

\begin{thm}[Haagerup--Connes]
\label{thm:hc}
A tracial von~Neumann algebra $(M,\tau)$ is nonamenable if and only if
there is a nontrivial projection $q\in M$ that commutes with every element 
of $M$, and unitaries $v_1,\ldots,v_r\in M$, so that $h_i=qv_i$
satisfy
$$
	\Bigg\|\frac{1}{r}\sum_{i=1}^r h_i\otimes \overline{h_i}
	\Bigg\| < 1.
$$
Here $\bar x\in B(\bar H)$ denotes the complex conjugate of an operator 
$x\in B(H)$.\footnote{%
More concretely, if 
$x\in\mathrm{M}_N(\mathbb{C})$ is a matrix, then it conjugate $\bar x$ may 
be identified with the elementwise complex conjugate of $x$; while if $x$ is 
a polynomial $P(\boldsymbol{u},\boldsymbol{u}^*)$ in the standard 
generators $u_i=\lambda(g_i)$ of 
$L(\mathbf{F}_r)$, then its conjugate $\bar x$ may be identified with 
the polynomial $\bar P(\boldsymbol{u},\boldsymbol{u}^*)$
where the coefficients of $\bar P$ 
are the complex conjugates of the coefficients of $P$.}
\end{thm}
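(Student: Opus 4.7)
The plan is to identify $H \otimes \bar H$ with the Hilbert-Schmidt operators $S_2(H)$ via $\xi \otimes \bar\eta \mapsto |\xi\rangle\langle\eta|$, so that $\sum_{i=1}^r h_i \otimes \bar{h_i}$ acts on $T \in S_2(H)$ as the completely positive Schur map $\Phi(T) := \sum_{i=1}^r h_i T h_i^*$. Because $q$ is central and $v_i$ is unitary, we have $h_i^* h_i = v_i^* q v_i = qv_i^* v_i = q$ and similarly $h_i h_i^* = q$, so $\sum_i h_i^* h_i = \sum_i h_i h_i^* = rq$ and each $\|h_i\| = 1$. The trivial bound $\|\tfrac{1}{r}\sum h_i \otimes \bar h_i\| \le 1$ is then automatic, and the content of the theorem is the dichotomy between this norm being exactly $1$ (the amenable case) and strictly less than $1$ (the nonamenable case).

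For the easy direction, ``$M$ amenable $\Rightarrow$ the norm equals $1$ for every such choice'', I would invoke Connes's characterization of amenability: there exists a net of positive unit vectors $\zeta_n \in L^2(M)_+$ satisfying the F\o lner-type condition $\|x\zeta_n - \zeta_n x\|_2 \to 0$ for every $x \in M$. Consider the Hilbert-Schmidt operators $T_n := q|\zeta_n\rangle\langle\zeta_n| q / \|q\zeta_n\|_2^2$. Using $h_i q = qv_i q = qv_i = h_i$ together with the relation $v_i \zeta_n \approx \zeta_n v_i$, one checks that each summand $h_i|\zeta_n\rangle\langle\zeta_n|h_i^*$ in $\Phi(T_n)$ is close in Hilbert-Schmidt norm to $|\zeta_n\rangle\langle\zeta_n|$, so that $\|\Phi(T_n) - rT_n\|_{\mathrm{HS}} = o(\|T_n\|_{\mathrm{HS}})$. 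This exhibits approximate eigenvectors of $\Phi$ with eigenvalue $r$, forcing $\|\Phi\| = r$ and thus $\|\tfrac{1}{r}\sum h_i \otimes \bar h_i\| = 1$.

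For the harder converse, ``norm equal to $1$ for every such choice $\Rightarrow M$ amenable'', I would argue by constructing a hypertrace. For each finite $F \subset U(M)$, applying the hypothesis with $q = \mathbf{1}$ and $\{v_i\} = F$ produces unit vectors $T_{F,n} \in S_2(H)$ with $\|\tfrac{1}{|F|}\sum_{v\in F} v T_{F,n} v^*\|_{\mathrm{HS}} \to 1$. By uniform convexity of the Hilbert space $S_2(H)$, this forces $\|v T_{F,n} v^* - T_{F,n}\|_{\mathrm{HS}} \to 0$ for each individual $v \in F$. Passing to $|T_{F,n}|$, the positive part of its polar decomposition, yields positive Hilbert-Schmidt operators whose associated vector states $\omega_{F,n}(\,\cdot\,) = \langle (\cdot)\,|T_{F,n}|, |T_{F,n}|\rangle_{\mathrm{HS}}$ on $B(H)$ restrict to $\tau$ on $M$ and are asymptotically $F$-central; a weak-$*$ compactness and diagonal argument over the directed family of finite subsets $F \subset U(M)$ extracts a cluster point which is a hypertrace, certifying amenability of $M$.

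The main obstacle is this converse direction, which has two delicate aspects. First, the uniform convexity step that promotes invariance in average to invariance per unitary must be quantitative enough to survive a double limit over both $n$ and $F$. Second, the role of allowing a nontrivial central projection $q$ in the theorem statement becomes essential when $M$ is not a factor: nonamenability may be supported on a specific central summand, so the equivalence with a witness of the form $h_i = qv_i$ requires a direct-integral reduction that isolates this summand via $q$. Handling this reduction cleanly while preserving the norm estimate is where the technical effort concentrates.
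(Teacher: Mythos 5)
The paper itself does not prove this theorem---it is quoted from Haagerup \cite[Lemma 2.2]{Haa85}---so your proposal has to stand on its own, and it has two genuine gaps. The first is in the direction ``amenable $\Rightarrow$ norm $=1$''. The ``characterization of amenability'' you invoke---a net of unit vectors $\zeta_n\in L^2(M)$ with $\|x\zeta_n-\zeta_n x\|_2\to 0$ for all $x\in M$---is not a characterization at all: the trace vector $\hat 1$ satisfies $x\hat 1=\hat 1 x$ exactly, for \emph{every} tracial $(M,\tau)$. So your input is available for any $M$, and if the rest of the argument were correct it would show $\|\frac1r\sum_i h_i\otimes\overline{h_i}\|=1$ always, contradicting the theorem itself (and the known free-group computation $\|\frac1r\sum_i u_i\otimes\overline{u_i}\|<1$). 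Indeed the key step fails: $h_i|\zeta\rangle\langle\zeta|h_i^*=|h_i\zeta\rangle\langle h_i\zeta|$, and $h_i\zeta_n\approx q\zeta_n v_i$ spans a line that is in general far from $\mathbb{C}\,q\zeta_n$ (take $\zeta_n=\hat 1$, $q=\id$, $h_i=u_i$ the free generators: $\hat u_i\perp\hat 1$, so the difference of the two rank-one projections has Hilbert--Schmidt norm $\sqrt2$). What amenability actually provides (Connes) is almost-central unit vectors in the \emph{coarse} bimodule $S_2(L^2(M))\simeq L^2(M)\otimes\overline{L^2(M)}$ whose vector states converge to $\tau$---equivalently, semidiscreteness, so that $x\otimes\bar y\mapsto xJyJ$ is $\min$-continuous and one tests $\sum_i h_iJh_iJ$ against $\hat q$ to get the lower bound $r$. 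That is the missing idea; it cannot be manufactured from rank-one operators built out of $L^2(M)$-vectors, whose almost-centrality is automatic and carries no information about $M$.

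In the converse direction your skeleton (uniform convexity to upgrade norm-attainment of the average to $\|vT_{F,n}v^*-T_{F,n}\|_{\mathrm{HS}}\to0$, Powers--St{\o}rmer to pass to $|T_{F,n}|$, weak-$*$ limit to get a central state) is the standard and correct route for a II$_1$ factor, where $\varphi|_M=\tau$ then comes for free from the Dixmier property. But as written you apply the hypothesis only with $q=\id$, and the statement you are then proving is false for non-factors: for $M=A\oplus B$ with $A$ amenable and $B$ nonamenable, every $\frac1r\sum_i v_i\otimes\overline{v_i}$ has norm $1$ (witnessed by Hilbert--Schmidt operators supported in the $A$-corner), yet $M$ is nonamenable. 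Correspondingly, your states $\omega_{F,n}$ need not restrict to anything resembling $\tau$---they may concentrate entirely on an amenable central summand---and an $M$-central state on $B(H)$ without control of its restriction to the center does not certify amenability. So the central projections $q$ must enter the converse in an essential way (e.g.\ run the whole argument inside $qM$ for each nontrivial central $q$, showing every central summand is amenable); this is not a deferrable ``direct-integral technicality'' but exactly the non-factor content of the theorem, and it is why the witness in the statement has the form $h_i=qv_i$ rather than $h_i=v_i$.
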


The above spectral property is very much false for matrices:
if $H_i=QV_i$ where 
$V_1,\ldots,V_r$ are unitary matrices and $Q$ is nontrivial projection
that commutes with them, and we define the unit norm vector
$z = (\tr Q)^{-1/2}\sum_{k,l} Q_{kl} \,e_k\otimes e_l$, then
\begin{equation}
\label{eq:qexpander}
	\Bigg\langle z,
	\Bigg(
	\frac{1}{r}\sum_{i=1}^r H_i\otimes \overline{H_i}
	\Bigg)
	z\bigg\rangle =
	\frac{1}{r}\sum_{i=1}^r 
	\frac{\tr QH_iQH_i^*}{\tr Q} = 1.
\end{equation}
Of course, this just shows that $\mathrm{M}_N(\mathbb{C})$ is amenable.

Since we assumed that $M$ is nonamenable, we can choose $h_1,\ldots,h_r\in 
M$ as in Theorem \ref{thm:hc}. To simplify the discussion, let us suppose 
that $h_i=P_i(\boldsymbol{u},\boldsymbol{u}^*)$ are polynomials of the 
standard generators $\boldsymbol{u}$ of $L(\mathbf{F}_r)$: this clearly 
need \emph{not} be true in general, and we will return to this issue at 
the end of this section. Let
$$
	H_i^N = P_i(\boldsymbol{U}^N,\boldsymbol{U}^{N*}),\qquad\quad
	\tilde H_i^N = P_i(\boldsymbol{\tilde U}^N,\boldsymbol{\tilde U}^{N*}).
$$
Then strong convergence of $(\boldsymbol{U}^N\otimes\id,~\id\otimes
\boldsymbol{\tilde U}^N)$ implies that there exists $\delta>0$ with
\begin{equation}
\label{eq:hayesnona}
	\Bigg\|\frac{1}{r}\sum_{i=1}^r
	H_i^N\otimes \overline{\tilde H_i^N}
	\Bigg\| \le 1-\delta
\end{equation}
with probability $1-o(1)$ as $N\to\infty$. The crux of the proof is now to 
show that $h(M:L(\mathbf{F}_r))=0$ implies 
``microstate collapse'': with 
high probability, there is a unitary matrix $V$ so that $H_i^N\approx 
V\tilde H_i^NV^*$ for all $i$. Thus \eqref{eq:hayesnona} contradicts 
\eqref{eq:qexpander}, and we have achieved the desired conclusion.

We now aim to explain the origin of microstates collapse
without giving a precise definition of $h(M:L(\mathbf{F}_r))$. 
Roughly speaking, $h(M:L(\mathbf{F}_r))$ measures the growth rate
as $N\to\infty$ of the metric entropy with respect to the metric
$$
	d^{\mathrm{orb}}(\boldsymbol{A}^N,\boldsymbol{B}^N) =
	\inf_{V\in U(N)}
	\Bigg(
	\sum_{i=1}^r \ntr |A_i^N - VB_i^N V^*|^2
	\Bigg)^{1/2}
$$
of the set of families $\boldsymbol{A}^N=(A_1^N,\ldots,A_r^N)$ of 
$N$-dimensional matrices whose law lies in a weak$^*$ neighborhood 
of the law of $\boldsymbol{h}=(h_1,\ldots,h_r)$ (recall that 
the notion of a law was defined in the proof of Lemma 
\ref{lem:upperlower}; in particular, weak$^*$ convergence of laws is 
equivalent to weak convergence of matrices). As 
$\boldsymbol{H}^N$
converges weakly to $\boldsymbol{h}$, the following is essentially
a consequence of the definition: if
$h(M:L(\mathbf{F}_r))=0$, then for all $N$ 
sufficiently large, there 
is a set $\Omega^N \subset (\mathrm{M}_N(\mathbb{C}))^{r}$
so that 
$$
	\mathbf{P}\big[\boldsymbol{H}^N \in \Omega^N\big]=1-o(1)
$$
and $\Omega^N$ can be covered by 
$e^{o(N^2)}$ balls of radius $o(1)$ in the metric $d^{\rm orb}$.
In particular, this implies that  at least one of these balls must have 
probability greater than $e^{-o(N^2)}$; in other words, 
there exist \emph{nonrandom} $\boldsymbol{A}^N$ so that
$$
	\mathbf{P}\big[d^{\rm orb}(\boldsymbol{H}^N,\boldsymbol{A}^N)
	= o(1)\big] \ge e^{-o(N^2)}.
$$
We now conclude by a beautiful application of the concentration of measure 
phenomenon \cite{Led01}, which states in the present context that for 
\emph{any} set $\Omega$ such that
$\mathbf{P}[\boldsymbol{H}^N\in\Omega]\ge e^{-C\varepsilon^2 N^2}$, 
taking
an $\varepsilon$-neighborhood $\Omega_\varepsilon$ of $\Omega$ with 
respect to the metric $d^{\rm orb}$ yields
$\mathbf{P}[\boldsymbol{H}^N\in\Omega_\varepsilon]\ge 
1-e^{-C\varepsilon^2 N^2}$. Thus we finally obtain
$$
	\mathbf{P}\big[d^{\rm orb}(\boldsymbol{H}^N,\boldsymbol{A}^N)
	= o(1)\big] =1-o(1).
$$
Since $\boldsymbol{\tilde H}^N$ is an independent copy of
$\boldsymbol{H}^N$ and thus satisfies the same property,
it follows that 
$d^{\rm orb}(\boldsymbol{H}^N,\boldsymbol{\tilde H}^N)=o(1)$
with probability $1-o(1)$.

While we have overlooked many details in the above sketch of the proof, we 
made one simplification that is especially problematic: we assumed that 
$h_i$ are polynomials of the standard generators $\boldsymbol{u}$. In 
general, however, all we know is that $h_i$ can be approximated 
by such polynomials in the strong operator topology. This does not suffice 
for our purposes, since such an approximation need not preserve the 
conclusion of Theorem \ref{thm:hc} on the \emph{norm} of (tensor products 
of) $h_i$. Indeed, from a broader perspective, it seems surprising that 
strong convergence has anything meaningful to say about the von~Neumann 
algebra $L(\mathbf{F}_r)$: strong convergence is a statement about norms 
of polynomials, so it would appear that it should not provide any 
meaningful information on objects that live outside the norm-closure 
$C^*_{\rm red}(\mathbf{F}_r)$ of the set of polynomials of the standard 
generators.

This issue is surmounted in \cite{Hay22} by using that any given 
$h_1,\ldots,h_r\in L(\mathbf{F}_r)$ can be approximated by 
$\varphi_k(h_1),\ldots,\varphi_k(h_r)\in C^*_{\rm red}(\mathbf{F}_r)$ in a 
special way: not only do $\varphi_k(h_i)\to h_i$ in the strong
operator topology, but in addition $\varphi_k$ are contractive completely 
positive maps (this uses exactness of $C^*_{\rm red}(\mathbf{F}_r)$). 
Consequently, even though the approximation does not preserve the norm, it 
preserves the \emph{upper bound} on the norm that appears in Theorem 
\ref{thm:hc}. Since only the upper bound is needed in the proof, this
suffices to make the rest of the argument work.

\subsection{Minimal surfaces}
\label{sec:minsurf}

We finally discuss yet another unexpected application of strong 
convergence to the theory of minimal surfaces.

An immersed surface $X$ in a Riemannian manifold $M$ is called a 
\emph{minimal surface} if it is a critical point (or, what is 
equivalent in this case, a local minimizer) of the area under compact 
perturbations; think of a soap film. Minimal surfaces have fascinated 
mathematicians since the 18th century and are a major research topic in 
geometric analysis; see \cite{MJ11,CM11} for an introduction.

We will use a slightly more general notion of a minimal surface that need 
only be immersed outside a set of isolated branch points (at which the 
surface can self-intersect locally), cf.\ \cite{GOR73}. These objects, 
called \emph{branched} minimal surfaces, arise naturally when taking 
limits of immersed minimal surfaces. For simplicity we will take ``minimal 
surface'' to mean a branched minimal surface.

A basic question one may ask is how the geometry of a minimal surface is 
constrained by that of the manifold it sits in. For example, a question in 
this spirit is: can an $N$-dimensional sphere---a manifold with constant 
\emph{positive} curvature---contain a minimal surface that has constant 
\emph{negative} curvature? It was shown by Bryant~\cite{Bry85} that the 
answer is no.
\iffalse
\footnote{%
However, whether there exist closed minimal surfaces
in Euclidean spheres that have variable negative curvature remains open; 
see \cite[Problem Section, \S 101]{Yau82}.}
\fi
Thus the following result of Song~\cite{Son24}, 
which shows 
the answer is ``almost'' yes in high dimension, appears rather 
surprising.

\begin{thm}[Song]
\label{thm:song}
There exist closed minimal surfaces $X_j$ in
Euclidean unit spheres $\mathbb{S}^{N_j}$ so that the Gaussian curvature 
$K_j$ 
of $X_j$ satisfies
$$
	\lim_{j\to\infty}\frac{1}{\mathrm{Area}(X_j)}\int_{X_j}
	|K_j+8| = 0.
$$
\end{thm}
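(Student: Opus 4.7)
The plan is to realize each $X_j$ as the image of a branched minimal immersion $\Phi_j : Y_j \to \mathbb{S}^{N_j}$, where $Y_j$ is a closed hyperbolic surface whose Laplacian has a growing collection of eigenvalues concentrated near $\lambda_\ast = \tfrac14 = \lambda_0(\Delta_{\mathbb{H}})$, and where $\Phi_j$ is built from the corresponding eigenfunctions. The numerology is dictated by Takahashi's theorem: if $\phi_1,\dots,\phi_{m}$ are $L^2$-orthonormal eigenfunctions of $\Delta_Y$ with common eigenvalue $\lambda$ on a hyperbolic surface $Y$, and if $\sum_i \phi_i(y)^2$ is constant, then the map $\Phi = c(\phi_1,\dots,\phi_m)$ (with $c$ chosen so $|\Phi|\equiv 1$) is a minimal immersion into $\mathbb{S}^{m-1}$; the induced metric is a constant multiple of $g_Y$, and the induced Gaussian curvature is $-2/\lambda$, which equals $-8$ exactly when $\lambda = \tfrac14$. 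This couples the curvature to the spectral gap of the Laplacian on $Y$.

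To produce such $Y_j$, I would invoke the Hide--Magee resolution of Buser's conjecture from section~\ref{sec:buser}, in the slightly strengthened form that strong convergence of random permutation matrices $U_i^N|_{1^\perp}$ to the standard generators of $C^*_{\rm red}(\mathbf{F}_r)$ implies operator-norm convergence of $\mathbf{1}_{[0,\tfrac14+\varepsilon]}(\Delta_{Y_j}^{\rm new})$ to the corresponding spectral projector of $\Delta_{\mathbb{H}}$ after a suitable smoothing (this follows by the same polynomial-in-$U_i^N|_{1^\perp}$ encoding of the heat operator used in section~\ref{sec:buser}, applied to a smooth bump function supported in a window around $\tfrac14$). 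Picking $\varepsilon_j \downarrow 0$ slowly relative to the degree $N_j$ of the cover, one obtains with high probability an orthonormal family $\phi_1^{(j)},\dots,\phi_{m_j}^{(j)}$ of Laplacian eigenfunctions on $Y_j$ whose eigenvalues all lie in $[\tfrac14,\tfrac14+\varepsilon_j]$ and whose number $m_j$ diverges like $\mathrm{Area}(Y_j) \cdot \rho_{\mathbb{H}}([\tfrac14,\tfrac14+\varepsilon_j])$.

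With this in hand, define $\Phi_j(y) = \sqrt{\mathrm{Area}(Y_j)/m_j}\,(\phi_1^{(j)}(y),\dots,\phi_{m_j}^{(j)}(y)) \in \mathbb{R}^{m_j}$, so that $\int_{Y_j} |\Phi_j|^2 = \mathrm{Area}(Y_j)$ and the average of $|\Phi_j|^2$ is $1$. If $|\Phi_j|^2$ were identically $1$, then Takahashi's theorem applied to each near-eigenvalue (with $O(\varepsilon_j)$ errors arising because the $\phi_i^{(j)}$ do not share a single eigenvalue) would give a branched minimal immersion of $Y_j$ into $\mathbb{S}^{m_j - 1}$ whose induced Gaussian curvature $K_j$ differs from $-8$ only by $O(\varepsilon_j)$ in $L^\infty$, producing the desired $X_j$.

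The main obstacle is to handle the failure of isotropy: in general $|\Phi_j|^2$ is not identically $1$, only equal to $1$ on average, so we must prove the quantum ergodicity type bound $\||\Phi_j|^2 - 1\|_{L^1(Y_j)} = o(1)$ and then correct $\Phi_j$ by radial projection onto the unit sphere. The key point is that $\||\Phi_j|^2 - 1\|_{L^2(Y_j)}^2$ unfolds into a sum of integrals $\int \phi_i^2 \phi_k^2$ which, via the covering description of $Y_j$ and the identification of Laplacian eigenfunctions of $Y_j$ as matrix coefficients of the random representation $\pi_{N_j} \in \mathrm{Hom}(\Gamma, \mathbf{S}_{N_j})$, can be encoded as traces of polynomials in the random permutations $U_i^N|_{1^\perp}$. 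Strong convergence then identifies these asymptotically with their counterparts in $C^*_{\rm red}(\Gamma)$, which by $\Gamma$-equivariance on $\mathbb{H}$ are the equidistributed values, yielding the required $L^1$ bound. Combined with the Takahashi calculation above and a radial projection, this gives $\int_{X_j} |K_j + 8| = o(\mathrm{Area}(X_j))$.
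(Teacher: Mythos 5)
There is a genuine gap, and it is structural rather than technical. The theorem asserts the existence of \emph{exact} closed minimal surfaces, but your construction only ever produces approximately minimal objects. Your $\Phi_j$ is assembled from near-eigenfunctions whose eigenvalues merely lie in a window $[\tfrac14,\tfrac14+\varepsilon_j]$, so the map is only approximately harmonic to begin with, and the final radial projection that enforces $|\Phi_j|\equiv 1$ destroys harmonicity altogether; nothing in the proposal upgrades an almost-minimal map to a genuine branched minimal immersion, and no perturbation or variational mechanism is offered that could do so. Moreover, the version of Takahashi's theorem you invoke is too strong: orthonormal eigenfunctions with a common eigenvalue and $\sum_i\phi_i^2$ constant give a \emph{harmonic} map into the sphere, but to get a (branched) minimal immersion with induced metric a constant multiple of $g_{Y}$ you additionally need weak conformality, i.e.\ $\sum_i d\phi_i\otimes d\phi_i$ proportional to $g_Y$ (vanishing Hopf differential). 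On a high-genus hyperbolic surface with a randomly produced eigenvalue cluster there is no symmetry forcing this isotropy, and your quantum-ergodicity step only controls $\||\Phi_j|^2-1\|_{L^1}$, not the conformality of the differential. This is exactly why Song's construction, sketched in section \ref{sec:minsurf}, takes the domain to be the thrice-punctured sphere: its unique conformal class makes every equivariant energy minimizer automatically weakly conformal, hence exactly minimal, and closedness of the image surface is then a separate nontrivial property of harmonic maps which your (noncompact Hide--Magee) covers also leave unaddressed.

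For comparison, in the paper strong convergence enters not through spectral projectors or eigenvalue windows but through weak equivalence of representations: one minimizes the Dirichlet energy among $\pi_N$-equivariant maps into $\mathbb{S}^{2N-1}$, so each $f_N(X^N)$ is an exact minimal surface by construction; one then passes to a limiting $\pi_\infty$-equivariant map into $\mathbb{S}^\infty$, and the rigidity statement of Theorem \ref{thm:songkey} (weak equivalence to $\lambda_\Gamma$ forces $\varphi^*g_{\mathbb{S}^\infty}=\tfrac18 g_{\rm hyp}$) combined with Proposition \ref{prop:weakequiv} is precisely where Theorem \ref{thm:bc} is used. Your numerological intuition that eigenvalue $\tfrac14$ corresponds to curvature $-8$ is consistent with this limiting picture, but in the actual proof it is the \emph{interpretation} of the rigid limit, not the construction; making it the construction, as you propose, is where the argument fails.
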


\smallskip

The minimal surfaces in this theorem arise from a random construction: one 
finds, by a variational argument, a sequence of minimal surfaces in 
finite-dimensional spheres that are symmetric under the action of a set of 
random rotations. Strong convergence is applied in the analysis in a 
non-obvious manner to understand the limiting behavior of these surfaces.

In the remainder of this section, we aim to give an impressionistic sketch 
of some of the ingredients of the proof of Theorem \ref{thm:song}. Our 
primary aim is to give a hint of the role that strong convergence plays in 
the proof.

\subsubsection{Harmonic maps}

We must first recall the connection between minimal surfaces 
and harmonic maps. If $f:X\to M$ is a map from a Riemann surface $X$ to a 
Riemannian manifold $M$, its Dirichlet energy is defined by
$$
	\mathrm{E}(f) = \frac{1}{2}\int_X |df|^2.
$$
A critical point of the energy is called a \emph{harmonic map}. If $f$ is 
weakly conformal (i.e., conformal away from branch points), then 
$\mathrm{E}(f)$ 
coincides with the area of the surface $f(X)$ in $M$. Thus a weakly 
conformal map $f$ is harmonic if and only if $f(X)$ is a minimal surface 
in $M$. See, e.g., \cite[\S 4.2.1]{Moo17}.

This viewpoint yields a variational method for constructing minimal 
surfaces. Clearly any minimizer of the energy is, by definition, a 
harmonic map. In general, such a map is not guaranteed to be weakly 
conformal. However, this will be the case if we take $X$ to be a surface 
with a unique conformal class---the thrice punctured sphere---and then a 
minimizer of $\mathrm{E}(f)$ automatically defines a minimal surface 
$f(X)$ in $M$. We will make this choice of $X$ from now on.\footnote{%
More generally, one obtains minimal surfaces by minimizing the energy both 
with respect to the map $f$ and with respect to the conformal class of 
$X$; see \cite[Theorem 4.8.6]{Moo17}.}

The construction in \cite{Son24} uses a variant of the variational 
method which produces minimal surfaces that have many 
symmetries. Let us write $X=\Gamma\backslash\mathbb{H}$, and 
consider a unitary representation $\pi_N:\Gamma\to U(N)$ with
finite range $|\pi_N(\Gamma)|<\infty$ which we view as acting on the unit 
sphere $\mathbb{S}^{2N-1}$ of $\mathbb{C}^N$ with its standard Euclidean 
metric. The following variational problem is considered in \cite{Son24}:
$$
	\mathrm{E}(X,\pi_N) = 
	\inf\bigg\{
	\frac{1}{2}\int_F |df|^2 ; ~
	f:\mathbb{H}\to\mathbb{S}^{2N-1}\text{ is }
	\pi_N\text{-equivariant}\bigg\},
$$
where $F$ is the fundamental domain
of the action of $\Gamma$ on $\mathbb{H}$.
To interpret this variational problem, note that a $\pi_N$-equivariant 
map $f:\mathbb{H}\to\mathbb{S}^{2N-1}$ can be identified with a map
$f:X^N\to\mathbb{S}^{2N-1}$ on the surface 
$X^N=\Gamma_N\backslash\mathbb{H}$, where\footnote{%
As $\pi_N$ has finite range, $\Gamma_N$ is a finite index subgroup of 
$\Gamma$ and thus $X^N$ is a finite cover of $X$.
This construction of covering spaces is different than the
one considered in section \ref{sec:buser}.}
$$
	\Gamma_N=\ker\pi_N = \{\gamma\in\Gamma:\pi_N(\gamma)=1\}.
$$
Since a minimizer $f_N$ in $\mathrm{E}(X,\pi_N)$ minimizes the Dirichlet
energy, it defines\footnote{%
Even though $X^N$ has punctures, taking the
closure of $f_N(X^N)$ yields a closed surface.
This is a nontrivial property of harmonic maps, see
\cite[\S 4.6.4]{Moo17}.}
a minimal surface $f_N(X^N)$ in
$\mathbb{S}^{2N-1}$ that has many symmetries (it contains many
rotated copies of the image $f_N(F)$ of the fundamental 
domain).

Once a minimizer $f_N$ has been chosen for every $N$, we can take 
$N\to\infty$ to obtain a limiting object. Indeed, if we embed each 
$\mathbb{S}^{2N-1}$ in the unit sphere $\mathbb{S}^\infty$ of an 
infinite-dimensional Hilbert space $H$, we can view all 
$f_N:\mathbb{H}\to\mathbb{S}^\infty$ on the same footing. Then the 
properties of harmonic maps furnish enough compactness to ensure that 
$f_N$ converges along a subsequence to a limiting map 
$f_\infty:\mathbb{H}\to\mathbb{S}^\infty$, which is 
$\pi_\infty$-equivariant for some unitary representation 
$\pi_\infty:\Gamma\to B(H)$.

\subsubsection{An infinite-dimensional model}

So far, it is not at all clear why choosing our energy-minimizing maps to 
have many symmetries helps our cause. The reason is that certain 
equivariant maps into the infinite-dimensional sphere $\mathbb{S}^\infty$ 
turn out to have remarkable properties, which will make it possible to 
realize them as the limit $f_\infty$ of the finite-dimensional minimal 
surfaces constructed above.

Recall that minimal surfaces in the spheres $\mathbb{S}^{2N-1}$ cannot 
have constant negative curvature. The situation is very different, 
however, in infinite dimension: one can isometrically embed the hyperbolic 
plane $\mathbb{H}$ in the Hilbert sphere $\mathbb{S}^\infty$ by means of 
an energy-minimizing map. What is more surprising is that this phenomenon 
is very rigid: any energy-minimizing map 
$\varphi:\mathbb{H}\to\mathbb{S}^\infty$ that is equivariant with respect 
to a certain class of representations is necessarily an isometry.

More precisely, we have the following \cite[Corollary 2.4]{Son24}. Here 
two unitary representations $\rho_1:\Gamma\to B(H_1)$ and 
$\rho_2:\Gamma\to B(H_2)$ are said to be \emph{weakly equivalent} if any 
matrix element of $\rho_1$ can be approximated uniformly on compacts by 
finite linear combinations of matrix elements of $\rho_2$, and vice versa.

\begin{thm}
\label{thm:songkey}
Let $\rho:\Gamma\to B(H)$ be a unitary representation of $\Gamma$ that is 
weakly equivalent to the regular representation $\lambda_\Gamma$.
Then any $\rho$-equivariant energy-minimizing map
$\varphi:\mathbb{H}\to\mathbb{S}^\infty$ must satisfy
$\varphi^*g_{\mathbb{S}^\infty} = \frac{1}{8}g_{\rm hyp}$, where $g_{\rm 
hyp}$ denotes the hyperbolic metric on $\mathbb{H}$
(so $\frac{1}{8}g_{\rm hyp}$ is the metric on 
$\mathbb{H}$ with constant curvature $-8$).
\end{thm}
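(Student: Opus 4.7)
The plan is to derive conformal rigidity from a sharp spectral inequality, where weak equivalence of $\rho$ to $\lambda_\Gamma$ makes a Rayleigh bound tight, and then to extract pointwise structure from saturation of the bound.

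First, I would set up the variational identities. Energy-minimality makes $\varphi$ harmonic, so as a sphere-valued map it satisfies $-\Delta_{\rm hyp}\varphi = |d\varphi|^2\varphi$ componentwise on the Hilbert target. The pullback $\varphi^*g_{\mathbb{S}^\infty}$ is $\Gamma$-invariant (using $\rho$-equivariance together with unitary invariance of $g_{\mathbb{S}^\infty}$), hence descends to $X = \Gamma\backslash\mathbb{H}$, and I would decompose it as $e(\varphi)\,g_{\rm hyp} + h$ with $h$ traceless. Pairing the harmonic equation with $\varphi$ and integrating on a fundamental domain $F$ gives $2E(\varphi) = \int_F |d\varphi|^2\,dA_{\rm hyp}$. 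Weak equivalence of $\rho$ to $\lambda_\Gamma$ implies that the $\rho$-equivariant Hilbert-valued Laplacian on $\mathbb{H}$ has the same $L^2$-spectrum as $-\Delta_{\mathbb{H}}$ on $L^2(\mathbb{H})$, whose infimum is $\tfrac14$ (the standard bottom of the spectrum of the hyperbolic plane). Applied to $\varphi$, for which the sphere constraint gives $\int_F \|\varphi\|^2 = \mathrm{Area}(F)$, the Rayleigh inequality yields $E(\varphi) \ge \mathrm{Area}(F)/8$. For the matching upper bound, I would transfer approximate ground states of $-\Delta_{\mathbb{H}}$ through the weak equivalence to construct $\rho$-equivariant sphere-valued comparison maps with energies converging to $\mathrm{Area}(F)/8$; energy-minimality of $\varphi$ then forces equality.

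Finally, saturation of the Rayleigh bound forces $\varphi$ to be a generalized ground state satisfying $-\Delta_{\rm hyp}\varphi = \tfrac14\varphi$; comparing with the harmonic equation gives $|d\varphi|^2 \equiv \tfrac14$, equivalently $e(\varphi) \equiv \tfrac18$ pointwise, so the trace of $\varphi^*g_{\mathbb{S}^\infty}$ with respect to $g_{\rm hyp}$ is the constant $\tfrac14$. To kill the traceless part $h$, I would use that the Hopf differential $\Phi = \langle \partial\varphi, \partial\varphi\rangle\,dz^2$ is holomorphic on $\mathbb{H}$ (by harmonicity into the sphere combined with $\langle \partial\varphi, \varphi\rangle = 0$) and $\Gamma$-invariant, hence descends to a holomorphic quadratic differential on $X$; combining the Bochner identity $\Delta e = |\nabla d\varphi|^2 - 2e - 2e^2 + |h|^2$ with the constancy of $e$, together with a second-variation argument (a compactly supported equivariant normal perturbation on the region where $|h|>0$ would strictly decrease the energy), forces $h \equiv 0$, yielding $\varphi^* g_{\mathbb{S}^\infty} = \tfrac18\, g_{\rm hyp}$.

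The main obstacle I anticipate is the tightness assertion (the matching upper bound $E(\varphi) \le \mathrm{Area}(F)/8$): the sphere constraint $\|\varphi\| = 1$ is nonlinear, and transferring Hilbert-valued near-ground states of $-\Delta_{\mathbb{H}}$ to $\rho$-equivariant sphere-valued test maps requires genuine use of weak equivalence to $\lambda_\Gamma$, not merely a spectral containment; this is where the hypothesis truly enters. The Hopf-differential killing step is also delicate, and is where the infinite-dimensional target and the specific structure of the domain become essential.
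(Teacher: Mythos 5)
Your proposal should first be measured against what the survey actually provides: Theorem \ref{thm:songkey} is quoted from \cite[Corollary~2.4]{Son24} and is not proved here; the only indication given is that one first exhibits a single pair $(\rho,\varphi)$ satisfying the conclusion by an explicit construction, and then uses weak equivalence to transfer the conclusion to all $\rho$ as in the statement. Measured against that outline, your argument has a genuine gap exactly where you locate the main difficulty: the matching upper bound. Weak equivalence of $\rho$ and $\lambda_\Gamma$ is, by Proposition \ref{prop:weakequiv}, an equality of norms of polynomials in the generators, equivalently an approximation statement for matrix coefficients; it supplies no intertwiner or map of any kind between $L^2(\mathbb{H})$ and $H$, so there is nothing along which an ``approximate ground state'' can be transferred. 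Even granting an $L^2$ section of the $\rho$-twisted flat bundle with Rayleigh quotient near $\tfrac14$, converting it into a \emph{unit-sphere-valued} equivariant competitor of comparable energy is not routine: the pointwise normalization $\psi/\|\psi\|$ loses all energy control where $\|\psi\|$ is small. Note also that for $\rho=\lambda_\Gamma$ itself the twisted Laplacian is $\Delta_{\mathbb{H}}$ on $L^2(\mathbb{H})$, whose bottom of spectrum $\tfrac14$ is not an eigenvalue, so the saturation you invoke can only occur for special representatives of the weak equivalence class; the identity $E(\varphi)=\mathrm{Area}(F)/8$ for the given $\rho$ cannot be obtained abstractly. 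What makes the theorem work is an explicit extremal object --- for instance the Poisson-kernel map $x\mapsto P(x,\cdot)^{1/2}\in\mathbb{S}(L^2(\partial\mathbb{H},\nu_o))$, which is equivariant for a boundary representation weakly contained in $\lambda_\Gamma$, satisfies the eigenfunction equation at $\tfrac14$, and pulls back the spherical metric to exactly $\tfrac18 g_{\rm hyp}$ --- together with a transfer to general $\rho$ carried out at the level of the $\Gamma$-invariant positive-definite kernels $k(x,y)=\langle\varphi(x),\varphi(y)\rangle$, which are the objects weak containment actually lets you approximate (with extra work, since the energy is a second-order quantity of $k$ at the diagonal). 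Your sketch replaces this by a transfer of vectors, which weak equivalence cannot perform.

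The endgame also needs repair. Granting $E(\varphi)=\mathrm{Area}(F)/8$, your deduction that $|d\varphi|^2\equiv\tfrac14$ (equality in the Rayleigh bound forces the eigensection equation, which combined with $-\Delta\varphi=|d\varphi|^2\varphi$ gives constancy) is fine and identifies the trace of the pullback; but your mechanism for killing the traceless part $h$ is not a proof: the compactly supported equivariant perturbation on $\{|h|>0\}$ that ``strictly decreases the energy'' is asserted rather than constructed, and the Bochner identity does not by itself force $h\equiv 0$. The effective mechanism --- and the reason the thrice-punctured sphere is chosen in this application, as the survey stresses when introducing the variational method --- is that the Hopf differential of a finite-energy harmonic map is an \emph{integrable} holomorphic quadratic differential on $X=\Gamma\backslash\mathbb{H}$, and for genus $g=0$ with $n=3$ cusps the space of such differentials has dimension $3g-3+n=0$; hence $\Phi\equiv0$, the map is weakly conformal, and $\varphi^*g_{\mathbb{S}^\infty}=\tfrac12|d\varphi|^2\,g_{\rm hyp}=\tfrac18 g_{\rm hyp}$. (For a general surface group this step would fail, which signals that your argument never uses the specific $X$.) Finally, your claim that weak equivalence forces the twisted Laplacian to have the same $L^2$-spectrum as $\Delta_{\mathbb{H}}$ is essentially correct but requires first expressing its heat semigroup or resolvent in the form $\sum_\gamma a_\gamma\otimes\rho(\gamma)$, as in section \ref{sec:buser}, before norm identities in the spirit of Proposition \ref{prop:weakequiv} (extended to operator coefficients) can be applied.
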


The proof of this result is one of the main ingredients of 
\cite{Son24}. Very roughly speaking, one first produces a single $\rho$ 
and $\varphi$ that satisfy the conclusion of the theorem by an explicit 
construction; weak equivalence is then used to transfer the conclusion to 
other $\rho$ and $\varphi$ as in the theorem.

Theorem \ref{thm:songkey} explains the utility of constructing equivariant 
minimal surfaces: if we choose the sequence of representations $\pi_N$ in 
such a way that the limiting representation $\pi_\infty$ is weakly 
equivalent to the regular representation, then this will automatically 
imply that the metrics $f_N^*g_{\mathbb{S}^{2N-1}}$ on the minimal 
surfaces converge to the metric 
$f_\infty^*g_{\mathbb{S}^\infty}=\frac{1}{8}g_{\rm hyp}$ with constant 
curvature $-8$.

\subsubsection{Weak containment and strong convergence}

At first sight, none of the above appears to be related to strong 
convergence. However, the following classical result \cite[Theorem 
F.4.4]{BHV08} makes the connection immediately obvious.

\begin{prop}
\label{prop:weakequiv}
Let $\Gamma$ be a finitely generated group with generating set
$\boldsymbol{g}=(g_1,\ldots,g_r)$, and let
$\rho_1:\Gamma\to B(H_1)$ and
$\rho_2:\Gamma\to B(H_2)$ be unitary representations.
Then the following are equivalent:
\begin{enumerate}[1.]
\itemsep\smallskipamount
\item
$\rho_1$ and $\rho_2$ are weakly equivalent.
\item
$\|P(\rho_1(\boldsymbol{g}),\rho_1(\boldsymbol{g})^*)\|=
\|P(\rho_2(\boldsymbol{g}),\rho_2(\boldsymbol{g})^*)\|$
for all $P\in\mathbb{C}\langle x_1,\ldots,x_{2r}\rangle$.
\end{enumerate}
\end{prop}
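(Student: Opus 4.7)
The plan is to show that both (1) and (2) are equivalent to a single intrinsic condition on the canonical extensions $\tilde{\rho}_i: C^*(\Gamma) \to B(H_i)$ to the full group $C^*$-algebra, namely the equality of kernels $\ker\tilde{\rho}_1 = \ker\tilde{\rho}_2$. Recall that $C^*(\Gamma)$ is the completion of the group algebra $\mathbb{C}[\Gamma]$ under the norm $\|x\|_{C^*(\Gamma)} = \sup_\sigma \|\sigma(x)\|$ taken over all unitary representations $\sigma$ of $\Gamma$, and each unitary representation $\rho$ extends uniquely by continuity to a $*$-representation $\tilde{\rho}$ of $C^*(\Gamma)$. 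Since $\Gamma$ is generated by $\boldsymbol{g}$, every element of $\mathbb{C}[\Gamma]$ has the form $P(\boldsymbol{g},\boldsymbol{g}^{-1})$ for some noncommutative polynomial $P$, and $\tilde{\rho}_i$ maps it to $P(\rho_i(\boldsymbol{g}),\rho_i(\boldsymbol{g})^*)$.

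The equivalence of (2) with the kernel condition reduces to an elementary $C^*$-algebraic fact: any $*$-homomorphism between $C^*$-algebras descends to an isometric $*$-isomorphism from the quotient by its kernel. Thus $\|P(\rho_i(\boldsymbol{g}),\rho_i(\boldsymbol{g})^*)\|$ coincides with the norm of the corresponding element of $\mathbb{C}[\Gamma]$ in the quotient $C^*(\Gamma)/\ker\tilde{\rho}_i$. Condition (2) amounts to saying that these two quotient norms agree on the dense subspace $\mathbb{C}[\Gamma]$, and hence on all of $C^*(\Gamma)$; this is in turn equivalent to $\ker\tilde{\rho}_1 = \ker\tilde{\rho}_2$.

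The main step is to identify weak equivalence (1) with the same kernel condition, which is a classical result of Fell. Every matrix coefficient $\gamma \mapsto \langle\xi,\rho_i(\gamma)\eta\rangle$ extends to a bounded linear functional on $C^*(\Gamma)$ that annihilates $\ker\tilde{\rho}_i$. Conversely, by the GNS construction, every state on the quotient $C^*$-algebra $\tilde{\rho}_i(C^*(\Gamma)) \cong C^*(\Gamma)/\ker\tilde{\rho}_i$ arises as a vector state of a cyclic representation weakly contained in an amplification of $\rho_i$, and such vector states can be approximated pointwise on $\Gamma$ by finite linear combinations of matrix coefficients of $\rho_i$ itself. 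Combined with Hahn–Banach, this identifies the weak-$*$-closed linear span of the matrix coefficients of $\rho_i$ in $C^*(\Gamma)^*$ with the annihilator $(\ker\tilde{\rho}_i)^\perp$. Since closed two-sided ideals in $C^*(\Gamma)$ are recovered from their annihilators via Hahn–Banach separation, equality of kernels is equivalent to equality of these weak-$*$ closed spans; and since $\Gamma$ is discrete its compact subsets are finite, so pointwise convergence on $\Gamma$ agrees with uniform convergence on compacts for the uniformly bounded (by unitarity) matrix coefficients. This completes the equivalence with (1).

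The main obstacle will be the approximation step in the third paragraph, namely showing that every bounded linear functional annihilating $\ker\tilde{\rho}_i$ is a pointwise limit of finite linear combinations of matrix coefficients of $\rho_i$. This is where the GNS construction is essential: given a state on the quotient $C^*$-algebra, one realizes it as a vector state of an amplification $\rho_i^{\oplus\infty}$, whose matrix coefficients decompose as countable sums of matrix coefficients of $\rho_i$, and then one passes from states to arbitrary bounded functionals via polar decomposition and the fact that $C^*(\Gamma)^*$ is spanned by its states. Once this density is in place, the remaining Hahn–Banach and bounded-approximation arguments are routine.
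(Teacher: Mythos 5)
The paper gives no argument here—it simply cites the classical theorem of Fell \cite{BHV08}—so what you are really doing is reconstructing that classical proof, and your skeleton is the standard one: both conditions should be equivalent to $\ker\tilde\rho_1=\ker\tilde\rho_2$ in $C^*(\Gamma)$. Two of your three links are correct as sketched: (2) $\Leftrightarrow$ equality of kernels follows, as you say, from the fact that an injective $*$-homomorphism is isometric together with continuity of the quotient norms on the dense subspace $\mathbb{C}[\Gamma]$; and equality of kernels $\Leftrightarrow$ equality of the weak$^*$-closed spans of matrix coefficients follows from the bipolar theorem, since the pre-annihilator of the span of the coefficients of $\rho_i$ is exactly $\ker\tilde\rho_i$. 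Note that this bipolar argument already gives the step you single out as the main obstacle (every bounded functional annihilating $\ker\tilde\rho_i$ is a weak$^*$, hence pointwise, limit of finite linear combinations of coefficients of $\rho_i$), so no GNS or amplification argument is needed there; and the amplification claim you do make is false as stated—a state of $\tilde\rho_i(C^*(\Gamma))$ is a vector state of $\rho_i^{\oplus\infty}$ only if it is given by a density operator on $H_i$, and singular states exist whenever the algebra is infinite dimensional. The correct classical fact is weak$^*$-density of convex combinations of vector states, proved by Hahn--Banach separation together with $\sup_{\|\xi\|=1}\langle\xi,a\xi\rangle=\max\mathrm{sp}(a)$ for self-adjoint $a$.

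The genuine gap is in the remaining link, from (1) to equality of the weak$^*$-closed spans (equivalently, of the kernels). Uniform-on-compacts convergence on a discrete group is pointwise convergence, and pointwise convergence on $\Gamma$ of a net of functionals implies weak$^*$ convergence against $C^*(\Gamma)$ only if the net is uniformly bounded—which the hypothesis, with arbitrary finite linear combinations of arbitrary matrix coefficients, does not provide. This is not a removable technicality: for the regular representation one has $\langle\delta_x,\lambda(\gamma)\delta_y\rangle=1_{\gamma=xy^{-1}}$, so finite linear combinations of coefficients of $\lambda$ contain every finitely supported function, and hence \emph{every} function on $\Gamma$ is a uniform-on-finite-sets limit of such combinations. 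Thus, read literally, condition (1) holds for $\rho_1=\lambda\oplus 1$, $\rho_2=\lambda$ on $\Gamma=\mathbf{F}_2$, while (2) fails (the norms of $\lambda(g_1)+\lambda(g_1)^*+\lambda(g_2)+\lambda(g_2)^*$ and its $\lambda\oplus1$ counterpart are $2\sqrt3$ and $4$). The classical statement, and the proof in \cite{BHV08} that the paper is invoking, formulates weak containment via functions of positive type (diagonal coefficients) approximated by finite \emph{sums} of functions of positive type: positivity gives $\|\varphi\|=\varphi(e)$, so the approximants are automatically uniformly bounded, uniform-on-compacts convergence then does coincide with weak$^*$ convergence on $C^*(\Gamma)$, and general matrix coefficients are recovered afterwards by polarization. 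Your argument needs to be routed through that positive-definite formulation; as written, the implication (1) $\Rightarrow$ (2) is not established.
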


\smallskip

In the present setting, $X$ is the thrice punctured sphere whose 
fundamental group is $\Gamma\simeq\mathbf{F}_2$. Thus we can define a 
random representation $\pi_N:\Gamma\to U(N)$ with finite range by choosing 
$\pi_N(g_1)=U_1^N|_{1^\perp}$ and $\pi_N(g_2)=U_2^N|_{1^\perp}$, where 
$U_1^N,U_2^N$ are independent random permutation matrices of dimension 
$N+1$ and we identified $\mathbb{C}^N\simeq \mathbb{C}^{N+1}\cap 1^\perp$. 
Since Theorem \ref{thm:bc} yields
$$
	\lim_{N\to\infty}
	\|P(\pi_N(\boldsymbol{g}),\pi_N(\boldsymbol{g})^*)\| =
	\|P(\lambda_\Gamma(\boldsymbol{g}),\lambda_\Gamma(\boldsymbol{g})^*)\|,
$$
it follows from Proposition~\ref{prop:weakequiv} that the limiting
representation $\pi_\infty$ must be weakly   
equivalent to the regular representation. Thus we obtain a sequence of
random minimal surfaces $f_N(X^N)$ in $\mathbb{S}^{2N-1}$ with the desired
property.

\section{Open problems}
\label{sec:open}

Despite rapid developments on the topic of strong convergence in recent 
years, many challenging questions remain poorly understood. We therefore 
conclude this survey by highlighting a number of open problems and 
research directions.

\subsection{Strong convergence without freeness}
\label{sec:nonfree}

Until recently, nearly all known strong convergence results were concerned 
with polynomials of \emph{independent} random matrices, and thus with 
limiting objects that are free.
As we have seen in section 
\ref{sec:buser}, however, it is of considerable interest in applications 
to achieve strong convergence in non-free settings; for example, to 
establish optimal spectral gaps for random covers of hyperbolic manifolds, 
one needs models of random permutation matrices that converge strongly 
to the regular representation of the fundamental group of the base 
manifold. Such questions are challenging, in part, because they 
give rise to complicated dependent models of random matrices.

The systematic study of strong convergence to the regular representation 
of non-free 
groups was pioneered by Magee; see the survey \cite{Mag24}. To date, 
a small number of positive results are known in this direction:
\smallskip
\begin{enumerate}[$\bullet$]
\itemsep\medskipamount
\item Louder and Magee \cite{LM25} show that there are models of
random permutation matrices that strongly converge to the regular 
representation of any \emph{fully 
residually free} group: that is, a group that locally embeds in a free 
group. The prime example of 
finitely residually free groups are surface groups.
\item Magee and Thomas \cite{MT23} show that there are models of random 
unitary (but not permutation!)\ matrices that strongly converge to the 
regular representation of any right-angled Artin group;
these are obtained from GUE matrices that act on overlapping factors
of a tensor product (see also \cite[\S 9.4]{CGV25}). This also implies
a strong convergence result for any group that virtually embeds in
a right-angled Artin group, such as fundamental groups of closed 
hyperbolic $3$-manifolds.
\item Magee, Puder, and the author \cite{MPV25} show that uniformly
random permutation representations of the fundamental groups of
orientable closed hyperbolic surfaces strongly converge to the 
regular 
representation.
\end{enumerate}
\smallskip
On the other hand, not every discrete group admits a strongly convergent 
model: there cannot be a model of random permutation matrices that 
strongly converges to the regular representation of 
$\mathbf{F}_2\times\mathbf{F}_2\times\mathbf{F}_2$ (a very special case of 
a right-angled Artin group) \cite[Proposition 2.7]{Mag24}, or a model of 
random unitary matrices that converges strongly to the regular 
representation of $\mathrm{SL}_d(\mathbb{Z})$ with $d\ge 4$ \cite{MdlS23}. 
Thus existence of strongly convergent models cannot be taken for 
granted.

To give a hint of the difficulties that arise in non-free settings,
recall that the fundamental group of a closed 
orientable surface of genus $2$ is
$$
	\Gamma = \big\langle g_1,g_2,g_3,g_4 ~ \big| ~ 
	[g_1,g_2][g_3,g_4]=1\big\rangle.
$$
The most natural random matrix model of this group is obtained by
sampling $4$-tuples of random permutation matrices 
$U_1^N,U_2^N,U_3^N,U_4^N$ 
uniformly at random from the set of such matrices that satisfy 
$[U_1^N,U_2^N][U_3^N,U_4^N]=\id$. This constraint introduces 
complicated dependencies, which causes the model to behave very 
differently than independent random permutation matrices. For example, 
unlike in the setting of section \ref{sec:poly}, the expected traces of 
monomials of these matrices are not even analytic, let alone rational, as 
a function of $\frac{1}{N}$.

For surface groups, one can use the representation theory of 
$\mathbf{S}_N$ to analyze this model;
in particular, this 
enabled Magee and Puder \cite{MP23} to show that its spectral 
statistics admit an asymptotic expansion in $\frac{1}{N}$. The proof of 
strong convergence of this model in \cite{MPV25} is made possible by an 
extension of the polynomial method to models that admit ``good'' 
asymptotic expansions.

However, even for models that look
superficially similar to surface groups,
essentially nothing is known.
For example, perhaps the the simplest fundamental group of 
a (non-orientable, finite volume) hyperbolic $3$-manifold is 
$$
	\Gamma = \big\langle g_1,g_2 ~ \big| ~
	g_1^2g_2^2=g_2g_1\big\rangle.
$$
This is the fundamental group of the Gieseking manifold, which is obtained 
by gluing the sides of a tetrahedron \cite[\S V.2]{Mag74}.
Whether sampling uniformly from the set of permutation matrices 
$U_1^N,U_2^N$ with $(U_1^N)^2(U_2^N)^2=U_2^NU_1^N$ yields
a strongly convergent model is not known. Such questions
are of considerable interest, since they provide a route to
extending Buser's conjecture to higher dimensions.

\subsection{Random Cayley graphs}
\label{sec:cayley}

Let $\boldsymbol{\sigma}=(\sigma_1,\ldots,\sigma_r)$ be i.i.d.\ uniform 
random elements of $\mathbf{S}_N$, and let $\pi_N$ be an irreducible 
representation of $\mathbf{S}_N$. The results in section 
\ref{sec:cassidy} show that the random matrices 
$\pi_N(\boldsymbol{\sigma})$ strongly converge to the regular 
representation $\lambda(\boldsymbol{g})$ of the free generators 
$\boldsymbol{g}=(g_1,\ldots,g_r)$ of $\mathbf{F}_r$ for any sequence of 
irreducible representations with $1<\dim(\pi_N)\le 
\exp(N^{\frac{1}{20}-\delta})$. What happens beyond this regime is a 
mystery: it may even be the case that strong convergence holds for 
\emph{any} irreducible representations with 
$\dim(\pi_N)\to\infty$.

Such questions are of particular interest since they are closely connected 
to the expansion of random Cayley graphs of finite groups. Let us recall 
that the \emph{Cayley graph} 
$\mathrm{Cay}(\mathbf{S}_N;\sigma_1,\ldots,\sigma_r)$ is the graph whose 
vertex set is $\mathbf{S}_N$, and whose edges are defined by connecting 
each vertex $\tau$ to its neighbors $\sigma_i\tau$ and 
$\sigma_i^{-1}\tau$ for $i=1,\ldots,r$.
Its adjacency matrix is therefore given by
$$
	A^N = 
	\lambda_{\mathbf{S}_N}(\sigma_1)+
	\lambda_{\mathbf{S}_N}(\sigma_1)^*+ \cdots +
	\lambda_{\mathbf{S}_N}(\sigma_r)+
	\lambda_{\mathbf{S}_N}(\sigma_r)^*,	
$$
where $\lambda_{\mathbf{S}_N}$ is the left-regular representation of 
$\mathbf{S}_N$. It is a folklore question whether there are sequences of 
finite groups so that, if generators are chosen independently and 
uniformly at random, the assocated Cayley graph has an optimal spectral 
gap. This question is open for any sequence of finite groups.

Now recall that every irreducible representation of a finite group is 
contained in its regular representation with multiplicity equal to its
dimension. Thus
$$
	\|A^N|_{1^\perp}\| =
	\sup_{\pi_N\ne\mathrm{triv}}
	\|\pi_N(\sigma_1)+\pi_N(\sigma_1)^* + \cdots +
	\pi_N(\sigma_r)+\pi_N(\sigma_r)^*\|,
$$
where the supremum is over all nontrivial irreducible representations 
$\pi_N$ (the trivial repesentation is removed by
restricting to $1^\perp$). Thus in order to establish optimal 
spectral gaps for Cayley graphs, we must understand the random 
matrices $\pi_N(\boldsymbol{\sigma})$ defined by \emph{all} irreducible 
representations $\pi_N$.

Note that random Cayley graphs of $\mathbf{S}_N$ cannot have an optimal 
spectral gap with probability $1-o(1)$, as there is a nontrivial 
$1$-dimensional representation (the sign representation). The latter 
produces a single eigenvalue that is distributed as twice the sum of $r$ 
independent Bernoulli variables, which exceeds the lower bound of Lemma 
\ref{lem:alonboppana} with constant probability. Thus we 
interpret the optimal spectral gap question to mean whether all 
eigenvalues, except those coming from the trivial and sign 
representations, meet the lower bound with probability $1-o(1)$.
That this is the case is a well known conjecture, see, e.g., 
\cite[Conjecture~1.6]{RS19}. 
However, to date it has not even been shown that such graphs are 
expanders, i.e., that their nontrivial eigenvalues are bounded away from 
the trivial eigenvalue as $N\to\infty$; nor is there any known 
construction of Cayley graphs of $\mathbf{S}_N$ that achieve an optimal 
spectral gap. The only known result, due to Kassabov \cite{Kas07}, is that 
there exists a choice of generators for which the Cayley graph is 
an expander.

The analogous question is of significant interest for other finite 
groups, such as $\mathrm{SL}_N(\mathbb{F}_q)$ (here we may take either 
$q\to\infty$ or $N\to\infty$). In some ways, these groups are considerably 
better understood than the symmetric group: in this setting, Bourgain and 
Gamburd \cite{BG08} (see also \cite{Tao15}) show that random Cayley graphs 
are expanders, while Lubotzky--Phillips--Sarnak \cite{LPS88} and
Margulis~\cite{Mar88} provide a 
determinstic choice of generators for which the Cayley graph has an 
optimal spectral gap. That random Cayley graphs of these groups have an 
optimal spectral gap is suggested by numerical evidence 
\cite{LR92,LR93,RS19}. However, the study of strong convergence in the 
context of such groups has so far remained out of reach.

\begin{rem}
The above questions are concerned with random Cayley graphs with a bounded
number of generators. If the number of generators is allowed to diverge
with the size of the group, rather general results are known:
expansion follows from a classical result of Alon and Roichman 
\cite{AR94}, while optimal spectral gaps were obtained by Brailovskaya and 
the author in \cite[\S 3.2.3]{BvH24}.
\end{rem}

\subsection{Representations of a fixed group}

In section \ref{sec:cassidy} and above, we always 
considered strong convergence in the context of a sequence of groups 
$\mathbf{G}_N$ of increasing size and a representation $\pi_N$ of each 
$\mathbf{G}_N$. It is a tantalizing question \cite{BC20} whether strong 
convergence might even arise when the group $\mathbf{G}$ is fixed, and 
we take a sequence of irreducible representations $\pi_N$ of $\mathbf{G}$ 
of dimension tending to infinity. Since the entropy of the random 
generators that are sampled from the group is now fixed, strong 
convergence would have to arise in this setting entirely from the 
pseudorandom behavior of high-dimensional representations.

This situation cannot arise, of course, for a finite group $\mathbf{G}$, 
since it has only finitely many irreducible representations. The question 
makes sense, however, when $\mathbf{G}$ is a compact Lie 
group. The simplest model of this kind arises when 
$\mathbf{G}=\mathrm{SU}(2)$, which has a single sequence of irreducible 
representations 
$
	\pi_N = \mathrm{sym}^N V
$
where $V$ is the standard 
representation. The question is then, if $\boldsymbol{U}=(U_1,\ldots,U_r)$ 
are sampled independently from the Haar measure on $\mathrm{SU}(2)$, 
whether $\pi_N(\boldsymbol{U})$ strongly converges to the 
regular representation $\lambda(\boldsymbol{g})$ of the free generators 
$\boldsymbol{g}=(g_1,\ldots,g_r)$ of $\mathbf{F}_r$. A special case of 
this question is discussed in detail by 
Gamburd--Jakobson--Sarnak~\cite{GJS99}, who present numerical evidence in its favor.

Let us note that while strong convergence of representations of a fixed 
group is poorly understood, the corresponding weak convergence property is 
known to hold in great generality. For example, if 
$\boldsymbol{U}=(U_1,\ldots,U_r)$ are sampled independently from the Haar 
measure on \emph{any} compact connected semisimple Lie group $\mathbf{G}$, 
and if $\pi_N$ is \emph{any} sequence of irreducible representations of 
$\mathbf{G}$ with $\dim(\pi_N)\to\infty$, then $\pi_N(\mathbf{G})$ 
converges weakly to $\lambda(\boldsymbol{g})$; see \cite[Proposition 
7.2(1)]{AG25}.

\subsection{Deterministic constructions}

To date, all known instances of the strong convergence phenomenon require 
random constructions (except in amenable situations, cf.\ \cite[\S 
2.1]{Mag24}). This is in contrast to the setting of graphs with an optimal 
spectral gap, for which explicit deterministic constructions exist and 
even predate the understanding of random graphs \cite{LPS88,Mar88}.
It remains a major challenge to achieve strong convergence by a 
deterministic construction.

A potential candidate arises from the celebrated works of 
Lubotzky--Phillips--Sarnak~\cite{LPS88} and Margulis \cite{Mar88}, who 
show that the Cayley graph of $\mathrm{PSL}_2(\mathbb{F}_q)$ defined by a 
certain explicit deterministic choice of generators has an optimal 
spectral gap.  We may therefore ask, by extension, whether the matrices 
obtained by applying the regular representation of 
$\mathrm{PSL}_2(\mathbb{F}_q)$ to these generators converge strongly to 
the regular representation of the free generators of $\mathbf{F}_r$ (cf.\ 
section \ref{sec:cayley}). 
This question was raised by Voiculescu \cite[p.\ 146]{Voi93} in an early 
paper that motivated the development of strong convergence of random 
matrices by Haagerup and Thorbj{\o}rnsen.
However, the deterministic question remains open, and
the methods of \cite{LPS88,Mar88} appear to be 
powerless for addressing this question.

Another tantalizing candidate is the following simple model.
Let $q$ be a prime and $\mathbf{P}^1(\mathbb{F}_q)$ be the projective line 
over $\mathbb{F}_q$; thus $z\in 
\mathbf{P}^1(\mathbb{F}_q)$ may take the 
values $0,1,2,\ldots,q-1,\infty$. $\mathrm{PSL}_2(\mathbb{Z})$ 
acts on $\mathbf{P}^1(\mathbb{F}_q)$ by M\"obius transformations
$$
	\begin{bmatrix}
	a & b \\
	c & d
	\end{bmatrix} z =
	\frac{az+b}{cz+d};
$$
this is just the linear action of 
$\mathrm{PSL}_2(\mathbb{Z})$ if we parametrize 
$\mathbf{P}^1(\mathbb{F}_q)$ 
by homogeneous coordinates $z=[z_1:z_2]$. Let
$\pi_q\in\mathrm{Hom}(\mathrm{PSL}_2(\mathbb{Z});\mathbf{S}_{q+1})$
be the homomorphism defined by this action, that is, $\pi_q(X)$ is the 
permutation of the elements of $\mathbf{P}^1(\mathbb{F}_q)$ that maps $z$ 
to $Xz$. The question is whether the permutation 
matrices 
$$
	(\pi_q(X_1),\ldots,\pi_q(X_r))|_{1^\perp}
$$
converge strongly to the regular representation
$$
	(\lambda_{\mathrm{PSL}_2(\mathbb{Z})}(X_1),\ldots,\lambda_{\mathrm{PSL}_2(\mathbb{Z})}(X_r))
$$
for any $X_1,\ldots,X_r\in\mathrm{PSL}_2(\mathbb{Z})$. Numerical evidence 
\cite{Buc86,LR92,LR93,RS19} supports this phenomenon,
but a mathematical understanding remains elusive.

The above convergence was conjectured by
Buck \cite{Buc86} for diffusion operators---that is, for polynomials with 
positive coefficients---and by Magee (personal communication) for 
arbitrary polynomials. Note, however, that these two conjectures are 
actually equivalent by the positivization trick 
(cf.\ Lemma~\ref{lem:pos} and Remark~\ref{rem:posnonfree}) since
$\mathrm{PSL}_2(\mathbb{Z})$ satisfies the rapid decay and unique
trace properties \cite{Cha17,BCD94}.

\iffalse
\footnote{%
The reader should beware of a subtlety: Buck works interchangeably
with $\mathrm{PSL}_2(\mathbb{Z})$ and $\mathrm{SL}_2(\mathbb{Z})$ as the
norms of diffusions operators on these two groups coincide
\cite[p.\ 292]{Buc86}. However, full strong convergence only follows for
$\mathrm{PSL}_2(\mathbb{Z})$ since
$\mathrm{SL}_2(\mathbb{Z})$ has a nontrivial center and hence 
does not have the unique trace property. I thank Michael Magee for 
pointing this out.
}
\fi

\subsection{Ramanujan constructions}
\label{sec:ramanujan}

Recall that if $A^N$ is the adjacency matrix of a $d$-regular graph with 
$N$ vertices, the lower bound of Lemma \ref{lem:alonboppana} states that
$$
	\|A^N|_{1^\perp}\| \ge 2\sqrt{d-1} - o(1)
$$
as $N$ to infinity. In this survey, we have said that a sequence 
of graphs has an \emph{optimal spectral gap} if satisfies this bound in 
reverse, that is, if
$$
	\|A^N|_{1^\perp}\| \le 2\sqrt{d-1} + o(1).
$$
However, a more precise question that has attracted significant attention 
in the literature is whether it is possible for graphs to have their
nontrivial eigenvalues be \emph{strictly} bounded by the spectral radius 
of the universal cover, without an error term: that is, can one have
$d$-regular graphs with $N$ vertices such that
$$
	\|A^N|_{1^\perp}\| \le 2\sqrt{d-1}
$$
for arbitrarily large $N$? Graphs satisfying this property are called 
\emph{Ramanujan graphs}. Ramanujan graphs do indeed exist and 
can be obtained by several remarkable constructions; we refer to the 
breakthrough papers \cite{LPS88,Mar88,MSS15,HMY25}.

Whether there can be a stronger notion of strong convergence 
that generalizes Ramanujan graphs is unclear. For example, one may ask 
whether there exist $N\times N$ permutation matrices 
$\boldsymbol{U}^N=(U_1^N,\ldots,U_r^N)$ so that
\begin{equation}
\label{eq:ramanotjan}
	\|P(\boldsymbol{U}^N,\boldsymbol{U}^{N*})|_{1^\perp}\|
	\le
	\|P(\boldsymbol{u},\boldsymbol{u}^*)\|
\end{equation}
for every polynomial $P$, where $\boldsymbol{u}$ are as defined in Theorem 
\ref{thm:bc}. It cannot be the case that \eqref{eq:ramanotjan} holds 
simultaneously for all $P$ in fixed dimension $N$, since that would imply 
by Lemma \ref{lem:upperlower} that $C^*_{\rm red}(\mathbf{F}_r)$ embeds in 
$\mathrm{M}_N(\mathbb{C})$. However, we are not aware of an obstruction to 
the existence of $\boldsymbol{U}^N$ so that \eqref{eq:ramanotjan} holds 
for all polynomials $P$ with a bound on the degree $\mathrm{deg}(P)\le 
q(N)$ that diverges sufficiently slowly with $N$. A weaker form of this 
question is whether for each $P$, there exist $\boldsymbol{U}^N$ (which 
may now depend on $P$) for arbitrarily large $N$ so that 
\eqref{eq:ramanotjan} holds.

The interest in Ramanujan graphs stems in part from an analogy with number 
theory: the Ramanujan property of a graph is equivalent to the validity of 
the Riemann hypothesis for its Ihara zeta function \cite[Theorem 
7.4]{Ter11}. In the setting of hyperbolic surfaces, the analogous 
``Ramanujan property'' that a hyperbolic surface $X$ has 
$\lambda_1(X)\ge\frac{1}{4}$ is equivalent to the validity of the Riemann 
hypothesis for its Selberg zeta function \cite[\S 6]{Mur87}. An important 
conjecture of Selberg \cite{Sar95} predicts that a specific family of 
hyperbolic surfaces has this property. However, no such 
surfaces have yet been proved to exist. The results in section 
\ref{sec:buser} therefore provide additional motivation for studying 
``Ramanujan forms'' of strong convergence.

\subsection{The optimal dimension of matrix coefficients}
\label{sec:optdim}

The strong convergence problem for polynomials of $N$-dimensional random 
matrices with matrix coefficients of dimension $D_N\to\infty$ was 
discussed in section \ref{sec:subexpon} in the context of the 
Peterson-Thom conjecture. While only the case $D_N=N$ is needed for that 
purpose, the optimal range of $D_N$ for which strong convergence holds 
remains open: for both Gaussian and Haar distributed matrices, it is known 
that strong convergence holds when $D_N=e^{o(N)}$ and can fail when 
$D_N\ge e^{CN^2}$ \cite{CGV25}. Understanding what lies in between is 
related to questions in operator space theory \cite[\S 4]{Pis14}.

From the random matrix perspective, an interesting feature of this 
question is that there is a basic obstacle to going beyond subexponential 
dimension that is explained in \cite[\S 1.3.1]{CGV25}. While strong 
convergence is concerned with understanding extreme eigenvalues of a 
random matrix $X^N$, essentially all known proofs of strong convergence 
are based on spectral statistics such as $\mathbf{E}[\ntr h(X^N)]$ which 
\emph{count} eigenvalues. However, when $D_N\ge e^{CN}$ the expected 
number of eigenvalues of $P(\boldsymbol{U}^N,\boldsymbol{U}^{N*})$ away 
from the support of the spectrum of $P(\boldsymbol{u},\boldsymbol{u}^*)$ 
may not go to zero even in situations where strong convergence holds, 
because polynomials with matrix coefficients can have outlier eigenvalues 
with very large multiplicity. Thus going beyond coefficients of 
exponential dimension appears to present a basic obstacle to any method
of proof that uses trace statistics.

\subsection{The optimal scale of fluctuations}

The largest eigenvalue of a GUE matrix has 
fluctuations of order $N^{-2/3}$, and the exact (Tracy-Widom) limit 
distribution is known. The universality of this phenomenon has been the 
subject of a major research program in mathematical physics \cite{EY17}, 
and corresponding results are known for many classical models of random 
matrix theory. In a major breakthrough, Huang--McKenzie--Yau \cite{HMY25} 
recently showed that the largest nontrivial eigenvalue of a random regular 
graph has the same behavior, which implies the remarkable result that 
about $69\%$ of random regular graphs are Ramanujan.

It is natural to expect that except in degenerate situations, the same 
scale and edge statistics should arise in strong convergence problems. 
However, to date the optimal scale of fluctuations $N^{-2/3}$ has only 
been established for the norm of quadratic polynomials of Wigner matrices 
\cite{FKN24}. For polynomials of arbitrary degree and for a broader
class of models, the best known rate $N^{-1/2}$ is achieved both 
by the interpolation \cite{Par23,Par23b} and polynomial \cite{CGV25} 
methods.

There is, in fact, a good reason why this is the case. The model 
considered by Parraud in \cite{Par23,Par23b} is somewhat more general in 
that it considers polynomials of both random and deterministic matrices 
(see section \ref{sec:randet} below). In this setting, however, one can 
readily construct examples where $N^{-1/2}$ is the true order of the 
fluctuations: for example, one may take the sum of a GUE matrix and a 
deterministic matrix of rank one \cite{Pec06}. The random matrix scale
$N^{-2/3}$ can therefore only be expected to appear for polynomials
of random matrices alone.

\subsection{Random and deterministic matrices}
\label{sec:randet}

Let $\boldsymbol{G}^N=(G_1^N,\ldots,G_r^N)$ be i.i.d.\ GUE matrices, and 
let $\boldsymbol{B}^N=(B_1^N,\ldots,B_s^N)$ be deterministic matrices of 
the same dimension. It was realized by Male \cite{Mal12} that the 
Haagerup--Thorbj{\o}rnsen theorem admits the following extension: if it is 
assumed that $\boldsymbol{B}^N$ converges strongly to some limiting family 
of operators $\boldsymbol{b}$, then $(\boldsymbol{G}^N,\boldsymbol{B}^N)$ 
converges strongly to $(\boldsymbol{s},\boldsymbol{b})$ where the free 
semicircular family $\boldsymbol{s}$ is taken to be freely independent of 
$\boldsymbol{b}$ in the sense of Voiculescu. This joint strong convergence 
property of random and deterministic matrices was extended to Haar 
unitaries by Collins and Male \cite{CM14}, and was developed in 
a nonasymptotic form by Collins, Guionnet, and Parraud 
\cite{CGP22,Par21,Par23,Par23b}. 
The advantage of this formulation is that it encodes a variety of 
applications that cannot be achieved without the inclusion of 
deterministic matrices.

To date, however, strong convergence of random and deterministic 
matrices has only been amenable to analytic methods, such those of 
Haagerup--Thorbj{\o}rnsen~\cite{HT05} or the interpolation methods of 
\cite{CGP22,BBV23}. Thus a counterpart of this form of strong convergence 
for random permutation matrices remains open. The development of such a 
result is motivated by various applications \cite{ACD21,CCM24,CMP25}.

\subsection{Complex eigenvalues}
\label{sec:complex}

In contrast to the real eigenvalues of self-adjoint polynomials, 
complex eigenvalues of non-self-adjoint polynomials are much more poorly 
understood. While an upper bound on the spectral radius follows directly 
from strong convergence by Lemma~\ref{lem:specradius}, a lower bound 
on the spectral radius and convergence of the empirical distribution of 
the complex eigenvalues remain largely open. The difficulty here is 
reversed from the study of strong convergence, where an upper bound on 
the norm is typically the main difficulty and both a lower bound on the 
norm and weak convergence follow automatically by Lemma 
\ref{lem:upperlower}.

It is not even entirely obvious at first sight how the complex eigenvalue 
distribution of a non-self-adjoint operator in a $C^*$-probability space 
should be defined. The natural object of this kind, whose definition 
reduces to the complex eigenvalue distribution in the case of matrices, is 
called the \emph{Brown measure} \cite[Chapter~11]{MS17}. It is tempting to 
conjecture that if a family of random matrices $\boldsymbol{X}^N$ strongly 
converges to a family of limiting operators $\boldsymbol{x}$, then the 
empirical distribution of the complex eigenvalues of any noncommutative 
polynomial $P(\boldsymbol{X}^N,\boldsymbol{X}^{N*})$ should converge to 
$P(\boldsymbol{x},\boldsymbol{x}^*)$. To date, this has only been proved 
in the special case of quadratic polynomials of independent complex 
Ginibre matrices \cite{CGH22}.

One may similarly ask whether the intrinsic freeness principle extends to 
complex eigenvalues of non-self-adjoint random matrices. For example, 
is there a counterpart of Theorem \ref{thm:intrfree} for complex 
eigenvalues, and if so what are the objects that should appear in it?
No results of this kind have been obtained to date.

% \subsection{Beyond intrinsic freeness}
% Kikuchi matrices + free tensor model for matrix chaos

\addtocontents{toc}{\protect\setcounter{tocdepth}{0}}
\subsection*{Acknowledgments}

I first learned about strong convergence a decade or so ago from Gilles 
Pisier, who asked me about its connection with the study of nonhomogeneous 
random matrices. Only many years later did I come to fully appreciate the 
significance of this question. An informal $C^*$-seminar organized by 
Peter Sarnak at Princeton during Fall 2023 further led to many fruitful 
interactions.

I am grateful to Michael Magee and Mikael 
de la Salle who taught me various things about this topic that could not 
easily be found in the literature, and to Ben Hayes and Antoine Song for
explaining the material in sections \ref{sec:hayes}--\ref{sec:minsurf} to
me.
It is a great pleasure to thank all my 
collaborators, acknowledged throughout this survey, with whom I have 
thought about these problems.

Last but not least, I thank the organizers of Current Developments in 
Mathematics for the invitation to present this survey.

The author was supported in part by NSF grant DMS-2347954. This survey was 
written while the author was at the Institute for Advanced Study in 
Princeton, NJ, which is thanked for providing a fantastic mathematical 
environment.

\addtocontents{toc}{\protect\setcounter{tocdepth}{2}}

\bibliographystyle{abbrv}
\bibliography{ref}

\end{document}